\setlist[enumerate, 1]{itemindent=1em, label={\normalfont(\arabic*)}}
\newcommand{\Rmnum}[1]{\uppercase\expandafter{\romannumeral #1}}
\newcommand{\aA}{\mathcal{A}}
\newcommand{\bB}{\mathcal{B}}
\newcommand{\cC}{\mathcal{C}}
\newcommand{\dD}{\mathcal{D}}
\newcommand{\gG}{\mathcal{G}}
\newcommand{\hH}{\mathcal{H}}
\newcommand{\lL}{\mathcal{L}}
\newcommand{\tT}{\mathcal{T}}
\newcommand{\ccC}{\mathscr{C}}
\newcommand{\one}{\mathbf{1}}
\newcommand{\mT}{{\normalfont\textrm{T}}}
\newcommand{\bL}{{\normalfont\textbf{L}}}
\newcommand{\bp}{{\normalfont\textbf{p}}}
\newcommand{\RHom}{\operatorname{RHom}}
\newcommand{\nin}{{\operatorname{in}}}
\newcommand{\ex}{{\operatorname{ex}}}
\newcommand{\Id}{{\operatorname{Id}}}
\newcommand{\dg}{{\operatorname{dg}}}
\renewcommand{\-}{\text{\normalfont -}}
\newcommand{\dgCAT}{\operatorname{dgCAT}}
\newcommand{\op}{{\operatorname{op}}}
\newcommand{\Ob}{{\operatorname{Ob}}}
\newcommand{\Span}{\operatorname{Span}}
\newcommand{\cone}{\operatorname{cone}}
\newcommand{\hHom}{\hH\hspace{-0.22em}\operatorname{om}}
\newcommand{\alg}{\operatorname{alg}}
\newcommand{\ac}{\operatorname{ac}}
\newcommand{\per}{{\normalfont\text{per}}}
\newcommand{\HH}{\operatorname{HH}}
\newcommand{\Ho}{\operatorname{Ho}}
\newcommand{\rep}{\operatorname{rep}}
\newcommand{\dgcat}{\operatorname{dgcat}}
\theoremstyle{plain}
\newtheorem{theorem}{{T{\footnotesize HEOREM}}}[section]
\newtheorem{lemma}[theorem]{{L{\footnotesize EMMA}}}
\newtheorem{proposition}[theorem]{{P{\footnotesize ROPOSITION}}}
\newtheorem{corollary}[theorem]{{C{\footnotesize OROLLARY}}}
\theoremstyle{definition}
\newtheorem{remark}[theorem]{{R{\footnotesize EMARK}}}
\begin{document}
	
\title{Lax functoriality of Hochschild cochain complex$^*$ \footnote{* This work was supported by the National Natural Science Foundation of China (Grant No. 12371043).}}
\author{Yang Han and Xukun Wang}
\maketitle

\begin{abstract}
    Unlike the Hochschild chain complex of an algebra, the Hochschild cochain complex of an algebra is not functorial. Nonetheless, we show that the Hochschild cochain complex of an algebra even a dg category is of lax functoriality, i.e., there exists a lax functor from bicategory of dg categories to bicategory of $B_\infty$-algebras which sends every dg category to its Hochschild cochain complex. This result is a homotopy version of the lax functoriality of center of an algebra obtained by Davydov, Kong, Runkel, Grady, Oren, et al, in the more general context of dg categories, and extends the restricted functoriality of Hochschild cochain complex of a dg category obtained by Keller to global lax functoriality. 
\end{abstract}



\tableofcontents


\section{Introduction}
Throughout this paper, $k$ is a fixed field. Unless stated otherwise, all vector spaces, algebras, categories and functors are over $k$. It is clear that the Hochschild chain complex $C_\bullet(A)=(A^{\otimes_k\bullet+1},b_{\bullet})$ of an algebra $A$ is functorial on $A$, i.e., Hochschild chain complex $C_{\bullet}$ is a functor from the category of algebras to the category of complexes of vector spaces. Furthermore, Hochschild homology $HH_{\bullet}(A)$ is also functorial on $A$. However, Hochschild cochain complex $C^{\bullet}(A)$ and Hochschild cohomology $HH^{\bullet}(A)$ are not the case. In fact, even the center $Z(A)$ of an algebra $A$, which is isomorphic to the zeroth Hochschild cohomology $HH^0(A)$ of $A$, is not functorial on $A$. For this, it suffices to observe the $2\times 2$ upper triangular matrix algebra $A={\tiny \begin{pmatrix} k&k\\ 0&k \end{pmatrix}}$ which has a commutative semisimple subalgebra $S={\tiny \begin{pmatrix} k&0\\ 0&k \end{pmatrix}}$. Obviously, $S$ is isomorphic to the factor algebra $A/{\rm rad}A$ of $A$ modulo its Jacobson radical ${\rm rad} A={}${\tiny $\begin{pmatrix} 0&k\\ 0&0 \end{pmatrix}$}. Moreover, the composition of the injective homomorphism $i:S\hookrightarrow A$ and the surjective homomorphism $p:A\to A/{\rm rad}A\cong S$ is the identity automorphism $1_S:S\to S$. Assume on the contrary that the center $Z$ is a functor from the category of algebras to the category of vector spaces. Then $Z(p)Z(i)=Z(pi)=Z(1_S)=1_{Z(S)}$. Note that $Z(A)\cong k$ and $Z(S)=S$. Thus ${\rm rank} Z(p)\le 1$ and ${\rm rank} Z(i)\le 1$. It contradicts to ${\rm rank}(Z(p)Z(i))=2$.
Nontheless, in 2011, Davydov, Kong and Runkel proved that the center of an algebra is of lax functoriality (Ref. \cite[Theorem 4.12]{DavydovKongRunkel11}), i.e., there exists a lax functor from the category of algebras, which is a bicategory with discrete Hom categories, to the bicategory of algebras, cospans of algebras, and morphisms of cospans of algebras, sending every algebra $A$ to its center $Z(A)$.
Moreover, in 2021, Grady and Oren showed that there exists a lax functor from the category of rings, which is a bicategory with discrete Hom categories, to the bicategory of rings, bimodules, and bimodule morphisms, sending every ring $R$ to its center $Z(R)$ (Ref. \cite[Theorem 4.4]{GradyOren21}).
Now that the Hochschild cochain complex of an algebra is its ``homotopy center'', it is natural to ask whether the Hochschild cochain complex of an algebra is of lax functoriality, i.e., whether there exists a lax functor from bicategory of algebras to bicategory of $B_\infty$-algebras sending every algebra to its Hochschild cochain complex?

In the more general context of dg (=differential graded) categories, there are two extreme categories of small dg categories: One is the category ${\rm dgcat}$ consisting of all small dg categories and all dg functors, the other is the category ${\rm dgcat}_{\rm id}$ consisting of all small dg categories and all identity dg functors. The Hochschild cochain complex $C^\bullet(\aA)$ is not a functor from ${\rm dgcat}$ to the category $\cC k$ of complexes of vector spaces, but it is obviously a functor from ${\rm dgcat}_{\rm id}$ to $\cC k$. It means that Hochschild cochain complex can be of {\it restricted functoriality} if we consider only special dg functors. A very nice intermediate category of small dg categories is the category ${\rm dgcat}_{\rm ff}$ consisting of all small dg categories and all fully faithful dg functors. In 2003, Keller found that Hochschild cochain complex $C^\bullet$ is a contravariant functor from ${\rm dgcat}_{\rm ff}$ to the category $B_\infty$ of $B_\infty$-algebras (Ref. \cite[4.3, Paragraph 1]{Keller03}). Furthermore, he discovered that Hochschild cochain complex $C^\bullet$ is a contravariant functor from the category ${\rm Hmo}_{\rm ff}$ to the homotopy category ${\rm Ho}B_\infty$ of $B_\infty$-algebras (Ref. \cite[5.4]{Keller06}). Here, the category {\rm Hmo} is the localization of ${\rm dgcat}$ with respect to Morita equivalences, and ${\rm Hmo}_{\rm ff}$ is the subcategory of ${\rm Hmo}$ whose objects are all objects of {\rm Hmo} and whose morphisms are the isomorphism classes $[X]$ of dg $\aA$-$\bB$-bimodules $X$ such that $X$ is perfect as a right dg $\bB$-module and the derived tensor product functor $-\otimes^{\bf L}_{\aA}X: \per\aA\to\per\bB$ is fully faithful.
It implies that if two small dg categories are derived equivalent then their Hochschild cochain complexes are isomorphic in ${\rm Ho}B_\infty$.
Inspired by Keller's results, it is natural to ask what kind of functoriality the Hochschild cochain complex of a small dg category is of, once we consider all dg functors, or more general, all dg bimodules? 

In this paper, we will answer the above two questions and show that the Hochschild cochain complex of a small dg category is of lax functoriality, i.e., there exists a lax functor from bicategory of small dg categories to bicategory of $B_\infty$-algebras which sends every small dg category to its Hochschild cochain complex (Theorem~\ref{Thm-LaxFuntor-DerCat-B-Inf}). This result is a homotopy version of the lax functoriality of center of an algebra obtained by Davydov, Kong, Runkel, Grady, Oren, et al, in the more general context of dg categories, and extends the restricted functoriality of Hochschild cochain complex of a small dg category obtained by Keller to global lax functoriality. Whereas the lax functoriality of center of an algebra has applications in topological quantum field theory and conformal field theory \cite{Davydov10,DavydovKongRunkel11,DavydovKongRunkel15,GradyOren21}, we hope our results are useful in these fields as well.

In fact, we will construct various lax functors and functors to display the lax functoriality and restricted functoriality of Hochschild cochain complexes and Hochschild cohomology of algebras and dg categories. The overview diagram $(\bigstar)$ below provides a global perspective.
$$\begin{tikzcd}[column sep=small]
	& {\rm dgcat}_{\rm ff} \arrow[r] \arrow[d] \arrow[rrrrddd,dotted] & {\rm Hqe}_{\rm ff} \arrow[r] \arrow[d] & {\rm Hmo}_{\rm ff} \arrow[d] \arrow[rddd,dotted,"K"'] \arrow[ddr] & &  \\
	{\rm alg} \arrow[r] \arrow[d] & {\rm dgcat} \arrow[r] \arrow[d] & {\rm Hqe} \arrow[r] & {\rm Hmo} \arrow[dl] & & \\	
	\aA\lL\gG \arrow[r] & \dg\cC\aA\tT \arrow[dl,shift left=1,"\mathscr{P}"] \arrow[r,shift left=1,"\mathscr{S}"] \arrow[d,"\ref{Thm-LaxFuntor-DerCat-B-Inf}","\mathscr{C}"'] & \overline{\dg\cC\aA\tT} \arrow[l,shift left=1,"\mathscr{T}"] \arrow[d,"\ref{Cor-overline-ccC}","\overline{\mathscr{C}}"'] & \overline{\dg\cC\aA\tT_{\rm raf}} \arrow[l] \arrow[d,"\overline{\mathscr{C}}_{\rm raf}"] &  \overline{\dg\cC\aA\tT_{\rm raf}}^{\le 1} \arrow[l] \arrow[d,near end,"\tilde{K}"] & \\
	\dgCAT_{\rm c,h} \arrow[r,"\ref{Thm-LaxFuntor-HomotopyCat-B-Inf}"'] \arrow[ur,shift left=1,"\mathscr{I}"] & B_\infty\-{\rm span}^2 \arrow[r,shift left=1,"\mathscr{S}"] & \overline{B_\infty\-{\rm span}^2} \arrow[l,shift left=1,"\mathscr{T}"] & \overline{B_\infty\-{\rm span}^2_{\rm raf}} \arrow[l] \arrow[r,"\mathscr{H}","\ref{Prop-BInfOspan2-HBInf}"'] \arrow[d,"\ref{Cor-LaxFunctor-B8span2-GSpan2}"] & ({\rm Ho}B_\infty)^\op \arrow[d] & B_\infty^\op \arrow[l] \\
	& & & \mathsf{G}\-{\rm Span}^2 & \mathsf{G}^\op &
\end{tikzcd}
$$
In the diagram above, the vertices on the upmost two rows and rightmost two columns are categories which also can be viewed as bicategories with discrete Hom categories. The arrows between these vertices are functors which also can be viewed as strict functors when these vertices are regarded as bicategories. All other vertices are bicategories and all other arrows are lax functors. 
Since the composition of (lax) functors is still a (lax) functor, every path of (lax) functors in the diagram above gives a (lax) functor. For example, there exist a functor ${\rm dgcat}_{\rm ff}\to \mathsf{G}^\op$ sending every small dg category $\aA$ to its Hochschild cohomology $HH^\bullet(\aA)$, and a lax functor ${\rm alg} \to B_\infty\-{\rm span}^2$ sending every algebra $A$ to its Hochschild cochain complex $C^\bullet(A)$.

Those two dotted arrows in the diagram above are the functors ${\rm dgcat}_{\rm ff} \to B_\infty^\op$ and $K: {\rm Hmo}_{\rm ff} \to ({\rm Ho}B_\infty)^\op$ discovered by Keller in \cite[4.3, Paragraph 1]{Keller03} and \cite[5.4, Page 180]{Keller06} respectively.  
For these functors, only fully faithful dg functors or the dg bimodules inducing fully faithful derived tensor product functors are considered.
In order to delete the fully faithful restrictions, we have to consider bicategories and lax functors. Eventually we obtain the lax functors from {\rm alg}, $\aA\lL\gG$, {\rm dgcat}, $\dg\cC\aA\tT$, {\rm Hqe} and {\rm Hmo} to $B_\infty\-{\rm span}^2$ respectively.
Moreover, we extend Keller's functor $K$ to the functor $\tilde{K}$. 

The paper is organized as follows: In section 2, we will fix some notations on the Hochschild cochain complexes of dg categories. Besides, we will give some new results which are necessary preparations for constructing lax functors of Hochschild cochain complexes of dg categories. In section 3, we will construct some bicategories of dg categories such as $\dgCAT_{\rm c,h}$ and $\dg\cC\aA\tT$, and some bicategories of $B_\infty$-algebras or Gerstenhaber algebras such as $B_\infty\-{\rm span}^2$ and $\mathsf{G}\-{\rm Span}^2$, which are the source and target bicategories of our objective (co)lax functors. Indeed, we will provide a general construction of a bicategory from a model category (Theorem~\ref{Thm-Const-Bicat}), which can be applied to the model categories $B_\infty$ of $B_\infty$-algebras and $\mathsf{G}$ of Gerstenhaber algebras to obtain the target bicategories $B_\infty\-{\rm span}^2$ of $B_\infty$-algebras and $\mathsf{G}\-{\rm Span}^2$ of Gerstenhaber algebras respectively. Furthermore, we will show that the cohomology functor $H:B_\infty\to \mathsf{G}$ induces a colax functor $B_\infty\-{\rm span}^2\to \mathsf{G}\-{\rm Span}^2$ (Corollary~\ref{Cor-Colax functor-Bspan2-GSpan2}). Restricted to the sub-bicategory $\overline{B_\infty\-{\rm span}^2_{\rm raf}}$ of 1-skeleton $\overline{B_\infty\-{\rm span}^2}$ of bicategory $B_\infty\-{\rm span}^2$, we will obtain a lax functor $\overline{B_\infty\-{\rm span}^2_{\rm raf}} \to \mathsf{G}\-{\rm Span}^2$ (Corollary~\ref{Cor-LaxFunctor-B8span2-GSpan2}). In section 4, we will construct some lax functors from some bicategories of small dg categories to some bicategories of $B_\infty$-algebras or Gerstenhaber algebras which send small dg categories to their Hochschild cochain complexes or Hochschild cohomologies. First of all, we will construct a lax functor from bicategory $\dgCAT_{\rm c,h}$ to bicategory $B_\infty\-{\rm span}^2$ (Theorem~\ref{Thm-LaxFuntor-HomotopyCat-B-Inf}). During the construction, Keller's upper triangular matrix dg category (Ref. \cite[4.5, Paragraph 1]{Keller03}) plays a crucial role. Next, since the bicategories $\dgCAT_{\rm c,h}$ and $\dg\cC\aA\tT$ are biequivalent, we will get a lax functor from bicategory $\dg\cC\aA\tT$ to bicategory $B_\infty\-{\rm span}^2$ (Theorem~\ref{Thm-LaxFuntor-DerCat-B-Inf}). Note that every bicategory is biequivalent to its 1-skeleton. Passing from bicategories to their 1-skeletons, we will obtain a lax functor from bicategory $\overline{\dg\cC\aA\tT}$ to bicategory $\overline{B_\infty\-{\rm span}^2}$ (Corollary~\ref{Cor-overline-ccC}). The 1-skeleton $\overline{\dg\cC\aA\tT}$ of the bicategory $\dg\cC\aA\tT$ is closely related to the homotopy category ${\rm Hmo}$. Indeed, ${\rm Hmo}$ is a sub-bicategory of $\overline{\dg\cC\aA\tT}$. Last but not least, we will complete the overview diagram $(\bigstar)$ by defining the remain (bi)categories and (lax) functors and checking the commutativity of some diagrams. In three appendices, we will provide the detailed proofs of two lemmas and one fact which are used in the main text.

\section{Hochschild cochain complexes of dg categories}

In this section, we will fix some notations on the Hochschild cochain complexes of small dg categories. Moreover, we will give some new results which are necessary preparations for constructing lax functors of Hochschild cochain complexes of dg categories.

\subsection{Dg categories and dg modules.} In this subsection, we fix some notations on dg categories and dg modules. For more knowledge on dg categories and dg modules, we refer to \cite{Keller06}. 

\medspace

\noindent{\bf Dg categories.} Denote by $\gG k$ (resp. $\cC k$) the category of $\mathbb{Z}$-graded $k$-vector spaces (resp. complexes of $k$-vector spaces). Write $\otimes$ for $\otimes_k$. The {\it tensor product} $V\otimes V'$ of the complexes $V$ and $V'$ of $k$-vector spaces is the graded $k$-vector space 
$$V\otimes V'=\bigoplus\limits_{n\in\mathbb{Z}} (V\otimes V')^n := \bigoplus\limits_{n\in\mathbb{Z}} \bigoplus\limits_{i+j=n} (V^i\otimes V'^j)$$ 
with differential $d_{V\otimes V'}:=d_V\otimes 1_{V'}+1_V\otimes d_{V'}$. Then $(\cC k,\otimes,k)$ is a closed symmetric monoidal category for which the {\it internal Hom complex} 
$$\hHom_k(V,V'):= \bigoplus\limits_{n\in\mathbb{Z}} \gG k(V,s^nV')$$ with differential $d_{\hHom_k(V,V')}(f):=d_{V'}\circ f-(-1)^{|f|}f\circ d_V$, where the {\it shift functor} $s:\gG k\to\gG k, V\mapsto sV, (f:V\to V')\mapsto(sf:sV\to sV')$, and $(sV)^n:=V^{n+1}, (sf)^n:=f^{n+1}$, for all $n\in\mathbb{Z}$.

A {\it dg category} is a $\cC k$-enriched category (Ref. \cite{Kelly82}), i.e., a category $\aA$ whose Hom spaces are complexes of $k$-vector spaces and whose compositions $\aA(A',A'')\otimes\aA(A,A')\to\aA(A,A''), g\otimes f\mapsto gf,$ are morphisms of complexes of $k$-vector spaces for all $A,A',A''\in\aA$. A key example of dg categories is the dg category $\cC_\dg k$ of complexes of $k$-vector spaces whose objects are all complexes of $k$-vector spaces and whose Hom spaces $\cC_\dg k(V,V'):=\hHom_k(V,V')$ for all complexes $V$ and $V'$ of $k$-vector spaces. The {\it opposite dg category} $\aA^\op$ of a dg category $\aA$ has the same objects as $\aA$, the Hom spaces $\aA^\op(A,A'):=\aA(A',A)$, and the composition $f\circ_{\aA^\op} g:=(-1)^{mn}g\circ_\aA f$, for all $A,A',A''\in\aA$, $f\in\aA^\op(A',A)^m$ and $g\in\aA^\op(A'',A')^n$.

Let $\aA$ and $\bB$ be dg categories. A {\it dg functor} $F:\aA\to\bB$ is given by a function $F:\Ob(\aA)\to\Ob(\bB)$ and morphisms of complexes $F_{AA'}:\aA(A,A')\to\bB(FA,FA')$ for all $A,A'\in\aA$ which are compatible with the composition and the units. The {\it category of small dg categories} dgcat has all small dg categories as objects and all dg functors between them as morphisms. The {\it tensor product} $\aA\otimes\bB$ of two small dg categories $\aA$ and $\bB$ is the small dg category having the set of objects $\Ob(\aA)\times \Ob(\bB)$, the Hom spaces $$(\aA\otimes\bB)((A,B),(A',B')):= \aA(A,A')\otimes\bB(B,B'),$$ and the natural compositions and units. Let $\aA,\bB\in{\rm dgcat}$ and $F,G\in{\rm dgcat}(\aA,\bB)$. A {\it graded natural transformation $\alpha:F\Rightarrow G$ of degree $n$} from $F$ to $G$ is a set of morphisms  $\alpha=\{\alpha_A\in\bB(FA,GA)^n\}_{A\in\aA}$ such that $Gf\circ\alpha_A=(-1)^{mn}\alpha_{A'}\circ Ff$ for all $A,A'\in\aA$ and $f\in\aA(A,A')^m$. 
$$\begin{tikzcd}
	FA \arrow[r,"\alpha_A"] \arrow[d,"Ff"] & GA \arrow[d,"Gf"] \\
	FA' \arrow[r,"\alpha_{A'}"] & GA' 
\end{tikzcd}$$
The {\it complex of graded natural transformations} $\hHom(F,G)$ from $F$ to $G$ is the graded vector space 
$$\hHom(F,G)=\bigoplus\limits_{n\in\mathbb{Z}}\hHom(F,G)^n,$$ 
where $\hHom(F,G)^n$ is the vector space of all graded natural transformations of degree $n$ from $F$ to $G$, with the differential $d_{\hHom(F,G)}$ given by $d_{\hHom(F,G)}(\alpha) = \{d_{\bB(FA,GA)}(\alpha_A)\}_{A\in\aA}$. Then $({\rm dgcat},\otimes,k)$ is a closed symmetric monoidal category for which the internal Hom dg category $\hHom(\aA,\bB)$ has all dg functors from $\aA$ to $\bB$ as objects and Hom spaces $\hHom(\aA,\bB)(F,G):=\hHom(F,G)$.

The {\it homotopy category} $H^0\aA$ of a dg category $\aA$ is the category with the same objects as $\aA$ and Hom spaces 
$$(H^0\aA)(A,A'):=H^0(\aA(A,A')),$$ 
the 0-th cohomology of the complex $\aA(A,A')$, for all $A,A'\in\aA$.
A {\it quasi-equivalence} from a dg cateory $\aA$ to a dg category $\bB$ is a dg functor $F:\aA\to\bB$ such that $F_{AA'}:\aA(A,A')\to\bB(FA,FA')$ is a quasi-isomorphism for all $A,A'\in\aA$ and the induced functor $H^0F:H^0\aA\to H^0\bB, A\mapsto FA, [f]\in H^0\aA(A,A')\mapsto [Ff]\in H^0\bB(FA,FA')$, is an equivalence.

\medspace

\noindent{\bf Dg modules.} Let $\aA$ be a small dg category. A {\it left dg $\aA$-module} is a dg functor $\aA\to \cC_\dg k$. A {\it right dg $\aA$-module} is a dg functor $\aA^\op\to \cC_\dg k$. 

The {\it dg category $\cC_\dg\aA$ of right dg $\aA$-modules} has all right dg $\aA$-modules as objects and Hom spaces $\cC_\dg\aA(X,X'):=\hHom(X,X')$ for all right dg $\aA$-modules $X$ and $X'$. The {\it category of right dg $\aA$-modules} $\cC\aA := Z^0(\cC_\dg\aA)$, where $Z^0\aA$ is the {\it 0-th cocycle category} of the dg category $\aA$ whose objects are all objects of $\aA$ and whose Hom spaces $(Z^0\aA)(A,A'):=Z^0(\aA(A,A')):={\rm Ker}d^0_{\aA(A,A')}$, the 0-th cocycle of the complex $\aA(A,A')$, for all $A,A'\in\aA$. Namely, the category of right dg $\aA$-modules $\cC\aA$ has all right dg $\aA$-modules as objects and all closed (= differential zero = cocycle) graded natural transformations of degree $0$ between them as morphisms.
The {\it homotopy category of right dg $\aA$-modules} $\hH\aA := H^0(\cC_\dg\aA)$. 
A morphism $f:X\to X'$ of right dg $\aA$-modules is a {\it quasi-isomorphism} if it induces quasi-isomorphisms $f_A:XA\to X'A$ for all $A\in\aA$, or equivalently, isomorphisms $H^n(f_A):H^n(XA)\to H^n(X'A)$ for all $A\in\aA$ and $n\in\mathbb{Z}$. The {\it derived category} $\dD\aA$ of a small dg category $\aA$ is the localization of the category $\cC\aA$ with respect to the class of quasi-isomorphisms.

Let $\aA,\bB$ be small dg categories. A {\it dg $\aA\-\bB$-bimodule} is a dg functor from $\bB^\op\otimes\aA$ to $\cC_\dg k$. A dg $\aA\-\bB$-bimodule coincides with a left dg $\aA\otimes\bB^\op$-module or a right dg $\aA^\op\otimes\bB$-module. The {\it tensor product} of dg $\aA\-\bB$-bimodule $X$ and dg $\bB\-\cC$-bimodule $Y$ over dg category $\bB$ is the dg $\aA\-\cC$-bimodule $X\otimes_\bB Y$ given by 
{\small $$(X\otimes_\bB Y)(C,A)	:= {\rm Coker}\left(\bigoplus\limits_{B,B'\in\bB}X(B',A)\otimes\bB(B,B')\otimes Y(C,B) \xrightarrow{\nu} \bigoplus\limits_{B\in\bB}X(B,A)\otimes Y(C,B)\right)$$}

\noindent where the morphism of complexes $\nu$ is defined by $\nu(x\otimes b\otimes y):=xb\otimes y-x\otimes by=X(b,A)(x)\otimes y-x\otimes Y(C,b)(y)$, for all $A\in\aA$ and $C\in\cC$. For a small dg category $\aA$, the {\it identity dg $\aA$-bimodule} $I_\aA$ is given by $I_\aA(A,A'):=\aA(A,A'), I_\aA(a,a'):=\aA(a,a')=\aA(A,a')\aA(a,A')=\aA(a,A''')\aA(A'',a')$, for all $A, A', A'', A'''\in\aA, a\in\aA(A,A''), a'\in\aA(A',A''')$. 
$$\begin{tikzcd}
	\aA(A'',A') \arrow[r,"{\aA(a,A')}"] \arrow[d,"{\aA(A'',a')}"'] & \aA(A,A') \arrow[d,"{\aA(A,a')}"] \\ \aA(A'',A''') \arrow[r,"{\aA(a,A''')}"] & \aA(A,A''')
\end{tikzcd}$$
Then $(\cC(\aA^\op\otimes\aA),\otimes_\aA,I_\aA)$ is a monoidal category.

\medspace

\noindent{\bf Model structures on the category of dg modules.} (Ref. \cite[3.2]{Keller06}) Let $\aA$ be a small dg category. A dg $\aA$-module $P$ is {\it cofibrant} if for any surjective quasi-isomorphism $f:X\to Y$ and morphism $g:P \to Y$ in $\cC\aA$, there is a morphism $h:P \to X$ in $\cC\aA$ such that $g=fh$.
A dg $\aA$-module $I$ is {\it fibrant} if for any injective quasi-isomorphism $f:X \rightarrow Y$ and morphism $g:X \rightarrow I$ in $\cC\aA$, there is a morphism $h:Y \to I$ in $\cC\aA$ such that $g=hf$. 
The category $\cC\aA$ admits two structures of Quillen model category whose weak equivalences are just all quasi-isomorphisms:
(1) The {\it projective model structure}, whose fibrations are just all epimorphisms. For this model structure, each object is fibrant, and an object is cofibrant if and only if it is a cofibrant dg module.
(2) The {\it injective model structure}, whose cofibrations are just all monomorphisms. For this model structure, each object is cofibrant, and an object is fibrant if and only if it is a fibrant dg module.
For both model structures, two morphisms are homotopic if and only if they are equal in the homotopy category $\hH\aA$ of dg $\aA$-modules.

\subsection{$B_\infty$-algebras and Gerstenhaber algebras} In this subsection, we introduce the category of $B_\infty$-algebras and the category of Gerstenhaber algebras. $B_\infty$-algebras must be $A_\infty$-algebras.

\medspace

\noindent{\bf $A_\infty$-algebras.} $A_\infty$-algebra was introduced by Stasheff in \cite{Stasheff}. We refer to \cite{Keller01} for a nice introduction on $A_\infty$-algebras. 
An {\it $A_\infty$-algebra} $(A,\{m_n\}_{n\in\mathbb{N}})$ is a graded vector space $A$ endowed with
graded linear maps $m_n: A^{\otimes n}\to A$ of degree $2-n$ for all $n\ge 1$ satisfying
the {\it Stasheff identities}
$$\sum\limits_{i+j+k=n, \atop i,k\ge 0, j\ge 1} (-1)^{i+jk}\ m_{i+1+k}(1^{\otimes i}\otimes m_j\otimes 1^{\otimes k})=0,\quad \mbox{for all}\ n\ge 1.$$
Note that, when $n=1$, we have $m_1^2=0$. Thus $(A,m_1)$ is a complex of vector spaces.
An {\it $A_\infty$-morphism} $f=\{f_n\}_{n\ge 1}$ from an $A_\infty$-algebra $(A,\{m_n\}_{n\in\mathbb{N}})$ to an $A_\infty$-algebra $(A',\{m'_n\}_{n\in\mathbb{N}})$ is a set of graded linear maps $f_n:A^{\otimes n}\to A'$ of degree $1-n$ for all $n\ge 1$ such that the following identities hold:
$$\sum\limits_{i+j+k=n, \atop i,k\ge 0,\ j\ge 1} (-1)^{i+jk}\ f_{i+1+k}(1^{\otimes i}\otimes m_j\otimes 1^{\otimes k})=\sum\limits_{l\ge 1, \atop n_1+\cdots+n_l=n} (-1)^\epsilon\ m'_l(f_{n_1}\otimes\cdots\otimes f_{n_l}),\ \mbox{for all}\ n\ge 1,$$
where $\epsilon:=(l-1)(n_1-1)+(l-2)(n_2-1)+\cdots+2(n_{l-2}-1)+(n_{l-1}-1)$ for all $l\ge 2$, and $\epsilon:=0$ for $l=1$.
Note that, when $n=1$, we have $f_1m_1=m'_1f_1$. Thus $f_1:(A,m_1)\to(A',m'_1)$ is a morphism of complexes.
The {\it composition} $f'\circ f$ of two $A_\infty$-morphisms $f=\{f_n\}_{n\ge 1}:A\to A'$ and $f'=\{f'_n\}_{n\ge 1}:A'\to A''$ is given by 
$$(f'\circ f)_n:=\sum\limits_{l\ge 1, \atop n_1+\cdots+n_l=n} (-1)^\epsilon\ f'_l(f_{n_1}\otimes\cdots\otimes f_{n_l}),\ \mbox{for all}\ n\ge 1,$$
where $\epsilon$ is defined as above. 
An $A_\infty$-morphism $f:A\to A'$ is {\it strict} if $f_n=0$ for all $n\ge 2$. 
The {\it identity $A_\infty$-morphism} on an $A_\infty$-algebra $A$ is the strict morphism $1_A:A\to A$. 
All $A_\infty$-algebras and $A_\infty$-morphisms form the {\it category $A_\infty$ of $A_\infty$-algebras}.
An $A_\infty$-morphism $f:A\to A'$ is an {\it $A_\infty$-isomorphism} if there exists an $A_\infty$-morphism $f': A'\to A$ such that $f'\circ f=1_A$ and $f\circ f'=1_{A'}$.
An $A_\infty$-morphism $f:A\to A'$ is an {\it $A_\infty$-quasi-isomorphism} if the morphism of complexes $f_1:(A,m_1)\to(A',m'_1)$ is a quasi-isomorphism.

\medspace

\noindent{\bf $B_\infty$-algebras.} 
$B_\infty$-structure first appeared in \cite{Baues81}. $B_\infty$-algebra was introduced by Getzler and Jones in \cite[Subsection 5.2]{GetzlerJones94}. 
For more knowledge on $B_\infty$-algebras, we refer to \cite{ChenLiWang20}.
Let $A$ be a graded vector space. For all nonnegative integers $i,j\ge 0$, positive integer $p\ge 1$, nonnegative $p$-partitions $i=i_1+\cdots+i_p$ and $j=j_1+\cdots+j_p$ with $i_1,\cdots,i_p,j_1,\cdots,j_p\ge 0$, we define the graded linear isomorphism 
$$\tau_{i_1,\cdots,i_p; j_1,\cdots,j_p}: A^{\otimes i}\bigotimes A^{\otimes j} \to
(A^{\otimes i_1}\bigotimes A^{\otimes j_1})\otimes\cdots\otimes(A^{\otimes i_p}\bigotimes A^{\otimes j_p})$$
by sending $a_1\otimes\cdots\otimes a_i\bigotimes a'_1\otimes\cdots\otimes a'_j$ to $(-1)^\epsilon\
(a_1\otimes\cdots\otimes a_{i_1}\bigotimes a'_1\otimes\cdots\otimes a'_{j_1}) \otimes\cdots\otimes (a_{i_1+\cdots+i_{p-1}+1}\otimes\cdots\otimes a_i\bigotimes a'_{j_1+\cdots+j_{p-1}+1}\otimes\cdots\otimes a'_j)$,
where $\epsilon := \sum\limits_{l=1}^p
(|a_{i_1+\cdots+i_l+1}|+\cdots+|a_i|)(|a'_{j_1+\cdots+j_{l-1}+1}|+\cdots+|a'_{j_1+\cdots+j_l}|)$.

A {\it $B_\infty$-algebra} $(A,\{m_n\}_{n\in\mathbb{N}},\{m_{p,q}\}_{p,q\in\mathbb{N}_0})$
is a graded vector space $A$ endowed with a graded linear map
$m_n : A^{\otimes n}\to A$ of degree $2-n$ for each $n\ge 1$ and
a graded linear map $m_{p,q} : A^{\otimes p}\otimes A^{\otimes q} \to A$ of degree $1-p-q$ for each pair $p,q\ge 0$ satisfying:

\medskip

(1) {\it Unity}:
$$m_{1,0}=1_A=m_{0,1},\quad m_{p,0}=0=m_{0,p},\ \mbox{for all}\ p\ge 2.$$

(2) {\it Associativity}: For all $i,j,k\ge 1$,
$$\begin{array}{ll}
	& \sum\limits_{p=1}^{i+j}\sum\limits_{i_1+\cdots+i_p=i, \atop j_1+\cdots+j_p=j} (-1)^{\eta_1}\ m_{p,k}(((m_{i_1,j_1}\otimes\cdots\otimes m_{i_p,j_p})\tau_{i_1,\cdots,i_p; j_1,\cdots,j_p})\bigotimes 1^{\otimes k}) \\ [8mm]
	=&\sum\limits_{q=1}^{j+k} \sum\limits_{j_1+\cdots+j_q=j, \atop k_1+\cdots+k_q=k} (-1)^{\epsilon_1}\ m_{i,q}(1^{\otimes i} \bigotimes((m_{j_1,k_1}\otimes\cdots\otimes m_{j_q,k_q})\tau_{j_1,\cdots,j_q; k_1,\cdots,k_q})),
\end{array}$$
where
$$\eta_1 := \sum\limits_{l=1}^p(i_l+j_l-1)(k+p-l)+\sum\limits_{l=1}^{p-1}j_l(i_{l+1}+\cdots+i_p),$$
$$\epsilon_1 := \sum\limits_{t=1}^{q-1}(j_t+k_t-1)(q-t)+\sum\limits_{t=1}^{q-1}k_t(j_{t+1}+\cdots+j_q).$$

(3) {\it Leibniz rule}: For all $i,j\ge 1$,
$$\begin{array}{ll}
	& \sum\limits_{n=1}^{i+j} \sum\limits_{i_1+\cdots+i_n=i, \atop j_1+\cdots+j_n=j} (-1)^{\epsilon_2}\ m_n(m_{i_1,j_1}\otimes\cdots\otimes m_{i_n,j_n})\tau_{i_1,\cdots,i_n;j_1,\cdots,j_n} \\ [8mm]
	= & \sum\limits_{p+q+r=i, \atop p,r\ge 0,\ q\ge 1} (-1)^{\eta_2}\ m_{p+1+r,j}(1^{\otimes p}\otimes m_q\otimes 1^{\otimes r} \bigotimes 1^{\otimes j}) \\ [8mm]
	& + \sum\limits_{u+v+w=j, \atop u,w\ge 0,\ v\ge 1} (-1)^{\eta_3}\ m_{i,u+1+w}(1^{\otimes i} \bigotimes 1^{\otimes u} \otimes m_v\otimes 1^{\otimes w}),
\end{array}$$
where
$$\epsilon_2 := \sum\limits_{l=1}^{n-1}(i_l+j_l-1)(n-l)+\sum\limits_{l=1}^{n-1}j_l(i-i_1-\cdots-i_l),$$
$$\eta_2 := (i+j-p-q)q+p,\quad\quad \eta_3=i+u+v(j-u-v).$$

\medspace

Let $A=(A,\{m_n\}_{n\ge 1},\{m_{p,q}\}_{p,q\ge 0})$ and $A'=(A',\{m'_n\}_{n\ge 1},\{m'_{p,q}\}_{p,q\ge 0})$ be two $B_\infty$-algebras. A {\it $B_\infty$-morphism} from $A$ to $A'$ is an $A_\infty$-morphism $f=\{f_n\}_{n\ge 1}:A\to A'$ satisfying the following identities:
$$\begin{aligned}
& \sum\limits_{p,q\ge 0} \sum\limits_{i_1+\cdots+i_p=i \atop j_1+\cdots+j_q=j} (-1)^\epsilon\ m'_{p,q}(f_{i_1}\otimes\cdots\otimes f_{i_p} \bigotimes f_{j_1}\otimes\cdots\otimes f_{j_q}) \\
={} & \sum\limits_{n\ge 1} \sum\limits_{i_1+\cdots+i_n=i \atop j_1+\cdots+j_n=j} (-1)^\eta\ f_n\circ(m_{i_1,j_1}\otimes\cdots\otimes m_{i_n,j_n}) \circ \tau_{i_1,\cdots,i_n;j_1,\cdots,j_n},\ \mbox{for all}\ i,j\ge 0,
\end{aligned}$$
where $\epsilon:=\sum\limits_{k=1}^p(i_k-1)(p+q-k)+\sum\limits_{k=1}^{q-1}(j_k-1)(q-k)$ and $\eta:=\sum\limits_{k=1}^{n-1}j_k(i-i_1-\cdots-i_k)+\sum\limits_{k=1}^{n-1}(i_k+j_k-1)(n-k)$.
The {\it composition} of $B_\infty$-morphisms is just their composition as $A_\infty$-morphisms. 
A $B_\infty$-morphism is {\it strict} if it is strict as an $A_\infty$-morphism. 
The {\it identity $B_\infty$-morphism} on a $B_\infty$-algebra $A$ is the identity $A_\infty$-morphism $1_A:A\to A$.
A $B_\infty$-morphism $f:A\to A'$ is a {\it $B_\infty$-isomorphism} if there exists a $B_\infty$-morphism $f': A'\to A$ such that $f'\circ f=1_A$ and $f\circ f'=1_{A'}$.
A $B_\infty$-morphism is a {\it $B_\infty$-quasi-isomorphism} if it is an $A_\infty$-quasi-isomorphism. 

All $B_\infty$-algebras and $B_\infty$-morphisms form the {\it category $B_\infty$ of $B_\infty$-algebras}.  
The category $B_\infty$ of $B_\infty$-algebras is the category of algebras over the $B_\infty$-operad (Ref. \cite[Section 2]{Voronov00}). Since the category $\cC k$ is a symmetric monoidal category with all small limits and colimits, by \cite[Proposition 2.3.5]{Rezk96} or \cite[Proposition 3.3.2]{Fresse09}, $B_\infty$ has all limits and colimits. 
Since the $B_\infty$-operad is an asymmetric operad, by \cite[Theorem 4.1.1 and Example 4.2.5]{Hinich97}, $B_\infty$ admits a model structure for which weak equivalences are just $B_\infty$-quasi-isomorphisms and fibrations are the $B_\infty$-morphisms corresponding to surjective morphisms of complexes. We denote by ${\rm Ho}B_\infty$ the homotopy category of $B_\infty$ with respect to this model structure. 

\medspace

\noindent{\bf Brace $B_\infty$-algebras.} 
The most important $B_\infty$-algebras are brace $B_\infty$-algebras \cite{Gerstenhaber63,GetzlerJones94,GerstenhaberVoronov95}. The typical examples of brace $B_\infty$-algebras include the singular cochains complexes of simplicial sets \cite{Baues81} and the Hochschild cochain complexes of dg algebras, more general, dg categories \cite{GetzlerJones94,Keller03}.
A {\it brace $B_\infty$-algebra} is a $B_\infty$-algebra $(A,\{m_n\}_{n\ge 1},\{m_{p,q}\}_{p,q\ge 0})$ satisfying $m_n=0$ for all $n\ge 3$ and $m_{p,q}=0$ for all $p\ge 2$. 
A brace $B_\infty$-algebra is also called a {\it homotopy $G$-algebra} or a
{\it Gerstenhaber-Voronov algebra} in some literatures.
The underlying $A_\infty$-algebra structure of a brace $B_\infty$-algebra is just a dg algebra. 

\medspace

\noindent{\bf Gerstenhaber algebras.} Gerstenhaber first found that the Hochschild cohomology of an algebra admits a Gerstenhaber algebra structure \cite{Gerstenhaber63}. 

A {\it Gerstenhaber algebra} $(G,-\cup-,[-,-])$ is a graded vector space $G$ equipped with two graded linear maps: a cup product $-\cup-: G\otimes G\to G$ of degree 0, and a Gerstenhaber bracket $[-,-]: G\otimes G\to G$ of degree $-1$, 
satisfying the following three conditions:

(1) $(G,-\cup-)$ is a graded commutative associative algebra.

(2) $(G^{\bullet+1},[-,-])$ is a graded Lie algebra, i.e., satisfies the following two conditions:

(2.1) $[a,b]=-(-1)^{(|a|-1)(|b|-1)}[b,a]$, and

(2.2) {\it Graded Jacobi idenity:} $$(-1)^{(|c|-1)(|a|-1)}[[a,b],c]+ (-1)^{(|a|-1)(|b|-1)}[[b,c],a]+ (-1)^{(|b|-1)(|c|-1)}[[c,a],b]=0.$$

(3) {\it Graded Leibniz rule:} $[a,b\cup c]=[a,b]\cup c + (-1)^{(|a|-1)|b|}\ b\cup[a,c].$

Let $G=(G,-\cup-,[-,-])$ and $G'=(G',-\cup'-,[-,-]')$ be two Gerstenhaber algebras. A {\it morphism} of Gerstenhaber algebras from $G$ to $G'$ is a graded linear map $f:G\to G'$ of degree $0$ that preserves cup product and Gerstenhaber bracket, i.e., $f(a\cup b)=f(a)\cup' f(b)$ and $f([a,b])=[f(a),f(b)]'$ for all $a,b\in G$.
The {\it composition} of two morphisms of Gerstenhaber algebras is just their composition as graded linear maps. 
The {\it identity morphism} on a Gerstenhaber algebra $G$ is just the identity map $1_G$. 

All Gerstenhaber algebras and morphisms of Gerstenhaber algebras form the {\it category $\mathsf{G}$ of Gerstenhaber algebras}.
The category $\mathsf{G}$ of Gerstenhaber algebras is the category of algebras over the Gerstenhaber operad which is isomorphic to the singular homology of the little disc operad (Ref. \cite{Cohen76} or \cite[Section 13.3]{LodayVallette12}). Since the category $\gG k$ is a symmetric monoidal category with all small limits and colimits, by \cite[Proposition 2.3.5]{Rezk96} or \cite[Proposition 3.3.2]{Fresse09}, $\mathsf{G}$ has all limits and colimits. 
By \cite[Example 1.1.5]{Hovey99}, $\mathsf{G}$ admits a model structure for which weak equivalences are isomorphisms and all morphisms are both fibration and cofibration.

\medspace

\noindent{\bf From $B_\infty$-algebras to Gerstenhaber algebras.}
Let $(A,\{m_n\}_{n\ge 1},\{m_{p,q}\}_{p,q\ge 0})$ be a $B_\infty$-algebra. Then its cohomology $HA := H(A,m_1)$ is a Gerstenhaber algebra with cup product $\overline{a}\cup\overline{b} := \overline{m_2(a,b)}$ 
and Gerstenhaber bracket $[\overline{a},\overline{b}]:=(-1)^{|a|}\ \overline{m_{1,1}(a,b)}-(-1)^{(|a|-1)(|b|-1)+|b|}\ \overline{m_{1,1}(b,a)}.$

Let $f=(f_n)_{n\ge1}:(A,\{m_n\}_{n\ge 1},\{m_{p,q}\}_{p,q\ge 0}) \rightarrow (A',\{m'_n\}_{n\ge 1},\{m'_{p,q}\}_{p,q\ge 0})$ be a $B_\infty$-morphism. 
Firstly, since $f$ is an $A_\infty$-morphism, we have $f_1m_1=m_1'f_1$, i.e., $f_1: (A,m_1)\to (A',m_1')$ is a morphism of complexes.
Thus $f_1$ induces a graded linear map $Hf_1:HA\to HA'$.
Secondly, since $f$ is an $A_\infty$-morphism, we have $f_1 m_2=m_2'(f_1\otimes f_1)+f_2(1\otimes m_1+m_1\otimes 1)+m_1' f_2$.
Hence $Hf_1(\overline{a}\cup \overline{b})=Hf_1(\overline{a})\cup Hf_1(\overline{b})$ for all $\overline{a},\overline{b}\in HA$. 
Thirdly, since $f$ is a $B_\infty$-morphism, we have $m'_{1,1}(f_1\otimes f_1)=f_1 m_{1,1}-f_2(m_{0,1}\otimes m_{1,0})\tau+f_2(m_{1,0}\otimes m_{0,1})$, where $\tau : A\otimes A\to A\otimes A, a\otimes b\mapsto (-1)^{|a||b|}b\otimes a$, is the swapping map.
Due to $m_{0,1}=m_{1,0}=1$, we get $m'_{1,1}(f_1\otimes f_1)-f_2(1-\tau)=f_1 m_{1,1}.$
Thus $f_1 m_{1,1}(1+\tau) = m_{1,1}'(f_1\otimes f_1)(1+\tau)$. Hence, $Hf_1([\overline{a},\overline{b}])=[Hf_1(\overline{a}),Hf_1(\overline{b})]$ for all $\overline{a},\overline{b}\in HA$. So far we have shown that $Hf_1$ is a morphism of Gerstenhaber algebras.
Finally, let $f'=(f'_n)_{n\ge1}:(A',\{m'_n\}_{n\ge 1},\{m'_{p,q}\}_{p,q\ge 0}) \rightarrow (A'',\{m''_n\}_{n\ge 1},\{m''_{p,q}\}_{p,q\ge 0})$ be also a $B_\infty$-morphism. Due to $(f'f)_1=f'_1f_1$ and $(1_A)_1=1_A$, $H$ is compatible with compositions and units. 

We have finished the proof of the following result.

\begin{proposition} \label{Prop:B_inf-to-G}
	Taking  cohomology $H : B_\infty \rightarrow \mathsf{G}, A\mapsto HA, (f:A\to A')\mapsto (Hf_1:HA\to HA'),$ is a functor.
\end{proposition}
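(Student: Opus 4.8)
The plan is to verify the three defining properties of a functor in turn: that $H$ is well defined on objects (i.e. that $HA$ really is a Gerstenhaber algebra under the cup product $\overline{a}\cup\overline{b}=\overline{m_2(a,b)}$ and the bracket displayed above), that $H$ is well defined on morphisms (i.e. that $Hf_1$ is a morphism of Gerstenhaber algebras), and that $H$ respects composition and identities. The uniform technique throughout is to extract the low-degree components of the $B_\infty$ structure relations (the Stasheff identities, the associativity relations, and the Leibniz rule) and of the $B_\infty$-morphism identities, and then to pass to cohomology, where every term of the form $m_1(\cdots)$ or $(\cdots)m_1$ vanishes. Thus each chain-level identity that holds only up to an $m_1$-exact correction becomes an exact identity on $HA$.

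For the object part I would argue as follows. Associativity of $\cup$ on $HA$ follows from the Stasheff identity with $n=3$, which expresses the associator $m_2(m_2\otimes 1)-m_2(1\otimes m_2)$ as $\pm m_1 m_3 \pm m_3(m_1\otimes 1\otimes 1 + \cdots)$, hence as an $m_1$-boundary. Graded commutativity of $\cup$ is the more delicate point: at the chain level $m_2$ is not commutative, so one must use the degree-$(1,1)$ component of the Leibniz rule, which relates $m_{1,1}$ to $m_1$ and $m_2$ and shows that $m_2(a,b)-(-1)^{|a||b|}m_2(b,a)$ is $m_1$-exact, with $m_{1,1}$ supplying the homotopy. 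The graded Lie axioms for the bracket and the graded Leibniz rule coupling the bracket to $\cup$ then come out of the appropriate low-degree components of the $B_\infty$ associativity and Leibniz relations governing $m_{1,1}$, checked modulo $m_1$-boundaries. This establishes that both operations descend to $HA$ and satisfy the three Gerstenhaber conditions.

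The morphism part and the functoriality are exactly the computation carried out in the paragraph preceding the statement, which I would organize as three short steps. First, the $A_\infty$-morphism relation at $n=1$ gives $f_1m_1=m_1'f_1$, so $f_1$ is a chain map and $Hf_1$ is defined. Second, the $A_\infty$-morphism relation at $n=2$, namely $f_1m_2=m_2'(f_1\otimes f_1)+f_2(1\otimes m_1+m_1\otimes 1)+m_1'f_2$, shows after passing to cohomology that $Hf_1(\overline{a}\cup\overline{b})=Hf_1(\overline{a})\cup Hf_1(\overline{b})$. Third, the degree-$(1,1)$ component of the $B_\infty$-morphism identity yields $f_1m_{1,1}(1+\tau)=m_{1,1}'(f_1\otimes f_1)(1+\tau)$ modulo $m_1$-exact terms, which is precisely what is needed for $Hf_1$ to preserve the bracket. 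Finally, the composition rule for $A_\infty$-morphisms gives $(f'\circ f)_1=f_1'f_1$ and $(1_A)_1=1_A$, so $H$ is compatible with composition and units; together with the previous steps this proves $H$ is a functor.

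The main obstacle is the object-level verification, not the morphism-level one. The preceding paragraph states that $HA$ is a Gerstenhaber algebra, and making this fully precise---particularly the graded commutativity of $\cup$ and the graded Jacobi identity for the bracket---requires careful sign bookkeeping through the full associativity and Leibniz relations, isolating exactly the partition terms with $p=q=1$ and showing that all remaining terms, carrying the permutation signs from the maps $\tau_{i_1,\dots;j_1,\dots}$, are $m_1$-boundaries. Once those identities are in hand, the morphism and functoriality steps are routine, since they reduce to reading off the $n=1,2$ and $(i,j)=(1,1)$ components of relations already displayed in the excerpt and discarding the exact terms.
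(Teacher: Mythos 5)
Your proposal is correct, and for everything the paper actually proves it follows the identical route: the $n=1$ and $n=2$ components of the $A_\infty$-morphism identities show that $f_1$ is a chain map and that $Hf_1$ preserves $\cup$; the $(i,j)=(1,1)$ component of the $B_\infty$-morphism identity gives $m'_{1,1}(f_1\otimes f_1)-f_2(1-\tau)=f_1 m_{1,1}$, hence $f_1 m_{1,1}(1+\tau)=m'_{1,1}(f_1\otimes f_1)(1+\tau)$ and preservation of the bracket; and $(f'\circ f)_1=f'_1f_1$, $(1_A)_1=1_A$ give compatibility with composition and units. The one divergence is scope: what you single out as the main obstacle --- verifying that $HA$ is a Gerstenhaber algebra --- is not part of the paper's proof at all; the paper asserts this as a known classical fact (in the spirit of \cite{Gerstenhaber63} and the $B_\infty$ literature) and proves only the morphism-level and functoriality statements. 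Your sketch of that verification (associativity of $\cup$ from the Stasheff identity at $n=3$; graded commutativity from the $(1,1)$ component of the Leibniz rule, with $m_{1,1}$ as the commuting homotopy; Jacobi and the graded Leibniz rule from the analogous low-degree components) is the standard argument and is sound, so it costs you extra sign bookkeeping but not correctness. One small refinement: your bracket-preservation step needs no ``modulo $m_1$-exact terms'' hedge, since the $(1,1)$ morphism identity is exact at the chain level and composing with $(1+\tau)$ kills the $f_2$ terms outright, because $(1-\tau)(1+\tau)=0$; this is exactly how the paper argues.
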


\subsection{Hochschild cochain complex of a dg category} In this subsection, we recall the Hochschild cochain complex of a dg category and the brace $B_\infty$-structure on it. 

\medspace

\noindent{\bf Hochschild cochain complex $C(\aA)$.} 
Let $\aA$ be a small dg category. 
The {\it Hochschild cochain complex} $C(\aA)$ of $\aA$ is the graded vector space 
\[
C(\aA)=\prod_{n\ge0}C^n(\aA)=\prod_{n\ge0}\prod_{A_0,\cdots,A_n\in\aA}
\mathcal{H}\mathrm{om}_k(s\aA(A_{n-1},A_n)\otimes \cdots\otimes s\aA(A_0,A_1),\aA(A_0,A_n)),
\]
where $s$ is the shift functor on $\cC k$,
equipped with differential $\delta_{C(\aA)}:=(\delta_{C(\aA)})_\nin+(\delta_{C(\aA)})_\ex$, where the {\it internal differential} $(\delta_{C(\aA)})_\nin$ is defined as follows: For any $\phi\in C^n(\aA)$, its image $(\delta_{C(\aA)})_\nin\phi\in C^n(\aA)$ is given by
\[
((\delta_{C(\aA)})_\nin\phi)(s{a}_{1,n})\coloneqq d_\aA\phi(s{a}_{1,n})+\sum_{i=1}^n(-1)^{|\phi|+|s{a}_{1,i-1}|}\phi(s{a}_{1,i-1}\otimes s{d_\aA a}_i\otimes s{a}_{i+1,n}),
\]
where $s$ denotes the graded linear map $X\to sX, (x\in X^n)\mapsto (x\in (sX)^{n-1})$, of degree $-1$, for all $X\in\cC k$ and $n\in \mathbb{Z}$, $sa_{1,n}:= sa_1\otimes\cdots\otimes sa_n$, and $|sa_{1,i-1}|:= \sum_{j=1}^{i-1}|sa_j|$, and
the {\it external differential} $(\delta_{C(\aA)})_\ex$ is defined as follows: For any $\phi\in C^n(\aA)$, its image $(\delta_{C(\aA)})_\ex\phi\in C^{n+1}(\aA)$ is given by
\[
\begin{aligned}
	((\delta_{C(\aA)})_\ex\phi)(s{a}_{1,n+1}):= & -(-1)^{|s{a}_1||\phi|}a_1\phi(s{a}_{2,n+1})+(-1)^{|\phi|+|s{a}_{1,n}|}\phi(s{a}_{1,n})a_{n+1} \\
	& -\sum_{i=1}^{n}(-1)^{|\phi|+|s{a}_{1,i}|}\phi(s{a}_{1,i-1}\otimes s(a_ia_{i+1})\otimes s{a}_{i+2,n+1}).
\end{aligned}
\]

The {\it Hochschild cohomology} $\HH^\bullet(\aA)$ of $\aA$ is the cohomology $H^\bullet(C(\aA))$ of $C(\aA)$.

From the two-sided bar resolution of the identity dg $\aA$-bimodule $I_\aA$, we know that the Hochschild cochain complex $C(\aA)$ of a small dg category $\aA$ is isomorphic to the derived Hom complex $\RHom_{\aA^\op\otimes\aA}(I_\aA,I_\aA)$ in the derived category $\dD k$ of $k$.	

\medspace

\noindent{\bf The brace $B_\infty$-structure on Hochschild cochain complex.}
Let $\aA$ be a small dg category. There are two operations on the Hochschild cochain complex $C(\aA)$ of $\aA$ --- cup product and brace operation.
The {\it cup product} $-\cup- : C(\aA)\otimes C(\aA) \rightarrow C(\aA), \phi\otimes\psi\mapsto\phi\cup\psi$, is defined by
$$(\phi\cup\psi)(sa_{1,p+q}):= (-1)^{|sa_{1,p}||\psi|}\phi(sa_{1,p})\psi(sa_{p+1,p+q})$$
for all $\phi\in C^p(\aA)$ and $\psi\in C^q(\aA)$.
The {\it brace operation} $-\{-,\cdots,-\}:C(\aA)\otimes C(\aA)^{\otimes k} \rightarrow C(\aA), \phi\otimes\psi_1\otimes\cdots\otimes\psi_k\mapsto\phi\{\psi_1,\cdots,\psi_k\}$, is defined by
$$\phi\{\psi_1,\cdots,\psi_k\}:=\sum_{\substack{i_0+\cdots+i_{k}=n-k,\\ i_0,\dots,i_k\ge 0}}\phi\left(1^{\otimes i_0}\otimes s\psi_1\otimes 1^{\otimes i_1}\otimes\cdots\otimes s\psi_k\otimes 1^{\otimes i_k}\right)$$
for all $k\ge 1$, $\phi\in C^n(\aA)$ and $\psi_i\in C^{n_i}(\aA), 1\le i\le k$.
By convention, the brace operation $\{\}:=1_{C(\aA)}$ when $k=0$.
It is well known that the Hochschild cochain complex $C(\aA)$ of $\aA$ equipped with these two operations is a brace $B_\infty$-algebra (Ref. \cite[5.2, Page 51]{GetzlerJones94}). 

A very useful result observed by Keller in \cite[4.3, Page 8]{Keller03} is the following: For any fully faithful dg functor $F:\aA \rightarrow \bB$, the restriction along $F$ yields a $B_\infty$-morphism $F^*:C(\bB) \rightarrow C(\aA)$.

\subsection{Actions of Hochschild cochain complexes on derived Hom complexes} In this subsection, we will observe all kinds of actions of the Hochschild cochain complex of a dg category on the derived Hom complexes of dg bimodules.

\medspace

\noindent{\bf Derived Hom complex $C(X,X')$.}
Let $\aA$ and $\bB$ be small dg categories, and $X$ and $X'$ dg $\aA$-$\bB$-bimodules. The {\it derived Hom complex} $C(X,X')$ from $X$ to $X'$ is the graded vector space
$C(X,X')=\prod_{n\ge0}C^n(X,X')=\prod_{n\ge0}\prod_{\substack{A_0,\cdots,A_l\in\aA, \\ B_0,\cdots,B_m\in\bB, \\ l+m=n}}\mathcal{H}\mathrm{om}_k(s\aA(A_{l-1},A_l)\otimes\cdots\otimes s\aA(A_0,A_1)\otimes X(B_m,A_0)\otimes s\bB(B_{m-1},B_m)\otimes\cdots\otimes s\bB(B_0,B_1),X'(B_0,A_l))$
equipped with differential $\delta_{C(X,X')}:=(\delta_{C(X,X')})_\nin+(\delta_{C(X,X')})_\ex$, where
the {\it internal differential} $(\delta_{C(X,X')})_\nin$ is defined as follows: For any $\phi\in C^n(X,X')$, its image $(\delta_{C(X,X')})_\nin\phi\in C^n(X,X')$ is given by 
\[
\begin{aligned}
    ((\delta_{C(X,X')})_\nin\phi)& (sa_{1,l} \otimes x\otimes sb_{1,m})\coloneqq d_{X'}\phi(sa_{1,l}\otimes x\otimes sb_{1,m}) \\
	& + \sum_{i=1}^l(-1)^{|\phi|+|sa_{1,i-1}|}\phi(sa_{1,i-1}\otimes sd_\aA a_i\otimes sa_{i+1,l}\otimes x\otimes sb_{1,m}) \\
	& -(-1)^{|\phi|+|sa_{1,l}|}\phi(sa_{1,l}\otimes d_X x\otimes sb_{1,m}) \\
	& +\sum_{j=1}^m(-1)^{|\phi|+|sa_{1,l}|+|x|+|sb_{1,j-1}|}\phi(sa_{1,l}\otimes x\otimes sb_{1,j-1}\otimes sd_\bB b_j\otimes sb_{j+1,m}),
\end{aligned}
\]
where $l+m=n$ and $l,m\ge 0$, and the {\it external differential} $(\delta_{C(X,X')})_\ex$ is defined as follows: For any $\phi\in C^n(X,X')$, its image $(\delta_{C(X,X')})_\ex\phi\in C^{n+1}(X,X')$ is given by
\[
\begin{aligned}
	((\delta_{C(X,X')})_\ex\phi) &(sa_{1,l} \otimes x\otimes sb_{1,m}) \coloneqq -(-1)^{|\phi||sa_1|}a_1\phi(sa_{2,l}\otimes x\otimes sb_{1,m}) \\
	& -\sum_{i=1}^{l-1}(-1)^{|\phi|+|sa_{1,i}|}\phi(sa_{1,i-1}\otimes s(a_ia_{i+1})\otimes sa_{i+2,l}\otimes x\otimes sb_{1,m}) \\
	& +(-1)^{|\phi|+|sa_{1,l}|}\phi(sa_{1,l-1}\otimes a_lx\otimes b_{1,m}) \\
	& -(-1)^{|\phi|+|sa_{1,l}|+|x|}\phi(sa_{1,l}\otimes xb_1\otimes sb_{2,m}) \\
	& -\sum_{j=1}^{m-1}(-1)^{|\phi|+|sa_{1,l}|+|x|+|sb_{1,j}|}\phi(sa_{1,l}\otimes x\otimes sb_{1,j-1}\otimes s(b_jb_{j+1})\otimes sb_{j+2,m}) \\
	& +(-1)^{|\phi|+|sa_{1,l}|+|x|+|sb_{1,m-1}|}\phi(sa_{1,l}\otimes x\otimes sb_{1,m-1})b_m,
\end{aligned}
\]
where $l+m=n+1$ and $l,m\ge 0$.

From the two-sided bar resolution of the dg $\aA$-$\bB$-bimodule $X$, we know that the derived Hom complex $C(X,X')$ is isomorphic to $\RHom_{\aA^\op\otimes\bB}(X,X')$ in $\dD k$.	

For simplicity, we denote $C(X,X)$ by $C(X)$.

\medspace

\noindent{\bf Left action $\alpha_X:C(\aA)\to C(X)$ and right action $\beta_X:C(\bB) \to C(X)$.} Let $\aA$ and $\bB$ be small dg categories, and $X$ a dg $\aA$-$\bB$-bimodule. The morphism of complexes $\alpha_X:C(\aA)\rightarrow C(X)$ is defined as follows: For any $\phi\in C^n(\aA)$, its image $\alpha_X\phi\in C^n(X)$ has the only nontrivial component $s\aA(A_{n-1},A_n)\otimes\cdots\otimes s\aA(A_0,A_1)\otimes X(B_0,A_0)\to X(B_0,A_n), sa_{1,n}\otimes x\mapsto\phi(sa_{1,n})x$. 
The morphism of complexes $\beta_X:C(\bB) \rightarrow C(X)$ is defined as follows: For any $\phi\in C^n(\bB)$, its image $\beta_X\phi\in C^n(X)$ has the only nontrivial component $X(B_n,A_0)\otimes s\bB(B_{n-1},B_n)\otimes\cdots\otimes s\bB(B_0,B_1)\to X(B_0,A_0), x\otimes sb_{1,n} \mapsto (-1)^{|\phi||x|} x\phi(sb_{1,n})$.

A very useful result observed by Keller in \cite[4.4]{Keller03} is the following: If a dg $\aA$-$\bB$-bimodule $X$ is cofibrant over $\bB$ (resp. $\aA$) and the derived tensor product functor $-\otimes_\aA^{\bf L}X:\per\aA \rightarrow \dD\bB$ (resp. $X\otimes_\bB^{\bf L}-:\per\bB^\op \rightarrow \dD\aA^\op$)
is fully faithful then $\alpha_X$ (resp. $\beta_X$) is a quasi-isomorphism. In particular, if $X$ is the identity $\aA$-bimodule $I_\aA$ then both $\alpha_{I_\aA}$ and $\beta_{I_\aA}$ are quasi-isomorphisms.

\medspace

\noindent{\bf Quasi-isomorphism $\gamma_{\aA}:C(I_\aA) \to C(\aA)$.} Let $\aA$ be a small dg category. The morphism of complexes $\gamma_\aA:C(I_\aA) \rightarrow C(\aA)$
is defined as follows: For any $\phi\in C^n(I_\aA)$, its image $\gamma_\aA\phi\in C^n(\aA)$ is given by $(\gamma_\aA\phi)(sa_{1,n}):= \sum_{i=0}^n\phi(sa_{1,i}\otimes 1\otimes sa_{i+1,n})$.

It is easy to see that $\gamma_\aA\alpha_{I_\aA}=1_{C(\aA)}=\gamma_\aA\beta_{I_\aA}$. Since $\alpha_{I_\aA}$ (or $\beta_{I_\aA}$) is a quasi-isomorphism, $\gamma_\aA$ is also a quasi-isomorphism. Furthermore, $\alpha_{I_\aA}=\beta_{I_\aA}$ in $\dD k$.

\medspace

\noindent{\bf Shifted derived Hom complex $D(X,X')$.}
Let $\aA$ and $\bB$ be small dg categories, and $X$ and $X'$ dg $\aA$-$\bB$-bimodules. The {\it shifted derived Hom complex} $D(X,X')$ from $X$ to $X'$ is the graded vector space
$D(X,X') =\prod_{n\ge0}D^n(X,X') =\prod_{n\ge0}\prod_{\substack{A_0,\cdots,A_l\in\aA \\ B_0,\cdots,B_m\in\bB \\ l+m=n}}\mathcal{H}\mathrm{om}_k(s\aA(A_{l-1},A_l)\otimes\cdots\otimes s\aA(A_0,A_1)\otimes sX(B_m,A_0) \otimes s\bB(B_{m-1},B_m)\otimes\cdots\otimes s\bB(B_0,B_1),X'(B_0,A_l))$
equipped with differential $\delta_{D(X,X')}:=(\delta_{D(X,X')})_\nin+(\delta_{D(X,X')})_\ex$, where
the {\it internal differential} $(\delta_{D(X,X')})_\nin$ is defined as follows: For any $\phi\in D^n(X,X')$, its image $(\delta_{D(X,X')})_\nin\phi\in D^n(X,X')$ is given by
\[
\begin{aligned}
	((\delta_{D(X,X')})_\nin\phi)& (sa_{1,l} \otimes sx\otimes sb_{1,m})\coloneqq d_{X'}\phi(sa_{1,l}\otimes sx\otimes sb_{1,m}) \\
	& + \sum_{i=1}^l(-1)^{|\phi|+|sa_{1,i-1}|}\phi(sa_{1,i-1}\otimes sd_\aA a_i\otimes sa_{i+1,l}\otimes sx\otimes sb_{1,m}) \\
	& +(-1)^{|\phi|+|sa_{1,l}|}\phi(sa_{1,l}\otimes sd_X x\otimes sb_{1,m}) \\
	& +\sum_{j=1}^m(-1)^{|\phi|+|sa_{1,l}|+|sx|+|sb_{1,j-1}|}\phi(sa_{1,l}\otimes sx\otimes sb_{1,j-1}\otimes sd_\bB b_j\otimes sb_{j+1,m}),
\end{aligned}
\]
where $l+m=n$ and $l,m\ge 0$, and the {\it external differential} $(\delta_{D(X,X')})_\ex$ is defined as follows: For any $\phi\in D^n(X,X')$, its image $(\delta_{D(X,X')})_\ex\phi\in D^{n+1}(X,X')$ is given by
\[
\begin{aligned}
	((\delta_{D(X,X')})_\ex\phi)& (sa_{1,l} \otimes sx\otimes sb_{1,m}) \coloneqq -(-1)^{|\phi||sa_1|}a_1\phi(sa_{2,l}\otimes sx\otimes sb_{1,m}) \\
	& -\sum_{i=1}^{l-1}(-1)^{|\phi|+|sa_{1,i}|}\phi(sa_{1,i-1}\otimes s(a_ia_{i+1})\otimes sa_{i+2,l}\otimes sx\otimes sb_{1,m}) \\
	& -(-1)^{|\phi|+|sa_{1,l}|}\phi(sa_{1,l-1}\otimes s(a_lx)\otimes b_{1,m}) \\
	& -(-1)^{|\phi|+|sa_{1,l}|+|sx|}\phi(sa_{1,l}\otimes s(xb_1)\otimes sb_{2,m}) \\
	& -\sum_{j=1}^{m-1}(-1)^{|\phi|+|sa_{1,l}|+|sx|+|sb_{1,j}|}\phi(sa_{1,l}\otimes sx\otimes sb_{1,j-1}\otimes s(b_jb_{j+1})\otimes sb_{j+2,m}) \\
	& +(-1)^{|\phi|+|sa_{1,l}|+|sx|+|sb_{1,m-1}|}\phi(sa_{1,l}\otimes sx\otimes sb_{1,m-1})b_m,
\end{aligned}
\]
where $l+m=n+1$ and $l,m\ge 0$.

For simplicity, we denote $D(X,X)$ by $D(X)$.

\medspace

\noindent{\bf Shift morphism $\eta_{X,X'}:C(X,X')\to D(X,X')$.} 
Let $\aA$ and $\bB$ be small dg categories, and $X$ and $X'$ dg $\aA$-$\bB$-bimodules. The graded linear map
$\eta_{X,X'}: C(X,X') \rightarrow D(X,X')$ of degree 1 
is defined as follows: For any $\phi\in C^n(X,X')$, its image  $\eta_{X,X'}\phi\in D^n(X,X')$ is given by
$(\eta_{X,X'}\phi)(sa_{1,l}\otimes sx\otimes sb_{1,m}) := (-1)^{|\phi|+|sa_{1,l}|}\phi(sa_{1,l}\otimes x\otimes sb_{1,m})$.
It is easy to see that $\eta_{X,X'}$ is a 1-cocycle in $\hHom_k(C(X,X'),D(X,X'))$, and induces an isomorphism $D(X,X')\cong s^{-1}C(X,X')$ in $\cC k$. 

For simplicity, we denote $\eta_{X,X}:C(X,X)\to D(X,X)$ by $\eta_X:C(X)\to D(X)$.

\medspace

\noindent{\bf Left action $\tilde{\alpha}_X:C(\aA) \rightarrow D(X)$ and right action $\tilde{\beta}_X:C(\bB) \rightarrow D(X)$.} 	
The closed morphism $\alpha_X:C(\aA) \rightarrow C(X)$ (resp. $\beta_X:C(\bB) \rightarrow C(X)$) of degree 0 induces a closed morphism
$\tilde{\alpha}_X:=\eta_{X}\alpha_X:C(\aA) \rightarrow D(X)$ (resp. $\tilde{\beta}_X:=\eta_{X}\beta_X:C(\bB) \rightarrow D(X)$) of degree 1
with the only nontrivial component given by $(\tilde{\alpha}_X\phi)(sa_{1,n}\otimes sx)=(-1)^{|\phi|+|sa_{1,n}|}\phi(s{a}_{1,n})x$ (resp. 
$(\tilde{\beta}_X\phi)(sx\otimes sb_{1,n})=(-1)^{|\phi||sx|}x\phi(s{b}_{1,n})$).

\medspace

\noindent{\bf Derived Hom complex $C(X_1|\cdots|X_m,X')$.} 
Let $\aA_0,\cdots,\aA_m$ be small dg categories, $X_i$ a dg $\aA_{i-1}$-$\aA_i$-bimodule for all $1\le i\le m$, and $X'$ a dg $\aA_0$-$\aA_m$-bimodule. The {\it derived Hom complex} $C(X_1|\cdots|X_m,X')$ is the graded vector space
$\prod_{n\ge 0}C^n(X_1|\cdots|X_m,X') := \prod_{n\ge0}\prod_{\substack{A_{0,0},\cdots,A_{0,l_0}\in\aA_0, \cdots,
		A_{m,0},\cdots,A_{m,l_m}\in\aA_m, \\ l_0+\cdots+l_m=n}}
\mathcal{H}\mathrm{om}_k(s\aA_0(A_{0,l_0-1},A_{0,l_0}) \otimes\cdots\otimes s\aA_0(A_{0,0},A_{0,1}) \otimes \linebreak X_1(A_{1,l_1},A_{0,0})\otimes\cdots\otimes s\aA_{m-1}(A_{m-1,l_{m-1}-1},A_{m-1,l_{m-1}}) \otimes\cdots\otimes s\aA_{m-1}(A_{m-1,0},A_{m-1,1}) \otimes \linebreak X_m(A_{m,l_m}, A_{m-1,0})\otimes s\aA_m(A_{m,l_m-1},A_{m,l_m})  \otimes\cdots\otimes s\aA_m(A_{m,0},A_{m,1}), X'(A_{m,0},A_{0,l_0}))$
with the differential $\delta_{C(X_1|\cdots|X_m,X')}$ defined similar to $\delta_{C(X,X')}$.

Clearly, $C(X_1|\cdots|X_m,X')\cong \RHom_{\aA_0^\op\otimes\aA_m}(X_1\otimes^{\bL}_{\aA_1}\cdots \otimes^{\bL}_{\aA_{m-1}}X_m,X')$ in $\dD k$.

For simplicity, we denote $C(X_1|\cdots|X_m,X_1\otimes_{\aA_1}\cdots\otimes_{\aA_{m-1}}X_m)$ by $C(X_1|\cdots|X_m)$.

\medspace

\noindent{\bf Shifted derived Hom complex $D(X_1|\cdots|X_m,X')$.} 
Let $\aA_0,\cdots,\aA_m$ be small dg categories, $X_i$ a dg $\aA_{i-1}$-$\aA_i$-bimodule for all $1\le i\le m$, and $X'$ a dg $\aA_0$-$\aA_m$-bimodule. The {\it shifted derived Hom complex} $D(X_1|\cdots|X_m,X')$ 
is the graded vector space \linebreak 
$\prod_{n\ge 0}D^n(X_1|\cdots|X_m,X') := \prod_{n\ge0}\prod_{\substack{A_{0,0},\cdots,A_{0,l_0}\in\aA_0,\cdots,A_{m,0},\cdots,A_{m,l_m}\in\aA_m, \\ l_0+\cdots+l_m=n}}
\mathcal{H}\mathrm{om}_k(s\aA_0(A_{0,l_0-1},A_{0,l_0}) \linebreak \otimes\cdots\otimes s\aA_0(A_{0,0},A_{0,1}) \otimes sX_1(A_{1,l_1},A_{0,0})\otimes\cdots \otimes s\aA_{m-1}(A_{m-1,l_{m-1}-1},A_{m-1,l_{m-1}}) \otimes\cdots\linebreak \otimes s\aA_{m-1}(A_{m-1,0},A_{m-1,1}) \otimes sX_m(A_{m,l_m},A_{m-1,0})\otimes s\aA_m(A_{m,l_m-1},A_{m,l_m}) \otimes\cdots\otimes \linebreak s\aA_m(A_{m,0}, A_{m,1}),X'(A_{m,0},A_{0,l_0}))$
with the differential $\delta_{D(X_1|\cdots|X_m,X')}$ defined similar to  $\delta_{D(X,X')}$.

For simplicity, we denote $D(X_1|\cdots|X_m,X_1\otimes_{\aA_1}\cdots\otimes_{\aA_{m-1}}X_m)$ by $D(X_1|\cdots|X_m)$.

\medspace

\noindent{\bf Shift morphism $\eta_{X_1|\cdots|X_m,X'}:C(X_1|\cdots|X_m,X')\to D(X_1|\cdots|X_m,X')$.} 
Let $\aA_0,\cdots, \linebreak \aA_m$ be small dg categories, $X_i$ a dg $\aA_{i-1}$-$\aA_i$-bimodule for all $1\le i\le m$, and $X'$ a dg $\aA_0$-$\aA_m$-bimodule.
The graded linear map 
$\eta_{X_1|\cdots|X_m,X'}:C(X_1|\cdots|X_m,X') \rightarrow D(X_1|\cdots|X_m, X')$ of degree $m$ is defined as follows: For any $\phi\in C^n(X_1|\cdots|X_m,X')$, its image  $\eta_{X_1|\cdots|X_m,X'}\phi\in D^n(X_1|\cdots|X_m,X')$ is given by
$(\eta_{X_1|\cdots|X_m,X'}\phi)(s(a_0)_{1,l_0}\otimes sx_1\otimes s(a_1)_{1,l_1}\otimes sx_2\otimes\cdots\otimes sx_m\otimes s(a_m)_{1,l_m}) := (-1)^{\sigma} \phi(s(a_0)_{1,l_0}\otimes x_1\otimes s(a_1)_{1,l_1}\otimes x_2\otimes\cdots\otimes x_m\otimes s(a_m)_{1,l_m})$, where $\sigma:=m|\phi|+m|s(a_0)_{1,l_0}|+(m-1)|sx_1|+(m-1)|s(a_1)_{1,l_1}|+(m-2)|sx_2|+\cdots+|sx_{m-1}|+|s(a_{m-1})_{1,l_{m-1}}|$.
It is easy to see that $\eta_{X_1|\cdots|X_m,X'}$ is an $m$-cocycle in $\hHom_k(C(X_1|\cdots|X_m,X'),D(X_1|\cdots|X_m,X'))$, and it induces an isomorphism $D(X_1|\cdots|X_m,X')\cong s^{-m}C(X_1|\cdots|X_m,X')$ in $\cC k$. 	

For simplicity, we denote $\eta_{X_1|\cdots|X_m,X_1\otimes_{\aA_1}\cdots\otimes_{\aA_{m-1}}X_m}$ by $\eta_{X_1|\cdots|X_m}$.

\medspace

\noindent{\bf Left actions $\alpha_{X_1|\cdots|X_m,X'\otimes_{\aA_{m-1}} X_m}$ and $\tilde{\alpha}_{X_1|\cdots|X_m,X'\otimes_{\aA_{m-1}} X_m}$.} 
Let $\aA_0,\cdots,\aA_m$ be small dg categories, $X_i$ a dg $\aA_{i-1}$-$\aA_i$-bimodule for all $1\le i\le m$, and $X'$ a dg $\aA_0$-$\aA_{m-1}$-bimodule. The morphism of complexes 
$$\alpha_{X_1|\cdots|X_m,X'\otimes_{\aA_{m-1}} X_m}: C(X_1|\cdots|X_{m-1},X') \rightarrow C(X_1|\cdots|X_{m-1}|X_m,X'\otimes_{\aA_{m-1}} X_m)$$
is defined as follows: For any $\phi\in C^n(X_1|\cdots|X_{m-1},X')$, its image $\alpha_{X_1|\cdots|X_m,X'\otimes_{\aA_{m-1}} X_m}\phi \in C^n(X_1|\cdots|X_m,X'\otimes_{\aA_{m-1}} X_m)$ has the only nontrivial component $s\aA_0(A_{0,l_0-1},A_{0,l_0})\otimes\cdots\otimes s\aA_0(A_{0,0},A_{0,1})\otimes X_1(A_{1,l_1},A_{0,0})\otimes\cdots\otimes s\aA_{m-1}(A_{m-1,l_{m-1}-1}, A_{m-1,l_{m-1}})\otimes\cdots\otimes s\aA_{m-1}(A_{m-1,0}, \linebreak A_{m-1,1})\otimes X_m(A_{m,0},A_{m-1,0})\to X'(A_{m-1,0},A_{0,l_0})\otimes X_m(A_{m,0}, A_{m-1,0}), s(a_0)_{1,l_0}\otimes x_1\otimes\cdots \otimes s(a_{m-1})_{1,l_{m-1}}\otimes x_m \mapsto \phi(s(a_0)_{1,l_0}\otimes x_1\otimes\cdots\otimes s(a_{m-1})_{1,l_{m-1}})\otimes x_m$.

The morphism of complexes $\alpha_{X_1|\cdots|X_m,X'\otimes_{\aA_{m-1}} X_m}$ induces a closed graded linear map 
$$\tilde{\alpha}_{X_1|\cdots|X_m,X'\otimes_{\aA_{m-1}} X_m}:D(X_1|\cdots|X_{m-1},X') \rightarrow D(X_1|\cdots|X_{m-1}|X_m,X'\otimes_{\aA_{m-1}} X_m)$$
of degree 1 such that the following diagram commutes.
\[
\begin{tikzcd}[column sep=110]
	C(X_1|\cdots|X_{m-1},X') \arrow[r, "\alpha_{X_1|\cdots|X_m,X'\otimes_{\aA_{m-1}} X_m}"] \arrow[d, "\eta_{X_1|\cdots|X_{m-1},X'}"'] & C(X_1|\cdots|X_{m-1}|X_m,X'\otimes_{\aA_{m-1}} X_m) \arrow[d, "\eta_{X_1|\cdots|X_m,X'\otimes_{\aA_{m-1}} X_m}"] \\
	D(X_1|\cdots|X_{m-1},X') \arrow[r, "\tilde{\alpha}_{X_1|\cdots|X_m,X'\otimes_{\aA_{m-1}} X_m}"] & D(X_1|\cdots|X_{m-1}|X_m,X'\otimes_{\aA_{m-1}} X_m) 
\end{tikzcd}
\]

For simplicity, we denote $\alpha_{X_1|\cdots|X_m,X_1\otimes_{\aA_1}\cdots\otimes_{\aA_{m-1}}X_m}$ and $\tilde{\alpha}_{X_1|\cdots|X_m,X_1\otimes_{\aA_1}\cdots\otimes_{\aA_{m-1}}X_m}$ by $\alpha_{X_1|\cdots|X_m}$ and $\tilde{\alpha}_{X_1|\cdots|X_m}$ respectively.

\medspace

\noindent{\bf Right actions $\beta_{X_1|\cdots|X_m,X_1\otimes_{\aA_1}X'}$ and $\tilde{\beta}_{X_1|\cdots|X_m,X_1\otimes_{\aA_1}X'}$.} 
Let $\aA_0,\cdots,\aA_m$ be small dg categories, $X_i$ a dg $\aA_{i-1}$-$\aA_i$-bimodule for all $1\le i\le m$, and $X'$ a dg $\aA_1$-$\aA_m$-bimodule. The morphism of complexes 
$$\beta_{X_1|\cdots|X_m,X_1\otimes_{\aA_1}X'}: C(X_2|\cdots|X_m,X') \rightarrow C(X_1|X_2|\cdots|X_m,X_1\otimes_{\aA_1}X')$$ 
is defined as follows: 
For any $\phi\in C^n(X_2|\cdots|X_m,X')$, its image $\beta_{X_1|\cdots|X_m,X_1\otimes_{\aA_1}X'}\phi\in C^n(X_1|\cdots |X_m,X_1\otimes_{\aA_1}X')$ has only nontrivial component 
$X_1(A_{1,l_1},A_{0,0})\otimes s\aA_1(A_{1,l_1-1},\linebreak A_{1,l_1})\otimes\cdots\otimes s\aA_1(A_{1,0},A_{1,1})\otimes\cdots\otimes X_m(A_{m,l_m},A_{m-1,0})\otimes s\aA_m(A_{m,l_m-1},A_{m,l_m}) \otimes \cdots\otimes s\aA_m(A_{m,0},A_{m,1}) 
\to X_1(A_{1,l_1},A_{0,0})\otimes X'(A_{m,0},A_{1,l_1}), 
x_1\otimes s(a_1)_{1,l_1}\otimes \cdots\otimes x_m \otimes s(a_m)_{1,l_m} 
\mapsto (-1)^{|\phi||x_1|} x_1\otimes\phi(s(a_1)_{1,l_1}\otimes \cdots\otimes x_m \otimes s(a_m)_{1,l_m})$.
It induces a closed morphism
\[
\tilde{\beta}_{X_1|\cdots|X_m,X_1\otimes_{\aA_1}X'}:D(X_2|\cdots|X_m,X') \rightarrow D(X_1|X_2|\cdots|X_m,X_1\otimes_{\aA_1}X')
\]
of degree 1 such that the following diagram commutes.
\[
\begin{tikzcd}[column sep=8pc]
	C(X_2|\cdots|X_m,X') \arrow[r, "\beta_{X_1|\cdots|X_m,X_1\otimes_{\aA_1}X'}"] \arrow[d, "\eta_{X_2|\cdots|X_m,X'}"'] & C(X_1|X_2|\cdots|X_m,X_1\otimes_{\aA_1}X') \arrow[d, "\eta_{X_1|\cdots|X_m,X_1\otimes_{\aA_1}X'}"] \\
	D(X_2|\cdots|X_m,X') \arrow[r, "\tilde{\beta}_{X_1|\cdots|X_m,X_1\otimes_{\aA_1}X'}"] & D(X_1|X_2|\cdots|X_m,X_1\otimes_{\aA_1}X') 
\end{tikzcd}
\]

For simplicity, we denote $\beta_{X_1|\cdots|X_m,X_1\otimes_{\aA_1}\cdots\otimes_{\aA_{m-1}}X_m}$ and $\tilde{\beta}_{X_1|\cdots|X_m,X_1\otimes_{\aA_1}\cdots\otimes_{\aA_{m-1}}X_m}$ by $\beta_{X_1|\cdots|X_m}$ and $\tilde{\beta}_{X_1|\cdots|X_m}$ respectively.

\medspace

\noindent{\bf Derived morphisms $\mu_{X_1|\cdots|X_m,X'}^i$ and $\tilde{\mu}_{X_1|\cdots|X_m,X'}^i$.} Let $\aA_0,\cdots,\aA_m$ be small dg categories, $X_i$ a dg $\aA_{i-1}$-$\aA_i$-bimodule for all $1\le i\le m$, and $X'$ a dg $\aA_0$-$\aA_m$-bimodule. For any $1\le i\le m-1$, the morphism of complexes 
$$\mu_{X_1|\cdots|X_m,X'}^i: C(X_1|\cdots|X_{i-1}|(X_{i}\otimes_{\aA_{i}}X_{i+1})|X_{i+2}|\cdots|X_m,X') \rightarrow C(X_1|\cdots|X_m,X')$$ 
is defined as follows: For any $\phi\in C(X_1|\cdots|X_{i-1}|(X_{i}\otimes_{\aA_{i}}X_{i+1})|X_{i+2}|
\cdots|X_m,X')$, its image $\mu_{X_1|\cdots|X_m,X'}^i\phi\in C(X_1|\cdots|X_m,X')$ has the only nontrivial component $s\aA_0(A_{0,l_0-1}, \linebreak A_{0,l_0})\otimes\cdots\otimes s\aA_0(A_{0,0},A_{0,1})\otimes X_1(A_{1,l_1}, A_{0,0})\otimes\cdots\otimes s\aA_{i-1}(A_{i-1,0}, A_{i-1,1})\otimes X_i(A_{i,0},A_{i-1,0})\otimes X_{i+1}(A_{i+1,l_{i+1}},A_{i,0})\otimes s\aA_{i+1}(A_{i+1,l_{i+1}-1}, A_{i+1,l_{i+1}})\otimes\cdots\otimes X_m(A_{m,l_m}, A_{m-1,0})\otimes\cdots\otimes \linebreak s\aA_m(A_{m,0},A_{m,1}) \to X'(A_{m,0},A_{0,l_0}), s(a_0)_{1,l_0}\otimes x_1\otimes\cdots\otimes x_i\otimes x_{i+1} \otimes\cdots\otimes s(a_m)_{1,l_m} \mapsto \phi(s(a_0)_{1,l_0}\otimes\cdots \otimes (x_{i}\otimes x_{i+1}) \otimes\cdots\otimes s(a_m)_{1,l_m})$.
It induces a closed morphism of degree 1
\[
\tilde{\mu}_{X_1|\cdots|X_m,X'}^i:D(X_1|\cdots|X_{i-1}|(X_{i}\otimes_{\aA_{i}}X_{i+1})|X_{i+2}|\cdots|X_m,X') \rightarrow D(X_1|\cdots|X_m,X')
\]
such that the following diagram commutes.
\[
\begin{tikzcd}[column sep=huge]
	C(X_1|\cdots|X_{i-1}|(X_{i}\otimes_{\aA_i}X_{i+1})|X_{i+2}|\cdots|X_m,X') \arrow[r,"\mu_{X_1|\cdots|X_m,X'}^i"] \arrow[d, "\eta_{X_1|\cdots|X_{i-1}|(X_{i}\otimes_{\aA_i}X_{i+1})|X_{i+2}|\cdots|X_m,X'}"'] & C(X_1|\cdots|X_m,X') \arrow[d,"\eta_{X_1|\cdots|X_m,X'}"] \\
	D(X_1|\cdots|X_{i-1}|(X_{i}\otimes_{\aA_{i}}X_{i+1})|X_{i+2}|\cdots|X_m,X') \arrow[r,"\tilde{\mu}_{X_1|\cdots|X_m,X'}^i"] & D(X_1|\cdots|X_m,X') 
\end{tikzcd}
\] 

For simplicity, we denote $\mu_{X_1|\cdots|X_m,X_1\otimes_{\aA_1}\cdots\otimes_{\aA_{m-1}}X_m}^i$ and $\tilde{\mu}_{X_1|\cdots|X_m,X_1\otimes_{\aA_1}\cdots\otimes_{\aA_{m-1}}X_m}^i$ by $\mu_{X_1|\cdots|X_m}^i$ and $\tilde{\mu}_{X_1|\cdots|X_m}^i$ respectively.

\medspace


\subsection{Hochschild cochain complexes of upper triangular matrix dg categories}
In this subsection, we will observe the Hochschild cochain complexes of upper triangular matrix dg categories which were introduced by Keller in \cite{Keller03} and will play crucial roles in our main results.

\medspace

\noindent{\bf Upper triangular matrix dg categories.} 
Let $\aA_0,\dots,\aA_m$ be small dg categories and $X_i$ a dg $\aA_{i-1}$-$\aA_i$-bimodule for all $1\le i\le m$. The {\it upper triangular matrix dg category} $\tT_{X_1,\cdots,X_m}$ is the small dg category whose object set $\Ob(\tT_{X_1,\cdots,X_m}):=\coprod_{i=0}^m\Ob(\aA_i)$, the disjoint union of object sets of $\aA_0,\dots,\aA_m$, and whose Hom space
\[
\tT_{X_1,\cdots,X_m}(A_i,A'_j):=
\begin{cases}
	\left(X_{j+1}\otimes_{\aA_{j+1}}X_{j+2}\otimes_{\aA_{j+2}}\cdots \otimes_{\aA_{i-1}}X_{i}\right)(A_i,A'_j), & i>j; \\
	\aA_i(A_i,A'_i), & i=j; \\
	0, & i<j.
\end{cases}
\]
for all $A_i\in\aA_i,A'_j\in\aA_j, 0\le i,j\le m$. It is formally denoted by the following upper triangular matrix.
$$\tT_{X_1,\cdots,X_m} =
\begin{pmatrix}
	\aA_0 & X_1   & X_1\otimes_{\aA_1} X_2 & \cdots & \cdots        & \cdots    & X_1\otimes_{\aA_1}\cdots\otimes_{\aA_{m-1}}X_m \\
	      & \aA_1 & X_2                    & \ddots &               &           & \vdots                                         \\
	      &       & \aA_2                  & \ddots & \ddots        &           & \vdots                                         \\
	      &       &                        & \ddots & \ddots        & \ddots    & \vdots                                         \\
	      &       &                        &        & \aA_{m-2}     & X_{m-1}   & X_{m-1}\otimes_{\aA_{m-1}} X_m                 \\
	      &       &                        &        &               & \aA_{m-1} & X_m                                            \\
	      &       &                        &        &               &           & \aA_m
\end{pmatrix}$$
The composition of two morphisms is given by matrix multiplication, and the identity morphisms coincide with those in $\aA_0,\cdots,\aA_m$.

For each $0\le i\le m$, there is a natural fully faithful functor 
$$\iota_i:\aA_i \rightarrow \tT_{X_1,\cdots,X_m}.$$ 

For any dg $\aA_{i-1}$-$\aA_i$-bimodule morphisms $f_i:X_i \rightarrow X'_i, 1\le i\le m$, there is a dg functor
\[
\tT_{f_1,\dots, f_m}:\tT_{X_1,\cdots,X_m} \rightarrow \tT_{X'_1,\cdots,X'_m}.
\]
Furthermore, if $X_i$ and $X'_i$ are cofibrant dg right $\aA_i$-modules for all $1\le i\le m-1$, or $X_i$ and $X'_i$ are cofibrant dg left $\aA_{i-1}$-modules for all $2\le i\le m$, in particular, $X_i$ and $X'_i$ are cofibrant dg $\aA_{i-1}$-$\aA_i$-bimodules for all $1\le i\le m$, and $f_i:X_i \rightarrow X'_i, 1\le i\le m$, are dg $\aA_{i-1}$-$\aA_i$-bimodule quasi-isomorphisms, then $\tT_{f_1,\dots, f_m}$ is a quasi-equivalence.

\medspace

\noindent{\bf Hochschild cochain complex of a $2\times 2$ upper triangular matrix dg category.} Let $\aA$ and $\bB$ be small dg categories, and $X$ a dg $\aA$-$\bB$-bimodule. The Hochschild cochain complex $C(\tT_X)$ of upper triangular matrix dg category $\tT_X=\begin{pmatrix}
	\aA&X\\&\bB
\end{pmatrix}$ is the graded vector space
\[
\begin{array}{rl}
	C(\tT_X) = & \prod\limits_{n\ge 0} C^n(\tT_X) \\ [3mm]
	= & \prod\limits_{n\ge 0} \prod\limits_{T_0,\cdots,T_n\in\tT_X}
	\mathcal{H}\mathrm{om}_k(s\tT_X(T_{n-1},T_n)\otimes\cdots\otimes s\tT_X(T_0,T_1),\tT_X(T_0,T_n)) \\ [3mm]
	= & \prod\limits_{n\ge 0} \prod\limits_{A_0,\cdots,A_n\in\aA}
	\mathcal{H}\mathrm{om}_k(s\aA(A_{n-1},A_n)\otimes\cdots\otimes s\aA(A_0,A_1), \aA(A_0,A_n)) \\ [3mm]
	& \times\prod\limits_{n\ge 0} \prod\limits_{B_0,\cdots,B_n\in\bB}
	\mathcal{H}\mathrm{om}_k(s\bB(B_{n-1},B_n)\otimes\cdots\otimes s\bB(B_0,B_1), \bB(B_0,B_n)) \\ [3mm]
	& \times\prod\limits_{n\ge 0} \prod\limits_{\substack{l+m=n,\ l,m\ge 0 \\ A_0,\cdots,A_l\in\aA \\ B_0,\cdots,B_m\in\bB}}
	\mathcal{H}\mathrm{om}_k(s\aA(A_{l-1},A_l)\otimes\cdots\otimes s\aA(A_0,A_1)\otimes sX(B_m,A_0)\otimes \\ [3mm]
	& \hspace{40mm} s\bB(B_{m-1},B_m)\otimes\cdots\otimes s\bB(B_0,B_1), X(B_0,A_n)) \\ [5mm]
	= & C(\aA)\oplus C(\bB)\oplus D(X)
\end{array}
\]
equipped with differential
\[
\delta_{C(\tT_X)} = 
\begin{pmatrix}
	\delta_{C(\aA)} & 0 & 0 \\
	0 & \delta_{C(\bB)} & 0 \\
	\tilde{\alpha}_X & -\tilde{\beta}_X & \delta_{D(X)}
\end{pmatrix}.
\]

\medspace

To study the Hochschild cochain complexes of upper triangular matrix dg categories further, we need the following lemma.

\begin{lemma}\label{Lem-LowerTriangularDifferential-Triangle}
	Let $V, V'$ and $V''$ be complexes of vector spaces such that $V=V'\oplus V''$ as graded vector spaces and the differential $d_V=\begin{pmatrix} d_{V'} & 0 \\ f & d_{V''} \end{pmatrix}$,
	where $f:V'\to V''$ is a graded linear map of degree 1. Then $f$ is a 1-cocycle in $\hHom_k(V',V'')$. Moreover, there is a triangle in $\dD k$
	\[
	V'' \xrightarrow{0 \choose 1_{V''}} V \xrightarrow{(1_{V'}\ 0)} V' \xrightarrow{-s\circ f} sV''.
	\]
	
\end{lemma}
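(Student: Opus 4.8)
The plan is to establish the two assertions separately. For the cocycle claim, I would expand the identity $d_V^2 = 0$, which holds because $V$ is a complex. Writing $d_V$ in the given lower-triangular matrix form and squaring it, the diagonal entries merely recover $d_{V'}^2 = 0$ and $d_{V''}^2 = 0$, while the single off-diagonal entry yields the relation $d_{V''} \circ f + f \circ d_{V'} = 0$. Since $f$ has degree $1$, the differential on the internal Hom complex is $d_{\hHom_k(V',V'')}(f) = d_{V''} \circ f - (-1)^{|f|} f \circ d_{V'} = d_{V''} \circ f + f \circ d_{V'}$, so the relation just derived is exactly the statement that $f$ is a $1$-cocycle in $\hHom_k(V',V'')$. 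This also records, for later use, that the cocycle condition is equivalent to $d_V^2=0$.

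For the triangle, I would first observe that the decomposition $V = V' \oplus V''$ exhibits a sequence of complexes
\[
0 \longrightarrow V'' \xrightarrow{\binom{0}{1_{V''}}} V \xrightarrow{(1_{V'}\ 0)} V' \longrightarrow 0,
\]
in which both maps are genuine morphisms of complexes: this is immediate from the triangular shape of $d_V$, since the inclusion of the $V''$-summand and the projection onto the $V'$-summand commute with the differentials irrespective of the off-diagonal term $f$. The sequence is short exact and, being split as graded vector spaces, is degreewise split, hence gives rise to a distinguished triangle in $\dD k$; the content of the lemma is that its connecting morphism equals $-s \circ f$.

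The key step is to pin down this connecting morphism. Here I would realize $V$ directly as a mapping cone. The $1$-cocycle $f$ corresponds, after a shift, to a degree-$0$ chain map $h \colon s^{-1}V' \to V''$ (this correspondence is precisely the cocycle condition from the first part), and comparing differentials shows that $V$ is isomorphic, as a complex, to $\cone(h)$, in such a way that $\binom{0}{1_{V''}}$ and $(1_{V'}\ 0)$ match the canonical maps $V'' \to \cone(h)$ and $\cone(h) \to V' \cong s(s^{-1}V')$. Since the standard triangle attached to $h$, namely $s^{-1}V' \xrightarrow{h} V'' \to \cone(h) \to V'$, is distinguished, rotating it forward yields $V'' \to V \to V' \xrightarrow{-s h} sV''$, and the final morphism is $-s \circ f$ under the chosen identification.

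I expect the only delicate point to be the sign. Confirming that the connecting morphism is $-s \circ f$ rather than $+s \circ f$ requires careful bookkeeping of the Koszul signs introduced by the shift functor $s$ and by the mapping-cone convention, both of which must be tracked through the identification $V \cong \cone(h)$. Everything else is formal: the chain-map verifications and short-exactness are immediate, so the proof really hinges on getting this one sign right.
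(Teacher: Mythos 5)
Your proposal is correct and follows essentially the same route as the paper: the cocycle claim from expanding $d_V^2=0$, and the triangle by recognizing $V$ as a mapping cone of the map determined by $f$ and rotating the standard cone triangle. The paper phrases this identification as $\cone(-s\circ f)\cong sV$ in $\cC k$, while you phrase it as $V\cong\cone(h)$ for the desuspended chain map $h\colon s^{-1}V'\to V''$; these differ only by a shift, so the arguments coincide.
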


\begin{proof}
	Due to $d_V^2=0$, we have $d_{V''}f+fd_{V'}=0$. Thus the first conclusion holds. The other one follows from the isomorphism $\cone(-s\circ f)\cong sV$ in $\cC k$.
\end{proof}

From Lemma \ref{Lem-LowerTriangularDifferential-Triangle}, we obtain the following lemma.

\begin{lemma} \label{Lem-Homotopy-Bicart} {\rm (Keller \cite[Section 4.5]{Keller03})}
	Let $\aA$ and $\bB$ be small dg categories, and $X$ a dg $\aA$-$\bB$-bimodule. Then there is a homotopy bicartesian diagram
	\[
	\begin{tikzcd}
		C(\tT_X) \arrow[r, "\iota_1^*"] \arrow[d, "\iota_2^*"'] & C(\aA) \arrow[d, "\alpha_X"] \\
		C(\bB) \arrow[r, "\beta_X"] & C(X),
	\end{tikzcd}
	\]
	i.e., a triangle $C(\tT_X) \xrightarrow{\iota^*_1 \choose \iota^*_2} C(\aA)\oplus C(\bB) \xrightarrow{(\alpha_X,-\beta_X)} C(X) \to$ in $\dD k$, where $\iota_1^*:C(\tT_X)\to C(\aA)$ and $\iota_2^*:C(\tT_X)\to C(\bB)$ are the $B_\infty$-morphisms induced by the fully faithful dg functors $\iota_1:\aA\hookrightarrow \tT_X$ and $\iota_2:\bB\hookrightarrow \tT_X$.
\end{lemma}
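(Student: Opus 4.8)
The plan is to extract the triangle directly from the explicit block description $C(\tT_X)=C(\aA)\oplus C(\bB)\oplus D(X)$ together with its lower-triangular differential, and then to recognize the three maps that appear as the ones named in the statement. The key input is Lemma~\ref{Lem-LowerTriangularDifferential-Triangle}, which converts a lower-triangular differential into a distinguished triangle; almost everything else has already been assembled in this section.

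First I would regroup the three summands as $V'=C(\aA)\oplus C(\bB)$ and $V''=D(X)$, so that $\delta_{C(\tT_X)}$ acquires the block form $\begin{pmatrix} d_{V'} & 0 \\ f & d_{V''}\end{pmatrix}$ demanded by Lemma~\ref{Lem-LowerTriangularDifferential-Triangle}, with $d_{V'}=\mathrm{diag}(\delta_{C(\aA)},\delta_{C(\bB)})$, $d_{V''}=\delta_{D(X)}$, and off-diagonal $1$-cocycle $f=(\tilde{\alpha}_X,-\tilde{\beta}_X):C(\aA)\oplus C(\bB)\to D(X)$. The lemma then produces a triangle in $\dD k$
\[
D(X)\xrightarrow{\binom{0}{1}} C(\tT_X)\xrightarrow{(1\ 0)} C(\aA)\oplus C(\bB)\xrightarrow{-s\circ f} sD(X).
\]

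Next I would identify the two structure maps. The projection $(1\ 0):C(\tT_X)\to C(\aA)\oplus C(\bB)$ coincides with $\binom{\iota_1^*}{\iota_2^*}$: restriction along the fully faithful inclusion $\iota_1:\aA\hookrightarrow\tT_X$ of the upper-left diagonal block retains only those cochains all of whose arguments lie in $\aA$, which is exactly the projection onto the $C(\aA)$ summand, and symmetrically $\iota_2^*$ is the projection onto $C(\bB)$. For the connecting map I would use the shift morphism $\eta_X:C(X)\to D(X)$, a $1$-cocycle inducing the isomorphism $sD(X)\cong C(X)$ in $\cC k$. Since $\tilde{\alpha}_X=\eta_X\alpha_X$ and $\tilde{\beta}_X=\eta_X\beta_X$, we obtain $-s\circ f=-(s\circ\eta_X)\circ(\alpha_X,-\beta_X)$, so transporting the target $sD(X)$ back to $C(X)$ along the isomorphism $s\circ\eta_X$ converts the connecting map into $-(\alpha_X,-\beta_X)$.

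Finally I would rotate the triangle by one step and absorb the residual sign into an automorphism of the triangle (negating a pair of maps via $\pm1$ on the objects), which yields
\[
C(\tT_X)\xrightarrow{\binom{\iota_1^*}{\iota_2^*}} C(\aA)\oplus C(\bB)\xrightarrow{(\alpha_X,-\beta_X)} C(X)\to,
\]
i.e., the asserted homotopy bicartesian diagram. I expect the only delicate point to be the bookkeeping of the shift and signs in the connecting map---checking that $s\circ\eta_X$ is genuinely the isomorphism $sD(X)\cong C(X)$ and that the leftover sign is immaterial up to isomorphism of triangles; by contrast, the conceptual heart, namely the identification of $(1\ 0)$ with $\binom{\iota_1^*}{\iota_2^*}$, is immediate once one unwinds the definitions of the restriction maps $\iota_1^*,\iota_2^*$.
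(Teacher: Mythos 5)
Your proof is correct and follows essentially the same route as the paper: the paper deduces the lemma directly from Lemma~\ref{Lem-LowerTriangularDifferential-Triangle} applied to the decomposition $C(\tT_X)=\bigl(C(\aA)\oplus C(\bB)\bigr)\oplus D(X)$ with off-diagonal cocycle $(\tilde{\alpha}_X,-\tilde{\beta}_X)$, identifying the quotient projection with ${\iota_1^* \choose \iota_2^*}$ and the connecting map with $(\alpha_X,-\beta_X)$ via the shift isomorphism induced by $\eta_X$. Your extra bookkeeping of the shift and the residual sign (which the paper leaves implicit) is sound, since negating two of the three maps of a distinguished triangle yields an isomorphic, hence distinguished, triangle.
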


Let $\aA$ be a small dg category. Then there are two different fully faithful dg functors
\[
\iota_1,\iota_2:\aA \hookrightarrow \tT_{I_\aA}=\begin{pmatrix}
	\aA&I_\aA\\ &\aA
\end{pmatrix},
\]
which induce two $B_\infty$-morphisms $\iota_1^*:C(\tT_{I_\aA})\to C(\aA)$ and $\iota_2^*:C(\tT_{I_\aA})\to C(\aA)$. We have known that $\alpha_{I_\aA}:C(\aA)\to C(I_\aA)$ and $\beta_{I_{\aA}}:C(\aA)\to C(I_\aA)$ are quasi-isomorphisms. It follows from Lemma \ref{Lem-Homotopy-Bicart} that $\iota_1^*$ and $\iota_2^*$ are quasi-isomorphisms as well. Furthermore, $\iota_1^*=\iota_2^*$ in $\Ho B_{\infty}$ (Ref. \cite[Theorem c)]{Keller06}), which also can be obtained by the Lemma \ref{Lem-ThetaA} below.

For any small dg category $\aA$, there is an isomorphism of complexes 
$$\kappa_\aA:C(\aA) \rightarrow D(I_\aA)$$ 
given by
$(\kappa_\aA\phi)(sa_{1,l}\otimes sx\otimes sb_{1,m}) := \phi(sa_{1,l}\otimes sx\otimes sb_{1,m}).$

The following result is a preparation for the construction of a lax functor in Theorem \ref{Thm-LaxFuntor-HomotopyCat-B-Inf}.

\begin{lemma} \label{Lem-ThetaA}
	Let $\aA$ be a small dg category. Then the graded linear map
	$$\theta_\aA=(1_{C(\aA)}, 1_{C(\aA)}, \kappa_\aA)^T : C(\aA) \rightarrow C(\tT_{ I_\aA})=C(\aA)\oplus C(\aA)\oplus D(I_\aA)$$
	is a $B_\infty$-quasi-isomorphism. Moreover, the $B_\infty$-morphisms $\iota_1^*,\iota_2^*: C(\tT_{ I_\aA})\to C(\aA)$ induced by the fully faithful dg functors $\iota_1,\iota_2: \aA\hookrightarrow\tT_{ I_\aA}$ are $B_\infty$-quasi-isomorphisms in $B_\infty$ and equal in ${\rm Ho} B_\infty$.
\end{lemma}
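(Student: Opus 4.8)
The plan is to show that $\theta_\aA$ is a \emph{strict} $B_\infty$-morphism which is moreover a quasi-isomorphism, and then to read off the statements about $\iota_1^*,\iota_2^*$ formally. First I would check that $\theta_\aA=(1_{C(\aA)},1_{C(\aA)},\kappa_\aA)^T$ is compatible with $m_1=\delta$, i.e. that it is a morphism of complexes. The first two components are immediate, being the identity on $C(\aA)$. In view of the lower-triangular shape of $\delta_{C(\tT_{I_\aA})}$, the third component reduces to an identity of the form $\delta_{D(I_\aA)}\kappa_\aA-\kappa_\aA\delta_{C(\aA)}=\tilde{\beta}_{I_\aA}-\tilde{\alpha}_{I_\aA}$, which I would verify by directly comparing the explicit formulas for $\delta_{C(\aA)}$, $\delta_{D(I_\aA)}$, $\tilde{\alpha}_{I_\aA}$ and $\tilde{\beta}_{I_\aA}$: treating the middle argument $x\in I_\aA$ as an ordinary morphism, every term of $\kappa_\aA\delta_{C(\aA)}\phi$ is matched by a term of $\delta_{D(I_\aA)}\kappa_\aA\phi$ except the two \emph{boundary actions} of $x$, which are precisely $\tilde{\alpha}_{I_\aA}\phi$ (at the right end) and $\tilde{\beta}_{I_\aA}\phi$ (at the left end). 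These are exactly the off-diagonal entries $\tilde{\alpha}_{I_\aA},-\tilde{\beta}_{I_\aA}$ of $\delta_{C(\tT_{I_\aA})}$, so they cancel and $\delta_{C(\tT_{I_\aA})}\theta_\aA=\theta_\aA\delta_{C(\aA)}$.

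Next I would check strict compatibility with the cup product $m_2$ and the brace operations $m_{1,q}$; since a brace $B_\infty$-algebra has no structure maps beyond $m_1$, $m_2$ and the braces $m_{1,q}$, this makes $\theta_\aA$ a strict $B_\infty$-morphism. The one observation driving every such check is that, because the bimodule is the \emph{identity} $I_\aA$, each composition in $\tT_{I_\aA}$ occurring in the cup or brace formula is just an ordinary composition in $\aA$. Evaluating an operation on a pure-$\aA$ (resp.\ pure-$\bB$) sequence reproduces the corresponding operation on the first (resp.\ second) copy of $C(\aA)$; evaluating it on a sequence with exactly one $X$-jump, the unique tensor factor (an outer slot of $\phi$ or one of the inserted arguments) that meets the jump is read off by a $D$-component $\kappa_\aA(-)$ and all remaining factors by first/second $C(\aA)$-components, so the whole expression collapses to $\kappa_\aA$ applied to the corresponding identity in $C(\aA)$. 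Carrying this out over all split points and insertion positions, with care for the Koszul signs, gives $\theta_\aA(\phi\cup\psi)=\theta_\aA\phi\cup\theta_\aA\psi$ and $\theta_\aA(\phi\{\psi_1,\dots,\psi_k\})=\theta_\aA\phi\{\theta_\aA\psi_1,\dots,\theta_\aA\psi_k\}$, together with preservation of units.

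For the quasi-isomorphism claim I would avoid recomputing cohomology and instead use that $\iota_1^*\colon C(\tT_{I_\aA})\to C(\aA)$ is already known to be a quasi-isomorphism, since it sits in the homotopy bicartesian square of Lemma~\ref{Lem-Homotopy-Bicart} alongside the quasi-isomorphism $\alpha_{I_\aA}$. As $\iota_1$ restricts $\tT_{I_\aA}$-cochains to the first diagonal copy of $\aA$, one has $\iota_1^*\theta_\aA=1_{C(\aA)}$ on the nose; passing to $m_1$-cohomology yields $H(\iota_1^*)H(\theta_\aA)=1$, and since $H(\iota_1^*)$ is invertible, so is $H(\theta_\aA)$. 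Hence $\theta_\aA$ is a $B_\infty$-quasi-isomorphism. The second assertion is then formal: $\iota_1^*$ and $\iota_2^*$ are $B_\infty$-morphisms (Keller's restriction maps) and quasi-isomorphisms, hence $B_\infty$-quasi-isomorphisms; and because $\iota_1^*\theta_\aA=\iota_2^*\theta_\aA=1_{C(\aA)}$ while the $B_\infty$-quasi-isomorphism $\theta_\aA$ becomes invertible in $\Ho B_\infty$, both $\iota_1^*$ and $\iota_2^*$ equal $\theta_\aA^{-1}$ in $\Ho B_\infty$ and are therefore equal there.

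I expect the main obstacle to be the strict brace compatibility of the second step. The identity-bimodule collapse makes the bookkeeping transparent at the level of where the single $X$-jump lands, but matching, on both sides and for every insertion pattern, the Koszul signs hidden in the shifts that define $\kappa_\aA$, $\tilde{\alpha}_{I_\aA}$, $\tilde{\beta}_{I_\aA}$ and the brace formula is the delicate and lengthy part of the argument.
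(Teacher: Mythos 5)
Your proposal is correct and follows essentially the same route as the paper's own proof: verify that $\theta_\aA$ is a strict $B_\infty$-morphism by checking compatibility with the differential (reducing to $\tilde{\alpha}_{I_\aA}-\tilde{\beta}_{I_\aA}+\delta_{D(I_\aA)}\kappa_\aA=\kappa_\aA\delta_{C(\aA)}$), the cup product, and the brace operations, and then deduce everything else from $\iota_1^*\theta_\aA=1_{C(\aA)}=\iota_2^*\theta_\aA$ together with the fact that $\iota_1^*,\iota_2^*$ are quasi-isomorphisms by Lemma~\ref{Lem-Homotopy-Bicart}. The only cosmetic difference is that the paper first records the component formulas for $\cup$ and braces on $C(\tT_X)$ for a general bimodule $X$ (Lemmas~\ref{Lem-CupProd-2x2} and~\ref{Lem-Brace-2x2}) before specializing to $X=I_\aA$, whereas you argue the collapse directly for the identity bimodule.
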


\begin{proof}
	See Appendix \ref{Appendix-ThetaA}.
\end{proof}

\medspace

\noindent{\bf Hochschild cochain complex of a $3\times 3$ upper triangular matrix dg category.} 
Let $\aA,\bB$ and $\cC$ be small dg categories, $X$ a dg $\aA$-$\bB$-bimodule, and $Y$ a dg $\bB$-$\cC$-bimodule. The Hochschild cochain complex $C(\tT_{X,Y})$ of upper triangular matrix dg category $\tT_{X,Y}=\begin{pmatrix}
	\aA&X&X\otimes_\bB Y\\&\bB&Y\\ &&\cC
\end{pmatrix}$ is the graded vector space
\[
\begin{array}{l}
	\quad\ C(\tT_{X,Y}) \\ [2mm]
	= \prod\limits_{n\ge 0} C^n(\tT_{X,Y}) \\ [2mm]
	=\prod\limits_{n\ge 0}\ \prod\limits_{T_0,\cdots,T_n\in\tT_{X,Y}}
	\mathcal{H}\mathrm{om}_k(s\tT_{X,Y}(T_{n-1},T_n)\otimes\cdots\otimes s\tT_{X,Y}(T_0,T_1),\tT_{X,Y}(T_0,T_n)) \\ [5mm]
	=\prod\limits_{n\ge 0}(\prod\limits_{A_0,\cdots,A_n\in\aA}
	\mathcal{H}\mathrm{om}_k(s\aA(A_{n-1},A_n)\otimes\cdots\otimes s\aA(A_0,A_1),\aA(A_0,A_n))\\ [5mm]
	\hspace{20pt}\times\prod\limits_{B_0,\cdots,B_n\in\bB}
	\mathcal{H}\mathrm{om}_k(s\bB(B_{n-1},B_n)\otimes\cdots\otimes s\bB(B_0,B_1),\bB(B_0,B_n)) \\ [5mm]
	\hspace{20pt}\times\prod\limits_{C_0,\cdots,C_n\in\cC}
	\mathcal{H}\mathrm{om}_k(s\cC(C_{n-1},C_n)\otimes\cdots\otimes s\cC(C_0,C_1),\cC(C_0,C_n)) \\ [5mm]
	\hspace{20pt}\times\prod\limits_{\substack{l+m=n,\ l,m\ge 0, \\ A_0,\cdots,A_l\in\aA, \\ B_0,\cdots,B_m\in\bB}} \mathcal{H}\mathrm{om}_k(s\aA(A_{l-1},A_l)\otimes\cdots\otimes s\aA(A_0,A_1)\otimes sX(B_m,A_0) \\ 
	\hspace{125pt} \otimes s\bB(B_{m-1},B_m)\otimes\cdots\otimes s\bB(B_0,B_1),X(B_0,A_l)) \\ [6mm]
	\hspace{20pt}\times\prod\limits_{\substack{l+m=n,\ l,m\ge 0, \\ B_0,\cdots,B_l\in\bB, \\ C_0,\cdots,C_m\in\cC}}
	\mathcal{H}\mathrm{om}_k(s\bB(B_{l-1},B_l)\otimes\cdots\otimes s\bB(B_0,B_1)\otimes sY(C_m,B_0) \\ [6mm] 
	\hspace{125pt} \otimes s\cC(C_{m-1},C_m)\otimes\cdots\otimes s\cC(C_0,C_1),Y(C_0,B_l)) \\ [6mm]
	\hspace{20pt} \times\prod\limits_{\substack{l+m=n,\ l,m\ge 0, \\ A_0,\cdots,A_l\in\aA, \\ C_0,\cdots,C_m\in\cC}} \mathcal{H}\mathrm{om}_k(s\aA(A_{l-1},A_l)\otimes\cdots\otimes s\aA(A_0,A_1)\otimes s(X\otimes_\bB Y)(C_m,A_0) \\ 
	\hspace{125pt} \otimes s\cC(C_{m-1},C_m)\otimes\cdots\otimes s\cC(C_0,C_1),
	(X\otimes_\bB Y)(C_0,A_l)) \\ [4mm]
	\hspace{20pt} \times\prod\limits_{\substack{l+m+p=n,\ l,m,p\ge 0, \\ A_0,\cdots,A_l\in\aA, \\ B_0,\cdots,B_m\in\bB, \\ C_0,\cdots,C_p\in\cC}}
	\mathcal{H}\mathrm{om}_k(s\aA(A_{l-1},A_l)\otimes\cdots\otimes s\aA(A_0,A_1)\otimes sX(B_m,A_0) \\ \hspace{125pt} 
	\otimes s\bB(B_{m-1},B_m)\otimes\cdots\otimes s\bB(B_0,B_1) \otimes sY(C_p,B_0) \\ [4mm]
	\hspace{125pt} \otimes s\cC(C_{p-1},C_p)\otimes\cdots\otimes s\cC(C_0,C_1),(X\otimes_\bB Y)(C_0,A_l))) \\ [4mm]
	= C(\aA)\oplus C(\bB)\oplus C(\cC)\oplus D(X) \oplus D(Y)\oplus D(X\otimes_\bB Y)\oplus D(X|Y)
\end{array}
\]
equipped with differential
\[
\delta_{C(\tT_{X,Y})} =
\begin{pmatrix}
	\delta_{C(\aA)} & & & & & & \\
	0 & \delta_{C(\bB)} & & & & & \\
	0 & 0 & \delta_{C(\cC)} & & & & \\
	\tilde{\alpha}_X & -\tilde{\beta}_X & 0 & \delta_{D(X)} & & & \\
	0 & \tilde{\alpha}_Y & -\tilde{\beta}_Y & 0 & \delta_{D(Y)} & & \\
	\tilde{\alpha}_{X\otimes_{\bB}Y} & 0 & -\tilde{\beta}_{X\otimes_\bB Y} & 0 & 0 & \delta_{D(X\otimes_{\bB} Y)} & \\
	0 & 0 & 0 & -\tilde{\alpha}_{X|Y} & -\tilde{\beta}_{X|Y} & \tilde{\mu}_{X|Y}^1 & \delta_{D(X|Y)}  
\end{pmatrix}.
\]

Denote by $\ac(X|Y)$ the complex whose underlying graded vector space is $D(X\otimes_\bB Y)\oplus D(X|Y)$, and whose differential is
$\begin{pmatrix}
	\delta_{D(X\otimes_\bB Y)} & 0 \\
	\tilde{\mu}_{X|Y}^1 & \delta_{D(X|Y)}
\end{pmatrix}.$
By Lemma \ref{Lem-LowerTriangularDifferential-Triangle}, we have a triangle $D(X|Y) \rightarrow \ac(X|Y) \rightarrow D(X\otimes_\bB Y) \xrightarrow{-s\circ\tilde{\mu}_{X|Y}^1} sD(X|Y)$ in $\dD k$.

Observe the $3\times 3$ upper triangular matrix dg category $\tT_{X,Y}=\begin{pmatrix}
	\aA&X&X\otimes_\bB Y\\&\bB&Y\\ &&\cC
\end{pmatrix}$. Denote by $(X\ X\otimes_\bB Y)$ the dg $\aA$-$\tT_Y$-bimodule given as follows:
$$\begin{array}{c}
\begin{array}{ll}
	(X\ X\otimes_\bB Y)(B,A)\coloneqq X(B,A), & (X\ X\otimes_\bB Y)(C,A)\coloneqq \left(X\otimes_\bB Y\right)(C,A), \\
	(X\ X\otimes_\bB Y)(b\otimes a)\coloneqq X(b,a), & (X\ X\otimes_\bB Y)(c\otimes a)\coloneqq \left(X\otimes_\bB Y\right)(c,a), \\
	(X\ X\otimes_\bB Y)(y\otimes a)\coloneqq (-\otimes y)\circ X(B,a)& 
\end{array}
\\ [10mm]
\begin{tikzcd}
	X(B,A) \arrow[d,"{X(B,a)}"] \arrow[r,"{-\otimes y}"] & (X\otimes_\bB Y)(C,A) \arrow[d,"{(X\otimes_\bB Y)(C,a)}"] \\
	X(B,A') \arrow[r,"{-\otimes y}"] & (X\otimes_\bB Y)(C,A')
\end{tikzcd}
\end{array}$$
for all $A\in\aA, B\in\bB, C\in\cC$ and $a\in\aA(A,A'), b\in\bB(B,B'), c\in\cC(C,C'), y\in Y(C,B)$.
It is not difficult to see that the underlying graded vector space of $D((X\ X\otimes_\bB Y))$ is
$D(X)\oplus D(X\otimes_\bB Y)\oplus D(X|Y),$
and the differential of $D((X\ X\otimes_\bB Y))$ is
$$\delta_{D((X\ X\otimes_\bB Y))} =
\begin{pmatrix}
	\delta_{D(X)} & & \\
	0 & \delta_{D(X\otimes_\bB Y)} & \\
	-\tilde{\alpha}_{X|Y} & \tilde{\mu}^1_{X|Y} & \delta_{D(X|Y)}
\end{pmatrix}.$$

By Lemma \ref{Lem-LowerTriangularDifferential-Triangle}, we have a triangle in $\dD k$
$$\ac(X|Y) \rightarrow D((X\ X\otimes_\bB Y)) \xrightarrow{p} D(X) \rightarrow$$
where $p=(1_{D(X)},0)$ is the natural projection.

The following lemma implies that the notation $\ac(X|Y)$ is reasonable.

\begin{lemma}\label{lem:3.6}
	Let $\aA,\bB$ and $\cC$ be small dg categories, $X$ a dg $\aA$-$\bB$-bimodule, and $Y$ a dg $\bB$-$\cC$-bimodule. If $X$ or $Y$ is cofibrant as dg $\bB$-module, then $\ac(X|Y)$ is acyclic.
\end{lemma}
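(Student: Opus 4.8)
The plan is to reduce the acyclicity of $\ac(X|Y)$ to a quasi-isomorphism statement about $\tilde{\mu}_{X|Y}^1$, and then to identify that map with the canonical comparison between the derived and the underived tensor product $X\otimes_\bB Y$.

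First I would apply Lemma~\ref{Lem-LowerTriangularDifferential-Triangle} to the triangle $D(X|Y)\to\ac(X|Y)\to D(X\otimes_\bB Y)\xrightarrow{-s\circ\tilde{\mu}_{X|Y}^1}sD(X|Y)$ already recorded above. In a triangle $A\to B\to C\xrightarrow{w}sA$ of $\dD k$, the middle term $B$ is acyclic precisely when the connecting morphism $w$ is an isomorphism in $\dD k$. Since $-s$ is invertible, this means $\ac(X|Y)$ is acyclic if and only if $\tilde{\mu}_{X|Y}^1\colon D(X\otimes_\bB Y)\to D(X|Y)$ is a quasi-isomorphism. Using the commuting square that defines $\tilde{\mu}_{X|Y}^1$ from $\mu_{X|Y}^1$ together with the fact that the vertical maps $\eta$ are isomorphisms in $\cC k$, this is in turn equivalent to $\mu_{X|Y}^1\colon C(X\otimes_\bB Y)\to C(X|Y)$ being a quasi-isomorphism. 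So the whole statement is reduced to a single comparison of derived Hom complexes.

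Next I would identify the two complexes through the $\RHom$ descriptions recorded earlier: treating $X\otimes_\bB Y$ as a single dg $\aA$-$\cC$-bimodule gives $C(X\otimes_\bB Y)\cong\RHom_{\aA^\op\otimes\cC}(X\otimes_\bB Y,\,X\otimes_\bB Y)$ in $\dD k$, while the two-sided bar description gives $C(X|Y)=C(X|Y,X\otimes_\bB Y)\cong\RHom_{\aA^\op\otimes\cC}(X\otimes^{\bL}_\bB Y,\,X\otimes_\bB Y)$. Inspecting the explicit formula for $\mu_{X|Y}^1$ --- which feeds a pair $x\otimes y$ in with no intervening $\bB$-bar, i.e.\ sends it to its class in $X\otimes_\bB Y$ --- one sees that under these identifications $\mu_{X|Y}^1$ is exactly precomposition along the canonical comparison morphism $\pi\colon X\otimes^{\bL}_\bB Y\to X\otimes_\bB Y$ (the augmentation of the bar resolution). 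Hence $\mu_{X|Y}^1$ is a quasi-isomorphism as soon as $\pi$ is.

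Finally I would invoke the cofibrancy hypothesis. If $X$ is cofibrant as a right dg $\bB$-module, or $Y$ is cofibrant as a left dg $\bB$-module, then that module is homotopy-flat, so $\operatorname{Tor}^\bB_{>0}(X,Y)=0$ and the augmentation $\pi\colon X\otimes^{\bL}_\bB Y\to X\otimes_\bB Y$ is a quasi-isomorphism; precomposition with a quasi-isomorphism is a quasi-isomorphism, so $\mu_{X|Y}^1$ is one and $\ac(X|Y)$ is acyclic. The main obstacle is the middle step: pinning down, at the chain level, that the combinatorially defined $\mu_{X|Y}^1$ really represents $\pi^{*}$ under the abstract $\dD k$-isomorphisms, and doing so compatibly with the infinite products that define the Hochschild-type complexes. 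I expect to handle this either by a filtration of $\ac(X|Y)$ by the number of interior $\bB$-bars (checking that the filtration is complete so the associated spectral sequence converges despite the products) or, in the cofibrant case, by transporting the extra-degeneracy contraction of the augmented bar complex $\cdots\to X\otimes s\bB\otimes Y\to X\otimes Y\to X\otimes_\bB Y$ through the $\aA$- and $\cC$-bar decorations and then passing to retracts; the sign bookkeeping and the product-compatibility of the contracting homotopy there are the delicate points.
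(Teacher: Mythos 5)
Your proposal is correct, and it takes a genuinely different route from the paper's. The paper never analyzes the connecting map $\tilde{\mu}^1_{X|Y}$ of the triangle $D(X|Y)\to\ac(X|Y)\to D(X\otimes_\bB Y)$ that you start from; instead it embeds $\ac(X|Y)$ in the \emph{other} triangle $\ac(X|Y)\to D((X\ X\otimes_\bB Y))\xrightarrow{p} D(X)$ coming from the dg $\aA$-$\tT_Y$-bimodule $(X\ X\otimes_\bB Y)$, and proves that the projection $p$ is a quasi-isomorphism by rewriting $(X\ X\otimes_\bB Y)\cong X\otimes_\bB(\bB\ Y)$, using the cofibrancy hypothesis to replace $\otimes_\bB$ by $\otimes^{\bL}_\bB$, and then collapsing $\RHom_{\aA^\op\otimes\tT_Y}$ down to $\RHom_{\aA^\op\otimes\bB}(X,X)\cong C(X)$ via tensor-Hom adjunction and the freeness $(\bB\ Y)\cong\tT_Y$. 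Your reduction --- $\ac(X|Y)$ acyclic $\Leftrightarrow$ $\tilde{\mu}^1_{X|Y}$ (equivalently $\mu^1_{X|Y}$) is a quasi-isomorphism, then $\mu^1_{X|Y}=\RHom_{\aA^\op\otimes\cC}(\pi,X\otimes_\bB Y)$ for the comparison $\pi:X\otimes^{\bL}_\bB Y\to X\otimes_\bB Y$ --- is more direct and makes completely transparent where the hypothesis enters (it makes $\pi$ invertible in $\dD(\aA^\op\otimes\cC)$). The two proofs are also on an equal footing at the step you flag as delicate: the paper likewise asserts, without chain-level verification, that its composite of abstract isomorphisms ``is just the natural projection $p$,'' which is the same kind of bar-resolution bookkeeping as your claim that $\mu^1_{X|Y}$ represents precomposition with $\pi$.

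One simplification for that delicate step: you need neither a filtration/spectral-sequence argument nor an explicit contracting homotopy. Both $C(X\otimes_\bB Y)$ and $C(X|Y)$ are by definition complexes $\Hom_{\aA^\op\otimes\cC}(-,X\otimes_\bB Y)$ out of explicit two-sided bar resolutions, and these resolutions are h-projective dg bimodules (this is exactly what underlies the paper's $\RHom$ descriptions of $C(X,X')$ and $C(X_1|\cdots|X_m,X')$). The map $\mu^1_{X|Y}$ is literally precomposition with the collapse chain map between the two resolutions: zero on components with at least one interior $\bB$-bar, and the multiplication $X\otimes Y\to X\otimes_\bB Y$ on the bar-free components; the chain-map property is just the cancellation $xb\otimes_\bB y=x\otimes_\bB by$. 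This collapse covers $\pi$, so once cofibrancy makes $\pi$ a quasi-isomorphism, 2-out-of-3 makes the collapse a quasi-isomorphism between h-projective bimodules, hence a homotopy equivalence; and $\Hom$ out of a homotopy equivalence is again a homotopy equivalence regardless of the infinite products, so $\mu^1_{X|Y}$ is a quasi-isomorphism with no convergence issues to address.
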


\begin{proof}
	Observe the $2\times 2$ upper triangular matrix dg category $\tT_Y=\begin{pmatrix}
		\bB&Y\\ &\cC
	\end{pmatrix}$. Denote by $(\bB\ Y)$ the dg $\bB$-$\tT_Y$-bimodule given as follows:
	$$\begin{array}{c}
		\begin{array}{ll}
			(\bB\ Y)(B,B')\coloneqq \bB(B,B'), & (\bB\ Y)(C,B)\coloneqq Y(C,B), \\
			(\bB\ Y)(b\otimes b')\coloneqq \bB(b,b'), & (\bB\ Y)(c\otimes b)\coloneqq Y(c,b), \\
			(\bB\ Y)(y\otimes b)\coloneqq Y(C,-)(y)\circ \bB(B'',b) & 
		\end{array}
		\\ [10mm]
		\begin{tikzcd}[column sep=50]
			\bB(B'',B) \arrow[d,"{\bB(B'',b)}"] \arrow[r,"{Y(C,-)(y)}"] & Y(C,B) \arrow[d,"{Y(C,b)}"] \\
			\bB(B'',B') \arrow[r,"{Y(C,-)(y)}"] & Y(C,B')
		\end{tikzcd}
	\end{array}$$
	for all $B,B',B'',B'''\in\bB, C\in\cC$ and $b\in\bB(B,B'),b'\in\bB(B'',B'''), c\in\cC(C,C'), y\in Y(C,B'')$.
	It is clear that $(\bB\ Y)\cong \tT_Y$ as dg $\bB$-$\tT_Y$-bimodules, $(\bB\ Y)\cong I_\bB$ as dg $\bB$-bimodules, $(X\ X\otimes_\bB Y) \cong X\otimes_\bB(\bB\ Y)$ as $\aA$-$\tT_Y$-bimodules, and $(X\ X\otimes_\bB Y) \cong X$ as $\aA$-$\bB$-bimodules.
	By the assumption that $X$ or $Y$ is cofibrant as dg $\bB$-module, we have the following isomorphisms in $\dD k$:
	\[
	\begin{aligned}
		C((X\ X\otimes_\bB Y)) & \cong \RHom_{\aA^\op\otimes\tT_Y}((X\  X\otimes_\bB Y),(X\ X\otimes_\bB Y)) \\
		& \cong \RHom_{\aA^\op\otimes\tT_Y}(X\otimes_\bB(\bB\ Y),(X\ X\otimes_\bB Y)) \\
		& \cong \RHom_{\aA^\op\otimes\tT_Y}(X\otimes_\bB^{\bL}(\bB\ Y),(X\ X\otimes_\bB Y)) \\
		& \cong \RHom_{\aA^\op\otimes\bB}(X,\RHom_{\tT_Y}((\bB\ Y),(X\ X\otimes_\bB Y))) \\
		& \cong \RHom_{\aA^\op\otimes\bB}(X,\RHom_{\tT_Y}(\tT_Y,(X\ X\otimes_\bB Y))) \\
		& \cong \RHom_{\aA^\op\otimes\bB}(X,(X\ X\otimes_\bB Y)) \\
		& \cong \RHom_{\aA^\op\otimes\bB}(X,X) \\
		& \cong C(X),
	\end{aligned}
	\]
	which is just the natural projection $p$. Moreover, we have the following commutative diagram.
	\[
	\begin{tikzcd}
		C((X\ X\otimes_\bB Y)) \arrow[r, "p"] \arrow[d, "\eta_{(X\ X\otimes_\bB Y)}"'] & C(X) \arrow[d, "\eta_X"] \\
		D((X\ X\otimes_\bB Y)) \arrow[r, "p"] & D(X)
	\end{tikzcd}
	\]
	Hence, the projection $p:D((X\ X\otimes_\bB Y)) \rightarrow D(X)$ is a quasi-isomorphism. Therefore, $\ac(X|Y)$ is acyclic.
\end{proof}

\medspace

By Lemma \ref{Lem-LowerTriangularDifferential-Triangle}, we have the following triangle in $\dD k$ 
\[
\ac(X|Y) \rightarrow C(\tT_{X,Y}) \rightarrow \bar{C}(\tT_{X,Y}) \rightarrow,
\]
where $\bar{C}(\tT_{X,Y})$ is the graded vector space
\[
C(\aA)\oplus C(\bB)\oplus C(\cC)\oplus D(X) \oplus D(Y)
\]
equipped with differential
\[
\delta_{\bar{C}(\tT_{X,Y})} =
\begin{pmatrix}
	\delta_{C(\aA)} & & & &  \\
	0 & \delta_{C(\bB)} & & &  \\
	0 & 0 & \delta_{C(\cC)} & & \\
	\tilde{\alpha}_X & -\tilde{\beta}_X & 0 & \delta_{D(X)} & \\
	0 & \tilde{\alpha}_Y & -\tilde{\beta}_Y & 0 & \delta_{D(Y)}
\end{pmatrix}.
\]

The following result is also a preparation for the construction of a lax functor in Theorem \ref{Thm-LaxFuntor-HomotopyCat-B-Inf}.

\begin{lemma}\label{Lem-CTXY-bar}
	Let $\aA,\bB$ and $\cC$ be small dg categories, $X$ a dg $\aA$-$\bB$-bimodule, and $Y$ a dg $\bB$-$\cC$-bimodule. If $X$ or $Y$ is cofibrant over $\bB$, then the natural projection
	\[
	C(\tT_{X,Y}) \rightarrow \bar{C}(\tT_{X,Y})
	\]
	is a $B_\infty$-quasi-isomorphism. Moreover, there is a pullback in the category $B_\infty$ of $B_\infty$-algebras
	\[
	\begin{tikzcd}
		\bar{C}(\tT_{X,Y}) \arrow[r, "\iota_{12}^*"] \arrow[d, "\iota_{23}^*"'] & C(\tT_X) \arrow[d,"\iota_2^*"] \\
		C(\tT_Y) \arrow[r, "\iota_1^*"] & C(\bB),
	\end{tikzcd}
	\]
	where the $B_\infty$-morphisms $\iota_{12}^*:\bar{C}(\tT_{X,Y})\to C(\tT_X)$ and $\iota_{12}^*:\bar{C}(\tT_{X,Y})\to C(\tT_X)$ are induced by the $B_\infty$-morphisms $\iota_{12}^*:C(\tT_{X,Y})\to C(\tT_X)$ and $\iota_{12}^*:C(\tT_{X,Y})\to C(\tT_X)$ which are induced by the fully faithful dg functors $\iota_{12}: \tT_X \hookrightarrow \tT_{X,Y}$ and $\iota_{23}: \tT_Y \hookrightarrow \tT_{X,Y}$.
\end{lemma}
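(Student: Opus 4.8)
The plan is to treat the two assertions together, deriving the first from the second. The subtlety is that $\bar{C}(\tT_{X,Y})$ is not a priori the Hochschild cochain complex of any dg category, so it carries no evident $B_\infty$-structure; the cleanest way to produce one is the pullback square itself. I would therefore first show that the square is a pullback in $B_\infty$, thereby equipping $\bar{C}(\tT_{X,Y})$ with a canonical $B_\infty$-structure and exhibiting $\iota_{12}^*,\iota_{23}^*$ as $B_\infty$-morphisms, and only then deduce that the natural projection is a $B_\infty$-quasi-isomorphism.

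For the pullback, recall that $B_\infty$ is the category of algebras over the $B_\infty$-operad and hence has all limits, and these are created by the forgetful functor $B_\infty\to\cC k$ (as for algebras over any operad in a complete category); so the pullback of $C(\tT_X)\xrightarrow{\iota_2^*}C(\bB)\xleftarrow{\iota_1^*}C(\tT_Y)$ in $B_\infty$ exists and its underlying complex is the pullback in $\cC k$. Here $\iota_2^*$ and $\iota_1^*$ are the $B_\infty$-morphisms obtained from the fully faithful inclusions $\iota_2:\bB\hookrightarrow\tT_X$ and $\iota_1:\bB\hookrightarrow\tT_Y$ via Keller's restriction result, and at the level of complexes each is simply the coordinate projection onto the shared summand $C(\bB)$ (a cochain on $\tT_X$ restricting to $\bB$ is exactly its $C(\bB)$-component, and likewise for $\tT_Y$). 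Forming the fibre product $C(\tT_X)\times_{C(\bB)}C(\tT_Y)$ in $\cC k$ then amalgamates the two copies of $C(\bB)$ into one, yielding the underlying graded space $C(\aA)\oplus C(\bB)\oplus C(\cC)\oplus D(X)\oplus D(Y)$ with differential assembled from $\delta_{C(\aA)},\delta_{C(\bB)},\delta_{C(\cC)},\delta_{D(X)},\delta_{D(Y)}$ and the off-diagonal terms $\tilde{\alpha}_X,-\tilde{\beta}_X,\tilde{\alpha}_Y,-\tilde{\beta}_Y$. This is precisely $\delta_{\bar{C}(\tT_{X,Y})}$, so the pullback complex is $\bar{C}(\tT_{X,Y})$ on the nose; transporting the operad action identifies the pullback $B_\infty$-algebra with $\bar{C}(\tT_{X,Y})$ and its canonical projections with $\iota_{12}^*$ and $\iota_{23}^*$.

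With this $B_\infty$-structure in hand, I would recover the projection as follows. The two composite dg functors $\bB\hookrightarrow\tT_X\hookrightarrow\tT_{X,Y}$ (via $\iota_2$ then $\iota_{12}$) and $\bB\hookrightarrow\tT_Y\hookrightarrow\tT_{X,Y}$ (via $\iota_1$ then $\iota_{23}$) coincide, both being the inclusion of the middle object $\bB$; by contravariant functoriality of restriction the two induced $B_\infty$-morphisms $C(\tT_{X,Y})\to C(\bB)$ agree, that is $\iota_2^*\iota_{12}^*=\iota_1^*\iota_{23}^*$. The universal property of the pullback then produces a unique $B_\infty$-morphism $C(\tT_{X,Y})\to\bar{C}(\tT_{X,Y})$ whose underlying map, being the pairing $(\iota_{12}^*,\iota_{23}^*)$, is exactly the natural projection onto the first five summands. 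Finally, Lemma~\ref{Lem-LowerTriangularDifferential-Triangle} applied to the lower-triangular differential $\delta_{C(\tT_{X,Y})}$ gives the triangle $\ac(X|Y)\to C(\tT_{X,Y})\to\bar{C}(\tT_{X,Y})\to$ in $\dD k$, while Lemma~\ref{lem:3.6} shows $\ac(X|Y)$ is acyclic once $X$ or $Y$ is cofibrant over $\bB$; hence the projection is a quasi-isomorphism, and being a $B_\infty$-morphism it is a $B_\infty$-quasi-isomorphism.

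I expect the only genuine obstacle to be the bookkeeping that identifies the underlying complex of the $B_\infty$-pullback with $\bar{C}(\tT_{X,Y})$ (differential included) and that identifies the induced universal map with the honest projection. Both reduce to the single observation that $\iota_1^*$ and $\iota_2^*$ are coordinate projections onto $C(\bB)$, so the fibre product merely glues the two $C(\bB)$-summands and discards nothing else. Once this is verified, acyclicity of $\ac(X|Y)$ is quoted directly from Lemma~\ref{lem:3.6}, and the cofibrancy hypothesis enters only there.
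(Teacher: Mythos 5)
Your proof is correct, and it uses the same two key ingredients as the paper --- the acyclicity of $\ac(X|Y)$ from Lemma~\ref{lem:3.6} fed into the triangle of Lemma~\ref{Lem-LowerTriangularDifferential-Triangle}, and the fact that limits of $B_\infty$-algebras are computed on underlying complexes --- but it assembles them in the opposite order, and the reversal is a genuine difference. The paper first endows $\bar{C}(\tT_{X,Y})$ with the quotient $B_\infty$-structure inherited from $C(\tT_{X,Y})$ (asserting without proof that the operations descend, which amounts to checking that the discarded summand $D(X\otimes_\bB Y)\oplus D(X|Y)$ never appears in the surviving components of cup products and braces, as the explicit $3\times 3$ formulas in Appendix B show), concludes immediately that the projection is a $B_\infty$-quasi-isomorphism, and only then verifies the pullback property from the short exact sequence $0\to\bar{C}(\tT_{X,Y})\to C(\tT_X)\oplus C(\tT_Y)\to C(\bB)\to 0$ in $\cC k$. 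You instead build the pullback in $B_\infty$ first, identify its underlying complex (differential included) with $\bar{C}(\tT_{X,Y})$ via the observation that $\iota_1^*,\iota_2^*$ are coordinate projections onto $C(\bB)$, and then recover the natural projection $C(\tT_{X,Y})\to\bar{C}(\tT_{X,Y})$ as the universal map induced by the equality $\iota_2^*\iota_{12}^*=\iota_1^*\iota_{23}^*$ of restrictions along $\bB\hookrightarrow\tT_{X,Y}$. What your route buys is that the $B_\infty$-structure on $\bar{C}(\tT_{X,Y})$ and the fact that the projection is a $B_\infty$-morphism come for free from the universal property, so you never need the descent claim the paper dismisses as ``clear''; what the paper's route buys is brevity, plus the explicit knowledge that the structure is the quotient structure (your pullback structure coincides with it, since both act componentwise on the five surviving summands, but your argument never needs this identification). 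Note that both proofs lean on the same categorical fact that the forgetful functor $B_\infty\to\cC k$ creates limits: the paper uses it silently when promoting the short exact sequence in $\cC k$ to a pullback in $B_\infty$, whereas you invoke it explicitly at the outset.
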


\begin{proof}
	Due to Lemma \ref{lem:3.6}, the complex $\ac(X|Y)$ is acyclic, which shows that the natural projection
	$C(\tT_{X,Y}) \rightarrow \bar{C}(\tT_{X,Y})$ is a quasi-isomorphism in $\cC k$.
	It is clear that the $B_\infty$-structure on $C(\tT_{X,Y})$ induces a $B_\infty$-structure on $\bar{C}(\tT_{X,Y})$, 
	and the natural projection $C(\tT_{X,Y}) \rightarrow \bar{C}(\tT_{X,Y})$ is a $B_\infty$-quasi-isomorphism.
	The last statement holds since 
	$$0 \to \bar{C}(\tT_{X,Y}) \xrightarrow{\iota_{12}^* \choose \iota_{23}^*} C(\tT_X)\oplus C(\tT_Y) \xrightarrow{(\iota_2^*,\iota_1^*)} C(\bB)\to 0$$
	is a short exact sequence in $\cC k$.
\end{proof}




\medspace

Let $\aA$ be a small dg category. Then we have an isomorphism of complexes $\kappa_\aA:C(\aA) \rightarrow D(I_\aA)$. 
More generally, there is a quasi-isomorphism of complexes
\[
\kappa_\aA^m:C(\aA) \rightarrow D(\underbrace{I_\aA|\cdots|I_\aA}_{m\text{ times}})
\]
given by $\kappa_\aA^m\phi(s(a_0)_{1,l_0}\otimes sa'_1\otimes s(a_1)_{1,l_1}\otimes \cdots\otimes sa'_m\otimes s(a_m)_{1,l_m}):=\phi(s(a_0)_{1,l_0}\otimes sa'_1\otimes s(a_1)_{1,l_1}\otimes \cdots\otimes sa'_m\otimes s(a_m)_{1,l_m})$ for all $m\ge 1$.

The following lemma is another preparation for the construction of a lax functor in Theorem \ref{Thm-LaxFuntor-HomotopyCat-B-Inf}.

\begin{lemma}\label{Lem-ThetaA^2}
	Let $\aA$ be a small dg category. Then the graded linear map
	\[
	\theta_\aA^2 := (1_{C(\aA)}, 1_{C(\aA)}, 1_{C(\aA)}, \kappa_\aA, \kappa_\aA, \kappa_\aA, \kappa_\aA^2)^{\mT} : C(\aA) \rightarrow C(\tT_{ I_\aA,I_\aA})
	\]
	is a $B_\infty$-quasi-isomorphism. Moreover, the $B_\infty$-morphisms $\iota_1^*,\iota_2^*,\iota_3^*: C(\tT_{I_\aA,I_\aA})\to C(\aA)$ induced by the fully faithful dg functors $\iota_1,\iota_2,\iota_3: \aA\hookrightarrow\tT_{I_\aA,I_\aA}$ are $B_\infty$-quasi-isomorphisms in $B_\infty$ and equal in ${\rm Ho} B_\infty$.
\end{lemma}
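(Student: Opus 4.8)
The plan is to reduce to the $2\times 2$ case of Lemma~\ref{Lem-ThetaA} through the pullback presentation of $\bar{C}(\tT_{I_\aA,I_\aA})$ supplied by Lemma~\ref{Lem-CTXY-bar}. First I would apply Lemma~\ref{Lem-CTXY-bar} with $\aA=\bB=\cC$ and $X=Y=I_\aA$; since $I_\aA$ is cofibrant over $\aA$ (the same cofibrancy invoked earlier to see that $\alpha_{I_\aA}$ and $\beta_{I_\aA}$ are quasi-isomorphisms), this produces a $B_\infty$-quasi-isomorphism $\pi\colon C(\tT_{I_\aA,I_\aA})\to\bar{C}(\tT_{I_\aA,I_\aA})$ together with a pullback square in $B_\infty$ realizing $\bar{C}(\tT_{I_\aA,I_\aA})$ as the fibre product of the cospan $C(\tT_{I_\aA})\xrightarrow{\iota_2^*}C(\aA)\xleftarrow{\iota_1^*}C(\tT_{I_\aA})$, with legs $\iota_{12}^*$ and $\iota_{23}^*$.

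Next I would feed $\theta_\aA$ from Lemma~\ref{Lem-ThetaA} into this square. The strict restrictions satisfy $\iota_1^*\theta_\aA=1_{C(\aA)}=\iota_2^*\theta_\aA$ in $B_\infty$, because the higher components of $\theta_\aA$ constructed in Appendix~\ref{Appendix-ThetaA} take values in the summand $D(I_\aA)$, which both $\iota_1^*$ and $\iota_2^*$ annihilate. Hence the pair $(\theta_\aA,\theta_\aA)$ is a strictly compatible cone over the above cospan, and the universal property of the pullback in $B_\infty$ yields a $B_\infty$-morphism $\Theta\colon C(\aA)\to\bar{C}(\tT_{I_\aA,I_\aA})$ with $\iota_{12}^*\Theta=\theta_\aA=\iota_{23}^*\Theta$ and degree-one part $(1_{C(\aA)},1_{C(\aA)},1_{C(\aA)},\kappa_\aA,\kappa_\aA)^{\mT}$. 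To prove $\Theta$ is a quasi-isomorphism I would compare the defining short exact sequence $0\to\bar{C}(\tT_{I_\aA,I_\aA})\to C(\tT_{I_\aA})\oplus C(\tT_{I_\aA})\to C(\aA)\to 0$ of Lemma~\ref{Lem-CTXY-bar} with the split short exact sequence $0\to C(\aA)\xrightarrow{\Delta}C(\aA)\oplus C(\aA)\to C(\aA)\to 0$ through the vertical maps $\Theta$, $\theta_\aA\oplus\theta_\aA$ and $1_{C(\aA)}$; the latter two are quasi-isomorphisms, so comparing the long exact cohomology sequences and invoking the five lemma forces $\Theta$ to be one as well.

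It then remains to lift $\Theta$ to the $B_\infty$-morphism $\theta_\aA^2\colon C(\aA)\to C(\tT_{I_\aA,I_\aA})$ asserted in the lemma, namely one with $\pi\theta_\aA^2=\Theta$ and degree-one part $(1_{C(\aA)},1_{C(\aA)},1_{C(\aA)},\kappa_\aA,\kappa_\aA,\kappa_\aA,\kappa_\aA^2)^{\mT}$. Granting such a $\theta_\aA^2$, the two-out-of-three property applied to $\pi\theta_\aA^2=\Theta$, with $\pi$ and $\Theta$ both quasi-isomorphisms, shows at once that $\theta_\aA^2$ is a $B_\infty$-quasi-isomorphism. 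I expect the construction of the higher components of $\theta_\aA^2$ to be the main obstacle. As in the $2\times 2$ case, the stated degree-one map is \emph{not} a chain map --- making it one would demand $\tilde{\alpha}_{I_\aA}=\tilde{\beta}_{I_\aA}$ on the nose, which only holds in $\dD k$ --- so genuine homotopies are forced, now including a new component into the summand $D(I_\aA|I_\aA)$ governed by $\kappa_\aA^2$ and the operation $\tilde{\mu}^1_{I_\aA|I_\aA}$. I would carry this out by the explicit method of Appendix~\ref{Appendix-ThetaA}, transporting the homotopies of $\theta_\aA$ along the two block inclusions $\tT_{I_\aA}\hookrightarrow\tT_{I_\aA,I_\aA}$ and adjoining the single new $D(I_\aA|I_\aA)$-term; should $C(\aA)$ be cofibrant in $B_\infty$, $\theta_\aA^2$ could instead be obtained abstractly by lifting $\Theta$ against the trivial fibration $\pi$.

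Finally, for the \emph{moreover} clause I would observe that each restriction $\iota_i^*\colon C(\tT_{I_\aA,I_\aA})\to C(\aA)$ is strict and satisfies $\iota_i^*\theta_\aA^2=1_{C(\aA)}$, since $\iota_i^*$ reads off the $i$-th diagonal copy of $C(\aA)$ while killing all the $D$-summands that carry the higher components of $\theta_\aA^2$. Two-out-of-three (with $\theta_\aA^2$ and $1_{C(\aA)}$ both quasi-isomorphisms) then makes every $\iota_i^*$ a $B_\infty$-quasi-isomorphism, and in $\Ho B_\infty$ each equals $[\theta_\aA^2]^{-1}$; therefore $\iota_1^*=\iota_2^*=\iota_3^*$ in $\Ho B_\infty$.
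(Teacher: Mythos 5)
Your proposal contains a genuine gap at its central step, and the gap stems from a misreading of how $\theta_\aA$ works in the $2\times 2$ case. You assert that the stated degree-zero map ``is \emph{not} a chain map --- making it one would demand $\tilde{\alpha}_{I_\aA}=\tilde{\beta}_{I_\aA}$ on the nose,'' and that therefore ``genuine homotopies are forced,'' whose construction you then defer as ``the main obstacle.'' This is false, and it is exactly the point of the paper's proof. The differential on $C(\tT_{I_\aA})=C(\aA)\oplus C(\aA)\oplus D(I_\aA)$ is lower triangular with the off-diagonal entries $\tilde{\alpha}_{I_\aA}$ and $-\tilde{\beta}_{I_\aA}$ feeding into the $D(I_\aA)$ summand, so strictness of $\theta_\aA$ requires only the identity $\tilde{\alpha}_{I_\aA}-\tilde{\beta}_{I_\aA}+\delta_{D(I_\aA)}\kappa_\aA=\kappa_\aA\delta_{C(\aA)}$, i.e.\ the discrepancy $\tilde{\alpha}_{I_\aA}-\tilde{\beta}_{I_\aA}$ is exactly absorbed by the failure of $\kappa_\aA$ to intertwine the \emph{external} differentials; it never needs to vanish. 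Consequently $\theta_\aA$ in Lemma~\ref{Lem-ThetaA} is a \emph{strict} $B_\infty$-morphism with no higher components, so there are no ``homotopies of $\theta_\aA$'' to transport along the block inclusions, and your fallback (abstract lifting against $\pi$, contingent on cofibrancy of $C(\aA)$ in $B_\infty$, which you do not establish) is likewise never carried out. In short, the object the lemma is about --- the $B_\infty$-morphism $\theta_\aA^2$ --- is never constructed in your argument.

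The paper's proof closes this gap by direct verification: the strict map $(1,1,1,\kappa_\aA,\kappa_\aA,\kappa_\aA,\kappa_\aA^2)^{\mT}$ commutes with $\delta_{C(\tT_{I_\aA,I_\aA})}$ because the lower-triangular terms $\tilde{\alpha}$, $-\tilde{\beta}$, $\tilde{\mu}^1_{I_\aA|I_\aA}$ compensate for $\kappa_\aA$ and $\kappa_\aA^2$ exactly as above, and it respects cup products and braces by the explicit $3\times 3$ formulas (Lemmas~\ref{Lem-CupProd-3x3} and~\ref{Lem-Brace-3x3}). Your surrounding architecture is salvageable once existence is settled: the pullback presentation of $\bar{C}(\tT_{I_\aA,I_\aA})$ from Lemma~\ref{Lem-CTXY-bar}, the five-lemma argument for $\Theta$, and two-out-of-three via $\pi$ would give the quasi-isomorphism property, and your treatment of the \emph{moreover} clause ($\iota_i^*\theta_\aA^2=1_{C(\aA)}$, hence all $\iota_i^*$ are quasi-isomorphisms equal to $[\theta_\aA^2]^{-1}$ in $\Ho B_\infty$) matches the paper's. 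Note, though, that the paper gets the quasi-isomorphism property more directly, by first proving $\iota_1^*$ is a quasi-isomorphism via the homotopy bicartesian square of Lemma~\ref{Lem-Homotopy-Bicart} applied to the bimodule $(I_\aA\ \, I_\aA\otimes_\aA I_\aA)\cong\tT_{I_\aA}$, and then reading off everything else from $\iota_i^*\theta_\aA^2=1_{C(\aA)}$.
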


\begin{proof}
	See Appendix \ref{Appendix-ThetaA^2}.
\end{proof}

\medspace

\noindent{\bf Hochschild cochain complex of a $4\times 4$ upper triangular matrix dg category.} 
Let $\aA,\bB,\cC$ and $\dD$ be small dg categories, $X$ a dg $\aA$-$\bB$-bimodule, $Y$ a dg $\bB$-$\cC$-bimodule, and $Z$ a dg $\cC$-$\dD$-bimodules. The Hochschild cochain complex $C(\tT_{X,Y,Z})$ of upper triangular matrix dg category $\tT_{X,Y,Z} = \begin{pmatrix}
	\aA&X&X\otimes_\bB Y&X\otimes_\bB Y\otimes_\cC Z\\ &\bB&Y&Y\otimes_\cC Z\\ &&\cC&Z\\ &&&\dD
\end{pmatrix}$ is the graded vector space
	$C(\aA)\oplus C(\bB)\oplus C(\cC)\oplus C(\dD)\oplus D(X) \oplus D(Y)\oplus D(Z)\oplus D(X\otimes_\bB Y)\oplus D(Y\otimes_\cC Z)\oplus 
	D(X\otimes_\aA Y\otimes_\bB Z)\oplus D(X|Y) \oplus D(Y|Z)\oplus D(X|(Y\otimes_\bB Z)) \oplus D((X\otimes_\bB Y)|Z) \oplus D(X|Y|Z)$
equipped with differential $\delta_{C(\tT_{X,Y,Z})}$ similar to $\delta_{C(\tT_{X,Y})}$.

\section{Constructions of bicategories}

In this section, we will construct the source and target bicategories of our objective (co)lax functors. 

\subsection{Bicategories and (co)lax functors}
Bicategory was introduced by B\'{e}nabou \cite{Benabou67}. It is a multi-object version of monoidal category.
The lax functors between bicategories are generalization of the monoidal functors between monoidal categories. We refer \cite{JohnsonYau21} for the knowledge of bicategories and (co)lax functors.

\medspace

\noindent{\bf Bicategory.} A {\it bicategory} $\mathsf{B}$ is a sextuple $(\mathsf{B},1,c,a,l,r)$ consisting of the following data:

(1) {\it Class of objects:} A class $\Ob(\mathsf{B})$, whose elements are called {\it objects} or {\it 0-cells}. If $X\in \Ob(\mathsf{B})$, we also write $X\in \mathsf{B}$.

(2) {\it Hom categories:} For each pair of objects $X,Y\in \mathsf{B}$, a category $\mathsf{B}(X,Y)$, called {\it Hom category}, whose objects $f:X\to Y$ are called {\it 1-cells}, whose morphisms $\alpha:f\to f'$ are called {\it 2-cells}, whose compositions $\alpha'\alpha$ are called {\it vertical compositions}, and whose identity morphisms $1_f$ are called {\it identity 2-cells}. Vertical compositions satisfy associativity $(\alpha''\alpha')\alpha=\alpha''(\alpha'\alpha)$ and unity $\alpha 1_f=\alpha=1_{f'}\alpha$ for all 2-cells $\alpha:f\to f', \alpha':f'\to f'', \alpha'':f''\to f'''$.

(3) {\it Identity 1-cells:} For each object $X\in \mathsf{B}$, a functor $1_X: {\bf 1} \to \mathsf{B}(X,X)$, where ${\bf 1}$ is the category with one object $*$ and one morphism $1_*$. We identify the functor $1_X$ with the 1-cell $1_X(*)\in \mathsf{B}(X,X)$, called the {\it identity 1-cell} of $X$, whose identity 2-cell is $1_{1_X}$.

(4) {\it Horizontal compositions:} For each triple of objects $X,Y, Z\in \mathsf{B}$, a functor $c_{XYZ}: \mathsf{B}(Y,Z) \times \mathsf{B}(X,Y) \to \mathsf{B}(X,Z)$, called {\it horizontal composition}. For all 1-cells $f,f'\in \mathsf{B}(X,Y)$ and $g,g'\in \mathsf{B}(Y,Z)$, and 2-cells $\alpha:f\to f'$ and $\beta:g\to g'$, we denote $c_{XYZ}(g,f)$ by $g\circ f$ or $gf$, and denote $c_{XYZ}(\beta,\alpha)$ by $\beta * \alpha$. The functor $c_{XYZ}$ preserves vertical compositions, i.e., $(\beta'\beta)*(\alpha'\alpha)=(\beta'*\alpha')(\beta*\alpha)$, called {\it middle four exchange}, and identity 2-cells, i.e., $1_g*1_f=1_{gf}$, for all 1-cells $f,f',f''\in\mathsf{B}(X,Y), g,g',g''\in\mathsf{B}(Y,Z)$ and 2-cells $\alpha:f\to f', \alpha':f'\to f'',\beta:g\to g', \beta':g'\to g''$.

(5) {\it Associators:} For each quadruple of objects $W,X,Y, Z\in \mathsf{B}$, a natural isomorphism $a_{WXYZ}: c_{WXZ}(c_{XYZ}\times \Id_{\mathsf{B}(W,X)}) \to c_{WYZ}(\Id_{\mathsf{B}(Y,Z)}\times c_{WXY})$ between two functors from $\mathsf{B}(Y,Z)\times \mathsf{B}(X,Y)\times \mathsf{B}(W,X)$ to $\mathsf{B}(W,Z)$, called {\it associator}. Its component 2-cell is isomorphism $a_{h,g,f}:(hg)f\to h(gf)$. The naturality means $a_{h',g',f'}((\gamma*\beta)*\alpha)=(\gamma*(\beta*\alpha))a_{h,g,f}$ for all 1-cells $f,f'\in\mathsf{B}(W,X), g,g'\in\mathsf{B}(X,Y), h,h'\in\mathsf{B}(Y,Z)$ and 2-cells $\alpha:f\to f', \beta:g\to g', \gamma:h\to h'$.

(6) {\it Unitors.} For each pair of objects $X,Y\in\mathsf{B}$, two natural isomorphisms $l_{XY}: c_{XYY}(1_Y\times \Id_{\mathsf{B}(X,Y)}) \to \Id_{\mathsf{B}(X,Y)}$ and $r_{XY}: c_{XXY}(\Id_{\mathsf{B}(X,Y)}\times 1_X) \to \Id_{\mathsf{B}(X,Y)}$, called {\it left unitor} and {\it right unitor}, respectively.
Their component 2-cells are isomorphisms $l_f: 1_Yf\to f$ and $r_f:f1_X\to f$. The naturality means $l_{f'}(1_{1_Y}*\alpha)=\alpha l_f$ and $r_{f'}(\alpha*1_{1_X})=\alpha r_f$ for all 1-cells $f,f'\in\mathsf{B}(X,Y)$ and 2-cell $\alpha:f\to f'$.

The above data satisfy the following two axioms:

{\it Unity Axiom:} For each pair of 1-cells $f\in \mathsf{B}(X,Y), g\in \mathsf{B}(Y,Z)$, the {\it middle unity diagram}
\[
  \begin{tikzcd}
	(g1_Y)f \arrow[rr,"a_{g,1_Y,f}"] \arrow[dr,"r_g * 1_f" '] & & g(1_Yf) \arrow[dl,"1_g * l_f"] \\
	& gf &
  \end{tikzcd}
\]
in $\mathsf{B}(X,Z)$ is commutative.

{\it Pentagon Axiom:} For each quadruple of 1-cells $f\in \mathsf{B}(V,W), g\in \mathsf{B}(W,X), h\in \mathsf{B}(X,Y), k\in \mathsf{B}(Y,Z)$, the following diagram in $\mathsf{B}(V,Z)$ is commutative.
\[
  \begin{tikzcd}
	&&(kh)(gf)\arrow[drr,"a_{k,h,gf}"] &&\\
	((kh)g)f \arrow[urr,"a_{kh,g,f}"] \arrow[dr,"a_{k,h,g}*1_f"] &&&& k(h(gf)) \\
	&(k(hg))f \arrow[rr,"a_{k,hg,f}"] &&k((hg)f) \arrow[ur,"1_k*a_{h,g,f}"] &
  \end{tikzcd}
\]

\medspace

\noindent{\bf Sub-bicategory.} A bicategory $(\mathsf{B}',1',c',a',l',r')$ is called a {\it sub-bicategory} of a bicategory $(\mathsf{B},1,c,a,l,r)$ if the following statements hold:

(1) {\it Class of objects:} $\Ob(\mathsf{B}')$ is a sub-class of $\Ob(\mathsf{B})$.

(2) {\it Hom categories:} For each pair of objects $X,Y\in \mathsf{B}'$, $\mathsf{B}'(X,Y)$ is a sub-category of $\mathsf{B}(X,Y)$.

(3) {\it Identity 1-cells:} The identity 1-cell $1'_X$ of $X$ in $\mathsf{B}'$ is equal to the identity 1-cell $1_X$ of $X$ in $\mathsf{B}$.

(4) {\it Horizontal compositions:} For each triple of objects $X,Y,Z\in \mathsf{B}'$, the following diagram is commutative.
\[
  \begin{tikzcd}
	\mathsf{B}'(Y,Z)\times \mathsf{B}'(X,Y) \ar[r,"c'_{XYZ}"] \ar[d] & \mathsf{B}'(X,Z) \ar[d] \\
	\mathsf{B}(Y,Z)\times \mathsf{B}(X,Y) \ar[r,"c_{XYZ}"] & \mathsf{B}(X,Z)
  \end{tikzcd}
\]

(5) {\it Associators and unitors:} Every component 2-cell of the associator (resp. the left unitor, the right unitor) in $\mathsf{B}'$ is equal to the corresponding component 2-cell of the associator (resp. the left unitor, the right unitor) in $\mathsf{B}$.

\medspace

\noindent{\bf Lax functor.} A {\it lax functor} $F$ from a bicategory $(\mathsf{B},1,c,a,l,r)$ to a bicategory $(\mathsf{B}',1',c', \\ a',l',r')$ is a triple $(F,F^2,F^0)$ consisting of the following data:

(1) {\it Function on classes of objects:} A function $F: \Ob(\mathsf{B}) \to \Ob(\mathsf{B}'), X\mapsto FX$.

(2) {\it Functors on Hom categories:} For each pair of objects $X,Y\in\mathsf{B}$, a functor $F=F_{XY}:\mathsf{B}(X,Y)\to\mathsf{B}'(FX,FY), f\mapsto Ff, \alpha\mapsto F\alpha$, which preserves vertical compositions, i.e., $F(\alpha'\alpha)=F(\alpha')F(\alpha)$, and preserves identity 2-cells, i.e., $F(1_f)=1_{F(f)}$, for all 1-cells $f,f',f''\in\mathsf{B}(X,Y)$ and 2-cells $\alpha:f\to f',\alpha':f'\to f''$.

(3) {\it Lax functoriality constraints:} For each triple of objects $X,Y,Z\in\mathsf{B}$, a natural transformation $F^2=F^2_{XYZ}:c'_{FX,FY,FZ}(F_{YZ}\times F_{XY}) \Rightarrow F_{XZ}c_{XYZ}$ with component 2-cell $F^2_{g,f}: Fg\circ Ff \to F(gf)$, called {\it lax functoriality constraint}.
\[
\begin{tikzcd}[row sep=large,column sep=large]
	\mathsf{B}(Y,Z) \times \mathsf{B}(X,Y) \ar[r,"c_{XYZ}"] \ar[d,"F_{YZ}\times F_{XY}"'] & \mathsf{B}(X,Z) \ar[d,"F_{XZ}"] \\
	\mathsf{B}'(FY,FZ) \times \mathsf{B}'(FX,FY) \ar[r,"c'_{FX,FY,FZ}"] \ar[ur,shorten=1cm, Rightarrow,"F^2_{XYZ}"]& \mathsf{B}'(FX,FZ) 
\end{tikzcd}
\]
The naturality of $F^2_{XYZ}$ means $F(\beta*\alpha)\circ F^2_{g,f}=F^2_{g',f'}\circ (F\beta*F\alpha)$, i.e., the following diagram is commutative 
\[
\begin{tikzcd}
	Fg\circ Ff \ar[r,"F^2_{g,f}"] \ar[d,"F\beta * F\alpha"'] & 
	F(gf) \ar[d,"F(\beta * \alpha)"] \\
	Fg'\circ Ff' \ar[r,"F^2_{g',f'}"] & F(g'f')
\end{tikzcd}
\]
for all 1-cells $f,f'\in\mathsf{B}(X,Y), g,g'\in\mathsf{B}(Y,Z)$ and 2-cells $\alpha:f\to f',\beta:g\to g'$.

(4) {\it Lax unity constraints:} For each object $X\in\mathsf{B}$, a natural transformation $F^0_X:1'_{FX} \Rightarrow F1_X$ with component 2-cell $F^0_X : 1'_{FX} \to F1_X$, called {\it lax unity constraint}.
\[
\begin{tikzcd}
	{\bf 1} \ar[r,"1_X"] \ar[dr,"1'_{FX}" '{name=U},] & \mathsf{B}(X,X) \ar[d,"F"] \arrow[Rightarrow,from=U,"F^0_X"] \\
	& \mathsf{B}'(FX,FX)
\end{tikzcd}
\]
The naturality of $F^0_X$ means $F1_{1_X}\circ F^0_X=F^0_X\circ 1_{1'_{FX}}$,  i.e., the following diagram is commutative 
\[
\begin{tikzcd}
	1'_{FX} \ar[r,"F^0_X"] \ar[d,"1_{1'_{FX}}"'] & 
	F1_X \ar[d,"F1_{1_X}"] \\
	1'_{FX} \ar[r,"F^0_X"] & F1_X
\end{tikzcd}
\]
which always holds (Ref. \cite[Explanation 4.1.5 (3)]{JohnsonYau21}). 

The above data satisfy:

{\it Lax Associativity:} For all 1-cells $f\in\mathsf{B}(W,X), g\in\mathsf{B}(X,Y), h\in\mathsf{B}(Y,Z)$,
the following diagram in $\mathsf{B}'(FW,FZ)$ is commutative.
\[
\begin{tikzcd}[row sep=large,column sep=large]
	(Fh\circ Fg)\circ Ff \arrow[r, "a'_{Fh,Fg,Ff}"] \arrow[d, "{F^2_{h,g}*1_{Ff}}"'] & Fh\circ(Fg\circ Ff) \arrow[d, "{1_{Fh}*F^2_{g,f}}"] \\
	F(hg)\circ Ff \arrow[d, "{F^2_{hg,f}}"'] & Fh\circ F(gf) \arrow[d, "{F^2_{h,gf}}"] \\
	F((hg)f) \arrow[r, "Fa_{h,g,f}"] & F(h(gf))
\end{tikzcd}
\]

{\it Lax Left Unity:} For all 1-cell $f\in\mathsf{B}(X,Y)$,
the following diagram in $\mathsf{B}'(FX,FY)$ is commutative.
\[
\begin{tikzcd}
	1'_{FY}\circ Ff \arrow[r,"l'_{Ff}"] \arrow[d, "{F_Y^0*1_{Ff}}"'] & Ff \\
	F1_Y\circ Ff \arrow[r, "{F^2_{\one_Y,f}}"] & F(1_Y\circ f) \arrow[u, "Fl_f"'] 
\end{tikzcd}
\]

{\it Lax Right Unity:} For all 1-cell $f\in\mathsf{B}(X,Y)$,
the following diagram in $\mathsf{B}'(FX,FY)$ is commutative.
\[
\begin{tikzcd}
	Ff\circ 1'_{FX} \ar[r,"r'_{Ff}"] \ar[d,"1_{Ff}*F^0_X" '] & Ff  \\
	Ff\circ F1_X \ar[r,"F^2_{f,1_X}"] & F(f \circ 1_X) \ar[u,"Fr_f" ']
\end{tikzcd}
\]

\medspace

\noindent{\bf Pseudofunctors.} A {\it pseudofunctor} is a lax functor $(F,F^2,F^0)$ in which $F^2$ and $F^0$ are natural isomorphisms.

\medspace

\noindent{\bf Strict functors.} 
A {\it strict functor} is a lax functor $(F,F^2,F^0)$ in which $F^2$ and $F^0$ are 
are identity natural transformations.

\medspace

\noindent{\bf Colax functors.} Let $\mathsf{B}$ be a bicategory. Then its co-bicategory $\mathsf{B}^{\rm co}$, which has the same objects as $\mathsf{B}$ and the opposite Hom categories (Ref. \cite[Definition 2.6.3]{JohnsonYau21}), is still a bicategory. A {\it colax functor} from a bicategory $\mathsf{B}$ to a bicategory $\mathsf{B}'$ is a lax functor from the co-bicategory $\mathsf{B}^{\rm co}$ of $\mathsf{B}$ to the co-bicategory $\mathsf{B}'^{\rm co}$ of $\mathsf{B}'$.

\medspace

\noindent{\bf 1-skeleton of a bicategory.} Every category is equivalent to its skeleton. Similarly, every bicategory is biequivalent to its 1-skeleton.

Let $\mathsf{C}$ be a category. The {\it skeleton} of $\mathsf{C}$ is the category $\overline{\mathsf{C}}$ defined as follows:
For each object $X\in\mathsf{C}$, we denote by $[X]$ its isomorphism class. For each isomorphism class $[X]$ of $X\in\mathsf{C}$ and each object $X'\in[X]$, we fix an object $\overline{X}\in[X]$ and an isomorphism $\phi_{X'}:\overline{X} \rightarrow X'$.
	\begin{enumerate}
		\item The objects of $\overline{\mathsf{C}}$ are all isomorphism classes of objects in $\mathsf{C}$, alternately, all chosen representatives $\overline{X}$ of isomorphism classes $[X]$ of objects $X$ in $\mathsf{C}$. 
		\item For any two objects $[X],[Y]\in\overline{\mathsf{C}}$, the Hom set $\overline{\mathsf{C}}([X],[Y]):=\mathsf{C}(\overline{X},\overline{Y}).$
		\item For any object $[X]\in\overline{\mathsf{C}}$, the identity morphism $1_{[X]}:=1_{\overline{X}}$.
		\item The composition in $\overline{\mathsf{C}}$ is the same as that in $\mathsf{C}$.
	\end{enumerate}
The functor $T_\mathsf{C}: \overline{\mathsf{C}} \to \mathsf{C}, [X]\mapsto \overline{X}, f\mapsto f$, is fully faithful and dense, 
so it is an equivalence with a quasi-inverse $S_\mathsf{C}: \mathsf{C} \to \overline{\mathsf{C}}, X\mapsto [X], (f:X\to Y)\mapsto (\phi_Y^{-1}f\phi_X: \overline{X}\to\overline{Y})$.
$$\begin{tikzcd}
	\overline{X} \arrow[r,"S_\mathsf{C}f"] \arrow[d,"\phi_X"] & \overline{Y} \arrow[d,"\phi_Y"] \\
	X \arrow[r,"f"] & Y
\end{tikzcd}$$

\medspace

Let $\mathsf{B}$ be a bicategory. The {\it $1$-skeleton} of $\mathsf{B}$ is the bicategory $\overline{\mathsf{B}}$ defined as follows:
	\begin{enumerate}
		\item The objects of $\overline{\mathsf{B}}$ are all objects of $\mathsf{B}$.
		
		\item For all $X,Y\in\overline{\mathsf{B}}$, the Hom category
		  $\overline{\mathsf{B}}(X,Y):=\overline{\mathsf{B}(X,Y)}$,
		  the skeleton of the category $\mathsf{B}(X,Y)$, where we fix $\overline{1_X}=1_X$ when $Y=X$.
		  
		\item For all $X\in\overline{\mathsf{B}}$, the identity 1-cell $1_X:=[1_X]$.
		
		\item For all $X,Y,Z\in\overline{\mathsf{B}}$, the horizontal composition functor 
		\[
		\bar{c}_{XYZ}: \overline{\mathsf{B}}(Y,Z)\times \overline{\mathsf{B}}(X,Y) \to \overline{\mathsf{B}}(X,Z)
		\] 
		in $\overline{\mathsf{B}}$ is induced by the horizontal composition functor $c_{XYZ}$ in $\mathsf{B}$. 
		\[
			\begin{tikzcd}
				\overline{\mathsf{B}}(Y,Z)\times \overline{\mathsf{B}}(X,Y) \arrow[r, "\bar{c}_{XYZ}"] \arrow[d, "\simeq"',"{T_{\mathsf{B}(Y,Z)}\times T_{\mathsf{B}(X,Y)}}"] & \overline{\mathsf{B}}(X,Z) \\
				\mathsf{B}(Y,Z)\times\mathsf{B}(X,Y) \arrow[r, "c_{XYZ}"] & \mathsf{B}(X,Z) \arrow[u, "\simeq","{S_{\mathsf{B}(X,Z)}}"']
			\end{tikzcd}
		\]
		More precisely, $[g]\circ[f]:=[\overline{g}\circ\overline{f}]$ and $\beta\bar{*}\alpha:=\phi_{\overline{g'}\circ\overline{f'}}^{-1}(\beta*\alpha)\phi_{\overline{g}\circ\overline{f}}$, for all 1-cells $[f]\in\overline{\mathsf{B}}(X,Y), [g]\in\overline{\mathsf{B}}(Y,Z)$ and 2-cells $\alpha:[f] \rightarrow [f'],\beta:[g] \rightarrow [g']$.
		
		\item For all $W,X,Y,Z\in\overline{\mathsf{B}}$, the associator $\bar{a}_{WXYZ}$ is given by the component 2-cell
		\[
			\bar{a}_{[h],[g],[f]} := \phi_{\overline{h}\circ\overline{\overline{g}\circ\overline{f}}}^{-1} (1_{\overline{h}}*\phi_{\overline{g}\circ\overline{f}}^{-1}) a_{\overline{h},\overline{g},\overline{f}} (\phi_{\overline{h}\circ\overline{g}}*1_{\overline{f}}) \phi_{\overline{\overline{h}\circ\overline{g}}\circ\overline{f}}
		\]
		for all 1-cells $[f]\in\overline{\mathsf{B}}(W,X), [g]\in\overline{\mathsf{B}}(X,Y), [h]\in\overline{\mathsf{B}}(Y,Z)$.
		$$\begin{tikzcd} [column sep=large]
			\overline{ \overline{\overline{h}\circ\overline{g}} \circ \overline{f} } \arrow[r,"{ \bar{a}_{[h],[g],[f]} }"] 
			\arrow[d,"{ \phi_{ \overline{\overline{h}\circ\overline{g}} \circ \overline{f}} }"] 
			& \overline{ \overline{h} \circ \overline{\overline{g}\circ\overline{f}} } 
			\arrow[d,"{ \phi_{ \overline{h} \circ \overline{\overline{g}\circ\overline{f}} } }"] 
			\\
			\overline{\overline{h}\circ\overline{g}} \circ \overline{f} 
			\arrow[d,"{ \phi_{ \overline{h}\circ\overline{g} }*1_{\overline{f}} }"] 
			& \overline{h} \circ \overline{ \overline{g} \circ \overline{f} } \arrow[d,"{ 1_{\overline{h}}*\phi_{ \overline{g}\circ\overline{f} } }"] \\
			(\overline{h} \circ \overline{g}) \circ \overline{f} \arrow[r,"{ a_{\overline{h},\overline{g},\overline{f}} }"] & \overline{h} \circ (\overline{g} \circ \overline{f})
		\end{tikzcd}$$
		
		\item For all $X,Y\in\overline{\mathsf{B}}$, the left unitor $\bar{l}_{XY}$ is the natural isomorphism
		\[
			S_{\mathsf{B}(X,Y)}l_{XY}T_{\mathsf{B}(X,Y)},
		\]
		i.e., its component 2-cell $\bar{l}_{[f]}:=l_{\overline{f}}\phi_{1_Y\circ \overline{f}}$ for all 1-cell $[f]\in\overline{\mathsf{B}}(X,Y)$.

		The right unitor $\bar{r}_{XY}$ is the natural isomorphism
		\[
			S_{\mathsf{B}(X,Y)}r_{XY}T_{\mathsf{B}(X,Y)},
		\]
		i.e., its component 2-cell $\bar{r}_{[f]}:=r_{\overline{f}}\phi_{\overline{f}\circ 1_X}$ for all 1-cell $[f]\in\overline{\mathsf{B}}(X,Y)$.
	\end{enumerate}

\begin{proposition} \label{Prop-Bicat-1-Skeleton} {\rm (Ref. \cite[Lemma 2.20]{Schommer-Pries09})}
	Every bicategory $\mathsf{B}$ is biequivalent to its 1-skeleton $\overline{\mathsf{B}}$. More precisely, the pseudofunctor $\mathscr{S}: \mathsf{B}\to\overline{\mathsf{B}}, X\mapsto X, f\mapsto [f], (\alpha:f\to g)\mapsto (\phi_g^{-1}\alpha\phi_f: \overline{f}\to\overline{g})$, is a biequivalence with inverse $\mathscr{T}: \overline{\mathsf{B}}\to\mathsf{B}, X\mapsto X, [f]\mapsto \overline{f}, \alpha\mapsto\alpha$.
\end{proposition}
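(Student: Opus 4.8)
The plan is to recognise $\overline{\mathsf{B}}$ as the bicategory obtained from $\mathsf{B}$ by transporting its structure along the local skeleton equivalences $S_{\mathsf{B}(X,Y)}\colon \mathsf{B}(X,Y)\rightleftarrows \overline{\mathsf{B}(X,Y)}\colon T_{\mathsf{B}(X,Y)}$, and then to produce the biequivalence directly. I would first upgrade $\mathscr{S}$ and $\mathscr{T}$ to pseudofunctors, and then exhibit the invertible data witnessing $\mathscr{S}\mathscr{T}\cong \Id_{\overline{\mathsf{B}}}$ and $\mathscr{T}\mathscr{S}\cong\Id_{\mathsf{B}}$. More economically, once $\mathscr{S}$ is known to be a pseudofunctor, I would invoke the bicategorical recognition theorem of \cite{JohnsonYau21}: a pseudofunctor is a biequivalence as soon as it is essentially surjective on objects and each of its functors on Hom categories is an equivalence of categories.

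First I would construct the structure constraints of $\mathscr{S}$. Its functors on Hom categories are, by inspection of the formula $(\alpha\colon f\to g)\mapsto(\phi_g^{-1}\alpha\phi_f)$, exactly the skeleton projections $\mathscr{S}_{XY}=S_{\mathsf{B}(X,Y)}$, each of which is an equivalence of categories by the construction of the skeleton. For the lax functoriality constraint, note that $\phi_g*\phi_f\colon \overline{g}\circ\overline{f}\to g\circ f$ is an isomorphism of $2$-cells, whence $[g]\circ[f]=[\overline{g}\circ\overline{f}]=[gf]$ as objects of $\overline{\mathsf{B}}(X,Z)$; one is therefore led to set
\[
\mathscr{S}^2_{g,f}:=\phi_{gf}^{-1}\,(\phi_g*\phi_f)\,\phi_{\overline{g}\circ\overline{f}}\colon \overline{gf}\to\overline{gf},
\]
an automorphism in $\mathsf{B}(X,Z)$, hence an invertible $2$-cell of $\overline{\mathsf{B}}$; the lax unity constraint $\mathscr{S}^0_X$ is the canonical isomorphism arising from the convention $\overline{1_X}=1_X$. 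All these constraints are invertible, so $\mathscr{S}$ is a candidate pseudofunctor. The quasi-inverse $\mathscr{T}$ is treated dually: its local functors are the inclusions $T_{\mathsf{B}(X,Y)}$ of chosen representatives, again equivalences, and its constraints $\mathscr{T}^2,\mathscr{T}^0$ are the evident (essentially identity) isomorphisms.

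The substantive step is verifying that $\mathscr{S}$ (and $\mathscr{T}$) satisfies the lax associativity and the two lax unity axioms. Here the point is that the associator $\bar{a}$ and the unitors $\bar{l},\bar{r}$ of $\overline{\mathsf{B}}$ were defined precisely by sandwiching $a,l,r$ between the coherence isomorphisms $\phi$ and their whiskerings; substituting those formulas together with $\mathscr{S}(a_{h,g,f})=\phi^{-1}a_{h,g,f}\phi$ and the $\mathscr{S}^2$ above, each pseudofunctor axiom collapses — after cancelling adjacent $\phi_{(-)}\phi_{(-)}^{-1}$ pairs and applying the interchange law — to the naturality of $a,l,r$ and the unity/pentagon axioms in $\mathsf{B}$. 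I expect this bookkeeping of the $\phi$'s and of the horizontal versus vertical composites to be the main obstacle: it is routine in content but demands care about the order of composition and about which pair of representatives each $\phi$ connects. Conceptually it is forced, being an instance of transport of structure along local equivalences.

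Finally I would establish the biequivalence. With the harmless normalisation $\phi_{\overline{X}}=1_{\overline{X}}$ on chosen representatives, $\mathscr{S}\mathscr{T}$ is the identity pseudofunctor on $\overline{\mathsf{B}}$, while $\mathscr{T}\mathscr{S}$ sends $f\mapsto\overline{f}$ and $\alpha\mapsto\phi_g^{-1}\alpha\phi_f$; the invertible $2$-cells $\phi_f\colon\overline{f}\to f$, composed with the unitors, assemble into a pseudonatural adjoint equivalence $\mathscr{T}\mathscr{S}\simeq\Id_{\mathsf{B}}$ whose coherence conditions reduce once more to $\phi$-chases. This exhibits $(\mathscr{S},\mathscr{T})$ as a biequivalence. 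Equivalently, one observes that $\mathscr{S}$ is the identity on objects, hence essentially surjective, and is locally the equivalence $S_{\mathsf{B}(X,Y)}$; since a pseudofunctor that is essentially surjective on objects and an equivalence on each Hom category is a biequivalence \cite{JohnsonYau21}, $\mathscr{S}$ is a biequivalence with quasi-inverse $\mathscr{T}$.
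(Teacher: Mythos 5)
Your proof is correct, but there is no internal argument in the paper to compare it against: Proposition~\ref{Prop-Bicat-1-Skeleton} is quoted from \cite[Lemma 2.20]{Schommer-Pries09} and the paper gives no proof of its own. What you supply is a sound, self-contained argument. Your two key moves are (i) identifying the local functors of $\mathscr{S}$ with the skeleton projections $S_{\mathsf{B}(X,Y)}$ and writing down the functoriality constraint $\mathscr{S}^2_{g,f}=\phi_{gf}^{-1}(\phi_g*\phi_f)\phi_{\overline{g}\circ\overline{f}}$, which is well defined as an automorphism of $\overline{gf}$ precisely because $\phi_g*\phi_f$ forces $[\overline{g}\circ\overline{f}]=[gf]$ and hence equality of chosen representatives; and (ii) invoking the Whitehead theorem for bicategories, \cite[Theorem 7.4.1]{JohnsonYau21}, to conclude that a pseudofunctor which is the identity on objects and a local equivalence is a biequivalence. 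The second move is exactly the device the paper itself uses in the proof of Proposition~\ref{Prop-Biequ-HomotopyCat-DerCat}, so your route is in the spirit of the paper and spares you the explicit construction of the pseudonatural equivalences $\mathscr{T}\mathscr{S}\simeq\Id_{\mathsf{B}}$ (your sketch of those, with components $1_X$ and 2-cells assembled from $\phi_f$ and the unitors, is also correct, though you should state the normalisation $\phi_{\overline{f}}=1_{\overline{f}}$ explicitly when claiming that $\mathscr{S}\mathscr{T}$ is literally the identity rather than merely isomorphic to it). The coherence verifications you defer are genuinely routine here, because the associator $\bar{a}$ and unitors $\bar{l},\bar{r}$ of $\overline{\mathsf{B}}$ were defined in the paper exactly by conjugating $a,l,r$ with the isomorphisms $\phi$; the pseudofunctor axioms for $\mathscr{S}$ and $\mathscr{T}$ therefore cancel down, via middle four exchange and naturality, to the axioms of $\mathsf{B}$, as you observe.
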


\subsection{Bicategories of dg categories} Now we construct some bicategories of small dg categories which are the source bicategories of objective (co)lax functors.

\medspace 

\noindent{\bf Bicategory $\dgCAT_{\rm c,h}$.} The bicategory $\dgCAT_{\rm c,h}$ is defined as follows:

\begin{enumerate}
	\item The objects of $\dgCAT_{\rm c,h}$ are all small dg categories.
	
	\item For all $\aA,\bB\in\dgCAT_{\rm c,h}$, the Hom category $\dgCAT_{\rm c,h}(\aA,\bB)$ is the subcategory $\hH_{\rm c,h}(\aA^\op\otimes\bB)$ of the homotopy category $\hH(\aA^\op\otimes\bB)$ of dg $\aA$-$\bB$-bimodules, whoses objects are all cofibrant $\aA$-$\bB$-bimodules and whose morphisms are all homotopy equivalence classes of dg $\aA\-\bB$-bimodule quasi-isomorphisms (=homotopy equivalences since all its objects are cofibrant dg $\aA\-\bB$-bimodules). 
	
	\item For all $\aA\in\dgCAT_{\rm c,h}$, the identity 1-cell $1_\aA:=\bp I_\aA$, where $\bp$ is the cofibrant replacement functor from the derived category $\dD(\aA^\op\otimes\bB)$ of dg $\aA$-$\bB$-bimodules to the full subcategory $\hH_{\rm c}(\aA^\op\otimes\bB)$ of the homotopy category $\hH(\aA^\op\otimes\bB)$ of dg $\aA$-$\bB$-bimodules consisting of all cofibrant dg $\aA$-$\bB$-bimodules.
	
	\item For all $\aA,\bB,\cC\in\dgCAT_{\rm c,h}$, the horizontal composition functor $c_{\aA\bB\cC}$ is the tensor product functor $\otimes_\bB$.
	
	\item For all $\aA,\bB,\cC,\dD\in\dgCAT_{\rm c,h}$, the associator $a_{\aA\bB\cC\dD}$ is given by the canonical natural isomorphism
	\[
	  a_{Z,Y,X}:X\otimes_\bB(Y\otimes_\cC Z) \xrightarrow{\cong} (X\otimes_\bB Y)\otimes_\cC Z
	\]
	for all cofibrant dg bimodules $_\aA X_\bB,{}_\bB Y_\cC$, and ${}_\cC Z_\dD$.
	
	\item For all $\aA,\bB\in\dgCAT_{\rm c,h}$, the left unitor $l_{\aA\bB}$ is given by the canonical natural isomorphism
	\[
	  l_X: X\otimes_\bB \bp I_\bB \rightarrow X\otimes_\bB I_\bB \xrightarrow{\cong} X,
	\]
    and the right unitor $r_{\aA\bB}$ is given by the canonical natural isomorphism
	\[
	 r_X: \bp I_{\aA}\otimes_\aA X \rightarrow I_\aA\otimes_\aA X \xrightarrow{\cong} X
	\]
	for all cofibrant dg $\aA$-$\bB$-bimodule $X$.
\end{enumerate}

It is easy to see that the above data satisfy the Unity Axiom and the Pentagon Axiom.


\medspace

\noindent{\bf Bicategory $\dg\cC\aA\tT$.} The bicategory $\dg\cC\aA\tT$ is defined as follows:

\begin{enumerate}
	\item The objects of $\dg\cC\aA\tT$ are all small dg categories.
	
	\item For all $\aA,\bB\in\dg\cC\aA\tT$, the Hom category $\dg\cC\aA\tT(\aA,\bB)$ is a subcategory of the derived category $\dD(\aA^\op\otimes\bB)$ of dg $\aA$-$\bB$-bimodules, whose objects are all objects in $\dD(\aA^\op\otimes\bB)$, and whose morphisms are all isomorphisms in $\dD(\aA^\op\otimes\bB)$.
	
	\item { For all $\aA\in\dg\cC\aA\tT$, the identity 1-cell $1_\aA:=I_\aA$.}
	
	\item For all $\aA,\bB,\cC\in\dg\cC\aA\tT$, the horizontal composition functor 
	\[
	  c_{\aA\bB\cC}:\dg\cC\aA\tT(\bB, \\ \cC) \times\dg\cC\aA\tT(\aA,\bB) \to \dg\cC\aA\tT(\aA,\cC)
	\] 
	  is the derived tensor product functor 
	\[
	  \dD(\bB^\op\otimes\cC)\times \dD(\aA^\op\otimes\bB) \to \dD(\aA^\op\otimes\cC), (Y,X)\mapsto X\otimes^\bL_\bB Y.
	\] 
	
	\item For all $\aA,\bB,\cC,\dD\in\dg\cC\aA\tT$, the associator $a_{\aA\bB\cC\dD}$ is given by the canonical natural isomorphism
	\[
	  a_{Z,Y,X}: X\otimes_\bB^\bL(Y\otimes_\cC^\bL Z) \xrightarrow{\cong} 
	  (X\otimes_\bB^\bL Y)\otimes_\cC^\bL Z
	\]
	for all dg bimodules $_\aA X_\bB,{}_\bB Y_\cC$, and ${}_\cC Z_\dD$.
	
	\item For all $\aA,\bB\in\dg\cC\aA\tT$, the left unitor $l_{\aA\bB}$ is given by the canonical natural isomorphism
	\[
	  l_X: X\otimes_\bB^\bL I_\bB \xrightarrow{\cong} X,
	\]
	and the right unitor $r_{\aA\bB}$ is given by the canonical natural isomorphism
	\[
	  r_X: I_\aA\otimes_\aA^\bL X \xrightarrow{\cong} X
	\]
	for all dg $\aA$-$\bB$-bimodule $X$.
\end{enumerate}

Clearly, the above data satisfy the Unity Axiom and the Pentagon Axiom.

\subsection{Bicategories of $B_\infty$-algebras}

Now we construct the target bicategory of the objective (co)lax functors. In fact, we will provide a general construction of a bicategory from a model category, for which {\it spans} and {\it spans of spans} will play crucial roles.

\medspace

\noindent{\bf Bicategory $\mathsf{C}\-\Span$.}  (Ref. \cite[Example 2.1.22]{JohnsonYau21})
Let $\mathsf{C}$ be a category with pullbacks. For a pair of objects $x,y\in\mathsf{C}$, a {\it span} from $x$ to $y$ is a pair of morphisms $x\xleftarrow{f_l} f \xrightarrow{f_r} y$ in $\mathsf{C}$ with common source $f$, denoted by $\hat{f}=(x\xleftarrow{f_l} f \xrightarrow{f_r} y)$. A {\it morphism} $\hat{\hat{\alpha}}:\hat{f}\to\hat{f}'$ of spans from $x$ to $y$ is a morphism $\alpha:f \rightarrow f'$ in $\mathsf{C}$ such that $f'_l\alpha=f_l$ and $f'_r\alpha=f_r$, i.e., the following diagram is commutative. 
\[
  \begin{tikzcd}
	& f \arrow[ld, "f_l"'] \arrow[rd, "f_r"] \arrow[dd, "\alpha"] & \\
	x & & y \\
	& f' \arrow[ul, "f'_l"] \arrow[ur, "f'_r"'] &
  \end{tikzcd}
\]
The {\it composition} of morphisms $\hat{\hat{\alpha}}:\hat{f}\to\hat{f}'$ and $\hat{\hat{\alpha}}':\hat{f}'\to\hat{f}''$ of spans from $x$ to $y$ is the composition $\alpha'\alpha:f \rightarrow f''$ of $\alpha:f\to f'$ and $\alpha':f'\to f''$ in $\mathsf{C}$. 
The {\it identity morphism} $1_{\hat{f}}$ on the span $\hat{f}$ is the identity morphism $1_f$ on $f$ in $\mathsf{C}$.
All spans from $x$ to $y$ and morphisms between them form the {\it category of spans from $x$ to $y$}, denoted by $\mathsf{C}\-\Span(x,y)$. 

The bicategory $\mathsf{C}\-\Span$ is defined by the following data:

(1) The objects of $\mathsf{C}\-\Span$ are all objects of $\mathsf{C}$.

(2) For all $x,y\in\mathsf{C}$, the Hom category $\mathsf{C}\-\Span(x,y)$ is the category of spans from $x$ to $y$ defined above.

(3) For all $x\in\mathsf{C}$, the identity 1-cell $1_x$ on $x$ is the span $x \xleftarrow{1_x} x\xrightarrow{1_x} x$ written as $x=x=x$ for short sometimes.

(4) For all $x,y,z\in\mathsf{C}$, the horizontal composition ${c}_{xyz}:\mathsf{C}\-\Span(y,z)\times\mathsf{C}\-\Span(x,y)\to\mathsf{C}\-\Span(x,z), (\hat{g},\hat{f}) \mapsto {c}_{xyz}(\hat{g},\hat{f}):=(x\xleftarrow{f_l\tilde{g}_l} f\times_yg\xrightarrow{g_r\tilde{f}_r} z), (\hat{\hat{\beta}},\hat{\hat{\alpha}})\mapsto {c}_{xyz}(\hat{\hat{\beta}},\hat{\hat{\alpha}}):=(\alpha\times_y\beta:f\times_yg\to f'\times_yg')$, for all 1-cells $\hat{f}=(x\xleftarrow{f_l} f \xrightarrow{f_r} y), \hat{f}'=(x\xleftarrow{f'_l} f' \xrightarrow{f'_r} y), \hat{g}=(y\xleftarrow{g_l} g \xrightarrow{g_r} z), \hat{g}'=(y\xleftarrow{g'_l} g' \xrightarrow{g'_r} z)$ and 2-cells $\hat{\hat{\alpha}}:\hat{f}\to\hat{f}',\hat{\hat{\beta}}:\hat{g}\to\hat{g}'$.
\[
  \begin{tikzcd}[row sep=small,column sep=small]
	& & f\times_yg \arrow[ld,"\tilde{g}_l"'] \arrow[rd,"\tilde{f}_r"] \arrow[dddd,dotted,bend left=6,"\alpha\times_y\beta",near start]& & \\
	& f \arrow[ld,"f_l"'] \arrow[rd,"f_r"] \arrow[dd,"\alpha"] & & g\arrow[ld,"g_l"'] \arrow[rd,"g_r"] \arrow[dd,"\beta"]  & \\
	x & & y &   & z \\
	& f' \arrow[ul,"f'_l"] \arrow[ru,"f'_r"']  & & g'\arrow[ul,"g'_l"] \arrow[ur,"g'_r" ']  & \\
	& & f'\times_yg' \arrow[lu,"\tilde{g}'_l"] \arrow[ru,"\tilde{f}'_r" '] & &
  \end{tikzcd}
\]

(5) For all $w,x,y,z\in\mathsf{C}$, the component 2-cell of the associator $a_{wxyz}$ is given by the canonical isomorphism $a_{\hat{h},\hat{g},\hat{f}}:=a_{f,g,h}:f\times_x (g\times_y h) \rightarrow (f\times_x g)\times_y h$ for all 1-cells $\hat{f}=(w\leftarrow f\to x), \hat{g}=(x\leftarrow g\to y), \hat{h}=(y\leftarrow h\to z)$.
\[
\begin{tikzcd}
	&& (f\times_xg)\times_yh\arrow[d] \arrow[rrrdd] \arrow[rrd,dashed]
	&& f\times_x(g\times_y h)\arrow[d] \arrow[ddlll] \arrow[ll,dotted,"\cong","{a_{f,g,h}}"'] \arrow[lld,dashed] &&\\
	&& f\times_xg\arrow[ld] \arrow[rd] && g\times_zh\arrow[ld] \arrow[rd] &&\\
	& f\arrow[ld] \arrow[rd] && g\arrow[ld] \arrow[rd] && h\arrow[ld] \arrow[rd] & \\
	w && x && y && z		
\end{tikzcd} 
\]

(6) For all $x,y\in\mathsf{C}$, the component 2-cell of the left unitor $l_{xy}$ is given by the canonical isomorphism $l_{\hat{f}}:=l_f: f\times_yy\rightarrow f$, and the component 2-cell of the right unitor $r_{xy}$ is given by the canonical isomorphism $r_{\hat{f}}:=r_f: x\times_xf\rightarrow f$, for all 1-cell $\hat{f}=(x\leftarrow f\to y)$.
\[
\begin{tikzcd}[column sep=small]
	&& f\times_yy \arrow[ld,"\cong"',"l_f"] \arrow[rd,"\tilde{f}_r"'] && \\
	&f\arrow[ld,"f_l"'] \arrow[rd,"f_r"] & & y \arrow[ld,equal,"1_y"'] \arrow[rd,equal,"1_y"] &\\
	x && y && y 
\end{tikzcd} 
\qquad
\begin{tikzcd}[column sep=small]
	&& x\times_xf \arrow[rd,"\cong","r_f"'] \arrow[ld,"\tilde{f}_l"] && \\
	& x\arrow[ld,equal,"1_x"'] \arrow[rd,equal,"1_x"] & & f\arrow[ld,"f_l"'] \arrow[rd,"f_r"] &\\
	x && x && y 
\end{tikzcd} 
\]

\medspace

\noindent{\bf Bicategory $\mathsf{C}\-\Span^2$.} (Ref. \cite[Theorem 8]{Rebro15}) Let $\mathsf{C}$ be a category with pullbacks. Then for all $x,y\in\mathsf{C}$, the category $\mathsf{C}\-\Span(x,y)$ of spans in $\mathsf{C}$ from $x$ to $y$ admits pullbacks. Indeed, for all cospan $\hat{f}\to\hat{e}\leftarrow\hat{g}$ in $\mathsf{C}\-\Span(x,y)$, the pullback $f\times_eg$ of the cospan $f\to e\leftarrow g$ in $\mathsf{C}$ gives the pullback $x\leftarrow f\times_eg\to y$ of the cospan $\hat{f}\to\hat{e}\leftarrow\hat{g}$ in $\mathsf{C}\-\Span(x,y)$. 
\[
  \begin{tikzcd}
	& e \arrow[ddl,"e_l"'] \arrow[drr,"e_r"] & & \\
	& f  \arrow[dl,"f_l"] \arrow[u] \arrow[rr,"f_r"'] & & y \\
	x & & g \arrow[uul] \arrow[ll,"g_l"'] \arrow[ur,"g_r"]  & \\
	& & f\times_eg \arrow[ull,dotted] \arrow[uul,dashed] \arrow[u,dashed] \arrow[uur,dotted]&
  \end{tikzcd} 
\]
Thus we have the bicategory $(\mathsf{C}\-\Span(x,y))\-\Span$. 
For all spans $\hat{f},\hat{f}'\in \mathsf{C}\-\Span(x,y)$, we have the Hom category $(\mathsf{C}\-\Span(x,y))\-\Span(\hat{f},\hat{f}')$, whose objects are spans $\hat{\hat{\alpha}}=(\hat{f}\leftarrow\hat{\alpha}\to\hat{f}')$ in $\mathsf{C}\-\Span(x,y)$, or equivalently, spans $f\xleftarrow{\alpha_l} \alpha \xrightarrow{\alpha_r} f'$ in $\mathsf{C}$ such that the following diagram is commutative, 
\[
  \begin{tikzcd}[row sep=huge,column sep=huge]
  	& f \arrow[ld,"f_l" '] \arrow[rd,"f_r"] & \\
  	x & \alpha \arrow[u,"\alpha_l" '] \arrow[d,"\alpha_r"] & y \\
  	& g \arrow[ul,"f'_l"] \arrow[ur,"f'_r" '] &
  \end{tikzcd}
\]
called {\it spans of spans} in $\mathsf{C}$, denoted by $\hat{\hat{\alpha}}=(\hat{f}\leftarrow\hat{\alpha}\to\hat{f}')=(f\xleftarrow{\alpha_l} \alpha \xrightarrow{\alpha_r} f')$,  and whose morphisms from span of spans $\hat{\hat{\alpha}}$ to $\hat{\hat{\beta}}$ is just a morphism $\theta:\alpha\to\beta$ in $\mathsf{C}$ such that the following diagram is commutative. 
\[
  \begin{tikzcd}
	& & f \arrow[lld,"f_l" '] \arrow[rrd,"f_r"] & & \\
	x & \alpha \arrow[ur,"\alpha_l" '] \arrow[rd,"\alpha_r"] \arrow[rr, "\theta"] & & \beta \arrow[ul,"\beta_l"] \arrow[dl,"\beta_r" '] & y \\
	& & f' \arrow[ull,"f'_l"] \arrow[urr,"f'_r" '] & & 
  \end{tikzcd}
\]

The bicategory $\mathsf{C}\-\Span^2$ is defined by the following data:

(1) The objects of $\mathsf{C}\-\Span^2$ are all objects of $\mathsf{C}$.

(2) For all $x,y\in\mathsf{C}\-\Span^2$, the Hom category $\mathsf{C}\-\Span^2(x,y)$ is defined as follows:

(2.1) 1-cells are all objects $\hat{f}=(x\xleftarrow{f_l} f \xrightarrow{f_r} y)$ in $\mathsf{C}\-\Span(x,y)$.

(2.2) For all 1-cells $\hat{f},\hat{f}'\in \mathsf{C}\-\Span(x,y)$, the set of 2-cells $\mathsf{C}\-\Span^2(x,y)(\hat{f},\hat{f}'):= \linebreak \text{Iso}((\mathsf{C}\-\Span(x,y))\-\Span(\hat{f},\hat{f}'))$, the set of the isomorphism classes $[\hat{\hat{\alpha}}]$ of objects $\hat{\hat{\alpha}}=(\hat{f}\leftarrow \hat{\alpha}\to\hat{f}')$ in $(\mathsf{C}\-\Span(x,y))\-\Span(\hat{f},\hat{f}')$.

(3) For all $x\in\mathsf{C}\-\Span^2$, the identity 1-cell $1_x$ is the span $x \xleftarrow{1_x} x\xrightarrow{1_x} x$.

(4) For all $x,y,z\in\mathsf{C}\-\Span^2$, the horizontal composition $$\begin{array}{c}
	{c}_{xyz}:\mathsf{C}\-\Span^2(y,z)\times\mathsf{C}\-\Span^2(x,y) \to \mathsf{C}\-\Span^2(x,z),\\ [2mm]
	(\hat{g},\hat{f}) \mapsto {c}_{xyz}(\hat{g},\hat{f}) := (x\xleftarrow{f_l\tilde{g}_l} f\times_yg\xrightarrow{g_r\tilde{f}_r} z), \\ [2mm] ([\hat{\hat{\beta}}],[\hat{\hat{\alpha}}]) \mapsto {c}_{xyz}([\hat{\hat{\beta}}],[\hat{\hat{\alpha}}]) := [f\times_yg \xleftarrow{\alpha_l\times_y\beta_l} \alpha\times_y\beta \xrightarrow{\alpha_r\times_y\beta_r} f'\times_yg'],
\end{array}$$ for all 1-cells $\hat{f}=(x\xleftarrow{f_l} f \xrightarrow{f_r} y), \hat{f}'=(x\xleftarrow{f'_l} f' \xrightarrow{f'_r} y), \hat{g}=(y\xleftarrow{g_l} g \xrightarrow{g_r} z), \hat{g}'=(y\xleftarrow{g'_l} g' \xrightarrow{g'_r} z)$ and 2-cells $[\hat{\hat{\alpha}}] = [\hat{f}\leftarrow \hat{\alpha}\to\hat{f}'] = [f\xleftarrow{\alpha_l} \alpha \xrightarrow{\alpha_r} f'], [\hat{\hat{\beta}}] = [\hat{g}\leftarrow \hat{\beta}\to\hat{g}'] = [g\xleftarrow{\beta_l} \beta \xrightarrow{\beta_r} g']$.
\[
\begin{tikzcd}[row sep=large,column sep=large]
	& & f\times_yg \arrow[ld,"\tilde{g}_l"'] \arrow[rd,"\tilde{f}_r"] & & \\
	& f \arrow[ld,"f_l"'] \arrow[rd,"f_r",near start] & \alpha\times_y\beta \arrow[u,dotted,"\alpha_l\times_y\beta_l"'] \arrow[dl] \arrow[dr] \arrow[ddd,dotted,bend left=6,"\alpha_r\times_y\beta_r",near end] & g\arrow[ld,"g_l"',near start] \arrow[rd,"g_r"]  & \\
	x & \alpha \arrow[u,"\alpha_l"] \arrow[d,"\alpha_r"'] & y & \beta \arrow[u,"\beta_l"'] \arrow[d,"\beta_r"] & z \\
	& f' \arrow[ul,"f'_l"] \arrow[ru,"f'_r"']  & & g'\arrow[ul,"g'_l"] \arrow[ur,"g'_r" ']  & \\
	& & f'\times_yg' \arrow[lu,"\tilde{g}'_l"] \arrow[ru,"\tilde{f}'_r" '] & &
\end{tikzcd}
\]

(5) For all $w,x,y,z\in\mathsf{C}\-\Span^2$, the component 2-cell of the associator $a_{wxyz}$ is  $a_{\hat{h},\hat{g},\hat{f}}:=[f\times_x (g\times_y h) = f\times_x (g\times_y h) \xrightarrow{a_{f,g,h}} (f\times_x g)\times_y h]$ for all 1-cells $\hat{f}=(w\leftarrow f\to x), \hat{g}=(x\leftarrow g\to y)$ and $\hat{h}=(y\leftarrow h\to z)$.

(6) For all $x\in\mathsf{C}\-\Span^2$, the component 2-cell of the left unitor $l_{xy}$ is $l_{\hat{f}}:=[f\times_yy = f\times_yy \xrightarrow{l_f} f]$, and the component 2-cell of the right unitor $r_{xy}$ is $r_{\hat{f}}:=[x\times_xf = x\times_xf\xrightarrow{r_f} f]$, for all 1-cell $\hat{f}=(x\leftarrow f\to y)$.

\begin{remark}
	In \cite[Theorem 8]{Rebro15}, Rebro assumed that the category $\mathsf{C}$ has pullbacks and a terminal object. Under this assumption, $\mathsf{C}$ has all finite limits, which were used many times in the proof of the theorem. In fact, the condition that $\mathsf{C}$ has a terminal object is unnecessary. Under the assumption that $\mathsf{C}$ has pullbacks, one can prove the conclusion of the theorem by the properties of pullbacks, analogous to the proof of Theorem \ref{Thm-Const-Bicat}. 
\end{remark}

\medspace

\noindent{\bf Bicategory $\mathsf{M}\-{\rm span}^2$.}  
The following two lemmas will be used frequently in the sequel.

\begin{lemma} \label{Lem-Pullback-Fib-AcycFib} {\rm (Ref. \cite[Corollary 1.1.11]{Hovey99})}
	In a model category, fibrations and acyclic fibrations are closed under pullbacks, i.e., for any pullback diagram
	\[
	\begin{tikzcd}
		x\times_zy \arrow[r,"\tilde{f}"]  \arrow[d,"\tilde{g}"] & y \arrow[d,"g"]  \\
		x \arrow[r,"f"] & z, 
	\end{tikzcd} 
	\]
	if $f$ is a fibration (resp. an acyclic fibration) then so is $\tilde{f}$. 
\end{lemma}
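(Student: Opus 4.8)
The plan is to reduce the statement to the standard fact that any class of morphisms defined by a right lifting property is closed under pullback. First I would recall the two weak factorization systems underlying the model category axioms: fibrations are exactly the maps with the right lifting property (RLP) against all acyclic cofibrations, and acyclic fibrations are exactly the maps with the RLP against all cofibrations. Both characterizations follow from the lifting and retract axioms via the retract argument. Granting them, it suffices to prove a single general claim: if $\mathcal{R}$ is the class of maps having the RLP against a fixed class $\mathcal{L}$ and $f\in\mathcal{R}$, then any pullback of $f$ again lies in $\mathcal{R}$.

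For this, I would keep the notation of the statement, so that $\tilde f\colon x\times_z y\to y$ is the pullback of $f\colon x\to z$ along $g\colon y\to z$, with $\tilde g\colon x\times_z y\to x$ the other projection and $f\tilde g=g\tilde f$. To verify $\tilde f\in\mathcal{R}$, take any $i\colon A\to B$ in $\mathcal{L}$ together with a commutative square having left edge $i$, right edge $\tilde f$, top edge $u\colon A\to x\times_z y$, and bottom edge $v\colon B\to y$. Post-composing the top edge with $\tilde g$ and the bottom edge with $g$ produces an outer square from $i$ to $f$; it commutes because $f\tilde g u=g\tilde f u=gvi$. Since $f$ has the RLP against $i$, this outer square admits a diagonal filler $w\colon B\to x$ satisfying $wi=\tilde g u$ and $fw=gv$.

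Now the universal property of the pullback applies to the pair $(v,w)$, which satisfies $gv=fw$, and yields a unique $h\colon B\to x\times_z y$ with $\tilde f h=v$ and $\tilde g h=w$. This $h$ is the desired filler: $\tilde f h=v$ holds by construction, while $hi=u$ follows from the uniqueness clause of the universal property, since $\tilde f(hi)=vi=\tilde f u$ and $\tilde g(hi)=wi=\tilde g u$. Hence $\tilde f$ has the RLP against every member of $\mathcal{L}$, i.e. $\tilde f\in\mathcal{R}$. Taking $\mathcal{L}$ to be the acyclic cofibrations shows fibrations are stable under pullback, and taking $\mathcal{L}$ to be the cofibrations shows the same for acyclic fibrations.

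I expect the only genuinely delicate point to be the lifting-property characterizations of the two classes, which rely on the factorization and retract axioms; once those are in hand, the pullback stability is a purely formal diagram chase with the universal property and needs no further reference to weak equivalences beyond what is already encoded in the words \emph{acyclic cofibration} and \emph{cofibration}. I would therefore invest the care in invoking the retract argument correctly and treat the remainder as routine.
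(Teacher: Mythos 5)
Your proof is correct and is exactly the standard argument behind the result the paper cites (Hovey, Corollary 1.1.11): the paper gives no proof of its own, and Hovey's proof likewise combines the lifting-property characterizations of (acyclic) fibrations with the formal fact that a class defined by a right lifting property is closed under pullback. Your diagram chase with the universal property, including the uniqueness step to verify $hi=u$, is complete and accurate.
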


\begin{lemma} \label{Lem-Pullback-2-Fib-AcycFib} 
	In a model category, for any 2-cospan $x \xrightarrow{f} x' \xrightarrow{f'} z  \xleftarrow{g'} y' \xleftarrow{g} y$, if $f$ and $g$ are fibrations (resp. acyclic fibrations) then so is $f\times_zg$.
	\[
	\begin{tikzcd}
		x\times_zy \arrow[dd] \arrow[rr] \arrow[dr,dotted,"{f\times_zg}"] & & y \arrow[d,"g"] \\
		& x'\times_zy' \arrow[r,"{\tilde{f}'}"] \arrow[d,"{\tilde{g}'}"] & y' \arrow[d,"{g'}"]  \\
		x \arrow[r,"f"]& x' \arrow[r,"{f'}"] & z 
	\end{tikzcd} 
	\]	
\end{lemma}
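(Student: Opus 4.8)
The plan is to realize $f\times_z g$ as a composite of two maps, each obtained from $f$ or $g$ by a single pullback, and then combine Lemma \ref{Lem-Pullback-Fib-AcycFib} with the elementary fact that fibrations (resp.\ acyclic fibrations) are closed under composition (being defined by a right lifting property, such classes are automatically closed under composition).

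First I would introduce the intermediate pullback $P:=x'\times_z y$ of the cospan $x'\xrightarrow{f'} z\xleftarrow{g'g} y$, inserting it between $x\times_z y$ and $x'\times_z y'$. The factorization I have in mind is
\[
x\times_z y \xrightarrow{\ \bar f\ } P=x'\times_z y \xrightarrow{\ \bar g\ } x'\times_z y',
\]
where $\bar f$ is induced by $f$ on the first factor and $\bar g$ by $g$ on the second; concretely $\bar f(a,c)=(f(a),c)$ and $\bar g(b,c)=(b,g(c))$, so that $\bar g\bar f=f\times_z g$.

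Next I would verify, using the pasting law for pullback squares, that $\bar f$ is itself a base change of $f$ and $\bar g$ a base change of $g$. For $\bar f$: the outer rectangle exhibiting $x\times_z y$ as the pullback over $z$ splits as the defining pullback square of $P$ over $z$ on the bottom, stacked beneath the square
\[
\begin{tikzcd}
	x\times_z y \arrow[r] \arrow[d,"\bar f"'] & x \arrow[d,"f"] \\
	P \arrow[r] & x'
\end{tikzcd}
\]
Since the bottom square and the outer rectangle are both pullbacks, the pasting law forces the top square to be a pullback, whence $\bar f$ is a pullback of $f$. A symmetric argument, pulling $g$ back along the projection $x'\times_z y'\to y'$, shows that $\bar g$ is a pullback of $g$.

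Finally, Lemma \ref{Lem-Pullback-Fib-AcycFib} guarantees that $\bar f$ and $\bar g$ are fibrations (resp.\ acyclic fibrations) whenever $f$ and $g$ are, and closure under composition then yields the claim for $f\times_z g=\bar g\bar f$. The only genuine content is the bookkeeping of the pullback identifications; the main obstacle, such as it is, is applying the pasting lemma in the correct direction (inferring that the top square is a pullback from the outer rectangle and bottom square being pullbacks), which becomes routine once the intermediate object $P$ has been chosen.
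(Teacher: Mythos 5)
Your proof is correct and takes essentially the same route as the paper: the paper factors $f\times_z g$ through the intermediate pullback $x\times_z y'$ (you use the mirror-image choice $x'\times_z y$), identifies each factor as a base change of $f$ or $g$ via the pasting law for pullback squares, and concludes by Lemma~\ref{Lem-Pullback-Fib-AcycFib} together with closure of fibrations (resp.\ acyclic fibrations) under composition. The difference in which coordinate is changed first is immaterial.
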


\begin{proof}
	Since the composition of pullbacks is a pullback, we have a commutative diagram
	\[
		\begin{tikzcd}
			x\times_z y \arrow[rr] \arrow[d, "\tilde{g}"] \arrow[rd, "f\times_z g", dotted] & & y \arrow[d, "g"] \\
			x\times_z y' \arrow[r, "\tilde{f}"] \arrow[d] & x'\times_z y' \arrow[r,"{\tilde{f}'}"] \arrow[d,"{\tilde{g}'}"] & y' \arrow[d,"{g'}"] \\
			x \arrow[r, "f"] & x' \arrow[r,"{f'}"] & z
		\end{tikzcd}
	\]
	where the bottom two squares and the upper rectangle are pullbacks. If $f$ and $g$ are fibrations (resp. acyclic fibrations) then by Lemma \ref{Lem-Pullback-Fib-AcycFib}, $\tilde{f}$ and $\tilde{g}$ are fibrations (resp. acyclic fibrations), and further, $f\times_z g=\tilde{f}\circ\tilde{g}$ is a fibration (resp. acyclic fibration).
\end{proof}

The following theorem provides a construction of a bicategory from a model category.

\begin{theorem} \label{Thm-Const-Bicat}
	Let $\mathsf{M}$ be a model category. Then the following data define a bicategory $\mathsf{M}\-{\rm span}^2$: 
	
	{\rm (1)} The objects of $\mathsf{M}\-{\rm span}^2$ are all objects of $\mathsf{M}$.
	
	{\rm (2)} For all $x,y\in \mathsf{M}\-{\rm span}^2$, the Hom category  $\mathsf{M}\-{\rm span}^2(x,y)$ is given as follows:
	
	{\rm (2.1)} 1-cells are all spans $\hat{f}=(x\xleftarrow{f_l} f \xrightarrow{f_r} y)$ in $\mathsf{M}$ with $f_l$ and $f_r$ being fibrations.
	
	{\rm (2.2)} 2-cells from 1-cell $\hat{f}$ to 1-cell $\hat{f}'$ are all equivalent classes $[\hat{\hat{\alpha}}]$ of  spans of spans $\hat{\hat{\alpha}} = (\hat{f} \leftarrow \hat{\alpha} \to \hat{f}') = (f \xleftarrow{\alpha_l} \alpha \xrightarrow{\alpha_r} f')$ with $\alpha_l$ being an acyclic fibration under the equivalent relation $\sim$ on spans from span $\hat{f}$ to span $\hat{f}'$ generated by ``\;$\hat{\hat{\alpha}} \sim \hat{\hat{\alpha}}' \stackrel{\triangle}{\Leftrightarrow}\exists\ \mbox{\rm a morphism of spans of spans}\ \theta: \hat{\hat{\alpha}}\to\hat{\hat{\alpha}}'$\;''.		
	\[
	\begin{tikzcd}
		& f \arrow[ld,two heads] \arrow[rd,two heads] & \\
		x & \alpha \arrow[u,two heads,"\sim" '] \arrow[d] & y \\
		& f' \arrow[ul,two heads] \arrow[ur,two heads] &
	\end{tikzcd} 
	\ \sim\ 
	\begin{tikzcd}
		& f \arrow[ld,two heads] \arrow[rd,two heads] & \\
		x & \alpha' \arrow[u,two heads,"\sim" '] \arrow[d] & y \\
		& f' \arrow[ul,two heads] \arrow[ur,two heads] &
	\end{tikzcd}
	\ \stackrel{\triangle}{\Leftrightarrow}\ 
	\begin{tikzcd}[column sep=tiny]
		& & f \arrow[lld,two heads] \arrow[rrd,two heads] & & \\
		x & \alpha \arrow[ur,two heads,"\sim" '] \arrow[rd] \arrow[rr, "\exists\ \theta"] & & \alpha' \arrow[ul,two heads,"\sim"] \arrow[dl] & y \\
		& & f' \arrow[ull,two heads] \arrow[urr,two heads] & & 
	\end{tikzcd}
	\]
	By 2-out-of-3 property of weak equivalences, $\theta:\alpha\rightarrow \alpha'$ must be a weak equivalence.
	
	{\rm (2.3)} The vertical composition of two 2-cells $[\hat{\hat{\alpha}}] =[\!\!\begin{tikzcd}[column sep=small] f & \alpha \arrow[l,two heads,"\alpha_l"',"\sim"] \arrow[r,"\alpha_r"] & f'\end{tikzcd}\!\!]$ and
	$[\hat{\hat{\alpha}}'] =[\!\!\begin{tikzcd}[column sep=small] f' & \alpha' \arrow[l,two heads,"\alpha'_l"',"\sim"]\end{tikzcd} \linebreak \stackrel{\alpha'_r}{\rightarrow} f'']$ is the 2-cell
	$[\hat{\hat{\alpha}}']\circ[\hat{\hat{\alpha}}] := [\hat{\hat{\alpha}}'\circ\hat{\hat{\alpha}}] = [\!\!\begin{tikzcd} f & \alpha\times_{f'}\alpha' \arrow[l,two heads,"\alpha_l\tilde{\alpha}'_l"',"\sim"]\end{tikzcd}  \xrightarrow{\alpha'_r\tilde{\alpha}_r} f'']$, where $\alpha_l\tilde{\alpha}'_l$ is an acyclic fibration due to Lemma~\ref{Lem-Pullback-2-Fib-AcycFib}.
	\[
	\begin{tikzcd}
		&& f \arrow[lldd,two heads] \arrow[rrdd,two heads] && \\
		&& \alpha \arrow[u,two heads,"\sim","\alpha_l"'] \arrow[d,"\alpha_r"] && \\
		x && f' & \alpha\times_{f'}\alpha' \arrow[ul,two heads,"\sim","\tilde{\alpha}'_l"'] \arrow[dl,"\tilde{\alpha}_r"] & y \\
		&& \alpha' \arrow[u,two heads,"\sim","\alpha'_l"'] \arrow[d,"\alpha'_r"] && \\
		&& f'' \arrow[uull,two heads] \arrow[uurr,two heads] &&
	\end{tikzcd}
	\]			
	
	{\rm (2.4)} The identity 2-cell of a 1-cell $\hat{f}=(x\twoheadleftarrow f \twoheadrightarrow y)$ is $1_{\hat{f}}:=[f\xleftarrow{1_f}f\xrightarrow{1_f} f]$.	
	
	{\rm (3)} For all $x\in\mathsf{M}\-{\rm span}^2$, the identity 1-cell of $x$ is the span $1_x:=(x\xleftarrow{1_x} x\xrightarrow{1_x} x)$.
	
	{\rm (4)} For all $x,y,z\in\mathsf{M}\-{\rm span}^2$, the horizontal composition functor $c_{xyz}:\mathsf{M}\-{\rm span}^2(y,z)\times\mathsf{M}\-{\rm span}^2(x,y)\to\mathsf{M}\-{\rm span}^2(x,z) $ is given as follows:
	
	{\rm (4.1)} For all 1-cells $\hat{f}=(x \stackrel{f_l}{\twoheadleftarrow} f \stackrel{f_r}{\twoheadrightarrow} y)$ and $\hat{g}=(y \stackrel{g_l}{\twoheadleftarrow} g \stackrel{g_r}{\twoheadrightarrow} z)$, the horizontal composition $c_{xyz}(\hat{g},\hat{f})=\hat{g}\circ\hat{f}:=(x \stackrel{f_l\tilde{g}_l}{\twoheadleftarrow} f\times_y g \stackrel{g_r\tilde{f}_r}{\twoheadrightarrow} z)$, where $f_l\tilde{g}_l$ and $g_r\tilde{f}_r$ are fibrations due to Lemma~\ref{Lem-Pullback-2-Fib-AcycFib}. See the commutative diagram below.
	
	{\rm (4.2)} For all 2-cells $[\hat{\hat{\alpha}}]=[\!\!\begin{tikzcd}[column sep=small] f & \alpha \arrow[l,two heads,"\alpha_l"',"\sim"] \arrow[r,"\alpha_r"] & f'\end{tikzcd}\!\!]$ and $[\hat{\hat{\beta}}]=[\!\!\begin{tikzcd}[column sep=small] g & \beta \arrow[l,two heads,"\beta_l"',"\sim"] \arrow[r,"\beta_r"] & g'\end{tikzcd}\!\!]$, the horizontal composition $c_{xyz}([\hat{\hat{\beta}}],[\hat{\hat{\alpha}}])=[\hat{\hat{\beta}}]*[\hat{\hat{\alpha}}]:=[\hat{\hat{\beta}}*\hat{\hat{\alpha}}]=[\!\!\begin{tikzcd} f\times_y g & \alpha\times_y\beta \arrow[l,two heads,"\alpha_l\times_y\beta_l"',"\sim"] \arrow[r,"\alpha_r\times_y\beta_r"] & f'\times_y g'\end{tikzcd}\!\!]$, where $\alpha_l\times_y\beta_l$ is an acyclic fibration due to Lemma~\ref{Lem-Pullback-2-Fib-AcycFib}.
	\[
	\begin{tikzcd}[row sep=large,column sep=large]
		& & f\times_yg \arrow[ld,two heads,"\tilde{g}_l"'] \arrow[rd,two heads,"\tilde{f}_r"] & & \\
		& f \arrow[ld,two heads,"f_l"'] \arrow[rd,two heads,"f_r",near start] & \alpha\times_y\beta \arrow[u,two heads,dotted,"\alpha_l\times_y\beta_l"',"\sim"] \arrow[dl,two heads] \arrow[dr,two heads] \arrow[ddd,dotted,bend left=6,"\alpha_r\times_y\beta_r",near end] & g\arrow[ld,two heads,"g_l"',near start] \arrow[rd,two heads,"g_r"]  & \\
		x & \alpha \arrow[u,two heads,"\alpha_l","\sim"'] \arrow[d,"\alpha_r"'] & y & \beta \arrow[u,two heads,"\beta_l"',"\sim"] \arrow[d,"\beta_r"] & z \\
		& f' \arrow[ul,two heads,"f'_l"] \arrow[ru,two heads,"f'_r"']  & & g'\arrow[ul,two heads,"g'_l"] \arrow[ur,two heads,"g'_r" ']  & \\
		& & f'\times_yg' \arrow[lu,two heads,"\tilde{g}'_l"] \arrow[ru,two heads,"\tilde{f}'_r" '] & &
	\end{tikzcd}
	\]
	
	{\rm (5)} For all $w,x,y,z\in\mathsf{M}\-{\rm span}^2$, the component 2-cells of the associator $a_{wxyz}$ are $a_{\hat{h},\hat{g},\hat{f}} := [f\times_x(g\times_yh) \xleftarrow{1_{f\times_x(g\times_yh)}} f\times_x(g\times_yh) \xrightarrow{a_{f,g,h}} (f\times_x g)\times_y h] = [f\times_x(g\times_yh) \xleftarrow{a_{f,g,h}^{-1}} (f\times_xg)\times_yh \xrightarrow{1_{(f\times_xg)\times_yh}} (f\times_x g)\times_yh]$ for all 1-cells $\hat{f}=(w\twoheadleftarrow f \twoheadrightarrow x), \hat{g}=(x\twoheadleftarrow g\twoheadrightarrow y)$, and $\hat{h}=(y\twoheadleftarrow h\twoheadrightarrow z)$.
	\[
	\begin{tikzcd}
		&& (f\times_xg)\times_yh\arrow[d,two heads] \arrow[rrrdd,two heads] \arrow[rrd,dashed]
		&& f\times_x(g\times_y h)\arrow[d,two heads] \arrow[ddlll,two heads] \arrow[ll,dotted,"\cong","{a_{f,g,h}}"'] \arrow[lld,dashed] &&\\
		&& f\times_xg\arrow[ld,two heads] \arrow[rd,two heads] && g\times_zh\arrow[ld,two heads] \arrow[rd,two heads] &&\\
		& f\arrow[ld,two heads] \arrow[rd,two heads] && g\arrow[ld,two heads] \arrow[rd,two heads] && h\arrow[ld,two heads] \arrow[rd,two heads] & \\
		w && x && y && z		
	\end{tikzcd} 
	\]
	\[
	\begin{tikzcd}
		&& f\times_x(g\times_yh) \arrow[lld,two heads] \arrow[rrd,two heads] && \\
		x & f\times_x(g\times_yh) \arrow[ur,equal] \arrow[dr,"{a_{f,g,h}}"] \arrow[rr,"{a_{f,g,h}}","\cong"'] && (f\times_x g)\times_y h \arrow[ul,"{a_{f,g,h}^{-1}}"] \arrow[dl,equal] & z \\
		&& (f\times_x g)\times_yh \arrow[ull,two heads] \arrow[urr,two heads] &&
	\end{tikzcd} 
	\]
	
	{\rm (6)} For all $x,y\in\mathsf{M}\-{\rm span}^2$, the component 2-cells of the left unitor $l_{xy}$ are $l_{\hat{f}}:=[f\times_yy \xleftarrow{1_{f\times_yy}} f\times_yy\xrightarrow{l_f} f]=[f\times_yy \xleftarrow{l_f^{-1}} f\xrightarrow{1_f} f]$ for all 1-cell $\hat{f}=(x \twoheadleftarrow f\twoheadrightarrow y)$,
	\[
	\begin{tikzcd}
		&& f\times_yy \arrow[ld,"\cong"',"l_f"] \arrow[rd,two heads,"\tilde{f}_r"'] && \\
		&f\arrow[ld,two heads,"f_l"'] \arrow[rd,two heads,"f_r"] & & y \arrow[ld,equal,"1_y"'] \arrow[rd,equal] &\\
		x && y && y 
	\end{tikzcd} 
	\quad\quad
	\begin{tikzcd}[column sep=small]
		&& f\times_yy \arrow[lld,two heads,"f_ll_f"'] \arrow[rrd,two heads,"\tilde{f}_r"] && \\
		x & f\times_yy \arrow[ur,equal] \arrow[dr,"l_f",near end] \arrow[rr,"l_f","\cong"',near start] && f \arrow[ul,"l_f^{-1}",near end] \arrow[dl,equal] & y \\
		&& f \arrow[ull,two heads,"f_l"] \arrow[urr,two heads,"f_r"'] &&
	\end{tikzcd} 
	\]
	and the component 2-cells of the right unitor $r_x$ are $r_{\hat{f}}:=[x\times_xf \xleftarrow{1_{x\times_xf}} x\times_xf \xrightarrow{r_f} f] =[x\times_xf \xleftarrow{r_f^{-1}} f \xrightarrow{1_f} f]$ for all 1-cell $\hat{f}=(x \twoheadleftarrow f\twoheadrightarrow y)$.
	\[
	\begin{tikzcd}
		&& x\times_xf \arrow[rd,"\cong","r_f"'] \arrow[ld,two heads,"\tilde{f}_l"] && \\
		& x\arrow[ld,equal] \arrow[rd,equal,"1_x"] & & f\arrow[ld,two heads,"f_l"'] \arrow[rd,two heads,"f_r"] &\\
		x && x && y 
	\end{tikzcd} 
	\quad\quad
	\begin{tikzcd}[column sep=small]
		&& x\times_x f \arrow[lld,two heads,"\tilde{f}_l"'] \arrow[rrd,two heads,"f_rr_f"] && \\
		x & x\times_x f \arrow[ur,equal] \arrow[dr,"r_f",near end] \arrow[rr,"r_f","\cong"',near start] && f \arrow[ul,"r_f^{-1}",near end] \arrow[dl,equal] & y \\
		&& f \arrow[ull,two heads,"f_l"] \arrow[urr,two heads,"f_r"'] &&
	\end{tikzcd} 
	\]
\end{theorem}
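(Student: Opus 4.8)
The plan is to treat $\mathsf{M}\-{\rm span}^2$ as the model-categorical refinement of Rebro's span-of-spans bicategory $\mathsf{C}\-\Span^2$ from \cite[Theorem 8]{Rebro15}, where the only genuinely new ingredient is the bookkeeping of the fibrancy and acyclic-fibrancy conditions imposed on the legs. Every structural operation — horizontal composition, vertical composition, associator, unitors — is built out of pullbacks, and Lemmas \ref{Lem-Pullback-Fib-AcycFib} and \ref{Lem-Pullback-2-Fib-AcycFib} guarantee precisely that the required fibrancy (resp.\ acyclic-fibrancy) of the relevant legs survives each such pullback. I would therefore verify the bicategory axioms in four stages: (i) each Hom category $\mathsf{M}\-{\rm span}^2(x,y)$ is a genuine category; (ii) each horizontal composition $c_{xyz}$ is a well-defined functor; (iii) the associator and the two unitors are natural isomorphisms; and (iv) the Unity and Pentagon Axioms hold.

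For stage (i), the key point is that $2$-cells are equivalence classes under the relation $\sim$ generated by the existence of a morphism $\theta$ of spans of spans, so I must show that the vertical composition of (2.3), defined by the pullback $\alpha\times_{f'}\alpha'$, descends to $\sim$-classes and becomes strictly associative and unital there. Well-definedness follows from functoriality of pullbacks: a single generating morphism $\theta\colon\hat{\hat{\alpha}}\to\hat{\hat{\alpha}}_1$ induces $\theta\times_{f'}1_{\alpha'}\colon\alpha\times_{f'}\alpha'\to\alpha_1\times_{f'}\alpha'$, which is again a morphism of spans of spans and hence respects $\sim$; the same argument on the right factor handles changes in $\hat{\hat{\alpha}}'$, and since $\sim$ is generated by such morphisms this covers the whole relation. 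Lemma \ref{Lem-Pullback-2-Fib-AcycFib} ensures the composite left leg $\alpha_l\tilde{\alpha}'_l$ remains an acyclic fibration, so the composite is a legitimate $2$-cell. Strict associativity and unitality are the crucial payoff of quotienting by $\sim$: the two iterated pullbacks $(\alpha\times_{f'}\alpha')\times_{f''}\alpha''$ and $\alpha\times_{f'}(\alpha'\times_{f''}\alpha'')$ differ by the canonical associativity isomorphism of pullbacks, and $\alpha\times_f f\cong\alpha$ is the canonical unit isomorphism; each such isomorphism is in particular a morphism of spans of spans, hence identifies the two classes, so vertical composition is strictly associative and the $2$-cell $1_{\hat{f}}$ of (2.4) is a strict two-sided unit.

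For stage (ii), I would first check that $c_{xyz}$ lands in the right place: by Lemma \ref{Lem-Pullback-2-Fib-AcycFib} the legs $f_l\tilde{g}_l$ and $g_r\tilde{f}_r$ of $\hat{g}\circ\hat{f}$ are fibrations, and the left leg $\alpha_l\times_y\beta_l$ of the horizontal composite of $2$-cells is an acyclic fibration, so both (4.1) and (4.2) are well-formed; well-definedness on $\sim$-classes is again immediate from pullback functoriality. Functoriality of $c_{xyz}$ then amounts to two identities: preservation of identity $2$-cells, $1_{\hat{g}}*1_{\hat{f}}=1_{\hat{g}\circ\hat{f}}$, which holds because $1_g\times_y 1_f=1_{f\times_y g}$; and the middle-four interchange $(\hat{\hat{\beta}}'\circ\hat{\hat{\beta}})*(\hat{\hat{\alpha}}'\circ\hat{\hat{\alpha}})=(\hat{\hat{\beta}}'*\hat{\hat{\alpha}}')\circ(\hat{\hat{\beta}}*\hat{\hat{\alpha}})$, which follows from the canonical commutation of the iterated pullbacks involved. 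As in stage (i), the relevant comparison map is a canonical isomorphism and hence trivialized in the quotient.

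Finally, for stages (iii) and (iv), the associator $a_{\hat{h},\hat{g},\hat{f}}$ and the unitors $l_{\hat{f}}, r_{\hat{f}}$ are, by construction, represented by the canonical pullback isomorphisms $a_{f,g,h}$, $l_f$, $r_f$. Their naturality with respect to $2$-cells reduces to the naturality of these canonical pullback isomorphisms, and the Unity and Pentagon diagrams reduce to the standard coherence of $a$, $l$, $r$ for pullbacks — exactly the coherence already underlying the span bicategory $\mathsf{C}\-\Span$ and Rebro's $\mathsf{C}\-\Span^2$. Since every $2$-cell entering these diagrams is represented by such a canonical isomorphism, the diagrams commute in $\mathsf{M}\-{\rm span}^2$. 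The main obstacle throughout is not any single computation but the systematic verification that $\sim$ is a congruence for both compositions and that quotienting by it strictifies the \emph{a priori} only-up-to-isomorphism pullback operations, all while the fibrancy and acyclic-fibrancy constraints survive every pullback; the latter is precisely what Lemmas \ref{Lem-Pullback-Fib-AcycFib} and \ref{Lem-Pullback-2-Fib-AcycFib} deliver, and it is what lets the model structure replace the terminal-object hypothesis used in \cite{Rebro15}.
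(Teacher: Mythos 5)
Your proposal is correct and follows essentially the same route as the paper's own proof: the same four-stage verification (Hom categories, functoriality of horizontal composition, naturality of associator and unitors, Unity and Pentagon axioms), with Lemmas \ref{Lem-Pullback-Fib-AcycFib} and \ref{Lem-Pullback-2-Fib-AcycFib} doing the fibrancy bookkeeping, well-definedness on $\sim$-classes checked on generating morphisms via functoriality of pullbacks, and strict associativity/unitality of vertical composition obtained because the canonical pullback isomorphisms are themselves morphisms of spans of spans and hence become identities in the quotient. The paper simply writes out explicitly the coherence computations (the isomorphisms $a_{\alpha,\alpha',\alpha''}$, $\tau$, $a_{f,1_y,g}$, and the pentagon identity for iterated pullbacks) that you compress into an appeal to standard pullback coherence.
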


\begin{proof}
	(1) {\it Hom categories:} For any $x,y\in\mathsf{M}$, we need to check that $\mathsf{M}\-{\rm span}^2(x,y)$ is a category. 
	
	(1.1) The vertical composition of 2-cells is well-defined: 
	We need to show that the definition of vertical composition of 2-cells is independent of the choices of representatives of the equivalent classes of spans of spans. The problem can be reduced to the following two simple cases:
	
	(1.1.1) If $\hat{\hat{\alpha}}=(f \stackrel{\sim}{\twoheadleftarrow} \alpha \to f') \sim \hat{\hat{\beta}}=(f \stackrel{\sim}{\twoheadleftarrow} \beta \to f')$ is given by a morphism $\theta:\alpha\to\beta$ then $(f \stackrel{\sim}{\twoheadleftarrow} \alpha\times_{f'}\alpha' \to f'')\sim(f \stackrel{\sim}{\twoheadleftarrow} \beta\times_{f'}\alpha' \to f'')$ is given by the morphism $\theta\times_{f'}1_{\alpha'}: \alpha\times_{f'}\alpha'\to\beta\times_{f'}\alpha'$ for all spans of spans $\hat{\hat{\alpha}}'=(f '\stackrel{\sim}{\twoheadleftarrow} \alpha' \to f'')$.
	\[
	\begin{tikzcd}
		&&& f \arrow[llldd,two heads] \arrow[rdd,two heads] & \\
		&& \alpha \arrow[ur,two heads,"\sim"'] \arrow[dr] \arrow[r,"\theta"] & \beta \arrow[u,two heads,"\sim"] \arrow[d] & \\
		x &\alpha\times_{f'}\alpha' \arrow[ur,two heads,"\sim"'] \arrow[drr] \arrow[r,dotted,"{\theta\times_{f'}1_{\alpha'}}"] & \beta\times_{f'}\alpha' \arrow[ur,two heads,"\sim"',near end] \arrow[dr] & f' & y \\
		&&& \alpha' \arrow[u,two heads,"\sim"] \arrow[d] & \\
		&&& f'' \arrow[uulll,two heads] \arrow[uur,two heads] &
	\end{tikzcd}
	\]
	
	(1.1.2) If $\hat{\hat{\alpha}}' = (f' \stackrel{\sim}{\twoheadleftarrow} \alpha' \to f'') \sim \hat{\hat{\beta}}' = (f' \stackrel{\sim}{\twoheadleftarrow} \beta' \to f'')$ is given by a morphism $\theta':\alpha'\to\beta'$ then $(f \stackrel{\sim}{\twoheadleftarrow} \alpha\times_{f'}\alpha' \to f'') \sim (f \stackrel{\sim}{\twoheadleftarrow} \alpha\times_{f'}\beta' \to f'')$ is given by the morphism $1_{\alpha}\times_{f'}\theta':\alpha\times_{f'}\alpha' \to \alpha\times_{f'}\beta'$ for all spans of spans $\hat{\hat{\alpha}} = (f\stackrel{\sim}{\twoheadleftarrow} \alpha \to f')$.
	\[
	\begin{tikzcd}
		&&& f \arrow[llldd,two heads] \arrow[rdd,two heads] & \\
		&& & \alpha \arrow[u,two heads,"\sim"] \arrow[d] & \\
		x &\alpha\times_{f'}\alpha' \arrow[urr,two heads,"\sim"'] \arrow[dr] \arrow[r,dotted,"{1_{\alpha}\times_{f'}\theta'}"'] & \alpha\times_{f'}\beta' \arrow[ur,two heads,"\sim"'] \arrow[dr] & f' & y \\
		&& \alpha' \arrow[ur,two heads,"\sim"',near end] \arrow[dr] \arrow[r,"{\theta'}"] & \beta' \arrow[u,two heads,"\sim"] \arrow[d] & \\
		&&& f'' \arrow[uulll,two heads] \arrow[uur,two heads] &
	\end{tikzcd}
	\]		
	
	(1.2) The associativity of vertical compositions:		
	For all 2-cells $[\hat{\hat{\alpha}}]=[f\stackrel{\sim}{\twoheadleftarrow} \alpha\to f'], [\hat{\hat{\alpha}}']=[f'\stackrel{\sim}{\twoheadleftarrow} \alpha'\to f'']$ and $[\hat{\hat{\alpha}}'']= [f''\stackrel{\sim}{\twoheadleftarrow} \alpha''\to f''']$, we need to prove $([\hat{\hat{\alpha}}'']\circ[\hat{\hat{\alpha}}'])\circ[\hat{\hat{\alpha}}]= [\hat{\hat{\alpha}}'']\circ([\hat{\hat{\alpha}}']\circ[\hat{\hat{\alpha}}])$.
	
	It is enough to show that the spans of spans $(\hat{\hat{\alpha}}''\circ\hat{\hat{\alpha}}')\circ\hat{\hat{\alpha}}$ and $\hat{\hat{\alpha}}''\circ(\hat{\hat{\alpha}}'\circ\hat{\hat{\alpha}})$ are equivalent, i.e., 
	$(f \stackrel{\sim}{\twoheadleftarrow} \alpha\times_{f'}(\alpha'\times_{f''}\alpha'') \to f''') \sim (f \stackrel{\sim}{\twoheadleftarrow} (\alpha\times_{f'}\alpha')\times_{f''}\alpha'' \to f''')$. It is given by the isomorphism $a_{\alpha,\alpha',\alpha''}: \alpha\times_{f'}(\alpha'\times_{f''}\alpha'') \to (\alpha\times_{f'}\alpha')\times_{f''}\alpha''$ obtained in the following commutative diagram.
	\[
	\begin{tikzcd}	
		&&& f \arrow[lllddd,two heads,bend right=30] \arrow[rddd,two heads] & \\
		&&& \alpha \arrow[u,two heads,"\sim"] \arrow[d] & \\
		& (\alpha\times_{f'}\alpha')\times_{f''}\alpha'' \arrow[r,two heads,"\sim"'] \arrow[rrddd,bend left=20] \arrow[ddr,dashed]  & \alpha\times_{f'}\alpha'\arrow[ur,two heads,"\sim"] \arrow[dr] & f' & \\
		x &&& \alpha' \arrow[u,two heads,"\sim"] \arrow[d] & y \\
		& \alpha\times_{f'}(\alpha'\times_{f''}\alpha'') \arrow[r] \arrow[uur,dashed] \arrow[rruuu,bend right=20,two heads,"\sim"'] \arrow[uu,dotted,"\cong"',"a_{\alpha,\alpha',\alpha''}"] & \alpha'\times_{f''}\alpha''\arrow[ur,two heads,"\sim"] \arrow[dr] & f'' & \\
		&&& \alpha'' \arrow[u,two heads,"\sim"] \arrow[d] & \\
		&&& f''' \arrow[uuulll,two heads,bend left=30] \arrow[uuur,two heads] & 
	\end{tikzcd}
	\]				
	
	(1.3) The unity of vertical compositions: For any 2-cell $[\!\!\begin{tikzcd}[column sep=small] f & \alpha \arrow[l,two heads,"\alpha_l"',"\sim"] \arrow[r,"\alpha_r"] & f'\end{tikzcd}\!\!]$, we need to prove $[\!\!\begin{tikzcd}[column sep=small] f & \alpha \arrow[l,two heads,"\alpha_l"',"\sim"] \arrow[r,"\alpha_r"] & f'\end{tikzcd}\!\!] \circ [f\xleftarrow{1_f}f\xrightarrow{1_f} f] = [\!\!\begin{tikzcd}[column sep=small] f & \alpha \arrow[l,two heads,"\alpha_l"',"\sim"] \arrow[r,"\alpha_r"] & f'\end{tikzcd}\!\!]$ and $[f'\xleftarrow{1_{f'}}f'\xrightarrow{1_{f'}} f'] \circ [\!\!\begin{tikzcd}[column sep=small] f & \alpha \arrow[l,two heads,"\alpha_l"',"\sim"] \arrow[r,"\alpha_r"] & f'\end{tikzcd}\!\!] = [\!\!\begin{tikzcd}[column sep=small] f & \alpha \arrow[l,two heads,"\alpha_l"',"\sim"] \arrow[r,"\alpha_r"] & f'\end{tikzcd}\!\!]$, i.e., $(\!\!\begin{tikzcd}[column sep=small] f & f\times_f\alpha \arrow[l,two heads,"\tilde{\alpha}_l"',"\sim"] \arrow[r,"\alpha_rr_\alpha"] & f'\end{tikzcd}\!\!) \sim (\!\!\begin{tikzcd}[column sep=small] f & \alpha \arrow[l,two heads,"\alpha_l"',"\sim"] \arrow[r,"\alpha_r"] & f'\end{tikzcd}\!\!)$ and $(\!\!\begin{tikzcd}[column sep=small] f & \alpha\times_{f'}f' \arrow[l,two heads,"\alpha_ll_\alpha"',"\sim"] \arrow[r,"\tilde{\alpha}_r"] & f'\end{tikzcd}\!\!) \sim (\!\!\begin{tikzcd}[column sep=small] f & \alpha \arrow[l,two heads,"\alpha_l"',"\sim"] \arrow[r,"\alpha_r"] & f'\end{tikzcd}\!\!)$. 
	
	Indeed, they are given by the isomorphisms $r_\alpha: f\times_f\alpha \to \alpha$ and $l_\alpha: \alpha\times_{f'}f' \to \alpha$.
	\[
	\begin{tikzcd}[column sep=small]
		& f \arrow[ldd,two heads] \arrow[ddrr,two heads] &&  \\
		& f \arrow[u,equal] \arrow[d,equal] && \\
		x & f & f\times_f\alpha \arrow[ul,two heads,"\sim","\tilde{\alpha}_l"'] \arrow[ld,"r_\alpha"] & y \\
		& \alpha \arrow[u,two heads,"\sim","\alpha_l"'] \arrow[d,"\alpha_r"] && \\
		& f' \arrow[uul,two heads] \arrow[uurr,two heads] & & 
	\end{tikzcd}
	\quad\quad\quad
	\begin{tikzcd}[column sep=small]
		& f \arrow[ldd,two heads] \arrow[ddrr,two heads] & &  \\
		& \alpha \arrow[u,two heads,"\sim","\alpha_l"'] \arrow[d,"\alpha_r"] && \\
		x & f' & \alpha\times_{f'}f' \arrow[ul,two heads,"\sim","l_\alpha"'] \arrow[ld,"\tilde{\alpha}_r"] & y \\
		& f' \arrow[u,equal] \arrow[d,equal] &&& \\
		& f' \arrow[uul,two heads] \arrow[uurr,two heads] & &  
	\end{tikzcd}
	\]		
	
	\medspace
	
	(2) {\it Horizontal compositions:} We need to show that the horizontal composition $$c_{xyz}:\mathsf{M}\-{\rm span}^2(y,z)\times\mathsf{M}\-{\rm span}^2(x,y) \rightarrow \mathsf{M}\-{\rm span}^2(x,z)$$ is a functor.
	
	(2.1) The horizontal composition is well-defined:
	It suffices to prove that the horizontal compositions of 2-cells are independent of the choices of representatives of equivalence classes. This is similar to the proof of (1.1).
	
	(2.2) 
	The horizontal composition $c_{xyz}$ preserves the composition of morphisms: 
	For any 2-cells $[\hat{\hat{\alpha}}]=[\!\!\begin{tikzcd}[column sep=small] f & \alpha \arrow[l,two heads,"\alpha_l"',"\sim"] \arrow[r,"\alpha_r"] & f'\end{tikzcd}\!\!], [\hat{\hat{\alpha}}']=[\!\!\begin{tikzcd}[column sep=small] f' & \alpha' \arrow[l,two heads,"\alpha'_l"',"\sim"] \arrow[r,"\alpha'_r"] & f''\end{tikzcd}\!\!], [\hat{\hat{\beta}}]=[\!\!\begin{tikzcd}[column sep=small] g & \beta \arrow[l,two heads,"\beta_l"',"\sim"] \arrow[r,"\beta_r"] & g'\end{tikzcd}\!\!], [\hat{\hat{\beta}}']=[\!\!\begin{tikzcd}[column sep=small] g' & \beta' \arrow[l,two heads,"\beta'_l"',"\sim"] \arrow[r,"\beta'_r"] & g''\end{tikzcd}\!\!]$, we need to prove $([\hat{\hat{\beta}}']*[\hat{\hat{\alpha}}'])\circ([\hat{\hat{\beta}}]*[\hat{\hat{\alpha}}]) = ([\hat{\hat{\beta}}']\circ[\hat{\hat{\beta}}])*([\hat{\hat{\alpha}}']\circ[\hat{\hat{\alpha}}])$, i.e.,
	\[
	\begin{array}{ll}
		& [f'\times_yg'\stackrel{\sim}{\twoheadleftarrow} \alpha'\times_y\beta'\rightarrow f''\times_yg''] \circ [f\times_yg\stackrel{\sim}{\twoheadleftarrow} \alpha\times_y\beta\rightarrow f'\times_yg'] \\
		= &
		[g\stackrel{\sim}{\twoheadleftarrow} \beta\times_{g'}\beta'\rightarrow g''] *
		[f\stackrel{\sim}{\twoheadleftarrow} \alpha\times_{f'}\alpha'\rightarrow f''],
	\end{array}
	\]
	i.e., 
	\[
	\begin{array}{ll}
		& [f\times_yg \stackrel{\sim}{\twoheadleftarrow} (\alpha\times_y\beta)\times_{f'\times_y g'}(\alpha'\times_y\beta') \rightarrow f''\times_yg''] \\
		= & [f\times_yg\stackrel{\sim}{\twoheadleftarrow} (\alpha\times_{f'}\alpha')\times_y(\beta\times_{g'}\beta') \rightarrow f''\times_yg''],
	\end{array}
	\]
	i.e., 
	\[
	\begin{array}{ll}
		& (f\times_yg \stackrel{\sim}{\twoheadleftarrow} (\alpha\times_y\beta)\times_{f'\times_y g'}(\alpha'\times_y\beta') \rightarrow f''\times_yg'') \\
		\sim & (f\times_yg\stackrel{\sim}{\twoheadleftarrow} (\alpha\times_{f'}\alpha')\times_y(\beta\times_{g'}\beta') \rightarrow f''\times_yg'').
	\end{array}
	\]
	It is given by the isomorphism $\tau:(\alpha\times_{f'}\alpha')\times_y(\beta\times_{g'}\beta')\to(\alpha\times_y\beta)\times_{f'\times_y g'}(\alpha'\times_y\beta')$ obtained in the following commutative diagram.
	\[
	\begin{tikzcd}[column sep=small]
		&& f\times_yg \arrow[dll,two heads]\arrow[drr,two heads] && \\
		f && \alpha\times_y\beta \arrow[dll,two heads]\arrow[drr,two heads] \arrow[u,two heads,"\sim"'] \arrow[d] && g \\
		\alpha \arrow[u,two heads,"\sim"'] \arrow[d] && f'\times_yg' \arrow[dll,two heads]\arrow[drr,two heads] && \beta \arrow[u,two heads,"\sim"] \arrow[d] \\
		f' \arrow[rr,two heads] && y && g' \arrow[ll,two heads] \\
		& \alpha\times_{f'}\alpha' \arrow[uul,two heads,"\sim"',near end] \arrow[dl] && \beta\times_{g'}\beta' \arrow[uur,two heads,"\sim"',near end]\arrow[dr] & \\	
		\alpha'\arrow[uu,two heads,"\sim"'] \arrow[d] & (\alpha\times_{f'}\alpha')\times_y(\beta\times_{g'}\beta') \arrow[uuuuur,two heads,"\sim",near start] \arrow[uuuur,dashed] \arrow[dr,dashed] \arrow[ddr] \arrow[rr,dotted,"\tau","\cong"',near end] \arrow[u] \arrow[urr] && (\alpha\times_y\beta)\times_{f'\times_y g'}(\alpha'\times_y\beta') \arrow[uuuul,two heads,bend right=3,"\sim"',near start] \arrow[ull,dashed] \arrow[dl] \arrow[u,dashed] & \beta' \arrow[uu,two heads,"\sim"] \arrow[d] \\
		f'' && \alpha'\times_y\beta' \arrow[ull,two heads] \arrow[urr,two heads] \arrow[uuuu,two heads,bend right=8,"\sim"'] \arrow[d] && g''\\
		&& f''\times_yg'' \arrow[ull,two heads]\arrow[urr,two heads] &&
	\end{tikzcd}
	\]		
	
	(2.3) The horizontal composition $c_{xyz}$ preserves identity: For all 1-cells $\hat{f}=(x \leftarrow f \rightarrow y)$ and $\hat{g}=(y \leftarrow g \rightarrow z)$, $c_{xyz}(1_{\hat{g}},1_{\hat{f}}) = [f\times_yg \xleftarrow{1_f\times_y1_g} f\times_yg \xrightarrow{1_f\times_y1_g} f\times_yg] = [f\times_yg \xleftarrow{1_{f\times_yg}} f\times_yg \xrightarrow{1_{f\times_yg}} f\times_yg] = 1_{c_{xyz}(\hat{g},\hat{f})}$.
	
	(3) {\it Associators:} For all $w,x,y,z\in\mathsf{M}\-{\rm span}^2$, we need to show that $a_{wxyz}$ is a natural isomorphism. For all 1-cells $\hat{f}_i=(w \leftarrow f_i \rightarrow x), \hat{g}_i=(x\leftarrow g_i \rightarrow y)$ and $\hat{h}_i=(y \leftarrow h_i \rightarrow z)$, $i=1,2$, and 2-cells $[\hat{\hat{\alpha}}]=[\hat{f}_1\leftarrow \hat{\alpha}\rightarrow \hat{f}_2], [\hat{\hat{\beta}}]=[\hat{g}_1\leftarrow \hat{\beta}\rightarrow \hat{g}_2]$ and $[\hat{\hat{\gamma}}]=[\hat{h}_1\leftarrow\hat{\gamma}\rightarrow \hat{h}_2]$, we need to prove the following diagram is commutative.
	\[
	\begin{tikzcd}
		(\hat{h}_1\circ\hat{g}_1)\circ\hat{f}_1 \arrow[r,"a_{\hat{h}_1,\hat{g}_1,\hat{f}_1}"] \arrow[d,"{([\hat{\hat{\gamma}}]*[\hat{\hat{\beta}}])*[\hat{\hat{\alpha}}]}"'] & \hat{h}_1\circ(\hat{g}_1\circ\hat{f}_1) \arrow[d,"{[\hat{\hat{\gamma}}]*([\hat{\hat{\beta}}]*[\hat{\hat{\alpha}}])}"] \\
		(\hat{h}_2\circ\hat{g}_2)\circ\hat{f}_2 \arrow[r,"a_{\hat{h}_2,\hat{g}_2,\hat{f}_2}"] & \hat{h}_2\circ(\hat{g}_2\circ\hat{f}_2)
	\end{tikzcd}
	\]	
	There is an isomorphism $a_{\alpha,\beta,\gamma} : \alpha\times_x(\beta\times_y\gamma)\to(\alpha\times_x\beta)\times_y\gamma$ such that the following diagram is commutative.	
	{\footnotesize \[
	\begin{tikzcd}[row sep=30]
		&& (f_1\times_xg_1)\times_yh_1 \arrow[d,two heads] \arrow[ddrrr,two heads] && f_1\times_x(g_1\times_yh_1) \arrow[ll,"a_{f_1,g_1,h_1}"',"{\cong}"] \arrow[d,two heads] \arrow[ddlll,two heads] && \\	
		&& f_1\times_xg_1 \arrow[dl,two heads] \arrow[dr,two heads] && g_1\times_yh_1 \arrow[dl,two heads] \arrow[dr,two heads] && \\	
		& f_1 \arrow[dl,two heads] \arrow[dr,two heads] & \alpha\times_x\beta \arrow[u,two heads,near start,"\sim","\alpha_l\times_x\beta_l"'] \arrow[ddd,bend left=50,near start,"\alpha_r\times_x\beta_r"description] \arrow[dl,two heads] \arrow[dr,two heads] & g_1 \arrow[dl,two heads] \arrow[dr,two heads] & \beta\times_y\gamma \arrow[u,two heads,near start,"\sim","\beta_l\times_y\gamma_l"'] \arrow[ddd,bend right=50,near start,"{\beta_r\times_y\gamma_r}"description] \arrow[dl,two heads] \arrow[dr,two heads] & h_1 \arrow[dl,two heads] \arrow[dr,two heads] & \\	
		w & \alpha \arrow[u,two heads,"\sim","\alpha_l"'] \arrow[d,"\alpha_r"] & x & \beta \arrow[u,two heads,"\sim","\beta_l"'] \arrow[d,"\beta_r"] & y & \gamma \arrow[u,two heads,"\sim","\gamma_l"'] \arrow[d,"\gamma_r"] & z \\
		& f_2 \arrow[ul,two heads] \arrow[ur,two heads] & (\alpha\times_x\beta)\times_y\gamma \arrow[uuuu,two heads,bend left=50,"\sim"',"(\alpha_l\times_x\beta_l)\times_y\gamma_l",near end] \arrow[dd,bend right=50,"(\alpha_r\times_x\beta_r)\times_y\gamma_r"'] \arrow[uu,bend left=50] \arrow[urrr] \arrow[ul,dashed] \arrow[uurr,dashed,bend right=8] & g_2 \arrow[ul,two heads] \arrow[ur,two heads] & \alpha\times_x(\beta\times_y\gamma) \arrow[ll,dotted,bend left=30,"a_{\alpha,\beta,\gamma}"',"{\cong}"] \arrow[uuuu,two heads,bend right=50,"\sim","\alpha_l\times_x(\beta_l\times_y\gamma_l)"',near end] \arrow[dd,bend left=50,"\alpha_r\times_x(\beta_r\times_y\gamma_r)"] \arrow[ulll] \arrow[uu,bend right=50] \arrow[ur,dashed] \arrow[uull,dashed,bend left=6] & h_2 \arrow[ul,two heads] \arrow[ur,two heads] & \\
		&& f_2\times_xg_2 \arrow[ul,two heads] \arrow[ur,two heads] && g_2\times_yh_2 \arrow[ul,two heads] \arrow[ur,two heads] && \\
		&& (f_2\times_xg_2)\times_yh_2 \arrow[u,two heads]  \arrow[uurrr,two heads] && f_2\times_x(g_2\times_yh_2) \arrow[u,two heads] \arrow[uulll,two heads] \arrow[ll,"a_{f_2,g_2,h_2}"',"{\cong}"] &&
	\end{tikzcd}
	\]}
	Thus
	\[
	\begin{array}{ll}
		& ([\hat{\hat{\gamma}}]*([\hat{\hat{\beta}}]*[\hat{\hat{\alpha}}])) \circ a_{\hat{h}_1,\hat{g}_1,\hat{f}_1} \\ [2mm]
		= & [(f_1\times_xg_1)\times_yh_1 \xleftarrow{(\alpha_l\times_x\beta_l)\times_y\gamma_l} (\alpha\times_x\beta)\times_y\gamma \xrightarrow{(\alpha_r\times_x\beta_r)\times_y\gamma_r} (f_2\times_xg_2)\times_yh_2] \\ [2mm]
		& \circ [f_1\times_x(g_1\times_yh_1) \xleftarrow{a_{f_1,g_1,h_1}^{-1}} (f_1\times_xg_1)\times_yh_1 =(f_1\times_xg_1)\times_yh_1] \\ [2mm]
		= & [f_1\times_x(g_1\times_yh_1) \xleftarrow{a_{f_1,g_1,h_1}^{-1}((\alpha_l\times_x\beta_l)\times_y\gamma_l)} (\alpha\times_x\beta)\times_y\gamma \xrightarrow{(\alpha_r\times_x\beta_r)\times_y\gamma_r} (f_2\times_xg_2)\times_yh_2] \\ [2mm]
		= & [f_1\times_x(g_1\times_yh_1) \xleftarrow{(\alpha_l\times_x(\beta_l\times_y\gamma_l)) a_{\alpha,\beta,\gamma}^{-1}} (\alpha\times_x\beta)\times_y\gamma \xrightarrow{(\alpha_r\times_x\beta_r)\times_y\gamma_r} (f_2\times_xg_2)\times_yh_2] \\ [2mm]
		= & [f_1\times_x(g_1\times_yh_1) \xleftarrow{\alpha_l\times_x(\beta_l\times_y\gamma_l) } \alpha\times_x(\beta\times_y\gamma) \xrightarrow{((\alpha_r\times_x\beta_r)\times_y\gamma_r)a_{\alpha,\beta,\gamma}} (f_2\times_xg_2)\times_yh_2] \\ [2mm]
		= & [f_1\times_x(g_1\times_yh_1) \xleftarrow{\alpha_l\times_x(\beta_l\times_y\gamma_l)} \alpha\times_x(\beta\times_y\gamma) \xrightarrow{a_{f_2,g_2,h_2}(\alpha_r\times_x(\beta_r\times_y\gamma_r))} (f_2\times_xg_2)\times_yh_2] \\ [2mm]
		= & [f_2\times_x(g_2\times_yh_2) = f_2\times_x(g_2\times_yh_2) \xrightarrow{a_{f_2,g_2,h_2}} (f_2\times_xg_2)\times_yh_2] \\ [2mm]
		& \circ [f_1\times_x(g_1\times_yh_1) \xleftarrow{\alpha_l\times_x(\beta_l\times_y\gamma_l)} \alpha\times_x(\beta\times_y\gamma) \xrightarrow{\alpha_r\times_x(\beta_r\times_y\gamma_r)} f_2\times_x(g_2\times_yh_2)] \\ [2mm]
		= & a_{\hat{h}_2,\hat{g}_2,\hat{f}_2}\circ ((\hat{\hat{\gamma}}*\hat{\hat{\beta}})*\hat{\hat{\alpha}}).
	\end{array}
	\]			
	
	(4) {\it Unitors:} For all $x,y\in\mathsf{M}\-{\rm span}^2$, we need to prove that the left unitor $l_{xy}$ and the right unitor $r_{xy}$ are natural isomorphisms. We just show that $l_{xy}$ is a natural isomorphism. The proof for $r_{xy}$ to be a natural isomorphism	is similar.
	
	For all 1-cells $\hat{f}_i=(x\twoheadleftarrow f_i\twoheadrightarrow y)$, $i=1,2$, and 2-cell $[\hat{\hat{\alpha}}]=[\hat{f}_1\leftarrow \hat{\alpha}\rightarrow \hat{f}_2] =[f_1\stackrel{\sim}{\twoheadleftarrow} \alpha\rightarrow f_2]$, we need to show the following diagram is commutative.
	\[
	\begin{tikzcd}
		1_y\circ \hat{f}_1 \arrow[r,"l_{\hat{f}_1}"] \arrow[d,"{1_{1_y}*[\hat{\hat{\alpha}}]}"'] & \hat{f}_1 \arrow[d,"{[\hat{\hat{\alpha}}]}"] \\
		1_y\circ \hat{f}_2 \arrow[r,"l_{\hat{f}_2}"] & \hat{f}_2  
	\end{tikzcd}
	\]
	We have the following commutative diagram.
	\[
	\begin{tikzcd}[column sep=huge]
		&& f_1\times_yy \arrow[dl,"{\cong}","l_{f_1}"'] \arrow[dr,two heads] &&\\		
		& f_1 \arrow[dl,two heads] \arrow[dr,two heads] & \alpha\times_yy \arrow[u,two heads,"\sim","\alpha_l\times_y1_y"'] \arrow[ddd,bend left=8,"\alpha_r\times_y1_y",near end] \arrow[dl,"{\cong}","l_\alpha"',near start] \arrow[dr,two heads] & y \arrow[dl,equal] \arrow[dr,equal] &\\	
		x & \alpha \arrow[u,two heads,"\sim","\alpha_l"'] \arrow[d,"\alpha_r"] & y & y\arrow[u,equal] \arrow[d,equal] & y \\
		& f_2 \arrow[ul,two heads] \arrow[ur,two heads] & & y \arrow[ul,equal] \arrow[ur,equal]&\\	
		&& f_2\times_yy \arrow[ul,"{\cong}"',"l_{f_2}"] \arrow[ur,two heads] &&
	\end{tikzcd}
	\]	
	Thus	
	\[
	\begin{array}{ll}
	& [\hat{\hat{\alpha}}]\circ l_{\hat{f}_1} \\ [2mm]
	= & [f_1 \xleftarrow{\alpha_l} \alpha \xrightarrow{\alpha_r} f_2] \circ [f_1\times_yy \xleftarrow{l_{f_1}^{-1}} f_1=f_1] 
	= [f_1\times_yy \xleftarrow{l_{f_1}^{-1}\alpha_l} \alpha \xrightarrow{\alpha_r} f_2] \\ [2mm]
	= & [f_1\times_yy \xleftarrow{\alpha_l\times_y1_y} \alpha\times_yy \xrightarrow{\alpha_rl_\alpha} f_2] 
	=  [f_1\times_yy \xleftarrow{\alpha_l\times_y1_y} \alpha\times_yy \xrightarrow{l_{f_2}(\alpha_r\times_y1_y)} f_2] \\ [2mm]
	= & [f_2\times_yy = f_2\times_yy \xrightarrow{l_{f_2}} f_2] \circ [f_1\times_yy \xleftarrow{\alpha_l\times_y1_y} \alpha\times_yy \xrightarrow{\alpha_r\times_y1_y} f_2\times_yy] \\ [2mm]
	= & l_{\hat{f}_2}\circ(1_{1_y}*[\hat{\hat{\alpha}}])
	\end{array}
	\]
	
	(5) {\it Unity Axiom:} For all 1-cells $\hat{f}=(x\twoheadleftarrow f\twoheadrightarrow y)$ and $\hat{g}=(y\twoheadleftarrow g \twoheadrightarrow z)$, we need to prove the following diagram is commutative.
	\[
	\begin{tikzcd}
		(\hat{g}\circ 1_y)\circ \hat{f} \arrow[rr,"a_{\hat{g},1_y,\hat{f}}"] \arrow[rd,"r_{\hat{g}}*1_{\hat{f}}"'] & & \hat{g}\circ(1_y\circ \hat{f}) \arrow[ld,"1_{\hat{g}}*l_{\hat{f}}"] \\
		& \hat{g}\circ \hat{f} &
	\end{tikzcd}
	\]		
	There is an isomorphism $a_{f,1_y,g}: f\times_y(y\times_yg) \to (f\times_yy)\times_yg $ such that the following diagram is commutative.
	\[
	\begin{tikzcd}
		&& (f\times_yy)\times_yg \arrow[d,two heads] \arrow[rrrdd,two heads] \arrow[rrd,dashed]  \arrow[dddddr,,"{\cong}","l_f\times_y1_g"',near end]
		&& f\times_y(y\times_y g) \arrow[d,two heads] \arrow[ddlll,two heads] \arrow[ll,dotted,"\cong","{a_{f,1_y,g}}"'] \arrow[lld,dashed] \arrow[dddddl,"{\cong}"',"1_f\times_yr_g",near end] &&\\
		&& f\times_yy \arrow[ld,"{\cong}","l_f"',near start] \arrow[rd,two heads] && y\times_yg \arrow[ld,two heads] \arrow[rd,,"{\cong}"',"r_g",near start] &&\\
		& f \arrow[ld,two heads] \arrow[rd,two heads] && y \arrow[ld,equal] \arrow[rd,equal] && g \arrow[ld,two heads] \arrow[rd,two heads] & \\
		x & f \arrow[u,equal] \arrow[d,equal] & y && y & g \arrow[u,equal] \arrow[d,equal] & z \\
		& f \arrow[lu,two heads] \arrow[ru,two heads] & && & g \arrow[lu,two heads] \arrow[ru,two heads] & \\	
		&&& f\times_yg \arrow[llu,two heads] \arrow[rru,two heads] &&&
	\end{tikzcd} 
	\]
	Thus
	\[
	\begin{array}{ll}
	& (1_{\hat{g}}*l_{\hat{f}})\circ a_{\hat{g},1_y,\hat{f}} \\ [2mm]
	= & [(f\times_yy)\times_yg = (f\times_yy)\times_yg \xrightarrow{l_f\times_y1_g} f\times_yg] \\ [2mm]
	& \circ [f\times_y(y\times_y g) =f\times_y(y\times_y g) \xrightarrow{a_{f,1_y,g}} (f\times_yy)\times_yg] \\ [2mm]
	= & [f\times_y(y\times_y g) = f\times_y(y\times_y g) \xrightarrow{(l_f\times_y1_g)a_{f,1_y,g}} f\times_yg] \\ [2mm]
	= & [f\times_y(y\times_y g) = f\times_y(y\times_y g) \xrightarrow{1_f\times_yr_g} f\times_yg] \\ [2mm]
	= & r_{\hat{g}}*1_{\hat{f}}.
	\end{array}
	\] 
	
	(6) {\it Pentagon Axiom:} For all 1-cells $\hat{f}=(v \twoheadleftarrow f \twoheadrightarrow w), \hat{g}=(w\twoheadleftarrow g\twoheadrightarrow x), \hat{h}=(x\twoheadleftarrow h\twoheadrightarrow y)$ and $\hat{k}=(y\twoheadleftarrow k \twoheadrightarrow z)$, we need to prove the following diagram is commutative.
	\[
	\begin{tikzcd}[column sep=tiny]
		& & (\hat{k}\hat{h})(\hat{g}\hat{f}) \arrow[rrd,"a_{\hat{k},\hat{h},\hat{g}\hat{f}}"] & & \\
		((\hat{k}\hat{h})\hat{g})\hat{f} \arrow[urr,"a_{\hat{k}\hat{h},\hat{g},\hat{f}}"] \arrow[rd,"a_{\hat{k},\hat{h},\hat{g}}*1_{\hat{f}}"] & & & & \hat{k}(\hat{h}(\hat{g}\hat{f})) \\
		& (\hat{k}(\hat{h}\hat{g}))\hat{f} \arrow[rr,"a_{\hat{k},\hat{h}\hat{g},\hat{f}}"] & & \hat{k}((\hat{h}\hat{g})\hat{f}) \arrow[ru,"1_{\hat{k}}*a_{\hat{h},\hat{g},\hat{f}}"] &
	\end{tikzcd}
	\]		
	There are isomorphisms 
	\[
	\begin{array}{l}
		a_{f,g,hk}:	 f\times_w(g\times_x(h\times_yk)) \to (f\times_wg)\times_x(h\times_yk), \\
		a_{fg,h,k}:(f\times_wg)\times_x(h\times_yk) \to ((f\times_wg)\times_xh)\times_yk, \\
		a_{f,gh,k}:f\times_w((g\times_xh)\times_yk) \to (f\times_w(g\times_xh))\times_yk, \\
		a_{f,g,h}: f\times_w(g\times_xh) \to (f\times_wg)\times_xh, \\
		a_{g,h,k}: g\times_x(h\times_yk) \to (g\times_xh)\times_yk,
	\end{array}
	\]
	such that the following diagram is commutative.
	Note that in the following commutative diagram, we omit all pullback notations $\times_?$ for short. 	
	\[
	\begin{tikzcd}[row sep=large,column sep=10]
		&&&& (fg)(hk) \arrow[dlll,dotted,"{\cong}","{a_{fg,h,k}}"'] \arrow[llddd,two heads,bend left=25] \arrow[rrddd,two heads,bend right=25] &&&&\\
		& ((fg)h)k \arrow[rrrrrrddd,two heads,bend left=5] \arrow[d,two heads] && (f(gh))k \arrow[ll,dotted,"{\cong}"',"{a_{f,g,h}\times_y1_k}"] \arrow[d,two heads] \arrow[rrrrddd,two heads] && f((gh)k)  \arrow[ll,dotted,"{\cong}"',"{a_{f,gh,k}}"] \arrow[llllddd,two heads] \arrow[d,two heads] && f(g(hk)) \arrow[ll,dotted,"{\cong}"',"{1_f\times_wa_{g,h,k}}"]  \arrow[lllu,dotted,"{\cong}","{a_{f,g,hk}}"'] \arrow[llllllddd,two heads,bend right=5] \arrow[d,two heads] & \\	
		& (fg)h \arrow[rrrrdd,two heads] \arrow[rd,two heads] && f(gh) \arrow[ll,dotted,"a_{f,g,h}"',"{\cong}"] \arrow[lldd,two heads,bend right=10] \arrow[rd,two heads] && (gh)k \arrow[ld,two heads] \arrow[rrdd,two heads,bend left=10] && g(hk) \arrow[ll,dotted,"a_{g,h,k}"',"{\cong}"] \arrow[ld,two heads] \arrow[lllldd,two heads] & \\		
		&& fg \arrow[ld,two heads] \arrow[rd,two heads] && gh \arrow[ld,two heads] \arrow[rd,two heads] && hk \arrow[ld,two heads] \arrow[rd,two heads] && \\	
		& f \arrow[ld,two heads] \arrow[rd,two heads] && g \arrow[ld,two heads] \arrow[rd,two heads] && h \arrow[ld,two heads] \arrow[rd,two heads] && k \arrow[ld,two heads] \arrow[rd,two heads] & \\	
		v && w && x && y && z
	\end{tikzcd}
	\]	
	Thus {\footnotesize 
	\[
	\begin{array}{ll}
		&(1_{\hat{k}}*a_{\hat{h},\hat{g},\hat{f}})\circ a_{\hat{k},\hat{h}\hat{g},\hat{f}}\circ (a_{\hat{k},\hat{h},\hat{g}}*1_{\hat{f}}) \\ [2mm]
		= & [(f\times_w(g\times_xh))\times_yk = (f\times_w(g\times_xh))\times_yk \xrightarrow{a_{f,g,h}\times_y1_k} ((f\times_wg)\times_xh)\times_yk] \\ [2mm]
		& \circ [f\times_w((g\times_xh)\times_yk) = f\times_w((g\times_xh)\times_yk) \xrightarrow{a_{f,gh,k}} (f\times_w(g\times_xh))\times_yk] \\ [2mm]
		& \circ [f\times_w(g\times_x(h\times_yk)) = f\times_w(g\times_x(h\times_yk)) \xrightarrow{1_f\times_wa_{g,h,k}} f\times_w((g\times_xh)\times_yk)] \\ [2mm]
		= & [f\times_w(g\times_x(h\times_yk)) = f\times_w(g\times_x(h\times_yk)) \xrightarrow{(a_{f,g,h}\times_y1_k)a_{f,gh,k}(1_f\times_wa_{g,h,k})} (f\times_w(g\times_xh))\times_yk] \\ [2mm]
		= & [f\times_w(g\times_x(h\times_yk)) = f\times_w(g\times_x(h\times_yk)) \xrightarrow{a_{fg,h,k}a_{f,g,hk}} (f\times_w(g\times_xh))\times_yk] \\ [2mm]
		= & [(f\times_wg)\times_x(h\times_yk)) = (f\times_wg)\times_x(h\times_yk)) \xrightarrow{a_{fg,h,k}} (f\times_w(g\times_xh))\times_yk] \\ [2mm]
		& \circ [f\times_w(g\times_x(h\times_yk)) = f\times_w(g\times_x(h\times_yk)) \xrightarrow{a_{f,g,hk}} (f\times_wg)\times_x(h\times_yk))] \\ [2mm]
		= & a_{\hat{k},\hat{h},\hat{g}\hat{f}} \circ a_{\hat{k}\hat{h},\hat{g},\hat{f}}.
	\end{array}
	\]}
	
	Now we have finished the proof of the theorem.
\end{proof}

For passing from $B_\infty$-algebras to Gerstenhaber algebras, we need do some preparations.

\begin{lemma}\label{Lem-Functor-Pullback}
	Let $\mathsf{M}$ be a model category, $\mathsf{C}$ a category with pullbacks, and $F:\mathsf{M}\rightarrow\mathsf{C}$ a functor sending each weak equivalence in $\mathsf{M}$ to an isomorphism in $\mathsf{C}$. Then for each cospan $x \xrightarrow{f} z \xleftarrow{g} y$ in $\mathsf{M}$ with $g$ being an acyclic fibration, the canonical morphism
	$\theta_{x,y}:F(x\times_zy) \to Fx\times_{Fz}Fy$ 
	given by the following pullback commutative diagrams is an isomorphism in $\mathsf{C}$.
	\[
	\begin{tikzcd}
		& x\times_zy \arrow[r,"\tilde{f}"] \arrow[d,two heads,"\sim"',"\tilde{g}"] & y \arrow[d,two heads,"\sim"',"g"] \\
		& x \arrow[r,"f"] & z 
	\end{tikzcd}
	\quad\quad
	\begin{tikzcd}
		F(x\times_zy) \arrow[rrd,"F\tilde{f}"',bend left=20] \arrow[rdd,"\cong"',"F\tilde{g}",bend right=20] \arrow[rd, dotted,"\theta_{x,y}"] & & \\
		& Fx\times_{Fz}Fy \arrow[r] \arrow[d,"\cong"] & Fy \arrow[d,"\cong","Fg"'] \\
		& Fx \arrow[r,"Ff"] & Fz 
	\end{tikzcd}
	\]	
\end{lemma}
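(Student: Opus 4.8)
The plan is to reduce the claim to the elementary fact that in any category the pullback of an isomorphism along an arbitrary morphism is again an isomorphism, after transporting the acyclic fibration $g$ and its base change across $F$.

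First I would record that $\mathsf{M}$, being a model category, possesses all finite limits, so the pullback $x\times_z y$ with its projections $\tilde g\colon x\times_z y\to x$ and $\tilde f\colon x\times_z y\to y$ exists and the left square commutes, i.e. $f\tilde g=g\tilde f$. Applying $F$ gives a commutative square $Ff\circ F\tilde g=Fg\circ F\tilde f$ in $\mathsf{C}$, which is precisely the cone inducing the canonical comparison morphism $\theta_{x,y}\colon F(x\times_z y)\to Fx\times_{Fz}Fy$; by definition of $\theta_{x,y}$ the projection $\pi_x\colon Fx\times_{Fz}Fy\to Fx$ satisfies $\pi_x\circ\theta_{x,y}=F\tilde g$.

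Next I would show that both of these maps are isomorphisms. Since $g$ is an acyclic fibration, Lemma \ref{Lem-Pullback-Fib-AcycFib} says its base change $\tilde g$ is again an acyclic fibration, hence a weak equivalence; as $F$ carries weak equivalences to isomorphisms, $F\tilde g$ is an isomorphism. On the other side, $g$ is in particular a weak equivalence, so $Fg$ is an isomorphism; because the pullback of an isomorphism along any morphism is an isomorphism, the projection $\pi_x$ is an isomorphism.

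Finally, from $\pi_x\circ\theta_{x,y}=F\tilde g$ with $\pi_x$ and $F\tilde g$ both invertible I conclude that $\theta_{x,y}=\pi_x^{-1}\circ F\tilde g$ is an isomorphism, which is the assertion. I do not expect a genuine obstacle here; the only points deserving a line of justification are the standard fact that base change preserves isomorphisms in $\mathsf{C}$ and the identity $\pi_x\circ\theta_{x,y}=F\tilde g$ coming from the universal property, together with the mild subtlety that the hypothesis on $F$ is used twice---once for $\tilde g$ (through Lemma \ref{Lem-Pullback-Fib-AcycFib}) and once for $g$ itself.
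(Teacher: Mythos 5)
Your proof is correct and follows essentially the same route as the paper: base change of the acyclic fibration $g$ yields the weak equivalence $\tilde g$, the hypothesis on $F$ makes $F\tilde g$ and $Fg$ isomorphisms, stability of isomorphisms under pullback makes the projection $Fx\times_{Fz}Fy\to Fx$ an isomorphism, and the identity $\pi_x\circ\theta_{x,y}=F\tilde g$ finishes the argument. The only (harmless) difference is the citation: you invoke Lemma~\ref{Lem-Pullback-Fib-AcycFib} directly, whereas the paper cites Lemma~\ref{Lem-Pullback-2-Fib-AcycFib}; your choice is if anything the more economical one, and your explicit factorization $\theta_{x,y}=\pi_x^{-1}\circ F\tilde g$ just spells out what the paper leaves implicit.
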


\begin{proof}
	It follows from Lemma \ref{Lem-Pullback-2-Fib-AcycFib} that $\tilde{g}$ is also an acyclic fibration. By the assumption that $F$ sends weak equivalences to isomorphisms, $Fg$ and $F\tilde{g}$ are isomorphisms. Since pullbacks preserve isomorphisms, the morphism $Fx\times_{Fz}Fy\to Fx$ is an isomorphism. So is the morphism $\theta_{x,y}: F(x\times_zy)\to Fx\times_{Fz}Fy$.
\end{proof}

\begin{proposition}\label{Prop-ColaxFunctor-Mspan-CSpan}
	Let $\mathsf{M}$ be a model category, $\mathsf{C}$ a category with pullbacks, and $F:\mathsf{M}\rightarrow\mathsf{C}$ a functor sending each weak equivalence to an isomorphism. Then the functor $F$ induces a colax functor
	$F:\mathsf{M}\-{\rm span}^2 \rightarrow \mathsf{C}\-\Span^2$ given by the following data:
	
	{\rm (1)} The function $F:\Ob(\mathsf{M}\-{\rm span}^2) \rightarrow \Ob(\mathsf{C}\-\Span^2), x \mapsto Fx.$
	
	{\rm (2)} For all $x,y\in\mathsf{M}\-{\rm span}^2$, the functor $F_{xy}:\mathsf{M}\-{\rm span}^2(x,y) \rightarrow \mathsf{C}\-\Span^2(Fx,Fy), \hat{f} =(x\stackrel{f_l}{\twoheadleftarrow} f \stackrel{f_r}{\twoheadrightarrow} y) \mapsto F_{xy}(\hat{f}) := (Fx \xleftarrow{Ff_l} Ff \xrightarrow{Ff_r} Fy), [\hat{\hat{\alpha}}]=[\!\!\begin{tikzcd} f & \alpha \arrow[l,two heads,"\alpha_l"',"\sim"] \end{tikzcd}\!\! \xrightarrow{\alpha_r} f'] \mapsto F_{xy}([\hat{\hat{\alpha}}]) := [\!\!\begin{tikzcd} Ff & F\alpha  \arrow[l,"F\alpha_l"',"{\cong}"] \end{tikzcd}\!\!\xrightarrow{F\alpha_r} Ff']$.
	
	{\rm (3)} For all objects $x,y,z$ and 1-cells $\hat{f}=(x\twoheadleftarrow f\twoheadrightarrow y)$ and $\hat{f}'=(y\twoheadleftarrow g\twoheadrightarrow z)$ in $\mathsf{M}\-{\rm span}^2$, the colax functoriality constraint $F^2_{\hat{f}',\hat{f}}: F(\hat{f}'\circ\hat{f})\to F(\hat{f}')\circ F(\hat{f})$ is the 2-cell $[F(f\times_y g) = F(f\times_y g) \xrightarrow{\theta_{f,g}} Ff\times_{Fy}Fg]$.
	\[
	\begin{tikzcd}
		& F(f\times_y g) \arrow[ld] \arrow[rd] & \\
		Fx & F(f\times_y g) \arrow[u, equal] \arrow[d,"\theta_{f,g}"] & Fz \\
		& Ff\times_{Fy}Fg \arrow[lu] \arrow[ru] &
	\end{tikzcd}
	\]
	
	{\rm (4)} For all $x\in\mathsf{M}\-{\rm span}^2$, the colax unity constraint $F_x^0:=[Fx=Fx=Fx]$.
	\[
	\begin{tikzcd}
		& Fx \arrow[ld,"F1_x"'] \arrow[rd,"F1_x"] & \\
		Fx & Fx \arrow[u, equal] \arrow[d, equal] & Fx \\
		& Fx \arrow[ru,"1_{Fx}"'] \arrow[lu,"1_{Fx}"]
	\end{tikzcd}
	\]
	
\end{proposition}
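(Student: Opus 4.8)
The plan is to verify, in the order dictated by the definition of a colax functor (equivalently, a lax functor between the co-bicategories), the following three packages: (i) that each $F_{xy}$ is a well-defined functor of Hom categories; (ii) that $F^2$ and $F^0$ are natural transformations in the colax direction, with components pointing as specified in (3) and (4); and (iii) the three coherence axioms obtained by dualizing Lax Associativity, Lax Left Unity and Lax Right Unity. Throughout, the decisive input will be Lemma~\ref{Lem-Functor-Pullback}: whenever a cospan in $\mathsf{M}$ has one leg an acyclic fibration, $F$ commutes with its pullback up to the canonical comparison isomorphism $\theta$.

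First I would treat the functoriality of $F_{xy}$. On $1$-cells there is nothing to check beyond applying $F$ legwise, producing a span in $\mathsf{C}$ (no fibrancy condition is imposed on $1$-cells of $\mathsf{C}\-\Span^2$). On $2$-cells, a representative $\hat{\hat{\alpha}}$ of $\mathsf{M}\-{\rm span}^2$ has left leg $\alpha_l$ an acyclic fibration, so $F\alpha_l$ is an isomorphism and $[F\hat{\hat{\alpha}}]$ is a genuine $2$-cell of $\mathsf{C}\-\Span^2$. Well-definedness on equivalence classes reduces to the generating relation: a morphism of spans of spans $\theta\colon\hat{\hat{\alpha}}\to\hat{\hat{\alpha}}'$ is a weak equivalence by the $2$-out-of-$3$ property, whence $F\theta$ is an isomorphism and exhibits $[F\hat{\hat{\alpha}}]=[F\hat{\hat{\alpha}}']$. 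Preservation of vertical composition is exactly where Lemma~\ref{Lem-Functor-Pullback} enters: the composite $\hat{\hat{\alpha}}'\circ\hat{\hat{\alpha}}$ is built from the pullback $\alpha\times_{f'}\alpha'$ along the acyclic fibration $\alpha'_l$, so $\theta_{\alpha,\alpha'}\colon F(\alpha\times_{f'}\alpha')\xrightarrow{\cong}F\alpha\times_{Ff'}F\alpha'$ is an isomorphism of spans of spans identifying $F\!\left([\hat{\hat{\alpha}}']\circ[\hat{\hat{\alpha}}]\right)$ with $F[\hat{\hat{\alpha}}']\circ F[\hat{\hat{\alpha}}]$. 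Identity $2$-cells are visibly preserved.

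Next I would establish naturality. For $F^2_{\hat{g},\hat{f}}=[\,F(f\times_y g)=F(f\times_y g)\xrightarrow{\theta_{f,g}}Ff\times_{Fy}Fg\,]$, naturality with respect to $2$-cells $[\hat{\hat{\alpha}}]$ and $[\hat{\hat{\beta}}]$ follows because $\theta$ is a natural comparison map for pullbacks; functoriality of pullback formation in $\mathsf{C}$ then forces the requisite square of spans of spans to commute up to isomorphism, which is equality of isomorphism classes. Observe that here no acyclicity of the span legs $f_r,g_l$ is needed: a colax functor does not require $F^2$ to be invertible, so $\theta_{f,g}$ may be an arbitrary morphism. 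The unity constraint $F^0_x=[\,Fx=Fx=Fx\,]$ is natural automatically, exactly as recorded for lax unity constraints in the excerpt.

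The main work, and the step I expect to be the obstacle, is the colax associativity axiom. Here I would compare the two canonical routes relating $F\!\left((f\times_x g)\times_y h\right)$ to $(Ff\times_{Fx}Fg)\times_{Fy}Fh$: one through $\theta_{f\times_x g,\,h}$ followed by $\theta_{f,g}\times_{Fy}1$, the other through $\theta_{f,\,g\times_y h}$ followed by $1\times_{Fx}\theta_{g,h}$, intertwined with the associators $a_{f,g,h}$ of $\mathsf{M}$ and $a$ of $\mathsf{C}$ appearing in $Fa$ and in $a'$ of the target bicategory. Both composites are morphisms into the same iterated pullback of $\mathsf{C}$, sharing the same projections to the corners $Ff,Fg,Fh$; by the universal property of pullbacks such a comparison morphism is unique, so the two routes coincide, and the dualized associativity pentagon commutes as an identity of isomorphism classes of spans of spans. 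The genuinely delicate part is the bookkeeping of which legs are acyclic fibrations, needed only to keep the instances of $\theta$ occurring inside \emph{vertical} composites invertible; the constraint legs themselves require no acyclicity. The colax left and right unity axioms are handled in the same spirit, now invoking the unitor coherences $l_f,r_f$ together with $\theta$ against an identity span, each again collapsing to uniqueness of a map into a pullback.
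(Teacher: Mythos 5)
Your proposal is correct and follows essentially the same route as the paper's proof: well-definedness of $F_{xy}$ via the $2$-out-of-$3$ property, preservation of vertical composition via the comparison isomorphism $\theta$ of Lemma~\ref{Lem-Functor-Pullback}, naturality of $F^2$ from naturality of $\theta$, and the colax coherence axioms from uniqueness of morphisms into (iterated) pullbacks — which is exactly what the paper's large commutative diagrams encode. Your observation that acyclicity is needed only where $\theta$ must be invertible (inside vertical composites of $2$-cells), and not for the constraint $2$-cells themselves, also matches the paper's use of Lemmas~\ref{Lem-Pullback-2-Fib-AcycFib} and~\ref{Lem-Functor-Pullback}.
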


\begin{proof}	
	(1) For all $x,y\in\mathsf{M}\-{\rm span}^2$, $F_{xy}$ is a functor: 
	
	(1.1) $F_{xy}$ is well-defined: If $\hat{\hat{\alpha}}=(\!\!\begin{tikzcd} f & \alpha \arrow[l,two heads,"\alpha_l"',"\sim"] \end{tikzcd}\!\! \xrightarrow{\alpha_r} f') \sim \hat{\hat{\alpha}}'=(\!\!\begin{tikzcd} f & \alpha' \arrow[l,two heads,"\alpha'_l"',"\sim"] \end{tikzcd}\!\! \xrightarrow{\alpha'_r} f')$ is given by a morphism $\theta:\alpha\to\alpha'$ then $(\!\!\begin{tikzcd} Ff & F\alpha \arrow[l,"F\alpha_l"',"{\cong}"] \end{tikzcd}\!\!\xrightarrow{F\alpha_r} Ff') \sim (\!\!\begin{tikzcd} Ff & F\alpha' \arrow[l,"F\alpha'_l"',"{\cong}"] \end{tikzcd}\!\! \xrightarrow{F\alpha'_r} Ff')$ is given by the isomorphism $F\theta:F\alpha\to F\alpha'$.
	\[
	\begin{tikzcd}[row sep=huge,column sep=small]
		&& f \arrow[lld,two heads,"f_l"'] \arrow[rrd,two heads,"f_r"] && \\
		x & \alpha \arrow[ur,two heads,near start,"\sim","\alpha_l"'] \arrow[dr,"\alpha_r"] \arrow[rr,"\sim"',"\theta"] &&  \alpha' \arrow[ul,two heads,near start,"\sim"',"\alpha'_l"] \arrow[dl,"\alpha'_r"'] & y \\
		&& f' \arrow[ull,two heads,"f'_l"] \arrow[urr,two heads,"f'_r"'] &&
	\end{tikzcd} \quad \longmapsto \quad
	\begin{tikzcd}[row sep=huge,column sep=small]
		&& Ff \arrow[lld,"Ff_l"'] \arrow[rrd,"Ff_r"] && \\
		Fx & F\alpha \arrow[ur,near start,"\cong","F\alpha_l"'] \arrow[dr,"F\alpha_r"] \arrow[rr,"F\theta","{\cong}"'] && F\alpha' \arrow[ul,near start,"\cong"',"F\alpha'_l"] \arrow[dl,"F\alpha'_r"'] & Fy \\
		&& Ff' \arrow[ull,"Ff'_l"] \arrow[urr,"Ff'_r"'] &&
	\end{tikzcd}
	\]
	
	(1.2) $F_{xy}$ preserves vertical compositions of 2-cells: By Lemma \ref{Lem-Functor-Pullback}, we have $F_{xy}([\hat{\hat{\alpha}}']\circ[\hat{\hat{\alpha}}]) = [Ff \leftarrow F(\alpha\times_{f'}\alpha') \rightarrow Ff''] = [Ff \leftarrow F\alpha\times_{Ff'}F\alpha' \rightarrow Ff''] =  F_{xy}([\hat{\hat{\alpha}}'])\circ F_{xy}([\hat{\hat{\alpha}}])$.
	\[
	\begin{tikzcd}[row sep=large]
		& Ff \arrow[ldd,"Ff_l"'] \arrow[rrrdd,"Ff_r"] &&& \\
		& F\alpha \arrow[u,"\cong","F\alpha_l"'] \arrow[d,"F\alpha_r"] &&& \\
		Fx & Ff' & F\alpha\times_{Ff'}F\alpha' \arrow[ul,"{\cong}"] \arrow[dl] & F(\alpha\times_{f'}\alpha') \arrow[l,dotted,"\cong","F\theta"'] \arrow[ull,"{\cong}"] \arrow[dll] & Fy\\
		& F\alpha' \arrow[u,"\cong","F\alpha'_l"'] \arrow[d,"F\alpha'_r"] &&& \\
		& Ff'' \arrow[uul,"{Ff''_l}"] \arrow[uurrr,"{Ff''_r}"'] &&&
	\end{tikzcd}
	\]
	
	(1.3) $F_{xy}$ preserves identity 2-cells: Obviously, $F1_{\hat{f}}=1_{F\hat{f}}$. 
	
	Thus $F_{xy}$ is a functor.
	
	(2) For all objects $x,y,z\in\mathsf{M}\-{\rm span}^2$,  $F^2_{xyz}$ is a natural transformation: We need to show that for all 1-cells
	$\hat{f}=(x \twoheadleftarrow f \twoheadrightarrow y), \hat{f}'=(x \twoheadleftarrow f' \twoheadrightarrow y), \hat{g}=(y \twoheadleftarrow g \twoheadrightarrow z), \hat{g}'=(y \twoheadleftarrow g' \twoheadrightarrow z)$
	and 2-cells
	$[\hat{\hat{\alpha}}]=[f \stackrel{\sim}{\twoheadleftarrow} \alpha \rightarrow f'], 
	[\hat{\hat{\beta}}]=[g \stackrel{\sim}{\twoheadleftarrow} \beta \rightarrow g']$ in $\mathsf{M}\-{\rm span}^2$, the following diagram is commutative. 
	\[
	\begin{tikzcd}
		F(\hat{g}\circ\hat{f}) \arrow[r,"F^2_{\hat{g},\hat{f}}"] \arrow[d,"{F([\hat{\hat{\beta}}]*[\hat{\hat{\alpha}}])}"'] & F(\hat{g})\circ F(\hat{f}) \arrow[d,"{F([\hat{\hat{\beta}}])*F([\hat{\hat{\alpha}}])}"] \\
		F(\hat{g}'\circ\hat{f}') \arrow[r,"F^2_{\hat{g}',\hat{f}'}"] & F(\hat{g}')\circ F(\hat{f}') 
	\end{tikzcd}
	\]
	
	Using the pullbacks $Ff\times_{Fy}Fg$ and $Ff'\times_{Fy}Fg'$, we can obtain the following commutative diagram, where $F(\alpha_l\times_y\beta_l)$ is an isomorphism due to Lemma~\ref{Lem-Pullback-2-Fib-AcycFib}.
	\[
	\begin{tikzcd}
		& Ff\times_{Fy}Fg \arrow[dl] \arrow[drrr] && F(f\times_yg)\arrow[ll,dotted,near end,"\theta_{f,g}"'] \arrow[dr] \arrow[dlll] &\\
		Ff \arrow[rr] && Fy && Fg \arrow[ll] \\
		& F\alpha\times_{Fy}F\beta \arrow[uu,"{\cong}","F\alpha_l\times_{Fy}F\beta_l"',near start] \arrow[dd,"F\alpha_r\times_{Fy}F\beta_r",near end] \arrow[dl] \arrow[drrr] && F(\alpha\times_y\beta) \arrow[ll,dotted,near end,"\theta_{\alpha,\beta}"'] \arrow[uu,"{\cong}"',"F(\alpha_l\times_y\beta_l)",near start] \arrow[dd,"F(\alpha_r\times_y\beta_r)"',near end] \arrow[dr] \arrow[dlll] &\\
		F\alpha \arrow[rr] \arrow[uu,"{\cong}"] \arrow[dd] && Fy \arrow[uu,equal] \arrow[dd,equal] && F\beta \arrow[ll] \arrow[uu,"{\cong}"] \arrow[dd] \\
		& Ff'\times_{Fy}Fg' \arrow[dl] \arrow[drrr] && F(f'\times_yg') \arrow[ll,dotted,near end,"\theta_{f',g'}"'] \arrow[dr] \arrow[dlll] &\\
		Ff' \arrow[rr] && Fy && Fg' \arrow[ll] 
	\end{tikzcd}		
	\]
	Thus
	\[
	\begin{array}{ll}
		& (F([\hat{\hat{\beta}}])*F([\hat{\hat{\alpha}}]))\circ F^2_{\hat{g},\hat{f}} \\ [2mm]
		= & [Ff\times_{Fy}Fg \xleftarrow{F\alpha_l\times_{Fy}F\beta_l} F\alpha\times_{Fy}F\beta \xrightarrow{F\alpha_r\times_{Fy}F\beta_r} Ff'\times_{Fy}Fg'] \\ [2mm]
		& \circ [F(f\times_yg) = F(f\times_yg) \xrightarrow{\theta_{f,g}} Ff\times_{Fy}Fg] \\ [2mm]
		= & [F(f\times_yg) =  F(f\times_yg) \xrightarrow{(F\alpha_r\times_{Fy}F\beta_r)\circ(F\alpha_l\times_{Fy}F\beta_l)^{-1}\circ\theta_{f,g}}  Ff'\times_{Fy}Fg'] \\ [2mm]
		= & [F(f\times_yg) =  F(f\times_yg) \xrightarrow{\theta_{f',g'}\circ F(\alpha_r\times_y\beta_r) \circ F(\alpha_l\times_y\beta_l)^{-1}}  Ff'\times_{Fy}Fg'] \\ [2mm]
		= & [F(f'\times_yg') = F(f'\times_yg') \xrightarrow{\theta_{f',g'}} Ff'\times_{Fy} Fg'] \\ [2mm]
		&  \circ [F(f\times_yg) \xleftarrow{F(\alpha_l\times_y\beta_l)} F(\alpha\times_y\beta) \xrightarrow{F(\alpha_r\times_y\beta_r)} F(f'\times_yg')] \\ [2mm]
		= & F^2_{\hat{g}',\hat{f}'}\circ F([\hat{\hat{\beta}}]*[\hat{\hat{\alpha}}]).
	\end{array}
	\]
	
	(3) {\it Colax associativity:} For all objects $w,x,y,z$ and 1-cells
	$\hat{f}=(w \twoheadleftarrow f \twoheadrightarrow x),\  \hat{g}=(x \twoheadleftarrow g \twoheadrightarrow y)$ and $\hat{h}=(y \twoheadleftarrow h \twoheadrightarrow z)$ in $\mathsf{M}\-{\rm span}^2$, we need to show that the following diagram is commutative.
	\[
	\begin{tikzcd}[row sep=large,column sep=large]
		(F\hat{h}\circ F\hat{g})\circ F\hat{f} \arrow[r,"{a'_{F\hat{h},F\hat{g},F\hat{f}}}"] & F\hat{h}\circ(F\hat{g}\circ F\hat{f}) \\
		F(\hat{h}\circ\hat{g})\circ F\hat{f} \arrow[u,"{F^2_{\hat{h},\hat{g}}*1_{F\hat{f}}}"] & F\hat{h}\circ F(\hat{g}\circ\hat{f}) \arrow[u,"{1_{F\hat{h}}*F^2_{\hat{g},\hat{f}}}"'] \\
		F((\hat{h}\circ\hat{g})\circ\hat{f}) \arrow[r,"{Fa_{\hat{h},\hat{g},\hat{f}}}"] \arrow[u,"F^2_{\hat{h}\circ\hat{g},\hat{f}}"] & F(\hat{h}\circ(\hat{g}\circ\hat{f})) \arrow[u,"F^2_{\hat{h},\hat{g}\circ\hat{f}}"']
	\end{tikzcd}		
	\]
	
	Using the pullback $(Ff\times_{Fx}Fg)\times_{Fy}Fh$, we can obtain the following commutative diagram in which we omit all pullback notations $\times_?$ for short.
	\[
	\begin{tikzcd}[column sep=25]
		&& F((fg)h) \arrow[d,"\theta_{fg,h}"',near end] \arrow[dddll] \arrow[ddddrrr,bend right=23] && F(f(gh)) \arrow[ll,"Fa_{f,g,h}"']\arrow[d,"\theta_{f,gh}",near end] \arrow[dddrr] \arrow[ddddlll,bend left=23] && \\
		&& F(fg)Fh \arrow[d,"\theta_{f,g}\times_{Fy}1_{Fh}"',near end] \arrow[ddll,bend right=10] \arrow[dddrrr,bend right=15] && FfF(gh) \arrow[d,"1_{Ff}\times_{Fx}\theta_{g,h}",near end] \arrow[ddrr,bend left=10] \arrow[dddlll,bend left=15] && \\
		&& (FfFg)Fh \arrow[d] \arrow[ddrrr,bend right=5] && Ff(FgFh) \arrow[ll,"{a'_{Ff,Fg,Fh}}"'] \arrow[d] \arrow[ddlll,bend left=5] && \\
		F(fg) \arrow[rr,"\theta_{f,g}"] \arrow[dr] \arrow[drrr] && FfFg \arrow[dl] \arrow[dr] && FgFh \arrow[dl] \arrow[dr] && F(gh) \arrow[ll,"\theta_{f,g}"'] \arrow[dl] \arrow[dlll] \\	
		& Ff \arrow[dl] \arrow[dr] && Fg \arrow[dl] \arrow[dr] && Fh \arrow[dl] \arrow[dr] & \\
		Fw && Fx && Fy && Fz 
	\end{tikzcd}
	\]
	Thus {\small		
	\[
	\begin{array}{ll}
		& (1_{F\hat{h}}*F^2_{\hat{g},\hat{f}})\circ F^2_{\hat{h},\hat{g}\circ\hat{f}} \circ Fa_{\hat{h},\hat{g},\hat{f}} \\ [2mm]
		= & [F(f\times_xg)\times_{Fy}Fh = F(f\times_xg)\times_{Fy}Fh \xrightarrow{\theta_{f,g}\times_{Fy}1_{Fh}} (Ff\times_{Fx}Fg)\times_{Fy}Fh] \\ [2mm]
		& \circ [F((f\times_xg)\times_yh) = F((f\times_xg)\times_yh) \xrightarrow{\theta_{fg,h}} F(f\times_xg)\times_{Fy}Fh] \\ [2mm]
		& \circ [F(f\times_x(g\times_yh)) = F(f\times_x(g\times_yh)) \xrightarrow{Fa_{f,g,h}} F((f\times_xg)\times_yh)] \\ [2mm]
		= & [F(f\times_x(g\times_yh)) = F(f\times_x(g\times_yh)) \xrightarrow{(\theta_{f,g}\times_{Fy}1_{Fh})\circ\theta_{fg,h}\circ Fa_{f,g,h}} (Ff\times_{Fx}Fg)\times_{Fy}Fh] \\ [2mm]
		= & [F(f\times_x(g\times_yh)) = F(f\times_x(g\times_yh)) \xrightarrow{a'_{Fh,Fg,Ff}(1_{Ff}\times_{Fx}\theta_{g,h})\theta_{f,gh}} (Ff\times_{Fx}Fg)\times_{Fy}Fh] \\ [2mm]
		= & [Ff\times_{Fx}(Fg\times_{Fy}Fh) = Ff\times_{Fx}(Fg\times_{Fy}Fh) \xrightarrow{a'_{Fh,Fg,Ff}} (Ff\times_{Fx}Fg)\times_{Fy}Fh] \\ [2mm]
		& \circ [Ff\times_{Fx}F(g\times_yh) = Ff\times_{Fx}F(g\times_yh) \xrightarrow{1_{Ff}\times_{Fx}\theta_{g,h}} Ff\times_{Fx}(Fg\times_{Fy}Fh)] \\ [2mm]
		& \circ [F(f\times_x(g\times_yh)) = F(f\times_x(g\times_yh)) \xrightarrow{\theta_{f,gh}} Ff\times_{Fx}F(g\times_yh)] \\ [2mm]
		= & a'_{F\hat{h},F\hat{g},F\hat{f}} \circ (F^2_{\hat{h},\hat{g}}*1_{F\hat{f}}) \circ F^2_{\hat{h}\circ\hat{g},\hat{f}}.	
	\end{array}
	\]}
	
	(4) {\it Colax left unity:} For any 1-cell $\hat{f}=(x \twoheadleftarrow f \twoheadrightarrow y)$, we need to show that the following diagram is commutative.
	\[
	\begin{tikzcd}
		1_{Fy}\circ F\hat{f} \arrow[r,"l'_{F\hat{f}}"] & F\hat{f} \\
		F1_y\circ F\hat{f} \arrow[u,"F^0_y*1_{F\hat{f}}"] & F(1_y\circ \hat{f}) \arrow[u,"Fl_{\hat{f}}"'] \arrow[l,"F^2_{1_y,\hat{f}}"']
	\end{tikzcd}
	\]
	
	We have the following commutative diagram. 
	\[
	\begin{tikzcd}
		& Ff\times_{Fy}Fy \arrow[d,"l'_{Ff}"] \arrow[drr] && F(f\times_y y) \arrow[ll,"\theta_{f,y}"'] \arrow[d] \arrow[dll,"Fl_f",near end] & \\
		& Ff \arrow[dl] \arrow[dr] && Fy \arrow[dl,equal] \arrow[dr,equal] & \\
		Fx && Fy & & Fy
	\end{tikzcd}
	\]	
	Thus	
	\[
	\begin{array}{ll}
		& l'_{F\hat{f}} \circ (F^0_y*1_{F\hat{f}}) \circ F^2_{1_y,\hat{f}} \\ [2mm]
		= & [Ff\times_{Fy}Fy = Ff\times_{Fy}Fy \xrightarrow{l'_{Ff}} Ff] \circ [Ff\times_{Fy}Fy = Ff\times_{Fy}Fy = Ff\times_{Fy}Fy] \\ [2mm]
		& \circ [F(f\times_yy) = F(f\times_yy) \xrightarrow{\theta_{f,y}} Ff\times_{Fy}Fy] \\ [2mm]
		= & [F(f\times_yy) = F(f\times_yy) \xrightarrow{l'_{Ff}\circ\theta_{f,y}} Ff] \\ [2mm]
		= & [F(f\times_yy) = F(f\times_yy) \xrightarrow{Fl_f} Ff] \\ [2mm]
		= & Fl_{\hat{f}}.
	\end{array}
	\]
	
	(5) {\it Colax right unity:} For any 1-cell $\hat{f}=(x \twoheadleftarrow f \twoheadrightarrow y)$, we need to show that the following diagram is commutative.
	\[
	\begin{tikzcd}
		F\hat{f}\circ 1_{Fx} \arrow[r,"r'_{F\hat{f}}"] & F\hat{f} \\
		F\hat{f}\circ F1_x \arrow[u,"1_{F\hat{f}}*F^0_x"] & F(\hat{f}\circ 1_x) \arrow[u,"Fr_{\hat{f}}"'] \arrow[l,"F^2_{\hat{f},1_x}"']
	\end{tikzcd}
	\]
	
	The proof is similar to that of the colax left unity.	
\end{proof}

As an application, we can obtain a colax functor from some bicategory of $B_\infty$-algebras to some bicategory of Gerstenhaber algebras.

\begin{corollary}\label{Cor-Colax functor-Bspan2-GSpan2}
The cohomology functor $H:B_\infty\to\mathsf{G}, A\mapsto HA, (f:A\to A')\mapsto (Hf_1:HA\to HA')$, induces a colax functor $\mathscr{H}:B_\infty\-{\rm span}^2 \rightarrow \mathsf{G}\-\Span^2$.
\end{corollary}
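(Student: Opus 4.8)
The plan is to obtain this corollary as an immediate application of the general Proposition~\ref{Prop-ColaxFunctor-Mspan-CSpan}, taking $\mathsf{M}=B_\infty$, $\mathsf{C}=\mathsf{G}$, and $F=H$. To invoke that proposition I must verify its three hypotheses: that $B_\infty$ is a model category, that $\mathsf{G}$ is a category with pullbacks, and that the cohomology functor $H$ sends every weak equivalence of $B_\infty$ to an isomorphism of $\mathsf{G}$. Once these are in place, the proposition manufactures a colax functor $\mathscr{H}:=H:B_\infty\-{\rm span}^2\to\mathsf{G}\-\Span^2$, together with its colax functoriality constraints $\mathscr{H}^2$ and colax unity constraints $\mathscr{H}^0$, which is exactly the assertion.

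The first two hypotheses are already recorded in Subsection~2.2. The category $B_\infty$ carries the model structure whose weak equivalences are precisely the $B_\infty$-quasi-isomorphisms, and $\mathsf{G}$ has all limits and colimits, hence in particular all pullbacks. That $H$ is a functor at all is the content of Proposition~\ref{Prop:B_inf-to-G}. So nothing new is needed for these points beyond citing the relevant facts.

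The only substantive step, which I expect to be the one place requiring any argument, is checking that $H$ carries weak equivalences to isomorphisms. Let $f=\{f_n\}_{n\ge 1}:A\to A'$ be a $B_\infty$-quasi-isomorphism; by definition $f_1:(A,m_1)\to(A',m'_1)$ is a quasi-isomorphism of complexes. Thus the induced map $H(f)=Hf_1:HA\to HA'$ is a bijective graded linear map, i.e.\ an isomorphism of graded vector spaces. By Proposition~\ref{Prop:B_inf-to-G}, $Hf_1$ is moreover a morphism of Gerstenhaber algebras, so it preserves the cup product and the Gerstenhaber bracket. Its inverse graded linear map then automatically preserves these operations as well (for $x=Hf_1(a)$, $y=Hf_1(b)$ one has $(Hf_1)^{-1}(x\cup y)=(Hf_1)^{-1}(Hf_1(a)\cup Hf_1(b))=a\cup b=(Hf_1)^{-1}(x)\cup(Hf_1)^{-1}(y)$, and likewise for the bracket), whence it too is a morphism of Gerstenhaber algebras. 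Therefore $H(f)$ is an isomorphism in $\mathsf{G}$.

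With all three hypotheses verified, Proposition~\ref{Prop-ColaxFunctor-Mspan-CSpan} applies verbatim and yields the desired colax functor $\mathscr{H}:B_\infty\-{\rm span}^2\to\mathsf{G}\-\Span^2$. I anticipate no genuine obstacle: the entire combinatorial and bicategorical content is absorbed into the general proposition, and the sole verification reduces to the observation that a $B_\infty$-quasi-isomorphism induces a cohomology isomorphism which, by Proposition~\ref{Prop:B_inf-to-G}, respects the Gerstenhaber structure.
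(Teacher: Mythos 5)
Your proposal is correct and matches the paper's proof, which is exactly the one-line deduction from Proposition~\ref{Prop-ColaxFunctor-Mspan-CSpan} with $\mathsf{M}=B_\infty$, $\mathsf{C}=\mathsf{G}$, $F=H$. Your explicit verification of the hypotheses (in particular that a $B_\infty$-quasi-isomorphism induces a Gerstenhaber algebra isomorphism on cohomology, since the inverse of a bijective Gerstenhaber morphism is again one) is left implicit in the paper but is exactly what its citation of the proposition presupposes.
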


\begin{proof}
	It follows from Proposition \ref{Prop-ColaxFunctor-Mspan-CSpan}.
\end{proof}

To obtain lax functors, we consider the restrictions of above colax functors on sub-bicategories.
Denote by $\mathsf{M}\-{\rm span}^2_{\rm raf}$ the sub-bicategory of $\mathsf{M}\-{\rm span}^2$ whose objects are all objects of $\mathsf{M}\-{\rm span}^2$ or $\mathsf{M}$, whose 1-cells are all 1-cells $x \stackrel{f_l}{\twoheadleftarrow} f\stackrel{f_r}{\twoheadrightarrow} y$ in $\mathsf{M}\-{\rm span}^2$ with $f_r$ being an acyclic fibration, and $\mathsf{M}\-{\rm span}^2_{\rm raf}(x,y)(\hat{f},\hat{f}'):=\mathsf{M}\-{\rm span}^2(x,y)(\hat{f},\hat{f}')$ for all objects $x,y$ and 1-cells $\hat{f}=(\!\!\begin{tikzcd}[column sep=small] x & f \arrow[l,two heads,"f_l"'] \arrow[r,two heads,"f_r","\sim"'] & y \end{tikzcd}\!\!)$ and $\hat{f}'=(\!\!\begin{tikzcd}[column sep=small] x & f' \arrow[l,two heads,"{f'_l}"'] \arrow[r,two heads,"{f'_r}","\sim"'] & y \end{tikzcd}\!\!)$ in $\mathsf{M}\-{\rm span}^2_{\rm raf}$.

\begin{proposition}\label{Prop-LaxFunctor-Mspanaf-CSpan}
	Let $\mathsf{M}$ be a model category, $\mathsf{C}$ a category with pullbacks, and $F:\mathsf{M}\rightarrow\mathsf{C}$ a functor sending each weak equivalence to an isomorphism. Then the functor $F$ induces a lax functor
	\[
	F':{\mathsf{M}\-{\rm span}^2_{\rm raf}} \rightarrow \mathsf{C}\-\Span^2
	\]
	given by the following data:
	
	{\rm (1)} $F':\Ob(\mathsf{M}\-{\rm span}^2_{\rm raf})\to\Ob(\mathsf{C}\-\Span^2), x\mapsto F'x:=Fx$.
	
	{\rm (2)} For all $x,y\in\mathsf{M}\-{\rm span}^2_{\rm raf}$, the functor $F'_{xy}: \mathsf{M}\-{\rm span}^2_{\rm raf} \rightarrow \mathsf{C}\-\Span^2, \hat{f}=(\!\!\begin{tikzcd}[column sep=small] x & f \arrow[l,two heads,"f_l"'] \arrow[r,two heads,"f_r","\sim"'] & y \end{tikzcd}\!\!) \linebreak \mapsto F'_{xy}\hat{f}:=(\!\!\begin{tikzcd}[column sep=small] Fx & Ff \arrow[l,"Ff_l"'] \arrow[r,"Ff_r","\cong"'] & Fy
	\end{tikzcd}\!\!), [\hat{\hat{\alpha}}]=[\hat{f}\leftarrow \hat{\alpha}\rightarrow \hat{f}']=[\!\!\begin{tikzcd}[column sep=small] f & \alpha \arrow[l,two heads,"\alpha_l"',"\sim"] \end{tikzcd}\!\!\xrightarrow{\alpha_r} f'] \mapsto F'_{xy}[\hat{\hat{\alpha}}]:=[\!\!\begin{tikzcd}[column sep=small] Ff & F\alpha \arrow[l,"F\alpha_l"',"\cong"] \end{tikzcd}\!\!\xrightarrow{F\alpha_r} Ff']$.
	
	{\rm (3)} For all objects $x,y,z$ and 1-cells $\hat{f}=(x {\twoheadleftarrow} f \stackrel{\sim}{\twoheadrightarrow} y)$ and $\hat{g}=(y {\twoheadleftarrow} g \stackrel{\sim}{\twoheadrightarrow} z)$ in $\mathsf{M}\-{\rm span}^2_{\rm raf}$, the lax functoriality constraint $F'^2_{\hat{g},\hat{f}}:=[Ff\times_{Fy} Fg \xleftarrow[\cong]{\theta_{f,g}} F(f\times_yg)=F(f\times_yg)]$, where $\theta_{f,g}$ is an isomorphism due to Lemma~\ref{Lem-Functor-Pullback}.
	\[
	\begin{tikzcd}
		& Ff\times_{Fy} Fg \arrow[ld] \arrow[rd] & \\
		Fx & F(f\times_y g) \arrow[u,"{\cong}","\theta_{f,g}"'] \arrow[d,equal] & Fz \\
		& F(f\times_yg) \arrow[lu] \arrow[ru] &
	\end{tikzcd}
	\]
		
	{\rm (4)} For all $x\in\mathsf{M}\-{\rm span}^2_{\rm raf}$, the lax unity constraint $F'^0_x:=[Fx=Fx=Fx]$.
\end{proposition}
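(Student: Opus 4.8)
The plan is to realize $F'$ as the restriction to $\mathsf{M}\-{\rm span}^2_{\rm raf}$ of the colax functor $F\colon \mathsf{M}\-{\rm span}^2 \to \mathsf{C}\-\Span^2$ produced in Proposition~\ref{Prop-ColaxFunctor-Mspan-CSpan}, together with the observation that the colax constraints become \emph{invertible} once we pass to the raf sub-bicategory. Inverting invertible colax constraints turns a colax functor into a lax functor (indeed into a pseudofunctor), and I expect this inversion to produce exactly the data displayed in (3) and (4).

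First I would verify that $\mathsf{M}\-{\rm span}^2_{\rm raf}$ really is a sub-bicategory of $\mathsf{M}\-{\rm span}^2$; since its $2$-cells and all structure constraints are taken verbatim from $\mathsf{M}\-{\rm span}^2$, the only thing to check is that the raf $1$-cells are closed under horizontal composition and contain the identities. For $\hat f=(x\stackrel{f_l}{\twoheadleftarrow} f\stackrel{f_r}{\twoheadrightarrow} y)$ and $\hat g=(y\stackrel{g_l}{\twoheadleftarrow} g\stackrel{g_r}{\twoheadrightarrow} z)$ with $f_r,g_r$ acyclic fibrations, the right leg of $\hat g\circ\hat f$ is $g_r\tilde f_r$, where $\tilde f_r\colon f\times_y g\to g$ is the pullback of $f_r$ along $g_l$; Lemma~\ref{Lem-Pullback-Fib-AcycFib} makes $\tilde f_r$ an acyclic fibration, and because fibrations compose and weak equivalences satisfy $2$-out-of-$3$, the composite $g_r\tilde f_r$ is again an acyclic fibration. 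As the identity $1$-cell has identity right leg, this confirms the sub-bicategory structure and hence that $F$ restricts to a colax functor on $\mathsf{M}\-{\rm span}^2_{\rm raf}$.

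Next I would show the restricted colax functor is a pseudofunctor. For raf $1$-cells $\hat f,\hat g$, the leg $f_r$ of the cospan $f\stackrel{f_r}{\to} y\stackrel{g_l}{\leftarrow} g$ that defines $f\times_y g$ is an acyclic fibration, so Lemma~\ref{Lem-Functor-Pullback} gives that $\theta_{f,g}\colon F(f\times_y g)\to Ff\times_{Fy}Fg$ is an isomorphism in $\mathsf{C}$. Therefore the colax functoriality constraint $F^2_{\hat g,\hat f}=[F(f\times_y g)=F(f\times_y g)\xrightarrow{\theta_{f,g}} Ff\times_{Fy}Fg]$ is an invertible $2$-cell of $\mathsf{C}\-\Span^2$ whose inverse is the reverse span $[Ff\times_{Fy}Fg\xleftarrow{\theta_{f,g}} F(f\times_y g)=F(f\times_y g)]$, and the colax unity constraint $F^0_x=[Fx=Fx=Fx]$ is already the identity $2$-cell.

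Finally I would invert. Setting $F'^2_{\hat g,\hat f}:=(F^2_{\hat g,\hat f})^{-1}$ and $F'^0_x:=(F^0_x)^{-1}$ recovers precisely the constraints of (3) and (4), while $F'$ agrees with $F$ on objects, $1$-cells and $2$-cells, so each $F'_{xy}=F_{xy}$ is a functor by the corresponding part of Proposition~\ref{Prop-ColaxFunctor-Mspan-CSpan}. Each lax coherence axiom (naturality of $F'^2$, lax associativity, lax left and right unity) is then obtained from the matching colax axiom established in Proposition~\ref{Prop-ColaxFunctor-Mspan-CSpan} by inverting the $2$-cells that appear, an operation that preserves commutativity of the coherence squares. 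I expect the genuine difficulty to be bookkeeping rather than conceptual: one must track the orientation of every $2$-cell carefully so that the passage ``invert the colax axioms to get the lax axioms'' is rigorous. Should one wish to sidestep this meta-argument, the same conclusion follows by verifying the lax axioms directly, repeating the diagram chases of Proposition~\ref{Prop-ColaxFunctor-Mspan-CSpan} with the relevant constraint $2$-cells reversed, which is legitimate precisely because $\theta_{f,g}$ is now invertible.
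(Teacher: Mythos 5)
Your proposal is correct, but it takes a genuinely different route from the paper. The paper proves the proposition by direct verification: it re-runs the diagram chases of Proposition~\ref{Prop-ColaxFunctor-Mspan-CSpan} with the constraint $2$-cells reversed, explicitly checking naturality of $F'^2$, lax associativity, and lax left unity (and declaring right unity similar), each time exhibiting the relevant commutative diagram in $\mathsf{C}$ and computing the compositions of spans of spans. You instead invoke the general principle that a colax functor whose functoriality and unity constraints are all invertible $2$-cells can be re-oriented into a lax functor (indeed a pseudofunctor) by inverting those constraints; the only genuinely new inputs are then (a) that $\mathsf{M}\-{\rm span}^2_{\rm raf}$ is closed under horizontal composition and identities, so the colax functor of Proposition~\ref{Prop-ColaxFunctor-Mspan-CSpan} restricts to it, and (b) that on raf $1$-cells each $\theta_{f,g}$ is an isomorphism by Lemma~\ref{Lem-Functor-Pullback}, so the restricted constraints are invertible, with the reversed span of spans as inverse (using that vertical composition along an isomorphism collapses to an identity $2$-cell). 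Both of your verifications are sound: closure under composition follows from Lemma~\ref{Lem-Pullback-Fib-AcycFib} plus the fact that acyclic fibrations compose, and the inversion of each coherence axiom is a routine manipulation (pre- and post-composing with $(F^2)^{-1}$, and using that horizontal composition preserves inverses, so $(1*F^2)^{-1}=1*(F^2)^{-1}$); note that the $2$-cells $\alpha,\beta$ appearing in the naturality squares never need to be inverted, only the constraints. What each approach buys: yours is more economical and conceptually cleaner, avoids repeating the diagram chases, makes explicit the closure property that the paper leaves implicit in calling $\mathsf{M}\-{\rm span}^2_{\rm raf}$ a sub-bicategory, and yields the slightly stronger conclusion that $F'$ is a pseudofunctor; the paper's direct approach is self-contained and does not rest on the (standard but uncited) meta-lemma about re-orienting colax functors with invertible constraints, which in a fully rigorous write-up you would need to prove or cite.
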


\begin{proof}
(1) For all objects $x,y,z$ in $\mathsf{M}\-{\rm span}^2_{\rm raf}$, $F'^2_{xyz}$ is a natural transformation: We need to prove that for any 1-cells
{$\hat{f}=(x {\twoheadleftarrow} f \stackrel{\sim}{\twoheadrightarrow} y), \hat{f}'=(x {\twoheadleftarrow} f' \stackrel{\sim}{\twoheadrightarrow} y), \hat{g}=(y {\twoheadleftarrow} g \stackrel{\sim}{\twoheadrightarrow} z), \hat{g}'=(y {\twoheadleftarrow} g' \stackrel{\sim}{\twoheadrightarrow} z)$}
and 2-cells $[\hat{\hat{\alpha}}]=[f \stackrel{\sim}{\twoheadleftarrow} \alpha \rightarrow f']$ and $[\hat{\hat{\beta}}]=[g \stackrel{\sim}{\twoheadleftarrow} \beta \rightarrow g']$ in $\mathsf{M}\-{\rm span}^2_{\rm raf}$, the following diagram is commutative.
\[
  \begin{tikzcd}
			F'\hat{g}\circ F'\hat{f} \arrow[r,"F'^2_{\hat{g},\hat{f}}"] \arrow[d,"{F'[\hat{\hat{\beta}}]*F'[\hat{\hat{\alpha}}]}"'] & F'(\hat{g}\circ\hat{f}) \arrow[d,"{F'([\hat{\hat{\beta}}]*[\hat{\hat{\alpha}}])}"] \\
			F'\hat{g}'\circ F'\hat{f}' \arrow[r,"{F'^2_{\hat{g}',\hat{f}'}}"] & F'(\hat{g}'\circ\hat{f}')
  \end{tikzcd}
\]

As in the proof (2) of Proposition \ref{Prop-ColaxFunctor-Mspan-CSpan}, we have the following commutative diagram.
\[
  \begin{tikzcd}[row sep=large,column sep=huge]
  	& F'f\times_{F'y}F'g \arrow[ld] \arrow[drr,bend left=40] & F'(f\times_yg) \arrow[l,"\theta_{f,g}"',"{\cong}"] \arrow[dr] & \\
  	F'x & F'\alpha\times_{F'y}F'\beta \arrow[u,"{F'\alpha_l\times_{F'y}F'\beta_l}"',"{\cong}"] \arrow[d,"{F'\alpha_r\times_{F'y}F'\beta_r}"] & F'(\alpha\times_y\beta)  \arrow[l,"\theta_{\alpha,\beta}"',"{\cong}"] \arrow[u,"F'(\alpha_l\times_y\beta_l)","{\cong}"'] \arrow[d,"F'(\alpha_r\times_y\beta_r)"'] & F'z \\
  	& F'f'\times_{F'g}F'g' \arrow[lu] & F'(f'\times_yg')  \arrow[llu,bend left=40] \arrow[l,"\theta_{f',g'}"',"{\cong}"] \arrow[ru] &  
  \end{tikzcd}
\]
Thus 
\[
\begin{array}{ll}
& (F'([\hat{\hat{\beta}}]*[\hat{\hat{\alpha}}])) \circ F'^2_{\hat{g},\hat{f}} \\ [2mm]
= & [F'(f\times_yg) \xleftarrow{F'(\alpha_l\times_y\beta_l)} F'(\alpha\times_y\beta) \xrightarrow{F'(\alpha_r\times_y\beta_r)} F'(f'\times_yg')] \\[2mm]
& \circ [F'f\times_{F'y}F'g \xleftarrow{\theta_{f,g}} F'(f\times_yg) = F'(f\times_yg)] \\[2mm]
= & [F'f\times_{F'y}F'g \xleftarrow{\theta_{f,g}\circ F'(\alpha_l\times_y\beta_l)} F'(\alpha\times_y\beta) \xrightarrow{F'(\alpha_r\times_y\beta_r)} F'(f'\times_yg')] \\[2mm]
= & [F'f\times_{F'y}F'g \xleftarrow{(F'\alpha_l\times_{F'y}F'\beta_l)\circ \theta_{\alpha,\beta}} F'(\alpha\times_y\beta) \xrightarrow{F'(\alpha_r\times_y\beta_r)} F'(f'\times_yg')] \\[2mm]
= & [F'f\times_{F'y}F'g \xleftarrow{F'\alpha_l\times_{F'y}F'\beta_l} F'\alpha\times_{F'y}F'\beta \xrightarrow{\theta_{f',g'}^{-1}(F'\alpha_r\times_{F'y}F'\beta_r)}  F'(f'\times_yg')] \\[2mm]
= & [F'f'\times_{F'g}F'g' = F'f'\times_{F'g}F'g' \xrightarrow{\theta_{f',g'}^{-1}} F'(f'\times_yg')] \\[2mm]
& \circ [F'f\times_{F'y}F'g \xleftarrow{F'\alpha_l\times_{F'y}F'\beta_l} F'\alpha\times_{F'y}F'\beta \xrightarrow{F'\alpha_r\times_{F'y}F'\beta_r} F'f'\times_{F'g}F'g'] \\[2mm]
= & F'^2_{\hat{g}',\hat{f}'} \circ (F'[\hat{\hat{\beta}}]*F'[\hat{\hat{\alpha}}]).
\end{array}
\]	
		
(2) {\it Lax associativity:} For any 1-cells {$\hat{f}=(w {\twoheadleftarrow} f \stackrel{\sim}{\twoheadrightarrow} x), \hat{g}=(x {\twoheadleftarrow} g \stackrel{\sim}{\twoheadrightarrow} y)$ and $\hat{h}=(y {\twoheadleftarrow} c \stackrel{\sim}{\twoheadrightarrow} z),$}
we need to show that the following diagram is commutative.
\[
  \begin{tikzcd}[row sep=large,column sep=huge]
	(F'\hat{h}\circ F'\hat{g})\circ F'\hat{f} \arrow[r,"a_{F'\hat{h},F'\hat{g},F'\hat{f}}"] \arrow[d,"F'^2_{\hat{h},\hat{g}}*1_{F'\hat{f}}"'] & F'\hat{h}\circ(F'\hat{g}\circ F'\hat{f}) \arrow[d,"1_{F'\hat{h}}*F'^2_{\hat{g},\hat{f}}"] \\
	F'(\hat{h}\circ\hat{g})\circ F'\hat{f} \arrow[d,"F'^2_{\hat{h}\circ\hat{g},\hat{f}}"'] & F'\hat{h}\circ F'(\hat{g}\circ \hat{f}) \arrow[d,"F'^2_{\hat{h},\hat{g}\circ\hat{f}}"] \\
	F'((\hat{h}\circ\hat{g})\circ\hat{f}) \arrow[r,"{F'a_{\hat{h},\hat{g},\hat{f}}}"]  & F'(\hat{h}\circ(\hat{g}\circ\hat{f})) 
  \end{tikzcd}
\]

As in the proof (3) of Proposition \ref{Prop-ColaxFunctor-Mspan-CSpan}, we have the following commutative diagram.
\[
  \begin{tikzcd}[row sep=large,column sep=large]
	& F'f\times_{F'x}(F'g\times_{F'y}F'h) \arrow[ld] \arrow[rrd, bend left=40] & (F'f\times_{F'x}F'g)\times_{F'y}F'h \arrow[l,"{a_{F'f,F'g,F'h}}"'] & \\
	F'w & F'f\times_{F'x}F'(g\times_yh) \arrow[u,"1_{F'f}\times_{F'x}\theta_{g,h}"'] & F'(f\times_xg)\times_{F'y}F'h \arrow[u,"\theta_{f,g}\times_{F'y}1_{F'h}"] & F'z \\
	& F'(f\times_x(g\times_yh)) \arrow[u,"\theta_{f,g\times_yh}"'] & F'((f\times_xg)\times_yh) \arrow[l,"F'a_{f,g,h}"'] \arrow[u,"\theta_{f\times_xg,h}"] \arrow[ur] \arrow[llu,bend left=40] & 
  \end{tikzcd}
\]
Thus {\footnotesize
\[
\begin{array}{ll}
	& F'^2_{\hat{h},\hat{g}\circ\hat{f}} \circ (1_{F'\hat{h}}*F'^2_{F'\hat{g},F'\hat{f}}) \circ a_{F'\hat{h},F'\hat{g},F'\hat{f}} \\[2mm]
	= & [F'(f\times_xg)\times_{F'y}F'h \xleftarrow{\theta_{f\times_xg,h}} F'((f\times_xg)\times_yh) = F'((f\times_xg)\times_yh)] \\[2mm]
	& \circ [(F'f\times_{F'x}F'g)\times_{F'y}F'h \xleftarrow{\theta_{f,g}\times_{F'y}1_{F'h}} F'(f\times_xg)\times_{F'y}F'h = F'(f\times_xg)\times_{F'y}F'h] \\[2mm]
	& \circ [F'f\times_{F'x}(F'g\times_{F'y}F'h) \xleftarrow{a_{F'f,F'g,F'h}} (F'f\times_{F'x}F'g)\times_{F'y}F'h = (F'f\times_{F'x}F'g)\times_{F'y}F'h] \\[2mm]
	= & [F'f\times_{F'x}(F'g\times_{F'y}F'h) \xleftarrow{a_{F'f,F'g,F'h} (\theta_{f,g}\times_{F'y}1_{F'h})\theta_{f\times_xg,h}} F'((f\times_xg)\times_yh) = F'((f\times_xg)\times_yh)] \\[2mm]
	= & [F'f\times_{F'x}(F'g\times_{F'y}F'h) \xleftarrow{(1_{F'f}\times_{F'x}\theta_{g,h}) \circ \theta_{f,g\times_yh} \circ F'a_{f,g,h}} F'((f\times_xg)\times_yh) = F'((f\times_xg)\times_yh)] \\[2mm]
	= & [F'(f\times_x(g\times_yh)) \xleftarrow{F'a_{f,g,h}} F'((f\times_xg)\times_yh) = F'((f\times_xg)\times_yh)] \\[2mm]
	& \circ [F'f\times_{F'x}F'(g\times_yh) \xleftarrow{\theta_{f,g\times_yh}} F'(f\times_x(g\times_yh)) = F'(f\times_x(g\times_yh))] \\[2mm]
	& \circ [F'f\times_{F'x}(F'g\times_{F'y}F'h) \xleftarrow{1_{F'f}\times_{F'x}\theta_{g,h}} F'f\times_{F'x}F'(g\times_yh) = F'f\times_{F'x}F'(g\times_yh)] \\[2mm]
	= & F'a_{\hat{h},\hat{g},\hat{f}} \circ F'^2_{\hat{h}\circ\hat{g},\hat{f}} \circ (F'^2_{F'\hat{h},F'\hat{g}}*1_{F'\hat{f}}). 
\end{array}
\] }
		
(3) {\it Lax left unity.} For any 1-cell {$\hat{f}=(x {\twoheadleftarrow} f \stackrel{\sim}{\twoheadrightarrow} y)$} in $\mathsf{M}\-{\rm span}^2_{\rm raf}$, we need to show that the following diagram is commutative.
\[
  \begin{tikzcd}
	1_{F'y}\circ F'\hat{f} \arrow[r,"l'_{F'\hat{f}}"] \arrow[d,"F'^0_y*1_{F'\hat{f}}"'] & F'\hat{f} \\
	F'1_y\circ F'\hat{f} \arrow[r,"F'^2_{1_y,\hat{f}}"] & F'(1_y\circ\hat{f}) \arrow[u,"F'l_{\hat{f}}"'] 
  \end{tikzcd}
\]

As in the proof (4) of Proposition \ref{Prop-ColaxFunctor-Mspan-CSpan}, we have the following commutative diagram.
\[
  \begin{tikzcd}
			& F'f\times_{F'y}F'y \arrow[dl] \arrow[ddr,"l'_{F'f}","\cong"'] && F'(f\times_{y}y) \arrow[dr] \arrow[ddl,"F'l_f"',"{\cong}"] \arrow[ll,"\theta_{f,y}"',"{\cong}"] & \\
			F'x &&&& F'y \\
			&& F'f \arrow[ull] \arrow[urr] && 
  \end{tikzcd}
\]
Thus
\[
\begin{array}{ll}
	& F'l_{\hat{f}} \circ F'^2_{1_y,\hat{f}} \circ (F'^0_y*1_{F'\hat{f}}) \\[2mm]
	= & [F'(f\times_{y}y) =F'(f\times_{y}y) \xrightarrow{F'l_f} F'f]\circ [F'f\times_{F'y}F'y \xleftarrow{\theta_{f,y}} F'(f\times_{y}y) = F'(f\times_{y}y)] \\[2mm]	
	& \circ [F'f\times_{F'y}F'y = F'f\times_{F'y}F'y = F'f\times_{F'y}F'y] \\[2mm]
	= & [F'f\times_{F'y}F'y \xleftarrow{\theta_{f,y}} F'(f\times_{y}y) \xrightarrow{F'l_f} F'f] \\[2mm]
	= & [F'f\times_{F'y}F'y = F'f\times_{F'y}F'y \xrightarrow{l'_{F'f}} F'f] \\[2mm]
	= & l'_{F'\hat{f}}.
\end{array}
\]

(4) {\it Lax right unity:} For any 1-cell {$\hat{f}=(x {\twoheadleftarrow} f \stackrel{\sim}{\twoheadrightarrow} y)$} in $\mathsf{M}\-{\rm span}^2_{\rm raf}$, we need to show that the following diagram is commutative.
\[
\begin{tikzcd}
	F'\hat{f}\circ 1_{F'x} \arrow[r,"r'_{F'\hat{f}}"] \arrow[d,"1_{F'\hat{f}}*F'^0_x"'] & F'\hat{f} \\
	F'\hat{f}\circ F'1_x \arrow[r,"F'^2_{\hat{f},1_x}"] & F'(\hat{f}\circ 1_x) \arrow[u,"F'r_{\hat{f}}"'] 
\end{tikzcd}
\]

The proof is similar to that of the lax left unity.
\end{proof}

As an application, we can obtain a lax functor from some bicategory of $B_\infty$-algebras to some bicategory of Gerstenhaber algebras.

\begin{corollary} \label{Cor-LaxFunctor-B8span2-GSpan2}
	The cohomology functor $H:B_\infty\to\mathsf{G}, A\mapsto HA, (f:A\to A')\mapsto (Hf_1:HA\to HA')$, induces a lax functor
	$\mathscr{H} : B_\infty\-{\rm span}^2_{\rm raf} \rightarrow \mathsf{G}\-\Span^2.$
\end{corollary}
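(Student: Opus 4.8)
The plan is to deduce this corollary from Proposition \ref{Prop-LaxFunctor-Mspanaf-CSpan} by taking $\mathsf{M}=B_\infty$, $\mathsf{C}=\mathsf{G}$ and $F=H$. Three of the four hypotheses are immediate from the material already assembled in Subsection 2.2: the category $B_\infty$ is a model category whose weak equivalences are exactly the $B_\infty$-quasi-isomorphisms; the category $\mathsf{G}$ of Gerstenhaber algebras has all small limits, in particular all pullbacks; and $H:B_\infty\to\mathsf{G}$ is a functor by Proposition \ref{Prop:B_inf-to-G}. Hence the whole content of the proof reduces to verifying the single remaining hypothesis of Proposition \ref{Prop-LaxFunctor-Mspanaf-CSpan}, namely that $H$ carries every weak equivalence of $B_\infty$ to an isomorphism of $\mathsf{G}$.

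To check this, I would unwind the definitions. Let $f=\{f_n\}_{n\ge 1}:A\to A'$ be a $B_\infty$-quasi-isomorphism; by definition this means that $f_1:(A,m_1)\to(A',m_1')$ is a quasi-isomorphism of complexes, so that the induced graded linear map $Hf_1:HA\to HA'$ is bijective. On the other hand, Proposition \ref{Prop:B_inf-to-G} already shows that $H(f)=Hf_1$ is a morphism of Gerstenhaber algebras. It then remains only to observe that a bijective morphism of Gerstenhaber algebras is automatically an isomorphism: its inverse graded linear map $(Hf_1)^{-1}$ preserves the cup product and the Gerstenhaber bracket because $Hf_1$ does, e.g.\ $(Hf_1)^{-1}(\overline{a}\cup'\overline{b})=(Hf_1)^{-1}(Hf_1(Hf_1)^{-1}\overline{a}\cup'Hf_1(Hf_1)^{-1}\overline{b})=(Hf_1)^{-1}\overline{a}\cup(Hf_1)^{-1}\overline{b}$, and similarly for the bracket. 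Thus $H(f)$ is an isomorphism in $\mathsf{G}$.

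With this verified, Proposition \ref{Prop-LaxFunctor-Mspanaf-CSpan} applies verbatim and produces the lax functor $\mathscr{H}=H':B_\infty\-{\rm span}^2_{\rm raf}\to\mathsf{G}\-\Span^2$, whose data (the assignment on objects, the functors on Hom categories, the lax functoriality constraints built from the comparison maps $\theta_{f,g}$, and the lax unity constraints) are exactly those described there. There is no genuine obstacle beyond the bookkeeping already packaged into that proposition; the only point requiring care is the observation that $H$ is homotopy-invariant in the strong sense of sending quasi-isomorphisms to isomorphisms, since this is precisely what makes the comparison maps $\theta_{f,g}$ isomorphisms via Lemma \ref{Lem-Functor-Pullback}, and hence allows the functoriality constraints to point in the lax rather than the colax direction (contrast with Corollary \ref{Cor-Colax functor-Bspan2-GSpan2}, where the unrestricted spans force the colax orientation).
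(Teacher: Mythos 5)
Your proposal is correct and is essentially the paper's own argument: the paper proves this corollary by simply invoking Proposition~\ref{Prop-LaxFunctor-Mspanaf-CSpan} with $\mathsf{M}=B_\infty$, $\mathsf{C}=\mathsf{G}$ and $F=H$. Your explicit verification that $H$ sends $B_\infty$-quasi-isomorphisms to isomorphisms of Gerstenhaber algebras (bijectivity of $Hf_1$ plus the automatic invertibility of a bijective morphism in $\mathsf{G}$) is exactly the hypothesis the paper leaves implicit, so you have only made the same proof more complete.
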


\begin{proof}
It follows from Proposition \ref{Prop-LaxFunctor-Mspanaf-CSpan}.
\end{proof}

\section{Lax functoriality of Hochschild cochain complex}

In this section, we show that there are lax functors from some bicategories of dg categories to some bicategories of $B_\infty$-algebras sending a dg category to its Hochschild cochain complex. 

\subsection{Main lax functors}

To construct a lax functor from some bicategory of dg categories to some bicategory of $B_\infty$-algebras
sending a dg category to its Hochschild cochain complex, we need do some preparations.

\medspace

\noindent{\bf Restricted dg bimodule $X_F$.} Let $\aA$ and $\bB$ be small dg categories and $F:\aA \rightarrow \bB$ a dg functor. The {\it restricted dg $\aA$-$\bB$-bimodule} $X_F$ is given by $X_F(B,A):=\bB(B,FA)$ and $X_F(b,a):=\bB(b,Fa)$ for all $A,A'\in\aA$, $B,B'\in\bB$, $a\in\aA(A,A')$ and $b\in\bB(B,B')$. 
It defines an upper triangular matrix dg category $\tT_{X_F}=
\begin{pmatrix}
	\aA & X_F \\
	& \bB
\end{pmatrix}$.

It is shown by Keller in \cite[4.6, Page 11]{Keller06} that if $F$ is a quasi-isomorphism, i.e., it induces quasi-isomorphisms
$F_{AA'}:\aA(A,A') \rightarrow \bB(FA,FA')$ for all $A,A'\in\aA$,
then the projection $\iota^*_2:C(\tT_{X_F}) \twoheadrightarrow C(\bB)$ induced by the fully faithful dg funtor $\iota_2:\bB\hookrightarrow\tT_{X_F}$ is a quasi-isomorphism in $B_\infty$. Furthermore, if $F$ is a quasi-equivalence then the projection $\iota^*_1:C(\tT_{X_F}) \twoheadrightarrow C(\aA)$ induced by the fully faithful dg funtor $\iota_1:\aA\hookrightarrow\tT_{X_F}$ is also a quasi-isomorphism in $B_\infty$. 

\medspace

\noindent{\bf Hochschild cochain complex $C(F)$ of a dg functor $F$.} Let $\aA$ and $\bB$ be small dg categories, and $F:\aA \rightarrow \bB$ a dg functor. The {\it Hochschild cochain complex $C(F)$ of $F$} is the Hochschild cochain complex $C(\tT_{X_F})$ of the upper triangular matrix dg category $\tT_{X_F}=
\begin{pmatrix}
	\aA & X_F \\
	& \bB
\end{pmatrix}$ (cf. \cite[3.3, Page 768]{Borisov07}).

If $f:X\to X'$ is a quasi-isomorphism of cofibrant dg $\aA$-$\bB$-bimodules, then the dg functor $\tT_f: \tT_X\to\tT_{X'}$ is a quasi-equivalence. 
Therefore, the projections $\iota^*_1:C(\tT_f)=C(\tT_{X_{\tT_f}})=C\left(\begin{pmatrix}
	\tT_X&X_{\tT_f} \\ &\tT_{X'}
\end{pmatrix}\right) \twoheadrightarrow C(\tT_X)$ and $\iota^*_2:C(\tT_f) \twoheadrightarrow C(\tT_{X'})$ are quasi-isomorphisms in $B_\infty$.

\medspace

\noindent{\bf Lax functor from $\dgCAT_{\rm c,h}$ to $B_\infty\-{\rm span}^2$.} Now we can construct a lax functor 
from bicategory $\dgCAT_{\rm c,h}$ to bicategory $B_\infty\-{\rm span}^2$ sending a dg category to its Hochschild cochain complex.

\begin{theorem} \label{Thm-LaxFuntor-HomotopyCat-B-Inf}
There is a lax functor $\ccC=(\ccC,\ccC^2, \ccC^0):\dgCAT_{\rm c,h} \rightarrow B_\infty\-{\rm span}^2$ given by the following data:

{\rm (1)} The function $\ccC:\Ob(\dgCAT_{\rm c,h}) \rightarrow \Ob(B_\infty\-{\rm span}^2), \aA\mapsto C(\aA)$.

{\rm (2)} For any objects $\aA,\bB\in\dgCAT_{\rm c,h}$, the functor
$\ccC_{\aA\bB}:\dgCAT_{\rm c,h}(\aA,\bB) \rightarrow B_\infty\-{\rm span}^2 \linebreak (C(\aA),C(\bB))$ is defined as follows:

{\rm (2.1)} For any 1-cell $X\in\dgCAT_{\rm c,h}(\aA,\bB)$, $\ccC_{\aA\bB}(X):= (C(\aA)\stackrel{\iota_1^*}{\twoheadleftarrow} C(\tT_X)\stackrel{\iota_2^*}{\twoheadrightarrow} C(\bB))$. 

{\rm (2.2)} For any 2-cell $[f]\in\dgCAT_{\rm c,h}(\aA,\bB)(X,X')$, 
$\ccC_{\aA\bB}([f]):=[C(\tT_X) \stackrel{\sim}{\twoheadleftarrow} \hat{C}(\tT_f) \stackrel{\sim}{\twoheadrightarrow} C(\tT_{X'})]$, where $\hat{C}(\tT_f)$ is the following pullback in $B_\infty$ (Ref. Lemma~\ref{Lem-ThetaA}),
\[
\begin{tikzcd}
	& \hat{C}(\tT_f) \arrow[r] \arrow[d,"\sim"'] & C(\aA)\times C(\bB) \arrow[d,"{(\theta_\aA,\theta_\bB)}","\sim"'] \\
	& C(\tT_f) \arrow[r,two heads,"{\begin{pmatrix}
		\iota^*_\aA \\ \iota^*_\bB	
		\end{pmatrix}}"] \arrow[dl,two heads,"\sim"'] \arrow[dr,two heads,"\sim"] & C(\tT_{I_\aA})\times C(\tT_{I_\bB})\\
	C(\tT_X) && C(\tT_{X'})
\end{tikzcd}
\]
$C(\tT_f)$ is the Hochschild cochain complex of the quasi-equivalence $\tT_f:\tT_X\to\tT_{X'}$, and $\iota_\aA:\tT_{I_\aA}=\begin{pmatrix}
	\aA&I_\aA \\ &\aA
\end{pmatrix} \hookrightarrow \tT_{X_{\tT_f}} = \begin{pmatrix}
	\tT_X&X_{\tT_f} \\ &\tT_{X'}
\end{pmatrix} = 
\begin{pmatrix}
\aA&X&I_\aA&X' \\ &\bB&&I_\bB\\ &&\aA&X'\\ &&&\bB
\end{pmatrix}$ 
and $\iota_\bB:\tT_{I_\bB} \hookrightarrow \tT_{X_{\tT_f}}$ are natural fully faithful dg functors.

{\rm (3)} For any $\aA,\bB,\cC\in\dgCAT_{\rm c,h}$, the natural transformation $\ccC^2_{\aA\bB\cC}:c_{\ccC(\aA)\ccC(\bB)\ccC(\cC)}\circ(\ccC_{\bB\cC}\times\ccC_{\aA\bB}) \Rightarrow \ccC_{\aA\cC}\circ c_{\aA\bB\cC}$ is defined as follows:
For any 1-cells $X\in\dgCAT_{\rm c,h}(\aA,\bB)$ and $Y\in\dgCAT_{\rm c,h}(\bB,\cC)$, the lax functoriality constraint $\ccC^2_{Y,X}:\ccC(Y)\circ\ccC(X) \rightarrow \ccC(Y\circ X)$ is the 2-cell $[\bar{C}(\tT_{X,Y}) \stackrel{\sim}{\twoheadleftarrow} C(\tT_{X,Y}) \twoheadrightarrow C(\tT_{X\otimes_\bB Y})]$ (Ref. Lemma~\ref{Lem-CTXY-bar}).
\[
\begin{tikzcd}
	&& \bar{C}(\tT_{X,Y}) \arrow[ld,two heads] \arrow[rd,two heads] && \\
	& C(\tT_{X}) \arrow[ld,two heads] \arrow[rd,two heads] & C(\tT_{X,Y}) \arrow[u,two heads,"\sim"'] \arrow[ddd,two heads,bend left=30] & C(\tT_{Y}) \arrow[ld,two heads] \arrow[rd,two heads] &\\
	C(\aA) && C(\bB) && C(\cC) \\
	&&&&\\
	&& C(\tT_{X\otimes_\bB Y}) \arrow[rruu,two heads] \arrow[lluu,two heads] && 
\end{tikzcd}
\]  

{\rm (4)} For any object $\aA\in\dgCAT_{\rm c,h}$, the lax unity constraint $\ccC^0_\aA: 1_{\ccC(\aA)}=(C(\aA)=C(\aA)=C(\aA)) \to \ccC(\bp I_\aA)=(C(\aA) \stackrel{\sim}{\twoheadleftarrow} C(\tT_{\bp I_\aA}) \twoheadrightarrow C(\aA))$ is the 2-cell $[C(\aA) \stackrel{\sim}{\twoheadleftarrow} C(\aA)\times_{C(\tT_{I_\aA})}\hat{C}(\tT_{p_{I_\aA}}) \rightarrow C(\tT_{\bp I_\aA})]$ given by the composition of spans $C(\aA) = C(\aA) \stackrel{\theta_\aA}{\rightarrow} C(\tT_{I_\aA})$ and $C(\tT_{I_\aA}) \stackrel{\sim}{\twoheadleftarrow} \hat{C}(\tT_{p_{I_\aA}}) \stackrel{\sim}{\twoheadrightarrow} C(\tT_{\bp I_\aA})$, where $p_{I_\aA}:\bp I_\aA \rightarrow I_\aA$ is the natural quasi-isomorphism from the cofibrant replacement $\bp I_\aA$ of dg $\aA$-bimodule $I_\aA$ to $I_\aA$. 
\[
\begin{tikzcd}[column sep=large]
	& C(\aA) \arrow[dl,equal] \arrow[d,"\theta_\aA","\sim"'] \arrow[dr,equal] & C(\aA)\times_{C(\tT_{I_\aA})}\hat{C}(\tT_{p_{I_\aA}}) \arrow[l,two heads,"\sim" '] \arrow[ddl] \\
	C(\aA) &  C(\tT_{I_\aA}) \arrow[l,two heads,"\iota^*_1"'] \arrow[r,two heads,near start,"\iota^*_2"] & C(\aA) \\
	& \hat{C}(\tT_{p_{I_\aA}}) \arrow[u,two heads,"\sim"] \arrow[d,two heads,"\sim"'] & \\
	& C(\tT_{\bp I_\aA}) \arrow[uul,two heads] \arrow[uur,two heads] &
\end{tikzcd}	
\]	
\end{theorem}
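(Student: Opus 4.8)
The plan is to check the defining axioms of a lax functor for the data $(\ccC,\ccC^2,\ccC^0)$ one at a time, translating each assertion about $B_\infty\-{\rm span}^2=B_\infty\-{\rm span}^2$ into a statement about Hochschild cochain complexes of upper triangular matrix dg categories through the explicit model of Theorem~\ref{Thm-Const-Bicat}. First I would confirm that the proposed data really are cells of the target bicategory. For a cofibrant bimodule $X$ the decomposition $C(\tT_X)=C(\aA)\oplus C(\bB)\oplus D(X)$ displays $\iota_1^*$ and $\iota_2^*$ as the coordinate projections, which are surjective and hence fibrations, so $\ccC(X)$ is a legitimate $1$-cell. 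For a quasi-isomorphism $f\colon X\to X'$ of cofibrant bimodules, $\tT_f$ is a quasi-equivalence, so Keller's restricted-functoriality results recalled just before the statement make $\iota_1^*\colon C(\tT_f)\to C(\tT_X)$ and $\iota_2^*\colon C(\tT_f)\to C(\tT_{X'})$ acyclic fibrations. Combined with Lemma~\ref{Lem-ThetaA} (which provides the $B_\infty$-quasi-isomorphisms $\theta_\aA,\theta_\bB$) and the pullback-stability of Lemma~\ref{Lem-Pullback-2-Fib-AcycFib}, the pullback $\hat C(\tT_f)$ has its horizontal leg an acyclic fibration, whence by two-out-of-three its vertical leg is a weak equivalence; arranging a representative whose left leg $\hat C(\tT_f)\to C(\tT_X)$ is an acyclic fibration then makes $\ccC([f])$ a bona fide $2$-cell. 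I would read off well-definedness of $\ccC_{\aA\bB}$ on homotopy classes and its functoriality from the functoriality of $X\mapsto\tT_X$ and $\tT_{(-)}\mapsto C(\tT_{(-)})$ together with the universal property of the pullbacks defining vertical composition.

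Next I would verify that $\ccC^2$ and $\ccC^0$ are well formed and natural. The essential input for $\ccC^2$ is Lemma~\ref{Lem-CTXY-bar}: it identifies $\bar C(\tT_{X,Y})$ with the pullback $C(\tT_X)\times_{C(\bB)}C(\tT_Y)$, i.e.\ with the vertex of the horizontal composite $\ccC(Y)\circ\ccC(X)$, and it exhibits the natural projection $C(\tT_{X,Y})\to\bar C(\tT_{X,Y})$ as a surjective quasi-isomorphism, the acyclicity coming from Lemma~\ref{lem:3.6}. Hence the span of spans $\bar C(\tT_{X,Y})\stackrel{\sim}{\twoheadleftarrow}C(\tT_{X,Y})\twoheadrightarrow C(\tT_{X\otimes_\bB Y})$ really is a $2$-cell of the required source and target. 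Naturality of $\ccC^2$ in $X$ and $Y$ then reduces to the functoriality of the $3\times3$ matrix construction in its bimodule arguments, checked against the horizontal-composition formula of Theorem~\ref{Thm-Const-Bicat}; naturality of $\ccC^0$ holds automatically, as noted in the definition of a lax functor.

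The hard part will be the lax associativity axiom, and here the $4\times4$ upper triangular matrix dg category $\tT_{X,Y,Z}$ of Subsection~2.5 is the organizing device. The plan is to realize every vertex of the associativity square as the Hochschild cochain complex of a matrix dg category assembled from $X,Y,Z$ (the $2\times2$, $3\times3$ and $4\times4$ ones together with their reduced versions $\bar C$), and to produce a single witnessing span of spans out of $C(\tT_{X,Y,Z})$ whose two reductions recover the two composites $\ccC^2_{hg,f}\circ(\ccC^2_{h,g}*1)$ and $\ccC^2_{h,gf}\circ(1*\ccC^2_{g,f})$ after transport along the associator. Concretely I would use the associativity isomorphism $(X\otimes_\bB Y)\otimes_\cC Z\cong X\otimes_\bB(Y\otimes_\cC Z)$ of bimodules, together with repeated applications of Lemma~\ref{lem:3.6} to pass between a full complex $C(\tT_{\cdots})$ and its reduced form $\bar C(\tT_{\cdots})$, and then check that the two resulting spans of spans are connected by a morphism of spans of spans, hence equal in $B_\infty\-{\rm span}^2$. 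The genuinely laborious point is the bookkeeping of the fifteen summands of $C(\tT_{X,Y,Z})$ and of the many projection $B_\infty$-morphisms among these matrix categories; once the comparison map is in hand, everything else is forced by the universal properties of the pullbacks involved.

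Finally, the lax left and right unity axioms are lighter variants of the same argument. For them I would combine the definition of $\ccC^0_\aA$ with Lemma~\ref{Lem-ThetaA} and Lemma~\ref{Lem-ThetaA^2} (and, where the identity $1$-cell $\bp I_\aA$ must be compared with $I_\aA$, the quasi-isomorphism $p_{I_\aA}\colon\bp I_\aA\to I_\aA$ and the cofibrant-replacement data), reducing each triangle to the commutativity of an explicit diagram of spans of spans built from $\tT_{I_\aA}$ and $\tT_X$, which again collapses through a morphism of spans of spans. Assembling these verifications establishes that $\ccC=(\ccC,\ccC^2,\ccC^0)$ is a lax functor.
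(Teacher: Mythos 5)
Your overall architecture matches the paper's: the same span data, the same key lemmas (Lemma~\ref{Lem-CTXY-bar}, Lemma~\ref{Lem-ThetaA}, Lemma~\ref{Lem-ThetaA^2}), and the same organizing device of upper triangular matrix dg categories, including the $4\times 4$ category $\tT_{X,Y,Z}$ with its two reductions for lax associativity. However, there is a genuine gap at the technical heart of the argument. You claim that well-definedness of $\ccC_{\aA\bB}$ on homotopy classes, its preservation of vertical composition, and the naturality of $\ccC^2$ can be ``read off from the functoriality of $X\mapsto\tT_X$ and $\tT_{(-)}\mapsto C(\tT_{(-)})$.'' But $C(-)$ is \emph{not} functorial on the dg functors involved: $\tT_f\colon\tT_X\to\tT_{X'}$ is fully faithful only when $f$ is an isomorphism, so for a general quasi-isomorphism $f$ there is no restriction $B_\infty$-morphism $C(\tT_{X'})\to C(\tT_X)$ at all --- this non-functoriality is precisely the obstruction the whole theorem is designed to circumvent. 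Consequently there is nothing to ``read off'': one must construct the comparison maps by hand. Concretely, proving $\ccC_{\aA\bB}([f'])\circ\ccC_{\aA\bB}([f])=\ccC_{\aA\bB}([f'f])$ requires introducing the $6\times 6$ upper triangular matrix dg category $\tT_{X_{\tT_f},X_{\tT_{f'}}}$, forming its pullback $\hat{C}(\tT_{X_{\tT_f},X_{\tT_{f'}}})$ against $\theta^2_\aA\times\theta^2_\bB$ (this is where Lemma~\ref{Lem-ThetaA^2} enters), and then producing, via a long chain of pullback universal-property arguments, maps comparing $\hat{C}(\tT_f)\times_{C(\tT_{X'})}\hat{C}(\tT_{f'})$, $\hat{C}(\tT_{X_{\tT_f},X_{\tT_{f'}}})$, and $\hat{C}(\tT_{f'f})$; naturality of $\ccC^2$ needs the analogous $6\times 6$ category $\tT_{X_{\tT_{f,g}}}$ built from $\tT_{f,g}\colon\tT_{X,Y}\to\tT_{X',Y'}$. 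None of this is a formality, and your proposal contains no substitute for it.

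Two further omissions in the same vein. First, well-definedness on homotopy classes is not automatic either: the paper coequalizes two homotopic quasi-isomorphisms $f,g$ by a third quasi-isomorphism $h$ with $hf=hg$, and then deduces $\ccC([f])=\ccC([g])$ from composition-preservation together with a cancellation argument --- so this step logically \emph{depends} on the composition result you skipped, rather than preceding it. Second, your associativity plan, while correctly organized around $C(\tT_{X,Y,Z})$, the identifications $\tT_{X,Y,Z}=\tT_{X,(Y\ Y\otimes_\cC Z)}=\tT_{{X\otimes_\bB Y\choose Y},Z}$ and Lemma~\ref{Lem-CTXY-bar}, still needs an explicit identification of the 2-cell $\ccC(a_{Z,Y,X})$ attached to the bimodule isomorphism $a_{Z,Y,X}$: one must first prove the Claim that for a bimodule \emph{isomorphism} $f$ the 2-cell $\ccC([f])$ equals $[C(\tT_X)\xleftarrow{\tT_f^*}C(\tT_{X'})=C(\tT_{X'})]$ (proved via the section $\theta_f$ and pullback universality); without it the bottom edge of the associativity square cannot be computed. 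Similarly, the left unity verification uses the auxiliary $B_\infty$-quasi-isomorphism $\theta_X\colon C(\tT_X)\to C(\tT_{X,I_\bB})$ of Appendix~\ref{App-ThetaX} to collapse $[C(\tT_X)\twoheadleftarrow C(\tT_{X,I_\bB})\twoheadrightarrow C(\tT_X)]$ to the identity 2-cell, an ingredient your sketch does not supply.
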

 
\begin{proof}
	
(1) {\it For any $\aA,\bB\in\dgCAT_{\rm c,h}$, $\ccC_{\aA\bB}:\dgCAT_{\rm c,h}(\aA,\bB) \rightarrow B_\infty\-{\rm span}^2(C(\aA),C(\bB))$ is a functor:}

(1.1) {\it $\ccC_{\aA\bB}$ is well-defined:} We need to show that if  $[f]=[g]$ in $\dgCAT_{\rm c,h}(\aA,\bB)(X,X')$, i.e., two quasi-isomorphisms $f,g:X \rightarrow X'$
are homotopy equivalent, then $\ccC_{\aA\bB}([f]) = \ccC_{\aA\bB}([g])$, i.e.,  the spans $C(\tT_X) \stackrel{\sim}{\twoheadleftarrow} \hat{C}(\tT_f) \stackrel{\sim}{\twoheadrightarrow} C(\tT_{X'})$ and $C(\tT_X) \stackrel{\sim}{\twoheadleftarrow} \hat{C}(\tT_g) \stackrel{\sim}{\twoheadrightarrow} C(\tT_{X'})$ are equivalent.
	
Since the quasi-isomorphisms $f,g:X \rightarrow X'$ are homotopy equivalent, there exists a quasi-isomorphism $h:X'\to X''$ such that $hf=hg$. By the (1.2) below, which is independent of (1.1),
we have 
\[
\begin{array}{ll}
	& (C(\tT_{X'}) \stackrel{\sim}{\twoheadleftarrow} \hat{C}(\tT_h) \stackrel{\sim}{\twoheadrightarrow} C(\tT_{X''})) \circ (C(\tT_X) \stackrel{\sim}{\twoheadleftarrow} \hat{C}(\tT_g) \stackrel{\sim}{\twoheadrightarrow} C(\tT_{X'})) \\[2mm]
	\stackrel{(1.2)}{\sim} & (C(\tT_X) \stackrel{\sim}{\twoheadleftarrow} \hat{C}(\tT_{hg}) \stackrel{\sim}{\twoheadrightarrow} C(\tT_{X''})) \\[2mm]
	= & (C(\tT_X) \stackrel{\sim}{\twoheadleftarrow} \hat{C}(\tT_{hf}) \stackrel{\sim}{\twoheadrightarrow} C(\tT_{X''})) \\[2mm]
	\stackrel{(1.2)}{\sim} & (C(\tT_{X'}) \stackrel{\sim}{\twoheadleftarrow} \hat{C}(\tT_h) \stackrel{\sim}{\twoheadrightarrow} C(\tT_{X''}))\circ(C(\tT_X) \stackrel{\sim}{\twoheadleftarrow} \hat{C}(\tT_f) \stackrel{\sim}{\twoheadrightarrow} C(\tT_{X'})).
\end{array}
\]
Left compose with the span of spans $C(\tT_{X''}) \stackrel{\sim}{\twoheadleftarrow} \hat{C}(\tT_h) \stackrel{\sim}{\twoheadrightarrow} C(\tT_{X'})$ and note that
$(C(\tT_{X''}) \stackrel{\sim}{\twoheadleftarrow} \hat{C}(\tT_h) \stackrel{\sim}{\twoheadrightarrow} C(\tT_{X'})) \circ (C(\tT_{X'}) \stackrel{\sim}{\twoheadleftarrow} \hat{C}(\tT_h) \stackrel{\sim}{\twoheadrightarrow} C(\tT_{X''})) \sim (C(\tT_{X'}) = C(\tT_{X'}) = C(\tT_{X'}))$, 
we obtain $(C(\tT_X) \stackrel{\sim}{\twoheadleftarrow} \hat{C}(\tT_f) \stackrel{\sim}{\twoheadrightarrow} C(\tT_{X'})) \sim (C(\tT_X) \stackrel{\sim}{\twoheadleftarrow} \hat{C}(\tT_g) \stackrel{\sim}{\twoheadrightarrow} C(\tT_{X'}))$.
	
(1.2) {\it $\ccC_{\aA\bB}$ preserves composition:} We need to show for all $[f]\in\dgCAT_{\rm c,h}(\aA,\bB)(X,X')$ and $[f']\in\dgCAT_{\rm c,h}(\aA,\bB)(X',X'')$, the equality $\ccC_{\aA\bB}([f']\circ[f])=\ccC_{\aA\bB}([f'])\circ\ccC_{\aA\bB}([f])$ holds, or equivalently, for any quasi-isomorphisms $f:X \rightarrow X'$ and $f':X' \rightarrow X''$ of cofibrant dg $\aA\-\bB$-bimodules, the spans of spans 
$(C(\tT_{X'}) \stackrel{\sim}{\twoheadleftarrow} \hat{C}(\tT_{f'}) \stackrel{\sim}{\twoheadrightarrow} C(\tT_{X''})) \circ 
(C(\tT_X) \stackrel{\sim}{\twoheadleftarrow} \hat{C}(\tT_f) \stackrel{\sim}{\twoheadrightarrow} C(\tT_{X'}))$ 
and $C(\tT_X) \stackrel{\sim}{\twoheadleftarrow} \hat{C}(\tT_{f'f}) \stackrel{\sim}{\twoheadrightarrow} C(\tT_{X''})$ are equivalent.
	
The quasi-isomorphisms $X \xrightarrow{f} X'$ and $X'\xrightarrow{f'} X''$ induce quasi-equivalences $\tT_X \xrightarrow{\tT_f} \tT_{X'}$ and $\tT_{X'} \xrightarrow{\tT_{f'}} \tT_{X''}$ respectively. Observe the upper triangular matrix dg category
\[
  \tT_{X_{\tT_f},X_{\tT_{f'}}} 
  =
  \begin{pmatrix}
  	\tT_X & X_{\tT_f} & X_{\tT_{f'f}} \\
  	& \tT_{X'} & X_{\tT_{f'}} \\
  	&& \tT_{X''}
  \end{pmatrix}
  = 
  \begin{pmatrix}
			\aA & X & I_\aA & X' & I_\aA & X'' \\
			& \bB & & I_\bB & & I_\bB \\
			& & \aA & X' & I_\aA & X'' \\
			& & & \bB & & I_\bB \\
			& & & & \aA & X'' \\
			& & & & & \bB 
  \end{pmatrix}.
\]
There are the following natural fully faithful dg functors.
\[
  \begin{array}{ll}
  	\iota_{12}:\tT_{X_{\tT_f}}\hookrightarrow\tT_{X_{\tT_f},X_{\tT_{f'}}} & \\ [2mm] \iota_{23}:\tT_{X_{\tT_{f'}}}\hookrightarrow\tT_{X_{\tT_f},X_{\tT_{f'}}} & \\ [2mm] 
  	\iota_{13}:\tT_{X_{\tT_{f'f}}}\hookrightarrow\tT_{X_{\tT_f},X_{\tT_{f'}}} & \\ [2mm]
  	\iota_\aA:\tT_{I_\aA,I_\aA}\hookrightarrow\tT_{X_{\tT_f},X_{\tT_{f'}}} & \iota_\bB:\tT_{I_\bB,I_\bB}\hookrightarrow\tT_{X_{\tT_f},X_{\tT_{f'}}} \\[2mm]
	\iota_{\aA12}:\tT_{I_\aA}\hookrightarrow\tT_{X_{\tT_f}}\hookrightarrow \tT_{X_{\tT_f},X_{\tT_{f'}}} & 
	\iota_{\bB12}:\tT_{I_\bB}\hookrightarrow\tT_{X_{\tT_f}}\hookrightarrow \tT_{X_{\tT_f},X_{\tT_{f'}}} \\[2mm]
	\iota_{\aA23}:\tT_{I_\aA}\hookrightarrow\tT_{X_{\tT_{f'}}}\hookrightarrow \tT_{X_{\tT_f},X_{\tT_{f'}}} & 
	\iota_{\bB23}:\tT_{I_\bB}\hookrightarrow\tT_{X_{\tT_{f'}}}\hookrightarrow \tT_{X_{\tT_f},X_{\tT_{f'}}} \\[2mm]
	\iota_{\aA13}:\tT_{I_\aA}\hookrightarrow\tT_{X_{\tT_{f'f}}}\hookrightarrow \tT_{X_{\tT_f},X_{\tT_{f'}}} & 
	\iota_{\bB13}:\tT_{I_\bB}\hookrightarrow\tT_{X_{\tT_{f'f}}}\hookrightarrow \tT_{X_{\tT_f},X_{\tT_{f'}}} 
  \end{array}
\]
Let $\hat{C}(\tT_{X_{\tT_f},X_{\tT_{f'}}})$ be the following pullbacks in $B_\infty$ (Ref. Lemma~\ref{Lem-ThetaA^2}).
\[
	\begin{tikzcd}[row sep=large]
		\hat{C}(\tT_{X_{\tT_f},X_{\tT_{f'}}}) \arrow[r,two heads] \arrow[d,"\sim"] & C(\aA)\times C(\bB) \arrow[d,"{\theta_\aA^2 \times \theta_\bB^2}","\sim"'] \\
		C(\tT_{X_{\tT_f},X_{\tT_{f'}}}) \arrow[r,two heads,"{\begin{pmatrix}
				\iota^*_\aA \\ \iota^*_\bB
		\end{pmatrix}}"] & C(\tT_{I_\aA,I_\aA})\times C(\tT_{I_\bB,I_\bB})
	\end{tikzcd}
\]
Since Hochschild cochain complex is functorial on fully faithful dg functors (Ref. \cite[4.3, Page 8]{Keller03}), the commutative diagram of fully faithful dg functors
\[
  \begin{tikzcd}
	& \tT_{X_{\tT_f},X_{\tT_{f'}}} & \\
	\tT_{X_{\tT_f}} \arrow[ru,hook] & \tT_{X_{\tT_{f'}}} \arrow[u,hook] & \tT_{X_{\tT_{f'f}}} \arrow[lu,hook] \\ 
	\tT_{X'} \arrow[u,hook] \arrow[ru,hook] & \tT_{X} \arrow[lu,hook] \arrow[ru,hook] & \tT_{X''} \arrow[u,hook] \arrow[lu,hook]
  \end{tikzcd}
\]
induces the following commutative diagram $\textcircled{\tiny 1}$ of $B_\infty$-algebras (Ref. Lemma~\ref{Lem-CTXY-bar})
\[
  \begin{tikzcd}
		& C(\tT_{X_{\tT_f},X_{\tT_{f'}}}) \arrow[ld,two heads,"\sim"] \arrow[d,two heads,"\sim"] \arrow[rd,two heads,"\sim"'] & \\
		C(\tT_f) \arrow[d,two heads,"\sim"] \arrow[rd,two heads,"\sim",near start] & C(\tT_{f'}) \arrow[ld,two heads,"\sim"',near start] \arrow[rd,two heads,"\sim",near start] & C(\tT_{f'f}) \arrow[ld,two heads,"\sim"',near start] \arrow[d,two heads,"\sim"] \\ 
		C(\tT_{X'}) & C(\tT_{X}) & C(\tT_{X''})
  \end{tikzcd} 
\]
and the commutative diagram of fully faithful dg functors
\[
\begin{tikzcd}[column sep=large]
	&&& \tT_{X_{\tT_{f'f}}} \arrow[ld,hook] & \\
	\tT_{X_{\tT_{f}}} \arrow[rr,hook] && \tT_{X_{\tT_f},X_{\tT_{f'}}} && \tT_{X_{\tT_{f'}}} \arrow[ll,hook] \\
	&&& \tT_{I_\aA}\times \tT_{I_\bB} \arrow[uu,hook]\arrow[ld,hook,"{\iota_{\aA13} \times \iota_{\bB13}}"'] & \\
	\tT_{I_\aA}\times\tT_{I_\bB} \arrow[uu,hook] \arrow[rr,hook,"{\iota_{\aA12} \times \iota_{\bB12}}"] && \tT_{I_\aA,I_\aA}\times\tT_{I_\bB,I_\bB} \arrow[uu,hook] && \tT_{I_\aA}\times \tT_{I_\bB} \arrow[uu,hook] \arrow[ll,hook,"{\iota_{\aA23} \times \iota_{\bB23}}"']
\end{tikzcd}
\] 
induces the following commutative diagram $\textcircled{\tiny 2}$ of $B_\infty$-algebras.
\[
\begin{tikzcd}[column sep=small]
	&&& C(\tT_{f'f}) \arrow[dd,two heads] & \\
	C(\tT_f) \arrow[dd,two heads] && C(\tT_{X_{\tT_f},X_{\tT_{f'}}}) \arrow[ll,two heads,"\sim"] \arrow[ur,two heads] \arrow[rr,two heads,"\sim"',near end] \arrow[dd,two heads] && C(\tT_{f'}) \arrow[dd,two heads] \\
	&&& C(\tT_{I_\aA})\times C(\tT_{I_\bB}) & \\
	C(\tT_{I_\aA})\times C(\tT_{I_\bB}) && C(\tT_{I_\aA,I_\aA})\times C(\tT_{I_\bB,I_\bB}) \arrow[ll,two heads, "{\iota_{\aA12}^* \times \iota_{\bB12}^*}"'] \arrow[ur,two heads,"{\iota_{\aA13}^* \times \iota_{\bB13}^*}"]  \arrow[rr,two heads,"{\iota_{\aA23}^* \times \iota_{\bB23}^*}"] && C(\tT_{I_\aA})\times C(\tT_{I_\bB}) 
\end{tikzcd}
\] 
Moreover, the following diagram $\textcircled{\tiny 3}$ is commutative (Ref. Lemma~\ref{Lem-ThetaA} and Lemma~\ref{Lem-ThetaA^2}). 
\[
\begin{tikzcd}
	 & C(\aA)\times C(\bB) \arrow[ld,"{\theta^2_\aA \times \theta^2_\bB}"',"\sim"] \arrow[rd,"{\theta_\aA \times \theta_\bB}","\sim"'] &  \\
	C(\tT_{I_\aA,I_\aA})\times C(\tT_{I_\bB,I_\bB})\arrow[rr,two heads,"\sim"', "{\iota_{\aA12}^* \times \iota_{\bB12}^*}"] \arrow[rr,two heads,bend right=20,"\sim"', "{\iota_{\aA23}^* \times \iota_{\bB23}^*}"] \arrow[rr,two heads,bend right=40,"\sim"', "{\iota_{\aA13}^* \times \iota_{\bB13}^*}"] & & C(\tT_{I_\aA})\times C(\tT_{I_\bB}) 
\end{tikzcd}
\] 
Combining the commutative diagrams $\textcircled{\tiny 1}$, $\textcircled{\tiny 2}$ and $\textcircled{\tiny 3}$, we obtain the following diagram $\textcircled{\tiny 4}$ which is commutative when we consider only solid arrows.
\[
\begin{tikzcd}[column sep=small]
	&& \hat{C}(\tT_{X_{\tT_f},X_{\tT_{f'}}}) \arrow[ld,two heads,dotted,"\sim"'] \arrow[d,"\sim"] \arrow[dd,two heads,bend left=60] \arrow[rd,two heads,dotted,"\sim"] && \\
	& \hat{C}(\tT_f) \arrow[ld,"\sim"'] \arrow[rd,two heads] & C(\tT_{X_{\tT_f},X_{\tT_{f'}}}) \arrow[lld,two heads,"\sim"] \arrow[rrd,two heads,"\sim"'] \arrow[dd,two heads,bend right=60] & \hat{C}(\tT_{f'}) \arrow[ld,two heads] \arrow[rd,"\sim"] & \\
	C(\tT_f) \arrow[ddd,two heads,"\sim"] \arrow[rd,two heads] \arrow[rrddd,two heads,"\sim",bend right=22] & & C(\aA)\times C(\bB) \arrow[ld,"{\theta_\aA \times \theta_\bB}"',"\sim"] \arrow[d,"{\theta^2_\aA \times \theta^2_\bB}","\sim"'] \arrow[rd,"{\theta_\aA \times \theta_\bB}","\sim"'] & & C(\tT_{f'}) \arrow[ld,two heads] \arrow[ddd,two heads,"\sim"] \arrow[llddd,two heads,"\sim"',bend left=22] \\
	& C(\tT_{I_\aA})\times C(\tT_{I_\bB}) & C(\tT_{I_\aA,I_\aA})\times C(\tT_{I_\bB,I_\bB}) \arrow[l,two heads,"\stackrel{\sim}{\iota_{\aA12}^* \times \iota_{\bB12}^*}"] \arrow[r,two heads,"\stackrel{\sim}{\iota_{\aA23}^* \times \iota_{\bB23}^*}"'] & C(\tT_{I_\aA})\times C(\tT_{I_\bB}) & \\
	&&&&\\
	C(\tT_X) \arrow[d,two heads] \arrow[drrrr,two heads] && C(\tT_{X'}) \arrow[dll,two heads] \arrow[drr,two heads] && C(\tT_{X''}) \arrow[d,two heads] \arrow[dllll,two heads] \\
	C(\aA) &&&& C(\bB)
\end{tikzcd}
\]
Since $\hat{C}(\tT_f)$ and $\hat{C}(\tT_{f'})$ are pullbacks, there are morphisms $\hat{C}(\tT_{X_{\tT_f},X_{\tT_{f'}}})\to \hat{C}(\tT_f)$ and $\hat{C}(\tT_{X_{\tT_f},X_{\tT_{f'}}})\to \hat{C}(\tT_{f'})$, which are quasi-isomorphisms by 2-out-of-3 property of quasi-isomorphisms, such that the diagrams
\[
\begin{tikzcd}
	\hat{C}(\tT_{X_{\tT_f},X_{\tT_{f'}}}) \arrow[r,two heads,"\sim"] \arrow[d,"\sim"'] & C(\tT_{X_{\tT_f},X_{\tT_{f'}}}) \arrow[d,two heads,"\sim"] \\
	\hat{C}(\tT_f) \arrow[r,"\sim"] & C(\tT_f)
\end{tikzcd}	
\quad\quad
\begin{tikzcd}
	\hat{C}(\tT_{X_{\tT_f},X_{\tT_{f'}}}) \arrow[r,two heads,"\sim"] \arrow[d,"\sim"'] & C(\tT_{X_{\tT_f},X_{\tT_{f'}}}) \arrow[d,two heads,"\sim"] \\
	\hat{C}(\tT_{f'}) \arrow[r,"\sim"] & C(\tT_{f'})
\end{tikzcd}	
\]
are commuative. Since the two compositions below have the same projections on $C(\aA)$ and $C(\bB)$ respectively, we obtain the following commutative diagram $\textcircled{\tiny 5}$
\[
\begin{tikzcd}
	\hat{C}(\tT_{X_{\tT_f},X_{\tT_{f'}}}) \arrow[r,two heads,"\sim"] \arrow[d,two heads,"\sim"'] & \hat{C}(\tT_{f'}) \arrow[d,two heads] \\
	\hat{C}(\tT_f) \arrow[r,two heads] & C(\aA)\times C(\bB)
\end{tikzcd}	
\]
which implies that the whole diagram $\textcircled{\tiny 4}$ above is commutative.
By the commutative diagram 
\[
\begin{tikzcd}
	\hat{C}(\tT_{X_{\tT_f},X_{\tT_{f'}}}) \arrow[r,two heads,"\sim"] \arrow[d,two heads,"\sim"'] & \hat{C}(\tT_{f'}) \arrow[d,two heads,"\sim"'] \\
	\hat{C}(\tT_f) \arrow[r,two heads,"\sim"'] & C(\tT_{X'})
\end{tikzcd}	
\]
and the pullback
\[
\begin{tikzcd}
	\hat{C}(\tT_f)\times_{C(\tT_{X'})}\hat{C}(\tT_{f'}) \arrow[r,two heads,"\sim"] \arrow[d,two heads,"\sim"'] & \hat{C}(\tT_{f'}) \arrow[d,two heads,"\sim"'] \\
	\hat{C}(\tT_f) \arrow[r,two heads,"\sim"'] & C(\tT_{X'}),
\end{tikzcd}	
\]
there is a morphism $\hat{C}(\tT_{X_{\tT_f},X_{\tT_{f'}}}) \to \hat{C}(\tT_f)\times_{C(\tT_{X'})}\hat{C}(\tT_{f'})$ such that the following diagram $\textcircled{\tiny 6}$ is commutative when we consider only solid arrows.
\[
\begin{tikzcd}[column sep=small]
	& C(\tT_X) \arrow[ldd,two heads] \arrow[rrrrdd,two heads] & & & & \\
	& \hat{C}(\tT_f) \arrow[u,two heads,"\sim"'] \arrow[d,two heads,"\sim"] & &  & & \\
	C(\aA) & C(\tT_{X'}) & \hat{C}(\tT_f)\times_{C(\tT_{X'})}\hat{C}(\tT_{f'}) \arrow[lu,two heads,"\sim"] \arrow[ld,two heads,"\sim"'] & \hat{C}(\tT_{X_{\tT_f},X_{\tT_{f'}}}) \arrow[l,"\sim"'] \arrow[llu,two heads,"\sim"] \arrow[lld,two heads,"\sim"'] \arrow[r,dotted] & \hat{C}(\tT_{f'f}) \arrow[llluu,two heads,"\sim"] \arrow[llldd,two heads,"\sim"'] & C(\bB) \\
	& \hat{C}(\tT_{f'}) \arrow[u,two heads,"\sim"'] \arrow[d,two heads,"\sim"] & & & & \\
	& C(\tT_{X''}) \arrow[luu,two heads] \arrow[rrrruu,two heads] & & & &
\end{tikzcd}
\]
Similarly, combining the commutative diagrams $\textcircled{\tiny 2}$ and $\textcircled{\tiny 3}$, the following diagram $\textcircled{\tiny 7}$ is commutative when we consider only solid arrows.
\[
\begin{tikzcd}[column sep=large]
	\hat{C}(\tT_{X_{\tT_f},X_{\tT_{f'}}}) \arrow[d,"\sim"] \arrow[dd,two heads,bend left=60] \arrow[rd,dotted] & & \\
	C(\tT_{X_{\tT_f},X_{\tT_{f'}}}) \arrow[rrd,two heads,"\sim"'] \arrow[dd,two heads,bend right=60] & \hat{C}(\tT_{f'f}) \arrow[ld,two heads] \arrow[rd,"\sim"] & \\
	C(\aA)\times C(\bB) \arrow[d,"{\theta^2_\aA \times \theta^2_\bB}","\sim"'] \arrow[rd,"{\theta_\aA \times \theta_\bB}","\sim"'] && C(\tT_{f'f}) \arrow[ld,two heads]\\
	C(\tT_{I_\aA,I_\aA})\times C(\tT_{I_\bB,I_\bB}) \arrow[r,two heads,"\sim", "{\iota_{\aA13}^* \times \iota_{\bB13}^*}"'] & C(\tT_{I_\aA})\times C(\tT_{I_\bB}) &
\end{tikzcd}
\] 
Since $\hat{C}(\tT_{f'f})$ is a pullback, 
there is a morphism $\hat{C}(\tT_{X_{\tT_f},X_{\tT_{f'}}}) \rightarrow \hat{C}(\tT_{f'f})$ such that the whole diagram $\textcircled{\tiny 7}$ above is commutative. Combining the commutative diagrams $\textcircled{\tiny 1}$, $\textcircled{\tiny 4}$ and $\textcircled{\tiny 7}$, we can get the following  commutative diagrams $\textcircled{\tiny 8}$ and $\textcircled{\tiny 9}$
	\[
	\begin{tikzcd}
		\hat{C}(\tT_{X_{\tT_f},X_{\tT_{f'}}}) \arrow[r,two heads,"\sim"] \arrow[d] & \hat{C}(\tT_f) \arrow[d,two heads,"\sim"] \\
		\hat{C}(\tT_{f'f}) \arrow[r,two heads,"\sim"] & C(\tT_X)
	\end{tikzcd}
	\quad\quad
	\begin{tikzcd}
		\hat{C}(\tT_{X_{\tT_f},X_{\tT_{f'}}}) \arrow[r,two heads,"\sim"] \arrow[d] & \hat{C}(\tT_{f'}) \arrow[d,two heads,"\sim"] \\
		\hat{C}(\tT_{f'f}) \arrow[r,two heads,"\sim"] & C(\tT_{X''})
	\end{tikzcd}
	\]
which implies that the whole diagram $\textcircled{\tiny 6}$ above is commutative. From the commutative diagram $\textcircled{\tiny 6}$, we obtain
\[
\begin{array}{ll}
	& \ccC_{\aA\bB}([f'])\circ\ccC_{\aA\bB}([f]) \\[2mm]
	= & [C(\tT_{X'}) \stackrel{\sim}{\twoheadleftarrow} \hat{C}(\tT_{f'}) \stackrel{\sim}{\twoheadrightarrow} C(\tT_{X''})] \circ 
	[C(\tT_X) \stackrel{\sim}{\twoheadleftarrow} \hat{C}(\tT_f) \stackrel{\sim}{\twoheadrightarrow} C(\tT_{X'})] \\[2mm]
	= & [C(\tT_X) \stackrel{\sim}{\twoheadleftarrow} \hat{C}(\tT_f)\times_{C(\tT_{X'})}\hat{C}(\tT_{f'}) \stackrel{\sim}{\twoheadrightarrow} C(\tT_{X''})] \\[2mm]
	= & [C(\tT_X) \stackrel{\sim}{\twoheadleftarrow} \hat{C}(\tT_{X_{\tT_f},X_{\tT_{f'}}}) \stackrel{\sim}{\twoheadrightarrow} C(\tT_{X''})] \\
	= & [C(\tT_X) \stackrel{\sim}{\twoheadleftarrow} \hat{C}(\tT_{f'f}) \stackrel{\sim}{\twoheadrightarrow} C(\tT_{X''})] \\[2mm]
	= & \ccC_{\aA\bB}([f'f]).
\end{array}
\] 
	
(1.3) {\it $\ccC_{\aA\bB}$ preserves identity morphism:} We need to show that for any 1-cell $X\in\dgCAT_{\rm c,h}(\aA,\bB)$, $\ccC_{\aA\bB}([1_X]) = 1_{\ccC_{\aA\bB}(X)}$, i.e., $[C(\tT_X) \stackrel{\sim}{\twoheadleftarrow} \hat{C}(\tT_{1_X}) \stackrel{\sim}{\twoheadrightarrow} C(\tT_X)] = [C(\tT_X) \stackrel{1_{C(\tT_X)}}{\leftarrow} C(\tT_X) \stackrel{1_{C(\tT_X)}}{\rightarrow} C(\tT_X)]$. For this, it suffices to prove the following more general result, which will also be used in the proof (3) of Theorem \ref{Thm-LaxFuntor-HomotopyCat-B-Inf}.

\medspace

{\bf Claim.} \label{Claim-BimodIso-B-Inf-Span}
Let $[f]\in \dgCAT_{\rm c,h}(\aA,\bB)(X,X')$ be a 2-cell such that $f: X \rightarrow X'$ is a dg $\aA\-\bB$-bimodule isomorphism. Then $\ccC_{\aA\bB}([f])
:= [C(\tT_X) \stackrel{\sim}{\twoheadleftarrow} \hat{C}(\tT_f) \stackrel{\sim}{\twoheadrightarrow} C(\tT_{X'})]
= [C(\tT_X) \xleftarrow[\cong]{\tT_f^*} C(\tT_{X'}) \xrightarrow{1_{C(\tT_{X'})}} C(\tT_{X'})] 
= [C(\tT_X) \xleftarrow{1_{C(\tT_{X})}} C(\tT_{X}) \xrightarrow[\cong]{\tT^*_{f^{-1}}} C(\tT_{X'})]$.

\medspace

{\it Proof of Claim.}
Obviously, the $B_\infty$-isomorphism $\tT_f^*:C(\tT_{X'})\to C(\tT_{X})$ gives a morphism of spans of spans from $C(\tT_X) \xleftarrow[\cong]{\tT_f^*} C(\tT_{X'}) \xrightarrow{1_{C(\tT_{X'})}} C(\tT_{X'})$ 
to $C(\tT_X) \xleftarrow{1_{C(\tT_{X})}} C(\tT_{X}) \xrightarrow[\cong]{\tT^*_{f^{-1}}} C(\tT_{X'})$. 
\[
  \begin{tikzcd} [row sep=large]
	&& C(\tT_{X}) \arrow[lld,two heads] \arrow[rrd,two heads] && \\
	C(\aA) & C(\tT_{X'}) \arrow[ur,"\tT_f^*"',"\cong"] \arrow[dr,equal]  \arrow[rr,"\tT_f^*","\cong"'] && C(\tT_{X}) \arrow[ul,equal] \arrow[dl,"\tT_{f^{-1}}^*"',"\cong"] & C(\bB) \\
	&& C(\tT_{X'}) \arrow[llu,two heads] \arrow[rru,two heads] &&
  \end{tikzcd}
\]
So $(C(\tT_X) \xleftarrow[\cong]{\tT_f^*} C(\tT_{X'}) \xrightarrow{1_{C(\tT_{X'})}} C(\tT_{X'})) \sim (C(\tT_X) \xleftarrow{1_{C(\tT_{X})}} C(\tT_{X}) \xrightarrow[\cong]{\tT^*_{f^{-1}}} C(\tT_{X'}))$. Thus $[C(\tT_X) \xleftarrow[\cong]{\tT_f^*} C(\tT_{X'}) \xrightarrow{1_{C(\tT_{X'})}} C(\tT_{X'})] 
= [C(\tT_X) \xleftarrow{1_{C(\tT_{X})}} C(\tT_{X}) \xrightarrow[\cong]{\tT^*_{f^{-1}}} C(\tT_{X'})]$.

Let $\theta_f=(1_{C(\tT_X)},\tT_{f^{-1}}^*,s\tT_{f^{-1}}^*)^T:C(\tT_X)\to C(\tT_f)=C(\tT_X)\oplus C(\tT_{X'})\oplus D(X_{\tT_f})$. Similar to Lemma~\ref{Lem-ThetaA}, we can show that $\theta_f$ is a $B_\infty$-quasi-morphism.
Moreover, the following diagram is commutative.
\[
\begin{tikzcd}[row sep=large]
 C(\tT_X) \arrow[d,"\theta_f"',"\sim"] \arrow[r,two heads,"{\begin{pmatrix} \iota^*_1 \\ \iota^*_2 \end{pmatrix}}"] & C(\aA)\times C(\bB) \arrow[d,"{(\theta_\aA,\theta_\bB)}","\sim"'] \\
 C(\tT_f) \arrow[r,two heads,"{\begin{pmatrix} \iota^*_\aA \\ \iota^*_\bB	
 \end{pmatrix}}"] & C(\tT_{I_\aA})\times C(\tT_{I_\bB})
\end{tikzcd}
\]	
Since $\hat{C}(\tT_f)$ is a pullback, there is a morphism $C(\tT_X)\to \hat{C}(\tT_f)$ such that the following diagram is commutative.
\[
\begin{tikzcd}[row sep=large]
	& C(\tT_X) \arrow[dd,bend right=50,"\theta_f"',"\sim",near end] \arrow[dr,two heads,bend left=20] \arrow[d,dotted] & \\
	& \hat{C}(\tT_f) \arrow[r,two heads] \arrow[d,"\sim"'] & C(\aA)\times C(\bB) \arrow[d,"{(\theta_\aA,\theta_\bB)}","\sim"'] \\
	& C(\tT_f) \arrow[r,two heads,"{\begin{pmatrix} \iota^*_\aA \\ \iota^*_\bB \end{pmatrix}}"] 
	\arrow[dl,two heads,"\iota_1^*"] \arrow[dr,two heads,"\iota_2^*"'] & C(\tT_{I_\aA})\times C(\tT_{I_\bB}) \\
	C(\tT_X) && C(\tT_{X'})
\end{tikzcd}
\]	
Since $\iota_1^*\circ\theta_f=1_{C(\tT_{X})}$ and $\iota_2^*\circ\theta_f=\tT^*_{f^{-1}}$, this morphism $C(\tT_X)\to \hat{C}(\tT_f)$ gives a morphism of spans of spans from $C(\tT_X) \xleftarrow{1_{C(\tT_{X})}} C(\tT_{X}) \xrightarrow[\cong]{\tT^*_{f^{-1}}} C(\tT_{X'})$ to $C(\tT_X) \stackrel{\sim}{\twoheadleftarrow} \hat{C}(\tT_f) \stackrel{\sim}{\twoheadrightarrow} C(\tT_{X'})$. Thus $\ccC_{\aA\bB}([f])
:= [C(\tT_X) \stackrel{\sim}{\twoheadleftarrow} \hat{C}(\tT_f) \stackrel{\sim}{\twoheadrightarrow} C(\tT_{X'})]
= [C(\tT_X) \xleftarrow{1_{C(\tT_{X})}} C(\tT_{X}) \xrightarrow[\cong]{\tT^*_{f^{-1}}} C(\tT_{X'})]$.

\medspace

(2) {\it For any $\aA,\bB,\cC\in\dgCAT_{\rm c,h}$,  $\ccC^2_{\aA\bB\cC}:c_{\ccC(\aA)\ccC(\bB)\ccC(\cC)}\circ(\ccC_{\bB\cC}\times\ccC_{\aA\bB}) \Rightarrow \ccC_{\aA\cC}\circ c_{\aA\bB\cC}$
is a natural transformation:} We need to show
for any 1-cells $X,X'\in\dgCAT_{\rm c,h}(\aA,\bB),Y,Y'\in\dgCAT_{\rm c,h}(\bB,\cC)$ and 2-cells $[f]\in\dgCAT_{\rm c,h}(\aA,\bB)(X,X'), [g]\in\dgCAT_{\rm c,h}(\bB,\cC)(Y,Y')$, the following diagram is commutative,
\[
	\begin{tikzcd}
		\ccC(Y)\circ\ccC(X) \arrow[r,"{\ccC^2_{Y,X}}"] \arrow[d,"{\ccC([g])*\ccC([f])}"'] & \ccC(X\otimes_\bB Y) \arrow[d,"{\ccC([f\otimes g])}"] \\
		\ccC(Y')\circ\ccC(X') \arrow[r,"{\ccC^2_{Y',X'}}"] & \ccC(X'\otimes_\bB Y') 
	\end{tikzcd}
\]
i.e., $\ccC([f\otimes g])\circ\ccC^2_{Y,X} = \ccC^2_{Y',X'}\circ(\ccC([g])*\ccC([f]))$. 

Firstly, the morphisms $f$ and $g$ induce a dg functor
\[
	\tT_{f,g}:\tT_{X,Y} \rightarrow \tT_{X',Y'}.
\]
Since $f$ and $g$ are quasi-isomorphisms, the dg functor $\tT_{f,g}$ is a quasi-equivalence. Thus the natural projections
\[
	\iota_1^*:C(\tT_{f,g}) \twoheadrightarrow C(\tT_{X,Y}) \quad\mathrm{and}\quad \iota_2^*:C(\tT_{f,g}) \twoheadrightarrow C(\tT_{X',Y'})
\]
are quasi-isomorphisms. Observe the upper trianglar matrix dg category 
\[
\tT_{X_{\tT_{f,g}}}=
\begin{pmatrix}
	\tT_{X,Y}&X_{\tT_{f,g}}\\
	&\tT_{X',Y'}
\end{pmatrix}
=
  \begin{pmatrix}
	\aA & X & X\otimes_{\bB}Y & I_\aA & X' & X'\otimes_{\bB}Y' \\
	& \bB & Y & & I_\bB & Y' \\
	& & \cC & & & I_\cC \\
	& & & \aA & X' & X'\otimes_{\bB} Y' \\
	& & & & \bB & Y' \\
	& & & & & \cC
  \end{pmatrix}.
\]
Let $\hat{C}(\tT_{f,g})$ be the following pullback in $B_\infty$ (Ref. Lemma~\ref{Lem-ThetaA}).
\[
  \begin{tikzcd}[column sep=large]
	\hat{C}(\tT_{f,g}) \arrow[r,two heads] \arrow[d, "\sim"'] & C(\aA)\times C(\bB)\times C(\cC) \arrow[d, "{\theta_\aA\times\theta_\bB\times\theta_\cC}","\sim"'] \\
	C(\tT_{f,g}) \arrow[r,two heads,"{(\iota^*_\aA,\iota^*_\bB,\iota^*_\cC)}"] & C(\tT_{ I_\aA})\times C(\tT_{ I_\bB})\times C(\tT_{ I_\cC})
  \end{tikzcd}
\]	
The commutative diagram of fully faithful dg functors
\[
	\begin{tikzcd}
		\tT_{X,Y} \arrow[r,hook] & \tT_{X_{\tT_{f,g}}} & \tT_{X',Y'} \arrow[l,hook] \\
		\tT_{X\otimes_\bB Y} \arrow[r,hook] \arrow[u,hook] & \tT_{X_{\tT_{f\otimes g}}} \arrow[u,hook] & \tT_{X'\otimes_\bB Y'} \arrow[l,hook] \arrow[u,hook]
	\end{tikzcd}
\]
induces the following commutative diagram $\textcircled{\tiny 10}$ of $B_\infty$-algebras
\[
	\begin{tikzcd}
		& \hat{C}(\tT_{f,g}) \arrow[d,"\sim"] & \\
		C(\tT_{X,Y}) \arrow[d,two heads] & C(\tT_{f,g}) \arrow[l,two heads,"\sim"'] \arrow[d,two heads] \arrow[r,two heads,"\sim"]  & C(\tT_{X',Y'}) \arrow[d,two heads] \\
		C(\tT_{X'\otimes Y'}) & C(\tT_{f\otimes g}) \arrow[l,two heads,"\sim"'] \arrow[r,two heads,"\sim"] & C(\tT_{X'\otimes Y'})
	\end{tikzcd}
\]
The following diagram $\textcircled{\tiny 11}$ is commutative when we consider only solid arrows.
\[
	\begin{tikzcd}
		\hat{C}(\tT_{f,g}) \arrow[rr,two heads] \arrow[dd,"\sim"] \arrow[rd,dotted] & & C(\aA)\times C(\bB)\times C(\cC) \arrow[dd,"\sim",near start,"{\theta_\aA\times\theta_\bB\times\theta_\cC}"',near start] \arrow[rd,two heads] & \\
		& \hat{C}(\tT_f) \arrow[rr,two heads] \arrow[dd,"\sim",near start] & & C(\aA)\times C(\bB) \arrow[dd,"\sim","{\theta_\aA\times\theta_\bB}"'] \\
		C(\tT_{f,g}) \arrow[rd,two heads] \arrow[rr,two heads,near start,"{(\iota^*_\aA,\iota^*_\bB,\iota^*_\cC)}"] & & C(\tT_{I_\aA})\times C(\tT_{I_\bB})\times C(\tT_{I_\cC}) \arrow[rd,two heads] \\
		& C(\tT_f) \arrow[rr,two heads,"{(\iota^*_\aA,\iota^*_\bB)}"] & & C(\tT_{I_\aA})\times C(\tT_{I_\bB}) 
	\end{tikzcd}
\]
By the universal property of pullback $\hat{C}(\tT_{f})$, there is a unique morphism $\hat{C}(\tT_{f,g}) \rightarrow \hat{C}(\tT_{f})$ such that the whole diagram  $\textcircled{\tiny 11}$ above is commutative. In particular, we have the following commutative diagram $\textcircled{\tiny 12}$.
\[
\begin{tikzcd}
	\hat{C}(\tT_{f,g}) \arrow[d,"\sim"] \arrow[r,two heads] & \hat{C}(\tT_f) \arrow[d,"\sim"] \\
	C(\tT_{f,g}) \arrow[r,two heads] & C(\tT_f) 
\end{tikzcd}
\]
Similarly, there are morphisms 
	\[
	\hat{C}(\tT_{f,g}) \rightarrow \hat{C}(\tT_g)\quad\text{and}\quad  \hat{C}(\tT_{f,g}) \rightarrow \hat{C}(\tT_{f\otimes g})
	\]
such that the following diagrams $\textcircled{\tiny 13}$ and $\textcircled{\tiny 14}$ are commutative. 
\[
  \begin{tikzcd}
	\hat{C}(\tT_{f,g}) \arrow[d,"\sim"] \arrow[r,two heads] & \hat{C}(\tT_g) \arrow[d,"\sim"] \\
	C(\tT_{f,g}) \arrow[r,two heads] & C(\tT_g) 
  \end{tikzcd}
  \quad\quad 
  \begin{tikzcd}
	\hat{C}(\tT_{f,g}) \arrow[d,"\sim"] \arrow[r,two heads] & \hat{C}(\tT_{f\otimes g}) \arrow[d,"\sim"] \\
	C(\tT_{f,g}) \arrow[r,two heads] & C(\tT_{f\otimes g}) 
  \end{tikzcd}
\]
Combining the commutative diagrams $\textcircled{\tiny 14}$ and $\textcircled{\tiny 10}$, we get the following commutative diagram $\textcircled{\tiny 15}$.
\[
	\begin{tikzcd}
		\hat{C}(\tT_{f,g}) \arrow[r,"\sim"] \arrow[d,two heads] & C(\tT_{f,g}) \arrow[r,two heads,"\sim"] \arrow[d,two heads] & C(\tT_{X,Y}) \arrow[d,two heads] \\
		\hat{C}(\tT_{f\otimes g}) \arrow[r,"\sim"] & C(\tT_{f\otimes g}) \arrow[r,two heads,"\sim"] & C(\tT_{X\otimes_\bB Y})
	\end{tikzcd}
\]

Secondly, by a similar proof, we can obtain the following commutative diagram $\textcircled{\tiny 16}$.
\[
\begin{tikzcd}
	\hat{C}(\tT_{f,g}) \arrow[r,"\sim"] \arrow[d,two heads] & C(\tT_{f,g}) \arrow[r,two heads,"\sim"] \arrow[d,two heads] & C(\tT_{X',Y'}) \arrow[d,two heads] \\
	\hat{C}(\tT_{f\otimes g}) \arrow[r,"\sim"] & C(\tT_{f\otimes g}) \arrow[r,two heads,"\sim"] & C(\tT_{X'\otimes_\bB Y'})
\end{tikzcd}
\]	

Thirdly, combining the commutative diagrams $\textcircled{\tiny 12}$ and $\textcircled{\tiny 13}$, we can get the following diagram $\textcircled{\tiny 17}$ which is commutative when we consider only those solid arrows.
\[
\begin{tikzcd}[row sep=large]
	& \hat{C}(\tT_{f,g}) \arrow[ld,two heads] \arrow[d,"\sim"] \arrow[rr,dotted] \arrow[rrrd,two heads] && \hat{C}(\tT_f)\times_{C(\bB)}\hat{C}(\tT_g) \arrow[llld,] \arrow[d,"\sim"] \arrow[rd] & \\
	\hat{C}(\tT_f) \arrow[d,"\sim"] & 
	C(\tT_{f,g}) \arrow[ld,two heads] \arrow[d,two heads,"\sim"] \arrow[rrrd,two heads] && C(\tT_f)\times_{C(\bB)}C(\tT_g) \arrow[llld] \arrow[d] \arrow[rd] & \hat{C}(\tT_g) \arrow[d,"\sim"]\\
	C(\tT_f) \arrow[rd,two heads,"\sim"] & C(\tT_{X,Y}) \arrow[d,two heads] \arrow[rr,two heads,"\sim"'] \arrow[rrd,two heads] && \bar{C}(\tT_{X,Y}) \arrow[lld,two heads] \arrow[d,two heads] & C(\tT_g) \arrow[ld,two heads,"\sim"] \\		 
	& C(\tT_{X}) \arrow[rd,two heads] && C(\tT_{Y}) \arrow[ld,two heads] &  \\
	& & C(\bB)	& &
\end{tikzcd}
\]	
By the universal property of pullback $\hat{C}(\tT_f)\times_{C(\bB)}\hat{C}(\tT_g)$, there is a unique morphism $\hat{C}(\tT_{f,g}) \rightarrow \hat{C}(\tT_f)\times_{C(\bB)}\hat{C}(\tT_g)$ such that the whole diagram $\textcircled{\tiny 17}$ above is commutative. Indeed, by the universal property of pullback $\bar{C}(\tT_{X,Y})$, the following diagram $\textcircled{\tiny 18}$ is commutative.
\[
\begin{tikzcd}
	\hat{C}(\tT_{f,g}) \arrow[r,"\sim"] \arrow[d] & C(\tT_{f,g}) \arrow[r,two heads,"\sim"] & C(\tT_{X,Y}) \arrow[d,two heads,"\sim"'] \\
	\hat{C}(\tT_f)\times_{C(\bB)}\hat{C}(\tT_g) \arrow[r] & C(\tT_f)\times_{C(\bB)} C(\tT_g) \arrow[r] & \bar{C}(\tT_{X,Y})
\end{tikzcd}	
\]

Fourthly, by a similar proof, we can obtain the following commutative diagram $\textcircled{\tiny 19}$.
\[
	\begin{tikzcd}
		\hat{C}(\tT_{f,g}) \arrow[r, "\sim"] \arrow[d] & C(\tT_{f,g}) \arrow[r,two heads,"\sim"] & C(\tT_{X',Y'}) \arrow[d,two heads,"\sim"'] \\
		\hat{C}(\tT_f)\times_{C(\bB)}\hat{C}(\tT_g) \arrow[r] & C(\tT_f)\times_{C(\bB)} C(\tT_g) \arrow[r] & \bar{C}(\tT_{X',Y'})
	\end{tikzcd}
\]
Note that the composition $	\hat{C}(\tT_{f,g})\to\hat{C}(\tT_f)\times_{C(\bB)}\hat{C}(\tT_g)\to C(\tT_f)\times_{C(\bB)} C(\tT_g) \to\bar{C}(\tT_{X',Y'})$ above is a surjective quasi-isomorphism.

Fifthly, combining the four commutative diagrams $\textcircled{\tiny 15}$,  $\textcircled{\tiny 16}$,  $\textcircled{\tiny 18}$ and $\textcircled{\tiny 19}$ obtained above, we get the following commutative diagram.
\[
	\begin{tikzcd}[column sep=tiny]
		& & \bar{C}(\tT_{X,Y})  & & \\
		& C(\tT_{X,Y}) \arrow[ru,two heads,"\sim"] \arrow[ld,two heads] & & \hat{C}(\tT_f)\times_{C(\bB)}\hat{C}(\tT_g) \arrow[lu,two heads,"\sim"'] \arrow[rd] & \\
		C(\tT_{X\otimes_\bB Y}) & & \hat{C}(\tT_{f,g}) \arrow[lu,two heads,"\sim"] \arrow[ld,two heads]  \arrow[ru] \arrow[rd,two heads,"\sim"'] & & \bar{C}(\tT_{X',Y'}) \\
		& \hat{C}(\tT_{f\otimes g}) \arrow[lu,two heads,"\sim"] \arrow[rd,two heads,"\sim"'] & & C(\tT_{X',Y'}) \arrow[ru,two heads,"\sim"'] \arrow[ld,two heads] & \\
		& & C(\tT_{X'\otimes_\bB Y'}) & & 
	\end{tikzcd}
\]

Finally, by the properties of pullbacks $C':=C(\tT_{X,Y})\times_{C(\tT_{X\otimes_\bB Y})} \hat{C}(\tT_{f\otimes g})$ and $C'':=(\hat{C}(\tT_f)\times_{C(\bB)}\hat{C}(\tT_g))\times_{\bar{C}(\tT_{X',Y'})} C(\tT_{X',Y'})$, there exist morphisms $\hat{C}(\tT_{f,g}) \to C'$ and $\hat{C}(\tT_{f,g}) \to C''$ such that the following diagram is commutative.
\[
	\begin{tikzcd}[column sep=tiny]
		& & \bar{C}(\tT_{X,Y})  & & \\
		& C(\tT_{X,Y}) \arrow[ru,two heads,"\sim"] \arrow[ld,two heads] & & \hat{C}(\tT_f)\times_{C(\bB)}\hat{C}(\tT_g) \arrow[lu,two heads,"\sim"'] \arrow[rd] & \\
	    C(\tT_{X\otimes_\bB Y}) & C' \arrow[u,two heads,"\sim"] \arrow[d,two heads] & \hat{C}(\tT_{f,g}) \arrow[l,dotted] \arrow[r,dotted] \arrow[lu,two heads,"\sim"] \arrow[ld,two heads] \arrow[ru] \arrow[rd,two heads,"\sim"'] & C'' \arrow[u,two heads,"\sim"] \arrow[d,two heads] & \bar{C}(\tT_{X',Y'}) \\
		& \hat{C}(\tT_{f\otimes g}) \arrow[lu,two heads,"\sim"] \arrow[rd,two heads,"\sim"'] & & C(\tT_{X',Y'}) \arrow[ru,two heads,"\sim"'] \arrow[ld,two heads] & \\
		& & C(\tT_{X'\otimes_\bB Y'}) & & 
	\end{tikzcd}
\]
Thus {\small 
\[
\begin{array}{ll}
	& \ccC([f\otimes g])\circ\ccC^2_{Y,X} \\[2mm]
	= & [C(\tT_{X\otimes_\bB Y}) \stackrel{\sim}{\twoheadleftarrow} \hat{C}(\tT_{f\otimes g}) \rightarrow C(\tT_{X'\otimes_\bB Y'})] \circ [\bar{C}(\tT_{X,Y}) \stackrel{\sim}{\twoheadleftarrow} C(\tT_{X,Y}) \rightarrow C(\tT_{X\otimes_\bB Y})] \\ [2mm]
	= & [\bar{C}(\tT_{X,Y}) \stackrel{\sim}{\twoheadleftarrow} C' \rightarrow C(\tT_{X'\otimes_\bB Y'})] \\[2mm]
	= & [\bar{C}(\tT_{X,Y}) \stackrel{\sim}{\twoheadleftarrow} \hat{C}(\tT_{f,g}) \rightarrow C(\tT_{X'\otimes_\bB Y'})] \\[2mm]
	= & [\bar{C}(\tT_{X,Y}) \stackrel{\sim}{\twoheadleftarrow} C'' \rightarrow C(\tT_{X'\otimes_\bB Y'})] \\[2mm]
	= & [\bar{C}(\tT_{X',Y'}) \stackrel{\sim}{\twoheadleftarrow} C(\tT_{X',Y'}) \rightarrow C(\tT_{X'\otimes_\bB Y'})] \circ [\bar{C}(\tT_{X,Y}) \stackrel{\sim}{\twoheadleftarrow} \hat{C}(\tT_f)\times_{C(\bB)}\hat{C}(\tT_g) \rightarrow \bar{C}(\tT_{X',Y'})] \\[2mm]
	= & \ccC^2_{Y',X'}\circ(\ccC([g])*\ccC([f])).
\end{array}
\]}

\medspace

(3) {\it $\ccC$ satisfies the lax associativity:} We need to show that
for any objects $\aA,\bB,\cC,\dD\in\dgCAT_{\rm c,h}$ and 1-cells $X\in\dgCAT_{\rm c,h}(\aA,\bB), Y\in\dgCAT_{\rm c,h}(\bB,\cC), Z\in\dgCAT_{\rm c,h}(\cC,\dD)$, the following diagram is commutative
\[
\begin{tikzcd}[row sep=large,column sep=huge]
	\left(\ccC(Z)\circ\ccC(Y)\right)\circ\ccC(X) \arrow[r,"a_{\ccC(Z),\ccC(Y),\ccC(X)}"] \arrow[d,"\ccC^2_{Z,Y}*1_{\ccC(X)}"'] & \ccC(Z)\circ(\ccC(Y)\circ\ccC(X)) \arrow[d,"1_{\ccC(Z)}*\ccC^2_{Y,X}"] \\
	\ccC(Y\otimes_\cC Z)\circ\ccC(X) \arrow[d,"\ccC^2_{Y\otimes_\cC Z,X}"'] & \ccC(Z)\circ\ccC(X\otimes_\bB Y) \arrow[d,"\ccC^2_{Z,X\otimes_\bB Y}"] \\
	\ccC(X\otimes_\bB(Y\otimes_\cC Z)) \arrow[r, "\ccC(a_{Z,Y,X})"] & \ccC((X\otimes_\bB Y)\otimes_\cC Z), 
\end{tikzcd}
\]
i.e., $\ccC^2_{Z,X\otimes_\bB Y}\circ(1_{\ccC(Z)}*\ccC^2_{Y,X}) \circ a_{\ccC(Z),\ccC(Y),\ccC(X)} = \ccC(a_{Z,Y,X})\circ \ccC^2_{Y\otimes_\cC Z,X} \circ (\ccC^2_{Z,Y}*1_{\ccC(X)})$.

Firstly, we observe the upper triangular matrix dg category
\[
\tT_{X,Y,Z}=\begin{pmatrix}
	\aA & X & X\otimes_\bB Y & X\otimes_\bB Y\otimes_\cC Z \\
	& \bB & Y & Y\otimes_\cC Z \\
	& & \cC & Z \\
	& & & \dD
\end{pmatrix}.
\]
The commutative diagram of fully faithful dg functors
\[
\begin{tikzcd}
	\tT_{X,Y,Z} & \tT_{X,Y\otimes_\cC Z} \arrow[l,hook] & \tT_{X\otimes_\bB(Y\otimes_\cC Z)} \arrow[l,hook] \arrow[dl,"\cong"',"\tT_{a_{Z,Y,X}}"] \\
	\tT_{X\otimes_\bB Y,Z} \arrow[u, hook] & \tT_{(X\otimes_\bB Y)\otimes_\cC Z} \arrow[l, hook]
\end{tikzcd}
\]
induces the following commutative diagram $\textcircled{\tiny 20}$ of $B_\infty$-algebras.
\[
\begin{tikzcd}
	C(\tT_{X,Y,Z}) \arrow[r,two heads] \arrow[d,two heads] & C(\tT_{X,Y\otimes_\cC Z}) \arrow[r,two heads] & C(\tT_{X\otimes_\bB(Y\otimes_\cC Z)}) \\
	C(\tT_{X\otimes_\bB Y}, Z) \arrow[r,two heads] & C(\tT_{(X\otimes_\bB Y)\otimes_\cC Z}) \arrow[ur,"\cong","\tT_{a_{Z,Y,X}}^*"'] & 
\end{tikzcd}
\]

Secondly, the dg category $\tT_{X,Y,Z}=\tT_{X, (Y\ Y\otimes_\cC Z)}$ and $\tT_{Y,Z}=\tT_{(Y\ Y\otimes_\cC Z)}$, where $(Y\ Y\otimes_\cC Z)$ is the natural $\bB$-$\tT_Z$-bimodule. Thus
\[
C(\tT_X)\times_{C(\bB)}C(\tT_{Y,Z})=C(\tT_X)\times_{C(\bB)}C(\tT_{(Y\ Y\otimes_\cC Z)})=\bar{C}(\tT_{X,(Y\ Y\otimes_\cC Z)}).
\]
By Lemma \ref{Lem-CTXY-bar}, we have
\[
C(\tT_{X,Y,Z})=C(\tT_{X,(Y\ Y\otimes_\cC Z)}) \stackrel{\sim}{\twoheadrightarrow} \bar{C}(\tT_{X,(Y\ Y\otimes_\cC Z)})=C(\tT_X)\times_{C(\bB)}C(\tT_{Y,Z}).
\]
Similarly, the dg category $\tT_{X,Y,Z}=\tT_{{X\otimes_\bB Y\choose Y},Z}$ and $\tT_{X,Y}=\tT_{X\otimes_\bB Y\choose Y}$, where {\tiny ${X\otimes_\bB Y\choose Y}$} is the natural $\tT_X$-$\cC$-bimodule. Thus
\[
C(\tT_{X,Y})\times_{C(\cC)}C(\tT_{Z}) = C(\tT_{X\otimes_\bB Y\choose Y}) \times_{C(\cC)} C(\tT_Z) = \bar{C}(\tT_{{X\otimes_\bB Y\choose Y}, Z}).
\]
By Lemma \ref{Lem-CTXY-bar}, we have
\[
C(\tT_{X,Y,Z})=C(\tT_{{X\otimes_\bB Y\choose Y}, Z}) \stackrel{\sim}{\twoheadrightarrow} \bar{C}(\tT_{{X\otimes_\bB Y\choose Y}, Z})=C(\tT_{X,Y})\times_{C(\cC)}C(\tT_{Z}).
\]
Then we get the following commutative diagram $\textcircled{\tiny 21}$ 
\[
\begin{tikzcd}[column sep=tiny]
	&& C(\tT_{X,Y,Z}) \arrow[ld,two heads,"\sim"] \arrow[rd,two heads,"\sim"'] && \\
	& C(\tT_{X,Y})\times_{C(\cC)}C(\tT_Z) \arrow[ld,two heads] \arrow[d,two heads,"\sim"] && C(\tT_X)\times_{C(\bB)}C(\tT_{Y,Z}) \arrow[d,two heads,"\sim"'] \arrow[rd,two heads] & \\
	C(\tT_{X,Y}) \arrow[dd,two heads] \arrow[rd,two heads,"\sim"'] & \bar{C}(\tT_{X,Y})\times_{C(\cC)}C(\tT_Z) \arrow[d,two heads] \arrow[ddrrr,two heads] && C(\tT_X)\times_{C(\bB)}\bar{C}(\tT_{Y,Z}) \arrow[ll,"a_{\ccC(Z),\ccC(Y),\ccC(X)}"',"\cong"] \arrow[ddlll,two heads]\arrow[d,two heads] & C(\tT_{Y,Z}) \arrow[ld,two heads,"\sim"] \arrow[dd,two heads] \\
	& \bar{C}(\tT_{X,Y}) \arrow[ld,two heads] \arrow[rd,two heads] && \bar{C}(\tT_{Y,Z}) \arrow[ld,two heads] \arrow[rd,two heads] & \\
	C(\tT_X) \arrow[d,two heads] \arrow[rd,two heads] && C(\tT_Y) \arrow[ld,two heads] \arrow[rd,two heads] && C(\tT_Z) \arrow[ld,two heads] \arrow[d,two heads] \\
	C(\aA) & C(\bB) && C(\cC) & C(\dD) 
\end{tikzcd}
\]
where the commutativity of the top pentagon can be proved by using the universality of the pullback $C(\tT_X)\times_{C(\bB)}\bar{C}(\tT_{Y,Z})$. 

Thirdly, by the universal property of the pullback $\bar{C}(\tT_{X,Y\otimes_\cC Z})$, we can obtain the following commutative diagram $\textcircled{\tiny 22}$.
\[ 
\begin{tikzcd}
	C(\tT_{X,Y,Z}) \arrow[r,two heads] \arrow[d,two heads] & C(\tT_{X,Y\otimes_\cC Z}) \arrow[d,two heads,"\sim"] & \\
	C(\tT_X)\times_{C(\bB)}C(\tT_{Y,Z}) \arrow[r,dotted] \arrow[d,two heads] & \bar{C}(\tT_{X,Y\otimes_\cC Z}) \arrow[r,two heads] \arrow[d,two heads] & C(\tT_X) \arrow[d,two heads] \\
	C(\tT_{Y,Z}) \arrow[r,two heads] & C(\tT_{Y\otimes_\cC Z}) \arrow[r,two heads] & C(\bB)
\end{tikzcd}
\]

Fourthly, by the universal property of the pullback $\bar{C}(\tT_{X\otimes_\bB Y,Z})$, we can obtain the following commutative diagram $\textcircled{\tiny 23}$.
\[
\begin{tikzcd}
	C(\tT_{X,Y,Z}) \arrow[r,two heads] \arrow[d,two heads] & C(\tT_{X\otimes_\bB Y,Z}) \arrow[d,two heads,"\sim"] & \\
	C(\tT_{X,Y})\times_{C(\cC)}C(\tT_{Z}) \arrow[r,dotted] \arrow[d,two heads] & \bar{C}(\tT_{X\otimes_\bB Y, Z}) \arrow[r,two heads] \arrow[d,two heads] & C(\tT_Z) \arrow[d,two heads] \\
	C(\tT_{X,Y}) \arrow[r,two heads] & C(\tT_{X\otimes_\bB Y}) \arrow[r,two heads] & C(\cC)
\end{tikzcd}
\]

Fifthly, by the Claim in the proof (1.3) of Theorem \ref{Thm-LaxFuntor-HomotopyCat-B-Inf}, the 2-cell $\ccC(a_{Z,Y,X})=$ \linebreak $[C(\tT_{X\otimes_\bB(Y\otimes_\cC Z)}) \xleftarrow{\tT_{a_{Z,Y,X}}^*} C(\tT_{(X\otimes_\bB Y)\otimes_\cC Z}) = C(\tT_{(X\otimes_\bB Y)\otimes_\cC Z})]$.
\[
\begin{tikzcd}
	& C(\tT_{X\otimes_\bB(Y\otimes_\cC Z)}) \arrow[ld,two heads] \arrow[rd,two heads] & \\
	C(\aA) & C(\tT_{(X\otimes_\bB Y)\otimes_\cC Z}) \arrow[d,equal] \arrow[u,"{\tT_{a_{Z,Y,X}}^*}"',"\cong"] & C(\dD) \\
	& C(\tT_{(X\otimes_\bB Y)\otimes_\cC Z}) \arrow[lu,two heads] \arrow[ru,two heads] & 
\end{tikzcd}
\]

Finally, from the commutative diagrams $\textcircled{\tiny 20}$, $\textcircled{\tiny 21}$, $\textcircled{\tiny 22}$ and $\textcircled{\tiny 23}$, we obtain the following commutative diagram.
\[
\begin{tikzcd}[column sep=tiny]
	C(\tT_X)\times_{C(\bB)}\bar{C}(\tT_{Y,Z}) \arrow[rr,"a_{\ccC(Z),\ccC(Y),\ccC(X)}","\cong"'] & & \bar{C}(\tT_{X,Y})\times_{C(\cC)}C(\tT_{Z}) \\
	C(\tT_X)\times_{C(\bB)}C(\tT_{Y,Z}) \arrow[u,two heads,"\sim"'] \arrow[d] & & C(\tT_{X,Y})\times_{C(\cC)}C(\tT_{Z}) \arrow[u,two heads,"\sim"'] \arrow[d] \\
	\bar{C}(\tT_{X,Y\otimes_\cC Z}) & C(\tT_{X,Y,Z}) \arrow[ul,two heads,"\sim"'] \arrow[dl]  \arrow[ur,two heads,"\sim"] \arrow[dr] & \bar{C}(\tT_{X\otimes_\bB Y,Z}) \\
	C(\tT_{X,Y\otimes_\cC Z}) \arrow[u,two heads,"\sim"'] \arrow[d] & & C(\tT_{X\otimes_\bB Y,Z}) \arrow[u,two heads,"\sim"'] \arrow[d] \\
	C(\tT_{X\otimes_\bB(Y\otimes_\cC Z)}) & &  C(\tT_{(X\otimes_\bB Y)\otimes_\cC Z}) \arrow[ll,"\tT_{a_{Z,Y,X}}^*"',"\cong"]
\end{tikzcd}
\]
Thus 
\[
\begin{array}{ll}
	& \ccC^2_{Z,X\otimes_\bB Y}\circ(1_{\ccC(Z)}*\ccC^2_{Y,X}) \circ a_{\ccC(Z),\ccC(Y),\ccC(X)} \\[2mm]
	= & [\bar{C}(\tT_{X\otimes_\bB Y,Z}) \stackrel{\sim}{\twoheadleftarrow} C(\tT_{X\otimes_\bB Y,Z}) \rightarrow C(\tT_{(X\otimes_\bB Y)\otimes_\cC Z})] \\[2mm] 
	& \circ [\bar{C}(\tT_{X,Y})\times_{C(\cC)}C(\tT_{Z}) \stackrel{\sim}{\twoheadleftarrow} C(\tT_{X,Y})\times_{C(\cC)}C(\tT_{Z}) \rightarrow \bar{C}(\tT_{X\otimes_\bB Y,Z})] \\[2mm]
	& \circ [C(\tT_X)\times_{C(\bB)}\bar{C}(\tT_{Y,Z}) = C(\tT_X)\times_{C(\bB)}\bar{C}(\tT_{Y,Z}) \xrightarrow{a_{\ccC(Z),\ccC(Y),\ccC(X)}} \bar{C}(\tT_{X,Y})\times_{C(\cC)}C(\tT_{Z})] \\[2mm]
	= & [C(\tT_X)\times_{C(\bB)}\bar{C}(\tT_{Y,Z}) \stackrel{\sim}{\twoheadleftarrow} C(\tT_{X,Y,Z}) \rightarrow C(\tT_{(X\otimes_\bB Y)\otimes_\cC Z})] \\[2mm]
	= & [C(\tT_{X\otimes_\bB(Y\otimes_\cC Z)}) \xleftarrow{\tT_{a_{Z,Y,X}}^*} C(\tT_{(X\otimes_\bB Y)\otimes_\cC Z}) = C(\tT_{(X\otimes_\bB Y)\otimes_\cC Z})] \\[2mm]
	& \circ [\bar{C}(\tT_{X,Y\otimes_\cC Z}) \stackrel{\sim}{\twoheadleftarrow} C(\tT_{X,Y\otimes_\cC Z}) \rightarrow C(\tT_{X\otimes_\bB(Y\otimes_\cC Z)})] \\[2mm]
	& \circ [C(\tT_X)\times_{C(\bB)}\bar{C}(\tT_{Y,Z}) \stackrel{\sim}{\twoheadleftarrow} C(\tT_X)\times_{C(\bB)}C(\tT_{Y,Z}) \rightarrow \bar{C}(\tT_{X,Y\otimes_\cC Z})] \\[2mm]
	= & \ccC(a_{Z,Y,X})\circ \ccC^2_{Y\otimes_\cC Z,X} \circ (\ccC^2_{Z,Y}*1_{\ccC(X)}).
\end{array}
\]

\medspace

(4) {\it $\ccC$ satisfies the lax left unity:} We need to show that for any objects $\aA,\bB\in\dgCAT_{\rm c,h}$ and 1-cell $X\in\dgCAT_{\rm c,h}(\aA,\bB)$, the following diagram is commutative.
\[
\begin{tikzcd}
	1_{C(\bB)}\circ\ccC(X) \arrow[r,"l_{\ccC(X)}"] \arrow[d,"\ccC^0_{\bB}*1_{\ccC(X)}"'] & \ccC(X) \\
	\ccC(\bp I_\bB)\circ\ccC(X) \arrow[r,"\ccC^2_{\bp I_\bB,X}"] & \ccC(X\otimes_\bB \bp I_\bB) \arrow[u,"\ccC(l_X)"']
\end{tikzcd}
\]
i.e., $\ccC(l_X) \circ \ccC^2_{\bp I_\bB,X} \circ (\ccC^0_{\bB}*1_{\ccC(X)}) = l_{\ccC(X)}$.

Firstly, the 2-cell $\ccC^0_{\bB}*1_{\ccC(X)} = [C(\tT_X)\times_{C(\bB)}C(\bB) \stackrel{\sim}{\twoheadleftarrow} C(\tT_X)\times_{C(\tT_{I_\bB})}\hat{C}(\tT_{p_{I_\bB}}) \stackrel{\sim}{\rightarrow} \bar{C}(\tT_{X,\bp I_\bB})]$ is given by the following commutative diagram $\textcircled{\tiny 24}$, where the dashed and dotted arrow are obtained by the university of pullbacks.
\[
\begin{tikzcd}[column sep=small]
	&& C(\tT_X)\times_{C(\bB)}C(\bB) \arrow[dl,"\cong"',"{l_{C(\tT_X)}}"] \arrow[dr,two heads] && \\
	& C(\tT_X) \arrow[dl,two heads] \arrow[d,equal] \arrow[dr,two heads] && C(\bB) \arrow[dl,equal] \arrow[d,"\sim","\theta_\bB"'] \arrow[dr,equal] & \\
	C(\aA) & C(\tT_X) \arrow[d,equal] \arrow[l,two heads] \arrow[r,two heads]& C(\bB) & C(\tT_{I_\bB}) \arrow[l,two heads,"\sim"']  \arrow[r,two heads,"\sim",near end] & C(\bB) \\
	& C(\tT_X) \arrow[ul,two heads] \arrow[ur,two heads] & \bar{C}(\tT_{X,I_\bB}) \arrow[ul,two heads,"\sim"',near end] \arrow[ur,two heads] \arrow[uuu,two heads,dotted,bend right=21,"\sim",near end] & \hat{C}(\tT_{p_{I_\bB}}) \arrow[u,two heads,"\sim"'] \arrow[d,two heads,"\sim",near start] & C(\bB)\times_{C(\tT_{I_\bB})}\hat{C}(\tT_{p_{I_\bB}}) \arrow[uul,two heads,"\sim",near end] \arrow[l,"\sim"']  \\
	&& C(\tT_X)\times_{C(\tT_{I_\bB})}\hat{C}(\tT_{p_{I_\bB}}) \arrow[uul,two heads,"\sim"',near start] \arrow[u,dotted] \arrow[urr,two heads,dashed] \arrow[ur] \arrow[d,dotted] & C(\tT_{\bp I_\bB}) \arrow[uul,two heads,"\sim"'] \arrow[uur,two heads,bend left=8,"\sim"',near end] & \\
	&& \bar{C}(\tT_{X,\bp I_\bB}) \arrow[uul,two heads,"\sim",bend left=10] \arrow[ur,two heads] & & 
\end{tikzcd}	
\]	

Secondly, by the result obtained in the first step of the proof of (2), i.e., the commuatative diagram $\textcircled{\tiny 15}$, we have the following commutative diagram $\textcircled{\tiny 25}$.
\[
\begin{tikzcd}
	\hat{C}(\tT_{1_X,p_{I_\bB}}) \arrow[r] \arrow[d] & C(\tT_{1_X,p_{I_\bB}}) \arrow[r] \arrow[d] & C(\tT_{X,\bp I_\bB}) \arrow[d] \\
	\hat{C}(\tT_{1_X\otimes p_{I_\bB}}) \arrow[r] & C(\tT_{1_X\otimes p_{I_\bB}}) \arrow[r] & C(\tT_{X\otimes_\bB \bp I_\bB})
\end{tikzcd}
\]
	
Thirdly, three morphisms of dg $\aA$-$\bB$-bimodules 
$$l_X: X\otimes_\bB \bp I_\bB \xrightarrow[\sim]{1_X\otimes p_{I_\bB}} X\otimes_\bB I_\bB \xrightarrow[\cong]{m} X,$$ 
induce three dg functors	
$$\tT_{l_X}: \tT_{X\otimes_\bB \bp I_\bB} \xrightarrow[\simeq]{\tT_{1_X\otimes p_{I_\bB}}} \tT_{X\otimes_\bB I_\bB} \xrightarrow[\cong]{\tT_m} \tT_X,$$ 
and further three $B_\infty$-quasi-isomorphisms  
$$\tT_{l_X}^*: C(\tT_X) \xrightarrow[\cong]{\tT_m^*} C(\tT_{X\otimes_\bB I_\bB}) \xrightarrow[\sim]{\tT_{1_X\otimes p_{I_\bB}}^*} C(\tT_{X\otimes_\bB \bp I_\bB}).$$ 
By definition of $\hat{C}(\tT_f)$, we have the following diagram $\textcircled{\tiny 26}$, which is commutative when we consider only solid arrows.
$$\begin{tikzcd}[column sep=small]
	  C(\tT_{X\otimes_\bB \bp I_\bB}) & C(\tT_{1_X\otimes p_{I_\bB}}) \arrow[l,two heads,"\sim"'] \arrow[dl,two heads,"\sim"'] \arrow[dr,two heads] && \hat{C}(\tT_{1_X\otimes p_{I_\bB}}) \arrow[dr,two heads] \arrow[ll,"\sim"'] & \\
      C(\tT_{X\otimes_\bB I_\bB})	&& C(\tT_{I_\aA})\times C(\tT_{I_\bB}) && C(\aA)\times C(\bB) \arrow[ll,near start,"\sim","{(\theta_\aA,\theta_\bB)}"'] \\
	  C(\tT_X) \arrow[u,"\cong","\tT_m^*"'] & C(\tT_{l_X}) \arrow[l,two heads,"\sim"'] \arrow[uu,dashed,"\cong"'] \arrow[uul,two heads,"\sim"'] \arrow[ur,two heads] && \hat{C}(\tT_{l_X}) \arrow[uu,dotted,near start,"\cong"'] \arrow[ur,two heads] \arrow[ll,"\sim"'] &
\end{tikzcd}$$
The dg isomorphism functor $\tT_m: \tT_{X\otimes_\bB I_\bB} \to \tT_X$ induces a dg isomorphism functor 
$$\tT_{X_{\tT_{1_X\otimes p_{I_\bB}}}} = 
\begin{pmatrix}
	\tT_{X\otimes_\bB \bp I_\bB} & X_{\tT_{1_X\otimes p_{I_\bB}}} \\ & \tT_{X\otimes_\bB I_\bB}
\end{pmatrix} 
\to \tT_{X_{\tT_{l_X}}} = 
\begin{pmatrix}
\tT_{X\otimes_\bB \bp I_\bB} & X_{\tT_{l_X}} \\ & \tT_X
\end{pmatrix},$$
and further a $B_\infty$-isomorphism $C(\tT_{l_X}) \to C(\tT_{1_X\otimes p_{I_\bB}})$ such that the diagram $\textcircled{\tiny 26}$ is commutative when we consider only solid and dashed arrows. By the universality of pullbacks, there is a  $B_\infty$-isomorphism $\hat{C}(\tT_{l_X}) \to \hat{C}(\tT_{1_X\otimes p_{I_\bB}})$ such that the whole diagram $\textcircled{\tiny 26}$ is commutative.

Finally, from the commutative diagrams $\textcircled{\tiny 24}$, $\textcircled{\tiny 25}$ and $\textcircled{\tiny 26}$, we obtain the following commutative diagram.
\[
\begin{tikzcd}[column sep=small]
	&& C(\tT_X)\times_{C(\bB)}C(\bB) \arrow[dl,"\cong"',"{l_{C(\tT_X)}}"] \arrow[dr,two heads] && \\
	& C(\tT_X) \arrow[dl,two heads] \arrow[d,equal] \arrow[dr,two heads] && C(\bB) \arrow[dl,equal] \arrow[d,"\sim","\theta_\bB"'] \arrow[dr,equal] & \\
	C(\aA) & C(\tT_X) \arrow[d,equal] \arrow[l,two heads] \arrow[r,two heads]& C(\bB) & C(\tT_{I_\bB}) \arrow[l,two heads,"\sim"']  \arrow[r,two heads,"\sim",near end] & C(\bB) \\
	& C(\tT_X) \arrow[ul,two heads] \arrow[ur,two heads] & \bar{C}(\tT_{X,I_\bB}) \arrow[ul,two heads,"\sim"',near end] \arrow[ur,two heads] \arrow[uuu,two heads,bend right=21,"\sim",near end] & \hat{C}(\tT_{p_{I_\bB}}) \arrow[u,two heads,"\sim"'] \arrow[d,two heads,"\sim",near start] & C(\bB)\times_{C(\tT_{I_\bB})}\hat{C}(\tT_{p_{I_\bB}}) \arrow[uul,two heads,"\sim",near end] \arrow[l,"\sim"']  \\
	&& C(\tT_X)\times_{C(\tT_{I_\bB})}\hat{C}(\tT_{p_{I_\bB}}) \arrow[uul,two heads,"\sim"',near start] \arrow[u] \arrow[urr,two heads] \arrow[ur] \arrow[d] & C(\tT_{\bp I_\bB}) \arrow[uul,two heads,"\sim"'] \arrow[uur,two heads,bend left=8,"\sim"',near end] & \\
	&& \bar{C}(\tT_{X,\bp I_\bB}) \arrow[uul,two heads,"\sim",bend left=10] \arrow[ur,two heads] & C(\tT_{X,I_\bB}) \arrow[uul,two heads,"\sim"',near end] \arrow[dddl,two heads] & C(\tT_X) \arrow[l,"\theta_X"',"\sim"]\\
	&& C(\tT_{X,\bp I_\bB}) \arrow[u,two heads,"\sim"'] \arrow[d,two heads] & C(\tT_{1_X,p_{I_\bB}}) \arrow[u,two heads,"\sim"'] \arrow[l,two heads,"\sim"'] \arrow[d,two heads] & \hat{C}(\tT_{1_X,p_{I_\bB}}) \arrow[l,"\sim"'] \arrow[d,two heads] \\
	&& C(\tT_{X\otimes_\bB \bp I_\bB}) & C(\tT_{1_X\otimes p_{I_\bB}}) \arrow[l,two heads,"\sim"'] \arrow[dl,two heads,near start,"\sim"'] & \hat{C}(\tT_{1_X\otimes p_{I_\bB}}) \arrow[l,"\sim"'] \\
	&& C(\tT_{X\otimes_\bB I_\bB}) & C(\tT_{l_X}) \arrow[u,"\cong"] \arrow[ul,two heads,near end,"\sim"'] \arrow[dl,two heads,"\sim"'] &  \hat{C}(\tT_{l_X}) \arrow[u,"\cong"] \arrow[l,"\sim"'] \\
	&& C(\tT_X) \arrow[u,"\cong","\tT_m^*"']&&
\end{tikzcd}	
\]
Thus
\[
\begin{array}{ll}
	& \ccC(l_X) \circ \ccC^2_{\bp I_\bB,X} \circ (\ccC^0_{\bB}*1_{\ccC(X)}) \\[2mm]
	= & [C(\tT_{X\otimes_\bB \bp I_\bB}) \stackrel{\sim}{\twoheadleftarrow} \hat{C}(\tT_{l_X}) \rightarrow C(\tT_X)] \quad \mbox{(See the lowest three rows)} \\[2mm]
	& \circ [\bar{C}(\tT_{X,\bp I_\bB}) \stackrel{\sim}{\twoheadleftarrow} C(\tT_{X, \bp I_\bB}) \rightarrow C(\tT_{X\otimes_\bB \bp I_\bB})] \quad \mbox{(See the middle column)} \\[2mm]
	& \circ [C(\tT_X)\times_{C(\bB)}C(\bB) \stackrel{\sim}{\twoheadleftarrow} C(\tT_X)\times_{C(\tT_{I_\bB})}\hat{C}(\tT_{p_{I_\bB}}) \stackrel{\sim}{\rightarrow} \bar{C}(\tT_{X,\bp I_\bB})] \\[2mm]
	= & [\bar{C}(\tT_{X,\bp I_\bB}) \stackrel{\sim}{\twoheadleftarrow} \hat{C}(\tT_{1_X,p_{I_\bB}}) \rightarrow C(\tT_X)] \quad \mbox{(See the lowest five rows)} \\[2mm]
	& \circ [C(\tT_X)\times_{C(\bB)}C(\bB) \stackrel{\sim}{\twoheadleftarrow} C(\tT_X)\times_{C(\tT_{I_\bB})}\hat{C}(\tT_{p_{I_\bB}}) \stackrel{\sim}{\rightarrow} \bar{C}(\tT_{X,\bp I_\bB})] \\[2mm]
	= & [\bar{C}(\tT_{X,\bp I_\bB}) \stackrel{\sim}{\twoheadleftarrow} C(\tT_{1_X,p_{I_\bB}}) \rightarrow C(\tT_X)] \\[2mm]
	& \circ [C(\tT_X)\times_{C(\bB)}C(\bB) \stackrel{\sim}{\twoheadleftarrow} C(\tT_X)\times_{C(\tT_{I_\bB})}\hat{C}(\tT_{p_{I_\bB}}) \stackrel{\sim}{\rightarrow} \bar{C}(\tT_{X,\bp I_\bB})] \\[2mm]
	= & [C(\tT_X)\times_{C(\bB)}C(\bB) \stackrel{\sim}{\twoheadleftarrow} C(\tT_{1_X,p_{I_\bB}}) \rightarrow C(\tT_X)] \\[2mm]
	= & [C(\tT_X)\times_{C(\bB)}C(\bB) \stackrel{\sim}{\twoheadleftarrow} C(\tT_{X,I_\bB}) \rightarrow C(\tT_X)] \\[2mm]
	= & [C(\tT_X)\times_{C(\bB)}C(\bB) \stackrel{\sim}{\twoheadleftarrow} C(\tT_X) \rightarrow C(\tT_X)] \quad \mbox{(Upmost-rightmost-lowermost)} \\[2mm]
	= & [C(\tT_X)\times_{C(\bB)}C(\bB) \xleftarrow{l_{C(\tT_X)}^{-1}} C(\tT_X) = C(\tT_X)] \\[2mm]
	= & l_{\ccC(X)}
\end{array}
\]
where the second equality from the bottom holds due to $[C(\tT_X) \stackrel{\iota_{12}^*}{\twoheadleftarrow} C(\tT_{X,I_\bB}) \stackrel{\iota_{13}^*}{\twoheadrightarrow} C(\tT_X)] = [C(\tT_X)=C(\tT_X)=C(\tT_X)]$ (Ref. Lemma \ref{Lem-ThetaX-BInfQis} in Appendix~\ref{App-ThetaX}). 
\[
  \begin{tikzcd}
	& C(\tT_X) & \\
    C(\tT_X) \arrow[ur,equal] \arrow[rr,"\theta_X","\sim"'] \arrow[dr,equal] && C(\tT_{X,I_\bB}) \arrow[ul,two heads,"{\iota_{12}^*}"',"\sim"] \arrow[dl,two heads,"{\iota_{13}^*}","\sim"'] \\	 
    & C(\tT_X) &
  \end{tikzcd}
\]

\medspace

(5) {\it $\ccC$ satisfies the lax right unity:} We need to show that for any objects $\aA,\bB\in\dgCAT_{\rm c,h}$ and 1-cell $X\in\dgCAT_{\rm c,h}(\aA,\bB)$, the following diagram is commutative.
\[
  \begin{tikzcd}
  \ccC(X)\circ 1_{C(\aA)}\arrow[r,"r_{\ccC(X)}"] \arrow[d,"1_{\ccC(X)}*\ccC^0_{\aA}"'] & \ccC(X) \\
  \ccC(X)\circ\ccC(\bp I_\aA) \arrow[r,"\ccC^2_{X,\bp I_\aA}"] & \ccC(\bp I_\aA \otimes_\aA X) \arrow[u,"\ccC(r_X)"']
  \end{tikzcd}
\]
 i.e., $\ccC(r_X) \circ \ccC^2_{X,\bp I_\aA} \circ (1_{\ccC(X)}*\ccC^0_{\aA}) = r_{\ccC(X)}$.

The proof of (5) is similar to that of (4), so we omit it here.

Now we have finished the proof of the theorem.
\end{proof}

\begin{remark}
	(1) According to \cite[Formula (4.43) and (4.44)]{DavydovKongRunkel11} and \cite[Formula (1.2) and (1.3)]{DavydovKongRunkel15}, we should define a lax functor sending a 1-cell $X$ to the cospan $C(\aA) \xrightarrow{\alpha_X} C(X) \xleftarrow{\beta_X} C(\bB)$ in the homotopy bicartesian diagram in Lemma \ref{Lem-Homotopy-Bicart}. However, $C(X)$ does not admit a nice $B_\infty$-algebra structure while $C(\tT_X)$ does, so we prefer to define a lax functor sending a 1-cell $X$ to the span $C(\aA) \xleftarrow{\iota_1^*} C(\tT_X) \xrightarrow{\iota_2^*} C(\bB)$ of $B_\infty$-algebras.
	
	(2) Let $\aA$ and $\bB$ be small dg categories, and $f:X\to X'$ a quasi-isomorphism of cofibrant dg $\aA\-\bB$-bimodules. One might wonder why we take $[C(\tT_X) \stackrel{\sim}{\twoheadleftarrow} \hat{C}(\tT_f) \rightarrow C(\tT_{X'})]$ but not $[C(\tT_X) \stackrel{\sim}{\twoheadleftarrow} C(\tT_f) \rightarrow C(\tT_{X'})]$ as $\mathscr{C}_{\aA\bB}([f])$. The reason is that the following natural diagram is not commutative, i.e., $[C(\tT_X) \stackrel{\sim}{\twoheadleftarrow} C(\tT_f) \rightarrow C(\tT_{X'})]$ is not a well-defined 2-cell.
	\[
	\begin{tikzcd}
		& C(\tT_X) \arrow[ld,two heads] \arrow[rd,two heads] & \\
		C(\aA) & C(\tT_f) \arrow[u,two heads,"\sim"'] \arrow[d,two heads,"\sim"] & C(\bB) \\
		& C(\tT_{X'}) \arrow[lu,two heads] \arrow[ru,two heads] &
	\end{tikzcd}
	\]	   
\end{remark}

\noindent{\bf Lax functor from $\dg\cC\aA\tT$ to $B_\infty\-{\rm span}^2$.} Now we want to construct a lax functor from bicategory $\dg\cC\aA\tT$ to bicategory $B_\infty\-{\rm span}^2$. We need the following biequivalences (Ref. \cite[Definition 6.2.8]{JohnsonYau21}).

\begin{proposition} \label{Prop-Biequ-HomotopyCat-DerCat}
	The bicategories $\dg\cC\aA\tT$ and $\dgCAT_{\rm c,h}$ are biequivalent.
\end{proposition}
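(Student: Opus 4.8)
The plan is to construct a biequivalence explicitly and verify its properties using Proposition~\ref{Prop-Bicat-1-Skeleton} and the standard relationship between the homotopy category of a model category and its cofibrant objects. First I would define a pseudofunctor $\Phi:\dgCAT_{\rm c,h} \to \dg\cC\aA\tT$ which is the identity on objects (both bicategories have all small dg categories as objects). On Hom categories, $\Phi_{\aA\bB}$ sends a cofibrant dg $\aA\-\bB$-bimodule $X$ to itself viewed as an object of $\dD(\aA^\op\otimes\bB)$, and sends a homotopy equivalence class $[f]$ of quasi-isomorphisms to the corresponding isomorphism in the derived category. The key fact making this a functor on Hom categories is that for cofibrant dg bimodules, homotopy classes of quasi-isomorphisms coincide with isomorphisms in $\dD(\aA^\op\otimes\bB)$, which is exactly the content recorded in the model structure discussion (two morphisms are homotopic iff equal in $\hH\aA$, and a quasi-isomorphism between cofibrant objects is a homotopy equivalence). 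This identification is precisely what guarantees $\Phi_{\aA\bB}$ is well-defined, faithful, and full onto the isomorphisms.

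Next I would supply the pseudofunctoriality data. The lax functoriality constraint $\Phi^2_{Y,X}:\Phi(Y)\circ\Phi(X) \to \Phi(Y\circ X)$ must compare the \emph{underived} tensor product $X\otimes_\bB Y$ of cofibrant bimodules (the horizontal composition in $\dgCAT_{\rm c,h}$) with the \emph{derived} tensor product $X\otimes^\bL_\bB Y$ (the horizontal composition in $\dg\cC\aA\tT$). Since $X$ is cofibrant over $\bB$ the canonical map $X\otimes^\bL_\bB Y \to X\otimes_\bB Y$ is an isomorphism in $\dD(\aA^\op\otimes\cC)$, so $\Phi^2_{Y,X}$ is a natural isomorphism; similarly the unit constraint $\Phi^0_\aA:I_\aA \to \bp I_\aA$ is the inverse in $\dD$ of the cofibrant-replacement quasi-isomorphism $p_{I_\aA}:\bp I_\aA \to I_\aA$, again an isomorphism. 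The associators and unitors of the two bicategories are both given by the canonical (derived or underived) associativity and unit isomorphisms of tensor products, so the coherence axioms (Lax Associativity, Lax Left/Right Unity) reduce to the compatibility of the comparison maps $X\otimes^\bL_\bB Y \xrightarrow{\cong} X\otimes_\bB Y$ with these canonical isomorphisms, which is a routine diagram chase on cofibrant bimodules.

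Finally I would verify that $\Phi$ is a biequivalence, i.e. that it is essentially surjective on objects and that each $\Phi_{\aA\bB}$ is an equivalence of categories. Essential surjectivity on objects is immediate since $\Phi$ is the identity on objects. For the Hom categories, every object $X$ of $\dg\cC\aA\tT(\aA,\bB)=\dD(\aA^\op\otimes\bB)$ is isomorphic in the derived category to its cofibrant replacement $\bp X$, which lies in the image of $\Phi_{\aA\bB}$, giving essential surjectivity; fullness and faithfulness onto the isomorphisms follow from the identification in the first paragraph. This establishes that $\Phi$ is a biequivalence in the sense of \cite[Definition 6.2.8]{JohnsonYau21}.

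The main obstacle I anticipate is not any single hard estimate but the bookkeeping in checking that $\Phi^2$ and $\Phi^0$ satisfy the three coherence axioms: one must pin down explicitly how the derived tensor products are computed via cofibrant replacements and confirm that the comparison isomorphisms $X\otimes^\bL_\bB Y \cong X\otimes_\bB Y$ intertwine the associators and unitors on both sides. Because cofibrancy over $\bB$ of the left factor is exactly what makes each such comparison an isomorphism, the subtle point is ensuring this cofibrancy hypothesis is available at every node of the pentagon and unit diagrams; once that is tracked carefully the coherence diagrams commute by naturality of the canonical tensor-product isomorphisms, so I expect this to be laborious rather than conceptually deep.
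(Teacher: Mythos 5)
Your proposal is correct, but it runs in the opposite direction from the paper's proof. The paper constructs a pseudofunctor $\mathscr{P}:\dg\cC\aA\tT\to\dgCAT_{\rm c,h}$ given by the cofibrant replacement functor $\bp$ on Hom categories, with pseudofunctoriality constraints assembled from the unit/counit isomorphisms of the Hom-category equivalences $\mathscr{P}_{\aA\bB}\dashv\mathscr{I}_{\aA\bB}$; it then verifies the Whitehead criterion, where essential surjectivity on objects requires a small argument transporting adjoint equivalences from $\dg\cC\aA\tT$ to $\dgCAT_{\rm c,h}$ via $\bp$. You instead construct the ``inclusion'' pseudofunctor $\Phi:\dgCAT_{\rm c,h}\to\dg\cC\aA\tT$ (essentially the paper's $\mathscr{I}$ from the overview diagram), whose constraints are the canonical comparison maps $X\otimes^{\bf L}_\bB Y\xrightarrow{\cong}X\otimes_\bB Y$ and $I_\aA\cong\bp I_\aA$; this makes essential surjectivity on objects trivial and shifts the work to essential fullness on 1-cells (cofibrant replacement, object-wise) and full faithfulness on 2-cells (for cofibrant--fibrant bimodules, homotopy classes of quasi-isomorphisms biject with isomorphisms in $\dD$), both of which you handle correctly. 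Your comparison-map constraints do require that a cofibrant dg bimodule is cofibrant as a one-sided dg module, which holds over the base field and is used elsewhere in the paper, so that hypothesis is available. Two small points: the logical step from ``identity on objects plus Hom equivalences'' to ``biequivalence'' is the Whitehead theorem for bicategories (\cite[Theorem 7.4.1]{JohnsonYau21}), which you should cite in place of (or alongside) the definition of biequivalence, and your opening appeal to Proposition~\ref{Prop-Bicat-1-Skeleton} is not actually needed. Note also that the paper's choice of direction is not arbitrary: Theorem~\ref{Thm-LaxFuntor-DerCat-B-Inf} needs a pseudofunctor \emph{out of} $\dg\cC\aA\tT$ to compose with $\mathscr{C}:\dgCAT_{\rm c,h}\to B_\infty\text{-}{\rm span}^2$, whereas your $\Phi$ would have to be reversed (again via Whitehead) before it could serve that purpose; for the proposition as stated, however, either direction suffices.
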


\begin{proof}
	By Whitehead theorem for bicategories (Ref. \cite[Theorem 7.4.1]{JohnsonYau21}),
	it is enough to show that there exists a pseudofunctor $\mathscr{P} : \dg\cC\aA\tT \to \dgCAT_{\rm c,h}$ which is essentially surjective on objects, essentially full on 1-cells, and fully faithful on 2-cells.
	
	Firstly, we define a pseudofunctor $\mathscr{P} : \dg\cC\aA\tT \to \dgCAT_{\rm c,h}$ by the following data:
	
	(1) The function $\mathscr{P}:\Ob(\dg\cC\aA\tT)\to\Ob(\dgCAT_{\rm c,h}), \aA\mapsto\aA$.
	
	(2) For all $\aA,\bB\in\dg\cC\aA\tT$, the functor $\mathscr{P}_{\aA\bB}: \dg\cC\aA\tT(\aA,\bB) \to \dgCAT_{\rm c,h}(\aA,\bB)=\hH_{\rm c,h}(\aA^\op\otimes\bB), X\mapsto {\bf p}X, (f:X\to X')\mapsto[{\bf p}f:{\bf p}X\to {\bf p}X']$, is induced by a cofibrant replacement functor ${\bf p}: \dD(\aA^\op\otimes\bB) \to \hH_c(\aA^\op\otimes\bB)$ where $\hH_c(\aA^\op\otimes\bB)$ is the full subcategory of $\hH(\aA^\op\otimes\bB)$ consisting of all cofibrant dg $\aA$-$\bB$-bimodules. It is clear that $\mathscr{P}_{\aA\bB}$ is an equivalence with quasi-inverse $\mathscr{I}_{\aA\bB} : \dgCAT_{\rm c,h}(\aA,\bB) \to \dg\cC\aA\tT(\aA,\bB), X\mapsto X, [f]\mapsto f$.
	
	(3) For all objects $\aA,\bB,\cC\in\dg\cC\aA\tT$, the natural isomorphism $\mathscr{P}^2_{\aA\bB\cC} : c'_{\mathscr{P}\aA,\mathscr{P}\bB,\mathscr{P}\cC}\circ (\mathscr{P}_{\bB\cC}\times\mathscr{P}_{\aA\bB}) \to \mathscr{P}_{\aA\cC}\circ c_{\aA\bB\cC} : \dg\cC\aA\tT(\bB,\cC)\times\dg\cC\aA\tT(\aA,\bB) \to \hH_{\rm c,h}(\aA^\op\otimes\cC)$ is given as follows:
	Observe the diagram
	\[
	\begin{tikzcd}
		\dg\cC\aA\tT(\bB,\cC)\times\dg\cC\aA\tT(\aA,\bB) \arrow[r,"-\otimes^{\bf L}_{\bB}-"] 
		\arrow[d,shift right=2,"\mathscr{P}_{\bB\cC}\times\mathscr{P}_{\aA\bB}"'] & \dg\cC\aA\tT(\aA,\cC) \arrow[d,shift right=2,"\mathscr{P}_{\aA\cC}"'] \\
		\hH_{\rm c,h}(\bB^\op\otimes\cC)\times\hH_{\rm c,h}(\aA^\op\otimes\bB) \arrow[r,"-\otimes_{\bB}-"] \arrow[u,shift right=2,"\mathscr{I}_{\bB\cC}\times\mathscr{I}_{\aA\bB}"'] & \hH_{\rm c,h}(\aA^\op\otimes\cC). \arrow[u,shift right=2,"\mathscr{I}_{\aA\cC}"'] 
	\end{tikzcd}
	\]
	We have $(-\otimes^{\bf L}_{\bB}-)(\mathscr{I}_{\bB\cC}\times\mathscr{I}_{\aA\bB})=\mathscr{I}_{\aA\cC}(-\otimes_{\bB}-)$ and the natural isomorphisms $\xi_{\aA\bB}:\Id\xrightarrow{\cong} \mathscr{P}_{\aA\bB}\mathscr{I}_{\aA\bB}$ and $\zeta_{\aA\bB}:\mathscr{I}_{\aA\bB}\mathscr{P}_{\aA\bB}\xrightarrow{\cong} \Id$. The natural isomorphisms $\mathscr{P}^2_{\aA\bB\cC}$ is the composition $(-\otimes_{\bB}-)(\mathscr{P}_{\bB\cC}\times\mathscr{P}_{\aA\bB}) \xrightarrow[\cong]{\xi_{\aA\cC}(-\otimes_{\bB}-)(\mathscr{P}_{\bB\cC}\times\mathscr{P}_{\aA\bB})} \mathscr{P}_{\aA\cC}\mathscr{I}_{\aA\cC}(-\otimes_{\bB}-)(\mathscr{P}_{\bB\cC}\times\mathscr{P}_{\aA\bB}) 
	= \mathscr{P}_{\aA\cC}(-\otimes^{\bf L}_{\bB}-) (\mathscr{I}_{\bB\cC}\times \mathscr{I}_{\aA\bB}) (\mathscr{P}_{\bB\cC}\times\mathscr{P}_{\aA\bB})
	\xrightarrow[\cong]{\mathscr{P}_{\aA\cC}(-\otimes^{\bf L}_{\bB}-)(\zeta_{\bB\cC}\times\zeta_{\aA\bB})}
	\mathscr{P}_{\aA\cC}(-\otimes^{\bf L}_{\bB}-)$. 
		
	(4) For all $\aA\in\dg\cC\aA\tT$, the natural isomorphism $\mathscr{P}^0_\aA$ is given by the lax unity constraint $\mathscr{P}^0_\aA:=1_{{\bf p}I_\aA}: 1_{\mathscr{P}\aA}={\bf p}I_\aA \to \mathscr{P}_{\aA\aA}(1_\aA)={\bf p}I_\aA$.
	
	It is easy to see that $(\mathscr{P},\mathscr{P}^2,\mathscr{P}^0)$ satisfies lax associativity, lax left unity and lax right unity. Moreover, $\mathscr{P}^2_{\aA\bB\cC}$ and $\mathscr{P}^0_\aA$ are natural isomorphisms for all $\aA,\bB,\cC\in\dg\cC\aA\tT$. Thus $\mathscr{P}$ is a pseudofunctor.
	
	Secondly, $\mathscr{P}$ is essentially surjective on objects (Ref. \cite[Definition 7.0.1]{JohnsonYau21}): For any adjoint equivalence class $[\aA]'$ of an object $\aA$ in the bicategory $\dgCAT_{\rm c,h}$, we choose any object $\bB$ in the adjoint equivalence class $[\aA]$ of the object $\aA$ in the bicategory $\dg\cC\aA\tT$. Then there are 1-cells $X\in\dg\cC\aA\tT(\aA,\bB)$ and $Y\in\dg\cC\aA\tT(\bB,\aA)$ such that $X\otimes^{\bf L}_\bB Y\cong I_\aA$ and $Y\otimes^{\bf L}_\aA X\cong I_\bB$. Thus there are ${\bf p}X\in\hH_{\rm c,h}(\aA^\op\otimes\bB)$ and ${\bf p}Y\in\hH_{\rm c,h}(\bB^\op\otimes\aA)$ such that ${\bf p}X\otimes_\bB {\bf p}Y\cong {\bf p}I_\aA$ in $\hH_{\rm c,h}(\aA^\op\otimes\aA)$ and ${\bf p}Y\otimes_\aA {\bf p}X\cong {\bf p}I_\bB$ in $\hH_{\rm c,h}(\bB^\op\otimes\bB)$. Hence $\bB$ and $\aA$ are adjoint equivalent in $\dgCAT_{\rm c,h}$.	
	Therefore, $\mathscr{P}$ sends the adjoint equivalence class $[\aA]$ to the adjoint equivalence class $[\aA]'$.
	
	Finally, $\mathscr{P}$ is essentially full on 1-cells (Ref. \cite[Definition 7.0.1]{JohnsonYau21}) and fully faithful on 2-cells: Indeed, for all $\aA,\bB\in\dg\cC\aA\tT$, the functor $\mathscr{P}_{\aA\bB} : \dg\cC\aA\tT(\aA,\bB) \to \dgCAT_{\rm c,h}(\aA,\bB)$ is an equivalence. 
	\end{proof}

Combining Proposition~\ref{Prop-Biequ-HomotopyCat-DerCat} and Theorem~\ref{Thm-LaxFuntor-HomotopyCat-B-Inf}, we obtain the following theorem.

\begin{theorem} \label{Thm-LaxFuntor-DerCat-B-Inf}
There is a lax functor $\ccC=(\ccC,\ccC^2, \ccC^0):\dg\cC\aA\tT\rightarrow B_\infty\-{\rm span}^2$ given as follows:
		
{\rm (1)} The function $\ccC:\Ob(\dg\cC\aA\tT) \rightarrow \Ob(B_\infty\-{\rm span}^2), \aA\mapsto C(\aA)$.
		
{\rm (2)} For all $\aA,\bB\in\dg\cC\aA\tT$, the functor
		$\ccC_{\aA\bB}:\dg\cC\aA\tT(\aA,\bB) \rightarrow B_\infty\-{\rm span}^2(C(\aA),C(\bB))$ is defined as follows:
		
{\rm (2.1)} For any 1-cell $X\in\dg\cC\aA\tT(\aA,\bB)$, $\ccC_{\aA\bB}(X):= (C(\aA)\stackrel{\iota_1^*}{\twoheadleftarrow} C(\tT_{{\bf p}X})\stackrel{\iota_2^*}{\twoheadrightarrow} C(\bB))$. 
		
{\rm (2.2)} For any 2-cell $f\in\dg\cC\aA\tT(\aA,\bB)(X,X')$, 
$\ccC_{\aA\bB}(f):=[C(\tT_{{\bf p}X}) \stackrel{\sim}{\twoheadleftarrow} \hat{C}(\tT_{{\bf p}f}) \stackrel{\sim}{\twoheadrightarrow} C(\tT_{{\bf p}X'})]$, where $\hat{C}(\tT_{{\bf p}f})$ is the following pullback in $B_\infty$:
\[
  \begin{tikzcd}[row sep=large]
	&\hat{C}(\tT_{{\bf p}f}) \arrow[r] \arrow[d,"\sim"'] & C(\aA)\times C(\bB) \arrow[d,"{(\theta_\aA,\theta_\bB)}","\sim"'] \\
	&C(\tT_{{\bf p}f}) \arrow[r,two heads,"{\begin{pmatrix}
	\iota^*_\aA \\ \iota^*_\bB	
	\end{pmatrix}}"] \arrow[dl,two heads,"\sim"'] \arrow[dr,two heads,"\sim"] & C(\tT_{I_\aA})\times C(\tT_{I_\bB})\\
			C(\tT_{{\bf p}X})&&C(\tT_{{\bf p}X'})
  \end{tikzcd}
\]
		
{\rm (3)} For any $\aA,\bB,\cC\in\dg\cC\aA\tT$, the natural transformation $\ccC^2_{\aA\bB\cC}:c_{\ccC\aA,\ccC\bB,\ccC\cC}\circ(\ccC_{\bB\cC}\times\ccC_{\aA\bB}) \Rightarrow \ccC_{\aA\cC}\circ c_{\aA\bB\cC}$ is defined as follows:
For any 1-cells $X\in\dg\cC\aA\tT(\aA,\bB)$ and $Y\in\dg\cC\aA\tT(\bB,\cC)$, the lax functoriality constraint $\ccC^2_{Y,X}:\ccC(Y)\circ\ccC(X) \rightarrow \ccC(Y\circ X)$ is the equivalence class $[\bar{C}(\tT_{{\bf p}X,{\bf p}Y}) \stackrel{\sim}{\twoheadleftarrow} C(\tT_{{\bf p}X,{\bf p}Y})\times_{C(\tT_{{\bf p}X\otimes_\bB {\bf p}Y})}\hat{C}(\tT_{\mathscr{P}^2_{Y,X}}) \to C(\tT_{{\bf p}(X\otimes^{\bf L}_\bB Y)})]$ of the vertical composition of two spans of spans $(\bar{C}(\tT_{{\bf p}X,{\bf p}Y}) \stackrel{\sim}{\twoheadleftarrow} C(\tT_{{\bf p}X,{\bf p}Y}) \to C(\tT_{{\bf p}X\otimes_\bB {\bf p}Y}))$ and $(C(\tT_{{\bf p}X\otimes_\bB {\bf p}Y}) \stackrel{\sim}{\twoheadleftarrow} \hat{C}(\tT_{\mathscr{P}^2_{Y,X}}) \to C(\tT_{{\bf p}(X\otimes^{\bf L}_\bB Y)}))$.
\[
  \begin{tikzcd}[row sep=small]
	&& \bar{C}(\tT_{{\bf p}X,{\bf p}Y}) \arrow[ld,two heads] \arrow[rd,two heads] && \\
	& C(\tT_{{\bf p}X}) \arrow[ld,two heads] \arrow[rd,two heads] & C(\tT_{{\bf p}X,{\bf p}Y}) \arrow[u,two heads,"\sim"'] \arrow[ddd,bend left=35] & C(\tT_{{\bf p}Y}) \arrow[ld,two heads] \arrow[rd,two heads] &\\
	C(\aA) && C(\bB) && C(\cC) \\
	&&&&\\
	&& C(\tT_{{\bf p}X\otimes_\bB {\bf p}Y}) \arrow[rruu,two heads] \arrow[lluu,two heads] &&\\
	&& \hat{C}(\tT_{\mathscr{P}^2_{Y,X}})\arrow[u,two heads,"\sim"'] \arrow[d] &&\\
	&& C(\tT_{{\bf p}(X\otimes^{\bf L}_\bB Y)}) \arrow[rruuuu,two heads] \arrow[lluuuu,two heads] && 
  \end{tikzcd}
\]  
where $\mathscr{P}^2_{Y,X}:{\bf p}X\otimes_\bB {\bf p}Y \to {\bf p}(X\otimes^{\bf L}_\bB Y)$ is the lax functoriality constraint of the pseudofunctor $\mathscr{P}$ defined in the proof of  Proposition~\ref{Prop-Biequ-HomotopyCat-DerCat}.
		
{\rm (4)} For any object $\aA\in\dg\cC\aA\tT$, the lax unity constraint $\ccC^0_\aA: 1_{\ccC(\aA)}=(C(\aA) = C(\aA) = C(\aA)) \to \ccC(I_\aA)=(C(\aA) \stackrel{\sim}{\twoheadleftarrow} C(\tT_{{\bf p}I_\aA}) \twoheadrightarrow C(\aA))$ is the equivalence class $[C(\aA) \stackrel{\sim}{\twoheadleftarrow} C(\aA)\times_{C(\tT_{I_\aA})} \hat{C}(\tT_{pI_\aA})\to C(\tT_{{\bf p}I_\aA})]$ of the vertical composition of two spans of spans $(C(\aA) = C(\aA) \stackrel{\theta_\aA}{\rightarrow} C(\tT_{I_\aA}))$ and $(C(\tT_{I_\aA}) \stackrel{\sim}{\twoheadleftarrow} \hat{C}(\tT_{pI_\aA}) \to  C(\tT_{{\bf p}I_\aA}))$. 
\[
\begin{tikzcd}[column sep=large]
	& C(\aA) \arrow[dl,equal] \arrow[d,"\theta_\aA","\sim"'] \arrow[dr,equal] & \\
	C(\aA) &  C(\tT_{I_\aA}) \arrow[l,two heads,"\iota^*_1"',"\sim"] \arrow[r,two heads,near start,"\iota^*_2","\sim"'] & C(\aA) \\
	& \hat{C}(\tT_{p_{I_\aA}}) \arrow[u,two heads,"\sim"] \arrow[d,two heads,"\sim"'] & \\
	& C(\tT_{\bp I_\aA}) \arrow[uul,two heads] \arrow[uur,two heads] &
\end{tikzcd}	
\]	
\end{theorem}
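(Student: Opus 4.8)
The plan is to derive Theorem~\ref{Thm-LaxFuntor-DerCat-B-Inf} by transporting the lax functor $\ccC:\dgCAT_{\rm c,h}\to B_\infty\-{\rm span}^2$ of Theorem~\ref{Thm-LaxFuntor-HomotopyCat-B-Inf} across the biequivalence $\mathscr{P}:\dg\cC\aA\tT\to\dgCAT_{\rm c,h}$ of Proposition~\ref{Prop-Biequ-HomotopyCat-DerCat}. The key general fact is that the composite of two lax functors is again a lax functor, and a pseudofunctor is in particular a lax functor. Therefore I would set $\ccC:=\ccC\circ\mathscr{P}$, where on the right $\ccC$ denotes the lax functor of Theorem~\ref{Thm-LaxFuntor-HomotopyCat-B-Inf} and $\mathscr{P}$ the pseudofunctor constructed in the proof of Proposition~\ref{Prop-Biequ-HomotopyCat-DerCat}. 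The bulk of the proof then consists of unwinding this composite and checking that the explicit data in items (1)--(4) of the statement agree with the composite lax functor's structure.

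First I would recall the formula for composing lax functors: if $(F,F^2,F^0):\mathsf{B}\to\mathsf{B}'$ and $(G,G^2,G^0):\mathsf{B}'\to\mathsf{B}''$ are lax functors, their composite $(GF,(GF)^2,(GF)^0)$ has $(GF)^2_{g,f}=G(F^2_{g,f})\circ G^2_{Fg,Ff}$ on 1-cells $f,g$, and $(GF)^0_X=G(F^0_X)\circ G^0_{FX}$. Since $\mathscr{P}$ sends each object $\aA$ to $\aA$, item (1) is immediate: $\ccC(\aA)=\ccC(\mathscr{P}\aA)=C(\aA)$. For item (2), on a $1$-cell $X\in\dg\cC\aA\tT(\aA,\bB)$ the pseudofunctor gives $\mathscr{P}_{\aA\bB}(X)=\bp X$, so $\ccC_{\aA\bB}(X)=\ccC_{\aA\bB}(\bp X)=(C(\aA)\stackrel{\iota_1^*}{\twoheadleftarrow} C(\tT_{\bp X})\stackrel{\iota_2^*}{\twoheadrightarrow} C(\bB))$, matching (2.1); on a $2$-cell $f$ we get $\mathscr{P}_{\aA\bB}(f)=[\bp f]$, whence $\ccC_{\aA\bB}(f)=\ccC_{\aA\bB}([\bp f])=[C(\tT_{\bp X})\stackrel{\sim}{\twoheadleftarrow}\hat{C}(\tT_{\bp f})\stackrel{\sim}{\twoheadrightarrow} C(\tT_{\bp X'})]$, matching (2.2). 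That $\ccC_{\aA\bB}$ is a functor is automatic, being the composite of the two functors $\mathscr{P}_{\aA\bB}$ and the $\ccC_{\aA\bB}$ of Theorem~\ref{Thm-LaxFuntor-HomotopyCat-B-Inf}.

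The more substantial checks are items (3) and (4), where the nontrivial lax constraint $\mathscr{P}^2$ enters. Using the composition formula, the lax functoriality constraint of $\ccC\circ\mathscr{P}$ at $(Y,X)$ is
\[
(\ccC\mathscr{P})^2_{Y,X}=\ccC(\mathscr{P}^2_{Y,X})\circ\ccC^2_{\mathscr{P}Y,\mathscr{P}X}=\ccC([\mathscr{P}^2_{Y,X}])\circ\ccC^2_{\bp Y,\bp X}.
\]
Here $\ccC^2_{\bp Y,\bp X}$ is the span of spans $[\bar{C}(\tT_{\bp X,\bp Y})\stackrel{\sim}{\twoheadleftarrow} C(\tT_{\bp X,\bp Y})\to C(\tT_{\bp X\otimes_\bB\bp Y})]$ from item (3) of Theorem~\ref{Thm-LaxFuntor-HomotopyCat-B-Inf}, and $\ccC([\mathscr{P}^2_{Y,X}])$ is the $2$-cell associated by (2.2) to the bimodule quasi-isomorphism $\mathscr{P}^2_{Y,X}:\bp X\otimes_\bB\bp Y\to\bp(X\otimes^{\bL}_\bB Y)$, namely $[C(\tT_{\bp X\otimes_\bB\bp Y})\stackrel{\sim}{\twoheadleftarrow}\hat{C}(\tT_{\mathscr{P}^2_{Y,X}})\to C(\tT_{\bp(X\otimes^{\bL}_\bB Y)})]$. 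I would then invoke the definition of vertical composition in $B_\infty\-{\rm span}^2$ (Theorem~\ref{Thm-Const-Bicat}(2.3)), which realizes this composite as a pullback over $C(\tT_{\bp X\otimes_\bB\bp Y})$; a routine identification shows it equals the span of spans displayed in item (3). The same reasoning applied to $(\ccC\mathscr{P})^0_\aA=\ccC(\mathscr{P}^0_\aA)\circ\ccC^0_{\mathscr{P}\aA}$, together with $\mathscr{P}^0_\aA=1_{\bp I_\aA}$ (so $\ccC(\mathscr{P}^0_\aA)$ is an identity $2$-cell) and the formula for $\ccC^0_\aA$ from Theorem~\ref{Thm-LaxFuntor-HomotopyCat-B-Inf}(4), yields the lax unity constraint in item (4).

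The main obstacle I anticipate is purely bookkeeping rather than conceptual: verifying that the abstractly-defined composite constraints literally coincide with the concrete pullback descriptions in (3) and (4) requires carefully tracking how the vertical composition of spans of spans is computed as an iterated pullback, and confirming that $\hat{C}(\tT_{\mathscr{P}^2_{Y,X}})$ is formed exactly as the general definition prescribes (via Lemma~\ref{Lem-ThetaA} applied to the quasi-equivalence $\tT_{\mathscr{P}^2_{Y,X}}$). Because the lax-functor axioms---lax associativity and the two unity laws---are preserved automatically under composition of (lax/pseudo) functors, I would not re-verify them directly; instead I would simply cite the fact that $\ccC\circ\mathscr{P}$ is a lax functor and that its explicit data have been matched with (1)--(4). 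This keeps the proof short, reducing everything to Proposition~\ref{Prop-Biequ-HomotopyCat-DerCat}, Theorem~\ref{Thm-LaxFuntor-HomotopyCat-B-Inf}, and the composition law for lax functors.
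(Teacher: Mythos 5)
Your proposal is correct and is essentially the paper's own proof: the paper likewise defines the lax functor on $\dg\cC\aA\tT$ as the composite $\mathscr{C}\circ\mathscr{P}$ of the lax functor of Theorem~\ref{Thm-LaxFuntor-HomotopyCat-B-Inf} with the biequivalence of Proposition~\ref{Prop-Biequ-HomotopyCat-DerCat}, citing that a composition of lax functors is again a lax functor. Your extra unwinding of $(\mathscr{C}\mathscr{P})^2$ and $(\mathscr{C}\mathscr{P})^0$ to match the explicit data in items (1)--(4) is more detail than the paper records, but it is the same argument.
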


\begin{proof}
	By Theorem \ref{Thm-LaxFuntor-HomotopyCat-B-Inf}, we have a lax functor $\mathscr{C} : \dgCAT_{\rm c,h} \to B_\infty\-{\rm span}^2$.
	By Proposition \ref{Prop-Biequ-HomotopyCat-DerCat}, we have a biequivalence $\mathscr{P}: \dg\cC\aA\tT \to\dgCAT_{\rm c,h}$. 	
	Since the composition of lax functors is also a lax functor (Ref. \cite[Lemma 4.1.29]{JohnsonYau21}), the composition $\mathscr{C}\mathscr{P}: \dg\cC\aA\tT \to B_\infty\-{\rm span}^2$ is the desired lax functor.	
\end{proof}

\begin{remark}{\rm 
	One might hope to construct a larger lax functor $\mathscr{C}$ starting from a larger source bicategory $\dg\cC\aA\tT$ with whole derived category $\dD(\aA^\op\otimes\bB)$ as Hom category $\dg\cC\aA\tT(\aA,\bB)$ for each pair of small dg categories $\aA$ and $\bB$, and sending a small dg category $\aA$ to its Hochschild cochain complex $C(\aA)$. We feel that it is feasible and conjecture that the tricategory $B_\infty\-\Span^3$ will be involved in this situation.
}\end{remark}

\subsection{More lax functors}

In this subsection, we will construct more lax functors and functors. The overview diagram $(\bigstar)$ below provides a global perspective.
$$\begin{tikzcd}[column sep=small]
		& {\rm dgcat}_{\rm ff} \arrow[r] \arrow[d] \arrow[rrrrddd,dotted] & {\rm Hqe}_{\rm ff} \arrow[r] \arrow[d] & {\rm Hmo}_{\rm ff} \arrow[d] \arrow[rddd,dotted,"K"'] \arrow[ddr] & &  \\
		{\rm alg} \arrow[r] \arrow[d] & {\rm dgcat} \arrow[r] \arrow[d] & {\rm Hqe} \arrow[r] & {\rm Hmo} \arrow[dl] & & \\	
		\aA\lL\gG \arrow[r] & \dg\cC\aA\tT \arrow[dl,shift left=1,"\mathscr{P}"] \arrow[r,shift left=1,"\mathscr{S}"] \arrow[d,"\ref{Thm-LaxFuntor-DerCat-B-Inf}","\mathscr{C}"'] & \overline{\dg\cC\aA\tT} \arrow[l,shift left=1,"\mathscr{T}"] \arrow[d,"\ref{Cor-overline-ccC}","\overline{\mathscr{C}}"'] & \overline{\dg\cC\aA\tT_{\rm raf}} \arrow[l] \arrow[d,"\overline{\mathscr{C}}_{\rm raf}"] &  \overline{\dg\cC\aA\tT_{\rm raf}}^{\le 1} \arrow[l] \arrow[d,near end,"\tilde{K}"] & \\
		\dgCAT_{\rm c,h} \arrow[r,"\ref{Thm-LaxFuntor-HomotopyCat-B-Inf}"'] \arrow[ur,shift left=1,"\mathscr{I}"] & B_\infty\-{\rm span}^2 \arrow[r,shift left=1,"\mathscr{S}"] & \overline{B_\infty\-{\rm span}^2} \arrow[l,shift left=1,"\mathscr{T}"] & \overline{B_\infty\-{\rm span}^2_{\rm raf}} \arrow[l] \arrow[r,"\mathscr{H}","\ref{Prop-BInfOspan2-HBInf}"'] \arrow[d,"\ref{Cor-LaxFunctor-B8span2-GSpan2}"] & ({\rm Ho}B_\infty)^\op \arrow[d] & B_\infty^\op \arrow[l] \\
		& & & \mathsf{G}\-{\rm Span}^2 & \mathsf{G}^\op &
	\end{tikzcd}
	$$

\medspace

In the following, we will provide a detailed explanation of the overview diagram $(\bigstar)$ above. 
Note that two dotted arrows in the diagram are the functors ${\rm dgcat}_{\rm ff} \to B_\infty^\op$ and $K: {\rm Hmo}_{\rm ff} \to ({\rm Ho}B_\infty)^\op$ discovered by Keller in \cite[4.3, Paragraph 1]{Keller03} and \cite[5.4, Page 180]{Keller06} respectively.  

\medspace

\noindent{\bf 1-skeletons $\overline{\dg\cC\aA\tT}$ and $\overline{B_\infty\-{\rm span}^2}$ of bicategories $\dg\cC\aA\tT$ and $B_\infty\text{-}{\rm span}^2$.}
By Proposition~\ref{Prop-Bicat-1-Skeleton}, we have biequivalences $\mathscr{T}: \overline{\dg\cC\aA\tT} \to \dg\cC\aA\tT$
and $\mathscr{S}: B_\infty\text{-}{\rm span}^2 \rightarrow \overline{B_\infty\-{\rm span}^2}.$

From Theorem \ref{Thm-LaxFuntor-DerCat-B-Inf}, we immediately obtain the following corollary.

\begin{corollary} \label{Cor-overline-ccC}
	There exists a lax functor $\overline{\ccC} : \overline{\dg\cC\aA\tT} \rightarrow \overline{B_\infty\-{\rm span}^2}$ given by the following commutative diagram  
	\[
		\begin{tikzcd}
			\dg\cC\aA\tT \arrow[d, "\ccC"] & \overline{\dg\cC\aA\tT} \arrow[l, "\mathscr{T}"] \arrow[d, "\overline{\ccC}"'] \\ 
			B_\infty\text{-}{\rm span}^2 \arrow[r,"\mathscr{S}"] & \overline{B_\infty\-{\rm span}^2}
		\end{tikzcd}
	\]
	where the lax functor $\ccC : \dg\cC\aA\tT \to B_\infty\text{-}{\rm span}^2$ was given in Theorem \ref{Thm-LaxFuntor-DerCat-B-Inf}.
\end{corollary}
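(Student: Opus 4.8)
The plan is to obtain $\overline{\ccC}$ as a composition of already-established (lax) functors, so that Corollary~\ref{Cor-overline-ccC} follows formally. The key observation is that the displayed square is not asserting a commutativity that must be checked from scratch; rather, it \emph{defines} $\overline{\ccC}$ as the composite $\mathscr{S}\circ\ccC\circ\mathscr{T}$, reading the diagram along the top-right-then-bottom route. That is, I would set
\[
\overline{\ccC} := \mathscr{S}\circ\ccC\circ\mathscr{T} : \overline{\dg\cC\aA\tT} \longrightarrow \overline{B_\infty\-{\rm span}^2}.
\]

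First I would recall that $\mathscr{T}: \overline{\dg\cC\aA\tT} \to \dg\cC\aA\tT$ is a biequivalence, hence in particular a pseudofunctor, by Proposition~\ref{Prop-Bicat-1-Skeleton} applied to the bicategory $\dg\cC\aA\tT$. Next, $\ccC: \dg\cC\aA\tT \to B_\infty\-{\rm span}^2$ is the lax functor constructed in Theorem~\ref{Thm-LaxFuntor-DerCat-B-Inf}. Finally, $\mathscr{S}: B_\infty\-{\rm span}^2 \to \overline{B_\infty\-{\rm span}^2}$ is again a biequivalence, hence a pseudofunctor, by Proposition~\ref{Prop-Bicat-1-Skeleton} applied to $B_\infty\-{\rm span}^2$. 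The only structural fact I need is that the composition of (lax) functors is a (lax) functor; this is exactly \cite[Lemma 4.1.29]{JohnsonYau21}, already invoked in the proof of Theorem~\ref{Thm-LaxFuntor-DerCat-B-Inf}. Since a pseudofunctor is a lax functor whose constraints are invertible, both $\mathscr{S}$ and $\mathscr{T}$ qualify as lax functors, and the composite of three lax functors is a lax functor. Thus $\overline{\ccC}$ is a lax functor, and it makes the square commute by construction (up to the identification implicit in the diagram).

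There is essentially no hard step here; the statement is a formal corollary. The only point requiring the mildest care is to confirm that $\mathscr{S}$ and $\mathscr{T}$ are genuine lax functors (which they are, being pseudofunctors/biequivalences supplied by Proposition~\ref{Prop-Bicat-1-Skeleton}), so that the cited composition lemma applies. I would therefore present the proof in two sentences: invoke Proposition~\ref{Prop-Bicat-1-Skeleton} for the biequivalences $\mathscr{T}$ and $\mathscr{S}$, invoke Theorem~\ref{Thm-LaxFuntor-DerCat-B-Inf} for $\ccC$, and then define $\overline{\ccC}:=\mathscr{S}\ccC\mathscr{T}$, noting it is a lax functor by \cite[Lemma 4.1.29]{JohnsonYau21} and that it renders the displayed diagram commutative by definition.
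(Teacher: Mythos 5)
Your proposal is correct and matches the paper's own (implicit) argument exactly: the paper also obtains $\overline{\ccC}$ as the composite $\mathscr{S}\circ\ccC\circ\mathscr{T}$, citing Proposition~\ref{Prop-Bicat-1-Skeleton} for the biequivalences and treating the corollary as immediate from Theorem~\ref{Thm-LaxFuntor-DerCat-B-Inf} since compositions of lax functors are lax functors. Nothing further is needed.
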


\medspace

\noindent{\bf 1-skeletons $\overline{B_\infty\-{\rm span}^2_{\rm raf}}$ and $\overline{\dg\cC\aA\tT_{\rm raf}}$ of bicategores $B_\infty\text{-}{\rm span}^2_{\rm raf}$ and $\dg\cC\aA\tT_{\rm raf}$.}  
Keller gave an example in \cite[3.4]{Keller03} which implies that the compositions of fully faithful derived tensor product functors need not be fully faithful any more. Thus we cannot define the bicategory $\dg\cC\aA\tT_{\rm ff}$ in the natural way. However, we can define the bicategory  $\dg\cC\aA\tT_{\rm raf}$ to be the sub-bicategory of $\dg\cC\aA\tT$ with the same objects as $\dg\cC\aA\tT$, 1-cells in the Hom category $\dg\cC\aA\tT_{\rm raf}(\aA,\bB)$ being the dg $\aA$-$\bB$-bimodules $X$ such that the projection $C(\tT_X) \twoheadrightarrow C(\bB)$ is an acyclic fibration in the category $B_\infty$ of $B_\infty$-algebras, and the set of 2-cells $\dg\cC\aA\tT_{\rm raf}(\aA,\bB)(X,X'):=\dg\cC\aA\tT(\aA,\bB)(X,X')$, for all $\aA,\bB\in \dg\cC\aA\tT_{\rm raf}$ and $X,X'\in \dg\cC\aA\tT_{\rm raf}(\aA,\bB)$.

During defining the 2-cells of $\overline{B_\infty\-{\rm span}^2}$ and $\overline{B_\infty\-{\rm span}^2_{\rm raf}}$, we may choose the same representative for the isomorphism classes in $B_\infty\text{-}{\rm span}^2_{\rm raf}$ and $B_\infty\text{-}{\rm span}^2$ for each 1-cell in $B_\infty\text{-}{\rm span}^2_{\rm raf}$, and the same isomorphism between the representative of its isomorphism class and each 1-cell in its isomorphism class in $B_\infty\text{-}{\rm span}^2_{\rm raf}$ for each 1-cell in $B_\infty\text{-}{\rm span}^2_{\rm raf}$.
Then $\overline{B_\infty\-{\rm span}^2_{\rm raf}}$ is a sub-bicategory of $\overline{B_\infty\-{\rm span}^2}$. 
After similar operations, $\overline{\dg\cC\aA\tT_{\rm raf}}$ is a sub-bicategory of $\overline{\dg\cC\aA\tT}$.

By the definitions of bicategories $\overline{\dg\cC\aA\tT_{\rm raf}}$ and $\overline{B_\infty\-{\rm span}^2_{\rm raf}}$, the lax functor $\overline{\mathscr{C}}$ restricts to a lax functor $\overline{\mathscr{C}}_{\rm raf} : \overline{\dg\cC\aA\tT_{\rm raf}} \to \overline{B_\infty\-{\rm span}^2_{\rm raf}}$ such that the following diagram is commutative.  
\[  
	\begin{tikzcd} 
		\overline{\dg\cC\aA\tT} \arrow[d,"\overline{\mathscr{C}}"] & \overline{\dg\cC\aA\tT_{\rm raf}} \arrow[d,"\overline{\mathscr{C}}_{\rm raf}"] \arrow[l] \\  
		\overline{B_\infty\-{\rm span}^2} & \overline{B_\infty\-{\rm span}^2_{\rm raf}} \arrow[l] 
	\end{tikzcd}  
\]  

To get a functor extending Keller's functor $K$, we may define the category $\overline{\dg\cC\aA\tT_{\rm raf}}^{\le 1}$ whose objects are all small dg categories, whose morphisms are the isomorphism classes $[X]$ of all dg $\aA$-$\bB$-bimodules $X$ such that the projection $C(\tT_X) \twoheadrightarrow C(\bB)$ is an acyclic fibration in the category $B_\infty$ of $B_\infty$-algebras, whose compositions $[Y]\circ [X]:=[X\otimes^\bL_\bB Y]$, and whose identity morphisms are identity dg bimodules $I_\aA$. Viewed as a bicategory with discrete Hom categories, $\overline{\dg\cC\aA\tT_{\rm raf}}^{\le 1}$ is a sub-bicategory of $\overline{\dg\cC\aA\tT_{\rm raf}}$.

\medspace

\noindent{\bf Homotopy category of $B_\infty$-algebras.}
According to Proposition \ref{Prop:B_inf-to-G}, there exists a cohomology functor $H:B_\infty \rightarrow \mathsf{G}$ from the category $B_\infty$ of $B_\infty$-algebras to the category $\mathsf{G}$ of Gerstenhaber algebras. It sends $B_\infty$-quasi-isomorphisms to Gerstenhaber algebra isomorphisms. Thus $H$ factors through the localization functor $B_\infty \rightarrow {\rm Ho}B_\infty$. 
$$\begin{tikzcd}
	B_\infty \arrow[r] \ar[dr,"H"'] & {\rm Ho}B_\infty \arrow[d]\\ & \mathsf{G}
\end{tikzcd}$$

The following result will be used to extend Keller's functor $K$.

\begin{proposition} \label{Prop-BInfOspan2-HBInf}
    There exists a strict functor $\mathscr{H} : \overline{B_\infty\-{\rm span}^2_{\rm raf}} \rightarrow ({\rm Ho}B_\infty)^\op$ defined as follows:
	
	{\rm (1)} The function $\mathscr{H} : \Ob(\overline{B_\infty\-{\rm span}^2_{\rm raf}}) \to \Ob(({\rm Ho}B_\infty)^\op), A\mapsto A$.

	{\rm (2)} For all $A,B \in \overline{B_\infty\-{\rm span}^2_{\rm raf}}$, the functor
		$\mathscr{H}_{AB} : \overline{B_\infty\-{\rm span}^2_{\rm raf}}(A,B) \to {\rm Ho}B_\infty(B,A)$
		is defined as the following:
		
	{\rm (2.1)} For any 1-cell $[\hat{f}] = [\!\!\begin{tikzcd}[column sep=small] A & f \arrow[l,two heads,"f_l"'] \arrow[r,two heads,"f_r","\sim"'] & B \end{tikzcd}\!\!] \in \overline{B_\infty\-{\rm span}^2_{\rm raf}}(A,B)$, 
			$\mathscr{H}_{AB}([\hat{f}]) := f_lf_r^{-1}.$
			
	{\rm (2.2)} For any 2-cell $[\hat{\hat{\alpha}}] = [\hat{f} \leftarrow \hat{\alpha} \rightarrow \hat{g}] = [\!\!\begin{tikzcd}[column sep=small] f & \alpha \arrow[l,two heads,"\alpha_l"',"\sim"] \arrow[r,"\alpha_r"]  & g \end{tikzcd}\!\!] \in \overline{B_\infty\-{\rm span}^2_{\rm raf}}(A,B)([\hat{f}],[\hat{g}])$, 
			$\mathscr{H}_{AB}([\hat{\hat{\alpha}}]) := 1_{f_lf_r^{-1}}.$  
	
	{\rm (3)} For all objects $A,B,C\in \overline{B_\infty\-{\rm span}^2_{\rm raf}}$ and 1-cells $[\hat{f}] \in \overline{B_\infty\-{\rm span}^2_{\rm raf}}(A,B)$ and $[\hat{g}] \in \overline{B_\infty\-{\rm span}^2_{\rm raf}}(B,C)$, 
		the lax associativity constraint 
		$\mathscr{H}^2_{[\hat{g}],[\hat{f}]}: \mathscr{H}([\hat{g}])\circ\mathscr{H}([\hat{f}]) \to \mathscr{H}([\hat{g}]\circ[\hat{f}])$ 		
		is the identity 2-cell.  
			
	{\rm (4)} For all $A \in \overline{B_\infty\-{\rm span}^2_{\rm raf}}$, 
		the lax unity constraint 
		$\mathscr{H}^0_A : 1_{\mathscr{H}A} \to \mathscr{H}(1_A)$ 
		is the identity 2-cell.
\end{proposition}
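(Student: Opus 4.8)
The plan is to exploit the fact that the codomain $({\rm Ho}B_\infty)^\op$ is a $1$-category regarded as a bicategory with discrete Hom categories. Consequently, to produce a strict functor $\mathscr{H}$ it suffices to check three things: that the assignment on $1$-cells is well defined and lands in morphisms of ${\rm Ho}B_\infty$; that every $2$-cell has equal source and target under $\mathscr{H}$ (so that it may legitimately be sent to an identity $2$-cell), which is exactly the requirement that $\mathscr{H}_{AB}$ be a functor into the \emph{discrete} category ${\rm Ho}B_\infty(B,A)$; and that $\mathscr{H}$ strictly preserves horizontal composition and identity $1$-cells. Once these hold, the lax associativity, lax left unity and lax right unity diagrams, as well as the conditions $\mathscr{H}^2=\mathrm{id}$ and $\mathscr{H}^0=\mathrm{id}$, are automatic, since all associators and unitors of a $1$-category-as-bicategory are identities and $\mathscr{H}$ restricts to an honest functor of underlying $1$-categories. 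Throughout I will use that a $B_\infty$-quasi-isomorphism becomes invertible in ${\rm Ho}B_\infty$; since the right leg $f_r$ of every $1$-cell $\hat f=(A\twoheadleftarrow f\twoheadrightarrow B)$ of the ``raf'' bicategory is an acyclic fibration, the formula $\mathscr{H}([\hat f])=f_lf_r^{-1}$ makes sense as a morphism $B\to A$ in ${\rm Ho}B_\infty$.

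The crucial step is the following claim, which simultaneously handles well-definedness on $2$-cells and invariance under isomorphism of $1$-cells: given any span of spans $\hat{\hat\alpha}=(f\xtwoheadleftarrow{\alpha_l}\alpha\xrightarrow{\alpha_r}g)$ representing a $2$-cell $\hat f\to\hat g$ in $B_\infty\text{-}{\rm span}^2_{\rm raf}$, one has $f_lf_r^{-1}=g_lg_r^{-1}$ in ${\rm Ho}B_\infty$. Here the commutativity of the span of spans gives the relations $f_l\alpha_l=g_l\alpha_r$ and $f_r\alpha_l=g_r\alpha_r$ in $B_\infty$, hence in ${\rm Ho}B_\infty$. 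By definition of a $2$-cell, $\alpha_l$ is an acyclic fibration, and by the ``raf'' restriction both $f_r$ and $g_r$ are acyclic fibrations; therefore from $f_r\alpha_l=g_r\alpha_r$ and the $2$-out-of-$3$ property, $\alpha_r$ is also a $B_\infty$-quasi-isomorphism, so that $\alpha_l,\alpha_r,f_r,g_r$ are all invertible in ${\rm Ho}B_\infty$. A short computation then gives $f_lf_r^{-1}=(g_l\alpha_r\alpha_l^{-1})(\alpha_l\alpha_r^{-1}g_r^{-1})=g_lg_r^{-1}$, proving the claim. This shows $\mathscr{H}_{AB}([\hat{\hat\alpha}]):=1_{f_lf_r^{-1}}$ is a legitimate identity $2$-cell whose value depends only on the source (equivalently target) $1$-cell and not on the chosen representative $\hat{\hat\alpha}$; functoriality of $\mathscr{H}_{AB}$ on vertical composition and identity $2$-cells is then trivial.

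For strict compatibility with horizontal composition, recall that $[\hat g]\circ[\hat f]$ is represented by $\hat g\circ\hat f=(A\xtwoheadleftarrow{f_l\tilde g_l}f\times_B g\xtwoheadrightarrow{g_r\tilde f_r}C)$, where $\tilde g_l,\tilde f_r$ come from the defining pullback square, so $f_r\tilde g_l=g_l\tilde f_r$; by Lemma~\ref{Lem-Pullback-Fib-AcycFib} $\tilde f_r$ is an acyclic fibration, whence $g_r\tilde f_r$ is one too and the composite indeed lies in the ``raf'' bicategory. Since the representative $\overline{\hat g\circ\hat f}$ is isomorphic to $\hat g\circ\hat f$, the claim above lets me compute with $\hat g\circ\hat f$ directly, obtaining $\mathscr{H}([\hat g]\circ[\hat f])=(f_l\tilde g_l)(g_r\tilde f_r)^{-1}=f_l\tilde g_l\tilde f_r^{-1}g_r^{-1}$. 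Inverting the pullback relation $f_r\tilde g_l=g_l\tilde f_r$ in ${\rm Ho}B_\infty$ yields $\tilde g_l\tilde f_r^{-1}=f_r^{-1}g_l$, so this equals $f_lf_r^{-1}g_lg_r^{-1}=\mathscr{H}([\hat f])\circ_{{\rm Ho}B_\infty}\mathscr{H}([\hat g])$, which is precisely the composite $\mathscr{H}([\hat g])\circ\mathscr{H}([\hat f])$ in the opposite category; preservation of identities is immediate from $\mathscr{H}(1_A)=1_A 1_A^{-1}=1_A$.

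Assembling these, $\mathscr{H}$ is a strict functor with identity functoriality and unity constraints. I expect the main obstacle to be the verification that $\alpha_r$ is a quasi-isomorphism, which is exactly where the ``raf'' restriction on $1$-cells is indispensable, together with the careful bookkeeping of directions and inverses in the opposite homotopy category; the remaining skeleton issues are dispatched by the observation that isomorphic $1$-cells, being connected by a $2$-cell, always receive the same value of $\mathscr{H}$.
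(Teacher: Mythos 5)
Your proposal is correct and follows essentially the same route as the paper's proof: define $\mathscr{H}$ on 1-cells by $f_lf_r^{-1}$, use the span-of-spans relations together with 2-out-of-3 (forcing $\alpha_r$ to be a quasi-isomorphism) to get well-definedness and collapse of 2-cells, verify strict preservation of composition via the pullback relation $f_r\tilde{g}_l=g_l\tilde{f}_r$, and observe that the coherence axioms are then automatic since the target has discrete Hom categories. You merely spell out the computations (e.g. $f_lf_r^{-1}=g_lg_r^{-1}$ and $(f_l\tilde{g}_l)(g_r\tilde{f}_r)^{-1}=(f_lf_r^{-1})(g_lg_r^{-1})$) and the 1-skeleton subtlety that the paper dismisses as "easy to see" and "clear".
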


\begin{proof}
	It is easy to see that the definition of $\mathscr{H}_{AB}([\hat{f}])$ is independent of the choice of representative of isomorphism class $[\hat{f}]$, and the definition of $\mathscr{H}_{AB}([\hat{\hat{\alpha}}])$ is independent of the choice of representative of equivalence class $[\hat{\hat{\alpha}}]$.
		\[
	\begin{tikzcd}[row sep=large]
		& f \arrow[ld,two heads,"f_l"'] \arrow[rd,two heads,"f_r","\sim"'] & \\
		A & \alpha \arrow[u,two heads,"\alpha_l"',"\sim"] \arrow[d,"\alpha_r","\sim"'] \arrow[l,two heads] \arrow[r,two heads,"\sim"] & B \\
		& g \arrow[lu,two heads,"g_l"] \arrow[ru,two heads,"g_r"',"\sim"] &
	\end{tikzcd} \qquad
	\begin{tikzcd}[row sep=large]
		&& f \arrow[lld,two heads,"f_l"'] \arrow[rrd,two heads,"f_r","\sim"'] && \\
		A & \alpha \arrow[ur,two heads,"\alpha_l"',"\sim"] \arrow[dr,"\alpha_r","\sim"'] \arrow[l,two heads] \arrow[rr,"\theta","\sim"'] && \beta \arrow[ul,two heads,"\beta_l","\sim"'] \arrow[dl,"\beta_r"',"\sim"] \arrow[r,two heads,"\sim"] & B \\
		&& g \arrow[llu,two heads,"g_l"] \arrow[rru,two heads,"g'_r"',"\sim"] &&
	\end{tikzcd}
	\]
	Thus the functor
	$\mathscr{H}_{AB} : \overline{B_\infty\-{\rm span}^2_{\rm raf}}(A,B) \to {\rm Ho}B_\infty(B,A)$ is well-defined.
	Note that $\mathscr{H}([\hat{g}])\circ\mathscr{H}([\hat{f}]) = (f_lf_r^{-1})(g_lg_r^{-1}) = (f_l\tilde{g}_l)(g_r\tilde{f}_r)^{-1} = \mathscr{H}([\hat{g}]\circ[\hat{f}])$ and $1_{\mathscr{H}A} = 1_A = 1_A1_A^{-1} = \mathscr{H}_{AA}([A\xleftarrow{1_A}A\xrightarrow{1_A}A])$.
	So $\mathscr{H}^2_{[\hat{g}],[\hat{f}]}$ and $\mathscr{H}^0_A$ are well-defined.	
	\[
	\begin{tikzcd}
		&& f\times_B g \arrow[ld,two heads,"\tilde{g}_l"'] \arrow[rd,two heads, "\tilde{f}_r","\sim"'] && \\
		& f \arrow[ld,two heads,"f_l"'] \arrow[rd,two heads,"f_r","\sim"'] & & g \arrow[ld,two heads,"g_l"'] \arrow[rd,two heads, "g_r","\sim"'] & \\
		A & & B & & C
	\end{tikzcd}
	\]
	Furthermore, it is clear that the lax associativity and lax left and right unity hold.
\end{proof}

The following commutative diagram of lax functors
\[  
\begin{tikzcd} 
	\overline{\dg\cC\aA\tT_{\rm raf}} \arrow[d,"\overline{\mathscr{C}}_{\rm raf}"] & \overline{\dg\cC\aA\tT_{\rm raf}}^{\le 1} \arrow[l] \arrow[d,"\tilde{K}"] \\  
	\overline{B_\infty\-{\rm span}^2_{\rm raf}} \arrow[r,"\mathscr{H}"] & ({\rm Ho}B_\infty)^\op 
\end{tikzcd}  
\]  
defines a strict functor $\tilde{K}: \overline{\dg\cC\aA\tT_{\rm raf}}^{\le 1} \to ({\rm Ho}B_\infty)^\op$ which corresponds to a functor $\tilde{K}:\overline{\dg\cC\aA\tT_{\rm raf}}^{\le 1} \to ({\rm Ho}B_\infty)^\op$. 
Later we will see that this functor $\tilde{K}$ extends Keller's functor $K$.

\medspace

\noindent{\bf Bicategories of algebras.}
Denote by $\alg$ the category whose objects are all $k$-algebras and whose morphisms are all homomorphisms of $k$-algebras. 
Denote by $\aA\lL\gG$ the bicategory for which objects are all $k$-algebras, the 1-cells in the Hom category $\aA\lL\gG(A,B)$ are all complexes of $A$-$B$-bimodules, the 2-cells in $\aA\lL\gG(A,B)(X,Y)$ are all isomorphisms $f:X\to Y$ in the derived category $\dD(A^\op\otimes B)$ of the category of $A$-$B$-bimodules, the horizontal composition functors are derived tensor product functors $-\otimes^{\bf L}_B-$, and the idenity 1-cells are identity bimodules $_AA_A$. Since the category ${\rm alg}$ can be viewed as a bicategory with discrete Hom categories, and each homomorphism of algebras $f:A\to B$ corresponds to an $A$-$B$-bimodule $_AB_B$ which is projective as a right $B$-module, we have a pseudofunctor
${\rm alg} \to \aA\lL\gG, A\mapsto A,\ (f:A\to B)\mapsto\ _AB_B, 1_f\mapsto 1_{_AB_B}$,
for which the lax associativity constraints are the natural isomorphisms $_AB\otimes^{\bf L}_BC_C \cong\ _AC_C$, and the lax unity contraints are identity morphisms $1_{_AA_A}$, for all algebras $A,B,C\in{\rm alg}$ and homomorphisms of algebras $f:A\to B, g:B\to C$ in ${\rm alg}$.

Every algebra can be regarded as a dg category with only one object, so we have a natural functor $\alg\rightarrow {\rm dgcat}$, and a natural pseudofunctor $\aA\lL\gG \to \dg\cC\aA\tT$. 

The category {\rm dgcat} of small dg categories can be viewed as a bicategory with discrete Hom categories. Then we have a pseudofunctor ${\rm dgcat} \rightarrow \dg\cC\aA\tT, \aA\mapsto\aA, F \mapsto X_F, 1_F \mapsto 1_{X_F}$, for which the lax associativity constraints are the natural isomorphisms $X_F\otimes^{\bf L}_{\bB}X_G \cong X_{GF}$, and the lax unity constraints are the identity morphisms $1_{I_{\aA}}$ on $I_{\aA}$. 

Furthermore, the following diagram of lax functors commutes.
\[
	\begin{tikzcd}
		\alg \arrow[r] \arrow[d] & \dgcat \arrow[d] \\	
		\aA\lL\gG \arrow[r] & \dg\cC\aA\tT
	\end{tikzcd}
\]	

\begin{remark}
	(1) Denote by ${\rm ALG}$ the bicategory for which objects are all $k$-algebras, 1-cells are all bimodules, 2-cells are all isomorphisms of bimodules, horizontal compositions are tensor product functors $-\otimes_B-$, and identity 1-cells are identity bimodules $_AA_A$. We have a natural pseudofunctor $\alg \to {\rm ALG}, A\mapsto A, (f:A \rightarrow B) \mapsto\ _AB_B, 1_f\mapsto 1_{_AB_B}$, for which the lax associativity constraints are the natural isomorphisms of bimodules $_AB\otimes_BC_C\cong\ _AC_C$, and the lax unity constraints are identity morphisms $1_{_AA_A}$. However, ${\rm ALG}\to \aA\lL\gG, A\mapsto A, X\mapsto X, (f:X\to X')\mapsto (f:X\to X')$, can not be defined as a lax functor in the natural way since the natural candidates of lax associativity constraints $X\otimes^{\bf L}_BY \to X\otimes_BY$ are not 2-cells in $\aA\lL\gG$.
	
	(2) Denote by $\dgCAT$ the bicategory for which objects are all small dg categories, 1-cells are all dg bimodules, 2-cells are all isomorphisms of dg bimodules, horizontal compositions are tensor product functors $-\otimes_\bB-$, and identity 1-cells are identity dg bimodules $I_\aA$.
	We have a natural pseudofunctor $\dgcat \to {\rm dgCAT}, \aA\mapsto \aA, (F:\aA \rightarrow \bB) \mapsto\ X_F, 1_F\mapsto 1_{X_F}$, for which the lax associativity constraints are the natural isomorphisms of dg bimodules $X_G\otimes_\bB X_F\cong X_{GF}$, and the lax unity constraints are identity morphisms $1_{I_\aA}$, for all $\aA,\bB,\cC\in{\rm dgcat}, F\in{\rm dgcat}(\aA,\bB), G\in{\rm dgcat}(\bB,\cC)$. However, ${\rm dgCAT}\to \dg\cC\aA\tT, \aA\mapsto \aA, X\mapsto X, (f:X\to X')\mapsto (f:X\to X')$, can not be defined as a lax functor in the natural way since the natural candidates of lax associativity constraints $X\otimes^{\bf L}_BY \to X\otimes_BY$ are not 2-cells in $\aA\lL\gG$.
\end{remark}

\medspace

\noindent{\bf Homotopy categories {\rm Hqe} and {\rm Hmo}.} 
The category ${\rm dgcat}$ of small dg categories and dg functors carries a cofibrantly generated Quillen model structure for which weak equivalences are quasi-equivalences. Denote by {\rm Hqe} the homotopy category of {\rm dgcat} with respect to this model structure. According to \cite[Corollary 4.8]{Toen07}, there is a functorial bijection identifying the set ${\rm Hqe}(\aA,\bB)$ with the set ${\rm Iso}(\rep(\aA,\bB)_{\rm rqr})$, where $\rep(\aA, \bB)_{\rm rqr}$ is the full subcategory of $\dD(\aA^\op\otimes\bB)$ consisting of all dg $\aA\-\bB$-bimodules $X$ such that the derived tensor product functor $-\otimes^{\bf L}_{\aA} X:\dD\aA\to \dD\bB$ sends representable right dg $\aA$-modules to objects isomorphic to representable right dg $\bB$-modules, and ${\rm Iso}(-)$ denotes the set of isomorphism classes. Denote by ${\rm Hqe}_{\rm ff}$ the subcategory of ${\rm Hqe}$ with the same objects as Hqe and morphisms being the isomorphism classes $[X]$ of dg $\aA$-$\bB$-bimodules $X\in \rep(\aA,\bB)_{\rm rqr}$ such that the functor 
$-\otimes_{\aA}^\bL X:\per\aA \rightarrow \per\bB$ is fully faithful.
  
A dg functor $F : \aA \to \bB$ is called a {\it Morita equivalence} if it induces a triangle equivalence ${\bf L}F^* : \dD\bB \to \dD\aA$. Evidently, all quasi-equivalences are Morita equivalences. The category {\rm dgcat} possesses a cofibrantly generated Quillen model structure for which weak equivalences are Morita equivalences (Ref. \cite{Tabuada05}).
Denote by {\rm Hmo} the homotopy category of {\rm dgcat} with respect to this model structure. According to \cite[1.6.3]{Tabuada15}, there exists a bijection between the set ${\rm Hmo}(\aA, \bB)$ and the set ${\rm Iso}(\rep(\aA, \bB)_{\rm rp})$, where $\rep(\aA, \bB)_{\rm rp}$ is the full subcategory of $\dD(\aA^\op \otimes \bB)$ consisting of all dg $\aA\-\bB$-bimodules $X$ that is perfect as a right dg $\bB$-module.
Denote by ${\rm Hmo}_{\rm ff}$ the subcategory of {\rm Hmo} with the same objects as {\rm Hmo} and morphisms being the isomorphism classes $[X]$ of dg $\aA$-$\bB$-bimodules $X\in\rep(\aA, \bB)_{\rm rp}$ such that the functor
$-\otimes_{\aA}^\bL X:\per\aA \rightarrow \per\bB$ is fully faithful. 
The category {\rm Hmo} can be viewed as a bicategory with discrete Hom categories. Then {\rm Hmo} is a sub-bicategory of the bicategory $\overline{\dg\cC\aA\tT}$. Moreover, the following diagram of lax functors commute.
\[
  \begin{tikzcd}
		\dgcat \arrow[r] \arrow[d] & {\rm Hqe} \arrow[r] & {\rm Hmo} \arrow[dl] & {\rm Hmo}_{\rm ff} \arrow[l] \arrow[d] \\
		\dg\cC\aA\tT \arrow[r,"\mathscr{S}"] & \overline{\dg\cC\aA\tT} & \overline{\dg\cC\aA\tT_{\rm raf}} \arrow[l] & \overline{\dg\cC\aA\tT_{\rm raf}}^{\le 1} \arrow[l]
  \end{tikzcd}
\]

\medspace

Denote by $\dgcat_{\rm ff}$ the subcategory of $\dgcat$ with the same objects as $\dgcat$ and morphisms being all fully faithful dg functors. For any fully faithful dg functor $F:\aA \rightarrow \bB$,  it follows from \cite[4.2 Lemma]{Keller94} that the functor
$-\otimes_{\aA}^\bL X_F:\per\aA \rightarrow \per\bB$ is fully faithful. Then we have a functor
$\dgcat_{\rm ff} \rightarrow {\rm Hqe}_{\rm ff}, \aA\mapsto\aA, F\mapsto [X_F]$.
Moreover, the following diagram of functors is commutative.
\[
  \begin{tikzcd}
		\dgcat_{\rm ff} \arrow[r] \arrow[d] & {\rm Hqe}_{\rm ff} \arrow[r] \arrow[d] & {\rm Hmo}_{\rm ff} \arrow[d] \\
		\dgcat	\arrow[r] & {\rm Hqe} \arrow[r] & {\rm Hmo}
  \end{tikzcd}
\]

Due to \cite[4.6 Theorem c)]{Keller03}, the following diagram of functors is also commutative.
\[
  \begin{tikzcd}
			\dgcat_{\rm ff} \arrow[r] \arrow[d] & {\rm Hqe}_{\rm ff} \arrow[r] & {\rm Hmo}_{\rm ff} \arrow[d,"K"] \\
			B^\op_\infty \arrow[rr] && ({\rm Ho}B_\infty)^\op
  \end{tikzcd}
\]

Finally, it is clear that the following diagram of functors
\[  
\begin{tikzcd} 
	{\rm Hmo}_{\rm ff} \arrow[r] \arrow[dr,"K"'] & \overline{\dg\cC\aA\tT_{\rm raf}}^{\le 1} \arrow[d,"\tilde{K}"]\\  
	& ({\rm Ho}B_\infty)^\op
\end{tikzcd}  
\]  
is commutative, which implies that the functor $\tilde{K}$ extends Keller's functor $K$. 

Now we have completed the exposition of the entire diagram $(\bigstar)$.

\appendix

\section{$\theta_\aA$ is a $B_\infty$-quasi-isomorphism}\label{Appendix-ThetaA}

In this appendix, we will give a proof of Lemma~\ref{Lem-ThetaA}.

Let $\aA$ and $\bB$ be small dg categories and $X$ a dg $\aA$-$\bB$-bimodule. 
Then the Hochschild cochain complex $C(\tT_X)$ of upper triangular matrix dg category $\tT_X=\begin{pmatrix}
	\aA&X\\&\bB
\end{pmatrix}$ is a brace $B_\infty$-algebra whose underlying graded vector space is $C(\aA)\oplus C(\bB)\oplus D(X)$.

First, we analyze the cup product on $C(\tT_X)$. For all
$\phi=(\phi_1,\phi_2,\phi_3)^T\in C^p(\tT_X)=C^p(\aA)\oplus C^p(\bB)\oplus D^{p-1}(X),
\psi=(\psi_1,\psi_2,\psi_3)^T\in C^q(\tT_X)=C^q(\aA)\oplus C^q(\bB)\oplus D^{q-1}(X)$, and 
$sa_{1,p+q}\in s\aA(A_{p+q-1},A_{p+q})\otimes\cdots\otimes s\aA(A_0,A_1)$,
we have
\[
\begin{aligned}
	(\phi\cup\psi)(sa_{1,p+q}) & =(-1)^{|\psi||sa_{1,p}|}\ \phi(sa_{1,p})\psi(sa_{p+1,p+q})\\
	& =(-1)^{|\psi_1||sa_{1,p}|}\ \phi_1(sa_{1,p})\psi_1(sa_{p+1,p+q}) \\
	& =(\phi_1\cup\psi_1)(sa_{1,p+q}).
\end{aligned}
\]
Thus $(\phi\cup\psi)_1=\phi_1\cup\psi_1$. 
Similarly,  $(\phi\cup\psi)_2=\phi_2\cup\psi_2$. 
For all $sa_{1,l}\otimes sx\otimes sb_{1,m}\in s\aA(A_{l-1},A_l)\otimes\cdots\otimes s\aA(A_0,A_1)\otimes sX(B_m,A_0)\otimes s\bB(B_{m-1},B_m)\otimes\cdots\otimes s\bB(B_0,B_1)$ with $l+m=p+q-1, l,m\ge 0$, we have
\[
\begin{array}{ll}
	& (\phi\cup\psi)(sa_{1,l}\otimes sx\otimes sb_{1,m}) \\ [2mm]
	={}& \begin{cases}
		(-1)^{|\psi||sa_{1,p}|}\ \phi(sa_{1,p})\psi(sa_{p+1,l}\otimes sx\otimes sb_{1,m}), & l\ge p; \\
		(-1)^{|\psi|(|sa_{1,l}|+|sx|+|sb_{1,p-l-1}|)}\ \phi(sa_{1,l}\otimes sx\otimes sb_{1,p-l-1})\psi(sb_{p-l,m}), & l<p.
	\end{cases} \\ [6mm]
	={}& \begin{cases}
		(-1)^{|\psi_3||sa_{1,p}|}\ \phi_1(sa_{1,p})\psi_3(sa_{p+1,l}\otimes sx\otimes sb_{1,m}), & l\ge p; \\
		(-1)^{|\psi_2|(|sa_{1,l}|+|sx|+|sb_{1,p-l-1}|)}\ \phi_3(sa_{1,l}\otimes sx\otimes sb_{1,p-l-1})\psi_2(sb_{p-l,m}), & l<p.
	\end{cases} \\ [6mm]
	={}& \begin{cases}
		(\phi_1\cup\psi_3)(sa_{1,l}\otimes sx\otimes sb_{1,m}), & l\ge p; \\
		(\phi_3\cup\psi_2)(sa_{1,l}\otimes sx\otimes sb_{1,m}), & l<p.
	\end{cases}
\end{array}
\]
Hence $(\phi\cup\psi)_3=\phi_1\cup\psi_3+\phi_3\cup\psi_2$. 

Now we have finished the proof of the following lemma.

\begin{lemma}\label{Lem-CupProd-2x2}
	Let $\aA$ and $\bB$ be small dg categories, and $X$ a dg $\aA$-$\bB$-bimodule. Then for all
	$\phi=\begin{pmatrix}
		\phi_1 \\
		\phi_2 \\
		\phi_3
	\end{pmatrix}$ and 
	$\psi=\begin{pmatrix}
		\psi_1 \\
		\psi_2 \\
		\psi_3
	\end{pmatrix}\in C(\tT_X)=C(\aA)\oplus C(\bB) \oplus D(X),$
	the cup product
	$$\phi\cup\psi=\begin{pmatrix}
		\phi_1\cup\psi_1 \\
		\phi_2\cup\psi_2 \\
		\phi_1\cup\psi_3+\phi_3\cup\psi_2
	\end{pmatrix}.$$
\end{lemma}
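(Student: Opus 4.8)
The plan is to compute the cup product $\phi \cup \psi$ componentwise on $C(\tT_X) = C(\aA) \oplus C(\bB) \oplus D(X)$ by evaluating it on the three types of generators of $\tT_X$, using only the explicit formula for the cup product on a Hochschild cochain complex. Recall that $C(\tT_X)$ is the Hochschild cochain complex of the dg category $\tT_X$, and its cup product is given by the standard formula $(\phi \cup \psi)(sa_{1,p+q}) = (-1)^{|sa_{1,p}||\psi|}\phi(sa_{1,p})\psi(sa_{p+1,p+q})$, where now the arguments $a_i$ range over morphisms in $\tT_X$. The key observation is that a tensor string of morphisms in $\tT_X$ decomposes according to which ``block'' of the upper triangular matrix each morphism lies in: a string of $\aA$-morphisms contributes to the $C(\aA)$-component, a string of $\bB$-morphisms to the $C(\bB)$-component, and a string of the form ($\aA$-morphisms)$\otimes(sX$-element)$\otimes(\bB$-morphisms) contributes to the $D(X)$-component. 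Since the only nonzero compositions in $\tT_X$ that mix blocks involve the $X$-entry exactly once, evaluating the cup product amounts to tracking where the splitting point between the first $p$ and the last $q$ slots falls relative to the unique $X$-slot.

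The concrete steps are as follows. First I would take $\phi = (\phi_1, \phi_2, \phi_3)^T$ and $\psi = (\psi_1, \psi_2, \psi_3)^T$ and evaluate $\phi \cup \psi$ on a pure $\aA$-string $sa_{1,p+q}$. Here every factor $\phi(sa_{1,p})$ and $\psi(sa_{p+1,p+q})$ can only see its $C(\aA)$-component, so the computation collapses to $(\phi_1 \cup \psi_1)(sa_{1,p+q})$, yielding the first component. The same reasoning on a pure $\bB$-string gives the second component $\phi_2 \cup \psi_2$. Second, and this is the substantive case, I would evaluate on a mixed string $sa_{1,l} \otimes sx \otimes sb_{1,m}$ with $l + m = p+q-1$, splitting into two subcases according to whether the $X$-slot lies in the first $p$ slots or the last $q$ slots. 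When $l \geq p$, the first $p$ inputs are all in $\aA$ so $\phi$ contributes via $\phi_1$, while the remaining inputs contain the $X$-slot so $\psi$ contributes via $\psi_3$; this gives $\phi_1 \cup \psi_3$. When $l < p$, the $X$-slot is absorbed into $\phi$'s arguments (contributing $\phi_3$) and the tail is a pure $\bB$-string (contributing $\psi_2$); this gives $\phi_3 \cup \psi_2$. Adding these disjoint cases yields the third component $\phi_1 \cup \psi_3 + \phi_3 \cup \psi_2$.

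The main thing to verify carefully is that the sign $(-1)^{|sa_{1,p}||\psi|}$ appearing in the general cup product formula restricts correctly to the sign appearing in each component's cup product. The subtlety is that $|\psi|$ as an element of $C^q(\tT_X)$ must be matched against the degree of the relevant component: in the $D(X)$-component $\psi_3$ lives in $D^{q-1}(X)$, so one must check that $|\psi| = |\psi_3|$ in the relevant sign and likewise $|\psi| = |\psi_2|$, which holds because all three components of $\psi$ have the same total degree $q$ as cochains (the shift in $D(X)$ is absorbed into its definition via the isomorphism $D(X) \cong s^{-1}C(X)$). Since the paper has already recorded that $\eta_{X,X'}$ induces $D(X,X') \cong s^{-1}C(X,X')$ and has fixed all sign conventions for $sa_{1,i}$ and $|sa_j|$, this sign-matching is the only place where an error could creep in; everything else is bookkeeping. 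I do not anticipate any conceptual obstacle, only the need to be scrupulous with the Koszul signs when identifying $\phi(sa_{1,l}\otimes sx \otimes sb_{1,p-l-1})$ with the $\phi_3$-evaluation, since the $X$-input carries a shift.

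Once these three evaluations are assembled, the formula
\[
\phi \cup \psi = \begin{pmatrix} \phi_1 \cup \psi_1 \\ \phi_2 \cup \psi_2 \\ \phi_1 \cup \psi_3 + \phi_3 \cup \psi_2 \end{pmatrix}
\]
follows immediately, completing the proof. Because the cup products on the right-hand side are the actions $\alpha$, $\beta$ and the internal cup products already defined in Section~2, no further identification is required, and the lemma is established by direct computation.
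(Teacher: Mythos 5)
Your proposal is correct and follows essentially the same route as the paper: componentwise evaluation of the cup product on pure $\aA$-strings, pure $\bB$-strings, and mixed strings $sa_{1,l}\otimes sx\otimes sb_{1,m}$, with the third component obtained by the same case split $l\ge p$ versus $l<p$ according to where the unique $X$-slot falls, and with the sign $(-1)^{|sa_{1,p}||\psi|}$ matched to the component degrees exactly as you describe. The paper's proof is precisely this computation, so no comparison beyond this is needed.
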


\medspace

Next we analyze the brace operation on $C(\tT_X)$. For all 
$\phi=(\phi_1,\phi_2,\phi_3)^T\in C^n(\tT_X), \psi_i=(\psi_{i,1},\psi_{i,2},\psi_{i,3})^T\in C^{n_i}(\tT_X), 1\le i\le k$,
by definition of brace operation, we have
\[
\phi\{\psi_1,\cdots,\psi_k\}= \sum_{\substack{i_0+\cdots+i_{k}=n-k,\\ i_0,\dots,i_k\ge 0}}\phi\left(1^{\otimes i_0}\otimes s\psi_1\otimes 1^{\otimes i_1}\otimes\cdots\otimes s\psi_k\otimes 1^{\otimes i_k}\right).
\]
It is clear that $(\phi\{\psi_1,\cdots,\psi_k\})_j =\phi_j\{\psi_{1,j},\cdots,\psi_{k,j}\}$ for $j=1,2$, and $(\phi\{\psi_1,\cdots,\psi_k\})_3 = \sum\limits_{i=1}^k \phi_3\{\psi_{1,1},\cdots,\psi_{i-1,1},\psi_{i,3},\psi_{i+1,2},\cdots,\psi_{k,2}\}$. So we get the following lemma.

\begin{lemma}\label{Lem-Brace-2x2}
	Let $\aA$ and $\bB$ be small dg categories, and $X$ a dg $\aA$-$\bB$-bimodule. Then for all 
	$\phi=\begin{pmatrix}
		\phi_1 \\
		\phi_2 \\
		\phi_3
	\end{pmatrix}\in C^n(\tT_X), \psi_i=\begin{pmatrix}
		\psi_{i,1} \\
		\psi_{i,2} \\
		\psi_{i,3}
	\end{pmatrix}\in C^{n_i}(\tT_X), 1\le i\le k,$
	the brace operation
	$$\phi\{\psi_1,\cdots,\psi_k\}=\begin{pmatrix}
		\phi_1\{\psi_{1,1},\psi_{2,1},\cdots,\psi_{k,1}\} \\
		\phi_2\{\psi_{1,2},\psi_{2,2},\cdots,\psi_{k,2}\} \\
		\sum\limits_{i=1}^k \phi_3\{\psi_{1,1},\cdots,\psi_{i-1,1},\psi_{i,3},\psi_{i+1,2},\cdots,\psi_{k,2}\}
	\end{pmatrix}.$$ 
\end{lemma}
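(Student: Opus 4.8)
The final statement is Lemma~\ref{Lem-Brace-2x2}, describing the brace operation on the $B_\infty$-algebra $C(\tT_X)=C(\aA)\oplus C(\bB)\oplus D(X)$. The plan is to compute the brace operation componentwise, exactly in the spirit of the computation just carried out for the cup product in Lemma~\ref{Lem-CupProd-2x2}, which I would use as a template.

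First I would recall the explicit formula for the brace operation from the definition of the brace $B_\infty$-structure on a Hochschild cochain complex,
\[
\phi\{\psi_1,\cdots,\psi_k\}=\sum_{\substack{i_0+\cdots+i_{k}=n-k,\\ i_0,\dots,i_k\ge 0}}\phi\left(1^{\otimes i_0}\otimes s\psi_1\otimes 1^{\otimes i_1}\otimes\cdots\otimes s\psi_k\otimes 1^{\otimes i_k}\right),
\]
and evaluate it on the three kinds of arguments that probe the three summands $C(\aA)$, $C(\bB)$, $D(X)$ of $C(\tT_X)$. The key structural input is that a cochain in $C(\tT_X)$ decomposes according to whether its input and output strings lie entirely in $\aA$, entirely in $\bB$, or pass through exactly one bimodule slot $X$ (the $D(X)$ part); this is precisely the direct sum decomposition already established for $C(\tT_X)$. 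For an input string lying entirely in $\aA$ (resp. $\bB$), every $\psi_i$ is forced to act through its $\aA$-component $\psi_{i,1}$ (resp. its $\bB$-component $\psi_{i,2}$), since inserting any cochain with an $X$-slot would produce an $X$-factor in an all-$\aA$ (all-$\bB$) string, which is impossible. This yields $(\phi\{\psi_1,\cdots,\psi_k\})_j=\phi_j\{\psi_{1,j},\cdots,\psi_{k,j}\}$ for $j=1,2$ immediately.

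The only substantive step is the third component, landing in $D(X)$. Here the input string passes through exactly one $X$-slot, and I would track where that $X$-slot sits relative to the inserted cochains $s\psi_1,\dots,s\psi_k$. Exactly one inserted $\psi_i$ must absorb the $X$-slot, so it contributes through its $X$-component $\psi_{i,3}$; every $\psi_j$ to its left sits in the all-$\aA$ region and hence contributes through $\psi_{j,1}$, while every $\psi_j$ to its right sits in the all-$\bB$ region and contributes through $\psi_{j,2}$. Summing over which index $i$ carries the bimodule slot gives
\[
(\phi\{\psi_1,\cdots,\psi_k\})_3=\sum_{i=1}^k \phi_3\{\psi_{1,1},\cdots,\psi_{i-1,1},\psi_{i,3},\psi_{i+1,2},\cdots,\psi_{k,2}\},
\]
which is the asserted formula.

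The main thing to be careful about — and what I expect to be the only real obstacle — is the bookkeeping of Koszul signs, since the brace operation and the isomorphism $D(X)\cong s^{-1}C(X)$ carry shift-induced signs. However, the lemma as stated records no extra signs beyond those implicit in each component brace, so I anticipate that the signs produced by the shift functor $s$ on the inserted cochains are identical on both sides of each component equality and therefore cancel, just as they did in the cup-product computation. Consequently I would present the argument compactly: state the brace formula, observe that the componentwise action is dictated by which region of the $\tT_X$-string each argument occupies, and read off the three formulas, leaving the routine sign verification to the reader as was done for Lemma~\ref{Lem-CupProd-2x2}.
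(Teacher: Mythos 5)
Your treatment of the first two components is correct, and it matches the paper exactly (the paper's entire proof of this lemma is the sentence ``It is clear that $\ldots$'' preceding the statement in Appendix~\ref{Appendix-ThetaA}). But the step you use for the third component --- ``Exactly one inserted $\psi_i$ must absorb the $X$-slot'' --- is false, and this is a genuine gap, not a matter of sign bookkeeping. In the brace $\phi\{\psi_1,\cdots,\psi_k\}=\sum\phi(1^{\otimes i_0}\otimes s\psi_1\otimes 1^{\otimes i_1}\otimes\cdots\otimes s\psi_k\otimes 1^{\otimes i_k})$ the identity slots pass inputs directly to $\phi$, so the unique $X$-element of the input string need not be consumed by any $\psi_i$ at all: it can pass through an identity slot straight into the $X$-input of $\phi_3$, in which case every $\psi_j$ acts through $\psi_{j,1}$ or $\psi_{j,2}$ and no third component of any $\psi_j$ occurs. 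Already for $\phi\in C^2(\tT_X)$ and $\psi\in C^1(\tT_X)$ one has, up to signs, $\phi\{\psi\}=\phi(s\psi\otimes 1)+\phi(1\otimes s\psi)$, and evaluation on the input $sa\otimes sx$ gives
\[
(\phi\{\psi\})_3(sa\otimes sx)=\pm\,\phi_3\bigl(s\psi_1(sa)\otimes sx\bigr)\pm\,\phi_3\bigl(sa\otimes s\psi_3(sx)\bigr),
\]
whereas the claimed formula reduces for $k=1$ to $\phi_3\{\psi_3\}$, which produces only the second summand; the first summand involves $\psi_1$, not $\psi_3$, so no reading of $\phi_3\{\psi_3\}$ can account for it. The correct decomposition is
\[
(\phi\{\psi_1,\cdots,\psi_k\})_3=\sum_{i=1}^{k}\phi_3\{\psi_{1,1},\cdots,\psi_{i-1,1},\psi_{i,3},\psi_{i+1,2},\cdots,\psi_{k,2}\}
+\sum_{p=0}^{k}\phi_3\{\psi_{1,1},\cdots,\psi_{p,1},\psi_{p+1,2},\cdots,\psi_{k,2}\},
\]
the second sum collecting the pass-through terms, in which the first $p$ cochains are inserted via their $C(\aA)$-components to the left of the $X$-slot and the remaining ones via their $C(\bB)$-components to the right of it.

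Two remarks on how this sits relative to the paper. First, the paper asserts exactly the formula you derived, with no more justification than yours, so you have faithfully reconstructed its reasoning --- including what appears to be the same oversight; the statement as printed is tenable only if the mixed-brace notation carries an unstated convention that silently includes the pass-through terms. Second, the reason your ``template'' transfer from Lemma~\ref{Lem-CupProd-2x2} breaks is structural rather than sign-related: in $\phi\cup\psi$ the two factors jointly consume every input, so the type of each input forces a unique component and the cup-product lemma is correct as stated; the brace has identity slots, and that is precisely where the extra terms enter. The downstream application is not endangered: in checking that $\theta_\aA$ is compatible with braces, the pass-through terms correspond exactly to the insertions in $\kappa_\aA(\phi\{\psi_1,\cdots,\psi_k\})$ in which no $\psi_j$ consumes the marked element, so the identity $\theta_\aA(\phi)\{\theta_\aA\psi_1,\cdots,\theta_\aA\psi_k\}=\theta_\aA(\phi\{\psi_1,\cdots,\psi_k\})$ survives once both sides are expanded with the corrected formula; but a proof of the lemma as literally stated is not possible, and your argument fails at the quoted step.
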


\medspace

Now we can give a proof of Lemma~\ref{Lem-ThetaA}.

\begin{lemma} \label{Lem-ThetaA-BInfQis} {\rm (=Lemma~\ref{Lem-ThetaA})}
	Let $\aA$ be a small dg category. Then the graded linear map
	$\theta_\aA=(1_{C(\aA)}, 1_{C(\aA)}, \kappa_\aA)^T : C(\aA) \rightarrow C(\tT_{ I_\aA})$
	is a $B_\infty$-quasi-isomorphism. Moreover, the $B_\infty$-morphisms $\iota_1^*,\iota_2^*: C(\tT_{ I_\aA})\to C(\aA)$ induced by the fully faithful dg functors $\iota_1,\iota_2: \aA\hookrightarrow\tT_{ I_\aA}$ are $B_\infty$-quasi-isomorphisms in $B_\infty$ and equal in ${\rm Ho} B_\infty$.
\end{lemma}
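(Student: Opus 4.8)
The plan is to exploit the explicit description, recorded at the start of this appendix, of $C(\tT_{I_\aA})$ as the brace $B_\infty$-algebra with underlying graded space $C(\aA)\oplus C(\aA)\oplus D(I_\aA)$ and lower-triangular differential. Since $\theta_\aA$ is given by a single degree-$0$ matrix, it is strict as an $A_\infty$-morphism, i.e. only its linear component $(\theta_\aA)_1=\theta_\aA$ is nonzero. Because both source and target are \emph{brace} $B_\infty$-algebras, their only nontrivial structure maps are $m_1$, the cup product $m_2$, and the braces $m_{1,q}$; writing out the $B_\infty$-morphism identities for a strict $f$ shows they collapse to $f_1m_{i,j}=m'_{i,j}(f_1^{\otimes i}\otimes f_1^{\otimes j})$ for all $i,j$, which for brace algebras means exactly that $\theta_\aA$ must be a chain map and be compatible with $m_2$ and with every $m_{1,q}$. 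So the proof reduces to these three checks plus a quasi-isomorphism argument.

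First I would check the chain-map condition. Expanding $\delta_{C(\tT_{I_\aA})}\theta_\aA$ and $\theta_\aA\delta_{C(\aA)}$ with the matrix form of the differential, the first two components agree automatically, and the third reduces to the single identity
$$\delta_{D(I_\aA)}\kappa_\aA-\kappa_\aA\delta_{C(\aA)}=\tilde\beta_{I_\aA}-\tilde\alpha_{I_\aA}.$$
I would verify this by feeding an arbitrary $\phi\in C^n(\aA)$ and an elementary tensor $sa_{1,l}\otimes sx\otimes sb_{1,m}$ into both sides, using the defining formula $(\kappa_\aA\phi)(sa_{1,l}\otimes sx\otimes sb_{1,m})=\phi(sa_{1,l}\otimes sx\otimes sb_{1,m})$ together with the explicit internal and external differentials of $D(I_\aA)$ and $C(\aA)$ and the formulas for the degree-$1$ actions $\tilde\alpha_{I_\aA},\tilde\beta_{I_\aA}$; the terms in $\delta_{D(I_\aA)}$ that multiply $x$ on the left or right are precisely the ones \emph{absent} from $\kappa_\aA\delta_{C(\aA)}$, and they reassemble into $-\tilde\alpha_{I_\aA}$ and $\tilde\beta_{I_\aA}$. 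Next, by Lemma~\ref{Lem-CupProd-2x2} the multiplicativity $\theta_\aA(\phi)\cup\theta_\aA(\psi)=\theta_\aA(\phi\cup\psi)$ collapses (on its only nontrivial, third component) to $\kappa_\aA(\phi\cup\psi)=\phi\cup\kappa_\aA\psi+\kappa_\aA\phi\cup\psi$, and by Lemma~\ref{Lem-Brace-2x2} the brace compatibility collapses to $\kappa_\aA(\phi\{\psi_1,\dots,\psi_k\})=\sum_{i=1}^k\kappa_\aA\phi\{\psi_1,\dots,\psi_{i-1},\kappa_\aA\psi_i,\psi_{i+1},\dots,\psi_k\}$; both follow immediately from the defining formula for $\kappa_\aA$ (the module slot may fall into any one of the bracketed inputs) and the definitions of the cup and brace operations.

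It then remains to prove $\theta_\aA$ is a quasi-isomorphism. Here I would observe that $\iota_1^*:C(\tT_{I_\aA})\to C(\aA)$, being restriction along the inclusion of the top-left copy of $\aA$, is the projection onto the first summand, so $\iota_1^*\theta_\aA=1_{C(\aA)}$. Since $\iota_1^*$ is already known to be a quasi-isomorphism (it is a $B_\infty$-morphism by Keller's restriction result, and a quasi-isomorphism by Lemma~\ref{Lem-Homotopy-Bicart}, as $\alpha_{I_\aA}$ and $\beta_{I_\aA}$ are quasi-isomorphisms), the $2$-out-of-$3$ property forces $\theta_\aA$ to be a quasi-isomorphism, hence a $B_\infty$-quasi-isomorphism.

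For the ``Moreover'' part, the maps $\iota_1^*,\iota_2^*$ are (strict) $B_\infty$-morphisms by restriction along the fully faithful $\iota_1,\iota_2$, and they are quasi-isomorphisms because $\iota_1^*\theta_\aA=1_{C(\aA)}=\iota_2^*\theta_\aA$ with $\theta_\aA$ a quasi-isomorphism. Finally, in $\Ho B_\infty$ the weak equivalence $\theta_\aA$ becomes invertible, so cancelling it in the equalities $\iota_1^*\theta_\aA=1_{C(\aA)}=\iota_2^*\theta_\aA$ gives $\iota_1^*=\theta_\aA^{-1}=\iota_2^*$ in $\Ho B_\infty$. The main obstacle I anticipate is the sign-careful verification of the chain-map identity and of the brace compatibility: the shift conventions, the degree-$1$ actions $\tilde\alpha_{I_\aA},\tilde\beta_{I_\aA}$, and the multiple inputs appearing in the braces make the sign bookkeeping delicate, even though the structural reductions above render each individual check mechanical.
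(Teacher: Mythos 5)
Your proposal is correct and follows essentially the same route as the paper's own proof: reduce the $B_\infty$-morphism conditions for the strict map $\theta_\aA$ to the chain-map identity $\tilde{\alpha}_{I_\aA}-\tilde{\beta}_{I_\aA}+\delta_{D(I_\aA)}\kappa_\aA=\kappa_\aA\delta_{C(\aA)}$ together with the cup-product and brace compatibilities of $\kappa_\aA$ (via Lemmas \ref{Lem-CupProd-2x2} and \ref{Lem-Brace-2x2}), then deduce the quasi-isomorphism property from $\iota_1^*\theta_\aA=1_{C(\aA)}=\iota_2^*\theta_\aA$ using that $\iota_1^*$ is a quasi-isomorphism by Lemma \ref{Lem-Homotopy-Bicart}, and cancel $\theta_\aA$ in $\Ho B_\infty$ to get $\iota_1^*=\iota_2^*$. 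The only cosmetic difference is that you make the strictness reduction (a strict morphism of brace $B_\infty$-algebras need only respect $m_1$, $m_2$ and the braces $m_{1,q}$) explicit, where the paper states it implicitly by checking "morphism of dg algebras" plus brace compatibility.
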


\begin{proof}
	(1) $\theta_\aA$ is a morphism of dg algebras:
	
	(1.1) $\theta_\aA$ is a morphism of complexes: 
	We need to prove $\delta_{C(\tT_{I_\aA})}\theta_\aA=\theta_\aA\delta_{C(\aA)}$.
	Since
	$$\delta_{C(\tT_{I_\aA})}\theta_\aA=
	\begin{pmatrix}
		\delta_{C(\aA)} & & \\
		0 & \delta_{C(\aA)} & \\
		\tilde{\alpha}_{I_\aA} & -\tilde{\beta}_{I_\aA} & \delta_{D(I_\aA)}
	\end{pmatrix}
	\begin{pmatrix}
		1_{C(\aA)} \\
		1_{C(\aA)} \\
		\kappa_\aA
	\end{pmatrix} =
	\begin{pmatrix}
		\delta_{C(\aA)} \\
		\delta_{C(\aA)} \\
		\tilde{\alpha}_{I_\aA}-\tilde{\beta}_{I_\aA}+\delta_{D(I_\aA)}\kappa_\aA
	\end{pmatrix}$$
	and
	$$\theta_\aA\delta_{C(\aA)}=
	\begin{pmatrix}
		\delta_{C(\aA)} \\
		\delta_{C(\aA)} \\
		\kappa_\aA\delta_{C(\aA)}
	\end{pmatrix},$$
	it suffices to show $\tilde{\alpha}_{I_\aA}-\tilde{\beta}_{I_\aA}+\delta_{D(I_\aA)}\kappa_\aA=\kappa_\aA\delta_{C(\aA)}$. 
	Indeed, it is clear that $(\delta_{D(I_\aA)})_{\nin}\kappa_\aA=\kappa_\aA(\delta_{C(\aA)})_{\nin}$. 
	Moreover, it is easy to check
	$\tilde{\alpha}_{I_\aA}-\tilde{\beta}_{I_\aA}+(\delta_{D(I_\aA)})_{\ex}\kappa_\aA=\kappa_\aA(\delta_{C(\aA)})_{\ex}.$
		
	(1.2) $\theta_\aA$ is compatible with cup products: For all $\phi\in C^p(\aA)$ and $\psi\in C^q(\aA)$, we need to prove $\theta_\aA(\phi\cup\psi)=\theta_\aA(\phi)\cup\theta_\aA(\psi)$. By Lemma \ref{Lem-CupProd-2x2}, we have
	$$\theta_\aA(\phi)\cup\theta_\aA(\psi)=\begin{pmatrix}
		\phi \\
		\phi \\
		\kappa_\aA\phi
	\end{pmatrix}\cup\begin{pmatrix}
		\psi \\
		\psi \\
		\kappa_\aA\psi
	\end{pmatrix}=\begin{pmatrix}
		\phi\cup\psi \\
		\phi\cup\psi \\
		\kappa_\aA\phi\cup\psi+\phi\cup\kappa_\aA\psi
	\end{pmatrix}$$
	and 
	$$\theta_\aA(\phi\cup\psi)=
	\begin{pmatrix}
		\phi\cup\psi \\
		\phi\cup\psi \\
		\kappa_\aA(\phi\cup\psi)
	\end{pmatrix}.$$
	So it suffices to show
	$\kappa_\aA\phi\cup\psi+\phi\cup\kappa_\aA\psi=\kappa_\aA(\phi\cup\psi).$
	Indeed, for all $sa_{1,l}\otimes sa'\otimes sa''_{1,m}\in s\aA(A_{l-1},A_l)\otimes\cdots\otimes s\aA(A_0,A_1)\otimes s\aA(A'_m,A_0)\otimes s\aA(A'_{m-1},A'_m)\otimes\cdots\otimes s\aA(A'_0,A'_1)$ with $l+m=p+q-1, l,m\ge 0$, we have
	\[
	\kappa_\aA(\phi\cup\psi)(sa_{1,l}\otimes sa'\otimes sa''_{1,m})=(\phi\cup\psi)(sa_{1,l}\otimes sa'\otimes sa''_{1,m}),
	\]
	and
	\[
	\begin{aligned}
		(\kappa_\aA\phi\cup\psi)(sa_{1,l}\otimes sa'\otimes sa''_{1,m}) & =\begin{cases}
			0, & l\ge p; \\
			(\phi\cup\psi)(sa_{1,l}\otimes sa'\otimes sa''_{1,m}), & l<p.
		\end{cases} \\
		(\phi\cup\kappa_\aA\psi)(sa_{1,l}\otimes sa'\otimes sa''_{1,m}) & =\begin{cases}
			(\phi\cup\psi)(sa_{1,l}\otimes sa'\otimes sa''_{1,m}), & l\ge p; \\
			0, & l<p.
		\end{cases}		
	\end{aligned}
	\]
	Thus $\kappa_\aA\phi\cup\psi+\phi\cup\kappa_\aA\psi=\kappa_\aA(\phi\cup\psi)$.
	
	\medspace
	
	(2) $\theta_\aA$ is compatible with brace operations: 
	
	For all $\phi\in C^n(\aA), \psi_i\in C^{n_i}(\aA), 1\le i\le k$, by Lemma \ref{Lem-Brace-2x2}, we have
	\[
	\theta_\aA\phi\{\theta_\aA\psi_1,\cdots,\theta_\aA\psi_k\} =\begin{pmatrix}
		\phi\{\psi_1,\cdots,\psi_k\} \\
		\phi\{\psi_1,\cdots,\psi_k\} \\
		\sum\limits_{i=1}^k (\kappa_\aA\phi)\{\psi_1,\cdots,\psi_{i-1},\kappa_\aA\psi_i,\psi_{i+1},\cdots,\psi_k\}
	\end{pmatrix}
	\]
	and
	\[
	\theta_\aA(\phi\{\psi_1,\cdots,\psi_k\}) =\begin{pmatrix}
		\phi\{\psi_1,\cdots,\psi_k\} \\
		\phi\{\psi_1,\cdots,\psi_k\} \\
		\kappa_\aA(\phi\{\psi_1,\cdots,\psi_k\})
	\end{pmatrix}.
	\]
    So it is enough to show
	\[
	\sum\limits_{i=1}^k (\kappa_\aA\phi)\{\psi_1,\cdots,\psi_{i-1},\kappa_\aA\psi_i,\psi_{i+1},\cdots,\psi_k\} = \kappa_\aA(\phi\{\psi_1,\cdots,\psi_k\}).
	\]
	This is clear, since as maps in $D(I_\aA)$, both sides have the same actions on each element.
	
	(3) $\theta_\aA$ is a quasi-isomorphism and $\iota_1^*=\iota_2^*$ in ${\rm Ho}B_\infty$:
	It is clear that the following diagram 
	\[
	\begin{tikzcd}[column sep=large]
		& C(\aA) \arrow[dl,equal] \arrow[d,"\theta_\aA"] \arrow[dr,equal] & \\
		C(\aA) & C(\tT_{I_\aA}) \arrow[l,two heads,"\iota_1^*"'] \arrow[r,two heads,"\iota_2^*"] & C(\aA) \\
	\end{tikzcd}	
	\]	
    is commutative, i.e., $\iota_1^*\theta_\aA=1_{C(\aA)}=\iota_2^*\theta_\aA$. 
    Since $\iota_1^*$ (or $\iota_2^*$) is a quasi-isomorphism, $\theta_\aA$ is a quasi-isomorphism as well. Furthermore, $\iota_1^*=\iota_2^*$ in ${\rm Ho} B_\infty$.
\end{proof}

\section{$\theta_\aA^2$ is a $B_\infty$-quasi-isomorphism}\label{Appendix-ThetaA^2}

In this appendix, we will give a proof of Lemma~\ref{Lem-ThetaA^2}.

Let $\aA,\bB$ and $\cC$ be small dg categories, $X$ a dg $\aA$-$\bB$-bimodule, and $Y$ a dg $\bB$-$\cC$-bimodule. 
The underlying graded vector space of the Hochschild cochain complex $C(\tT_{X,Y})$ of upper triangular matrix dg category $\tT_{X,Y}=\begin{pmatrix}
	\aA&X&X\otimes_\bB Y\\&\bB&Y\\ &&\cC
\end{pmatrix}$ is
$C(\aA)\oplus C(\bB)\oplus C(\cC) \oplus D(X)\oplus D(Y)\oplus D(X\otimes_\bB Y) \oplus D(X|Y)$. 

First, we analyze the cup product on $C(\tT_{X,Y})$. 
Similar to Lemma \ref{Lem-CupProd-2x2}, we have the following lemma.

\begin{lemma}\label{Lem-CupProd-3x3}
	Let $\aA,\bB$ and $\cC$ be small dg categories, $X$ a dg $\aA$-$\bB$-bimodule, and $Y$ a dg $\bB$-$\cC$-bimodule. Then for all $\phi=(\phi_1,\cdots,\phi_7)^\mT\in C^n(\tT_{X,Y}),\psi=(\psi_1,\cdots,\psi_7)^\mT\in C^{n_i}(\tT_{X,Y})$, the cup product
	\[
	\phi\cup\psi=\begin{pmatrix}
		\phi_1\cup\psi_1 \\
		\phi_2\cup\psi_2 \\
		\phi_3\cup\psi_3 \\
		\phi_1\cup\psi_4+\phi_4\cup\psi_2 \\
		\phi_2\cup\psi_5+\phi_5\cup\psi_3 \\
		\phi_1\cup\psi_6+\phi_6\cup\psi_3 \\
		\phi_1\cup\psi_7+\phi_4\cup\psi_5+\phi_7\cup\psi_3
	\end{pmatrix}.
	\]
\end{lemma}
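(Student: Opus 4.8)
The plan is to carry out a direct computation, exactly parallel to the proof of Lemma~\ref{Lem-CupProd-2x2}, by evaluating $\phi\cup\psi$ on an arbitrary generator of $C(\tT_{X,Y})$ and reading off each of the seven components of the result. Recall that the cup product is defined by $(\phi\cup\psi)(sa_{1,p+q})=(-1)^{|sa_{1,p}||\psi|}\phi(sa_{1,p})\psi(sa_{p+1,p+q})$, so it splits the input tensor word at a single position $p$, applies $\phi$ to the left block and $\psi$ to the right block, and composes (multiplies) the two outputs in $\tT_{X,Y}$. Since the Hom spaces of $\tT_{X,Y}$ are nonzero only in the upper triangular pattern, each tensor word indexing a component of $C(\tT_{X,Y})$ is of exactly one of seven types, according to whether its objects all lie in $\aA$, all in $\bB$, all in $\cC$, or cross one of the boundaries $\bB\to\aA$ (type $D(X)$), $\cC\to\bB$ (type $D(Y)$), $\cC\to\aA$ with a single $X\otimes_\bB Y$ entry (type $D(X\otimes_\bB Y)$), or $\cC\to\bB\to\aA$ with separate $X$ and $Y$ entries (type $D(X|Y)$).

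First I would fix the output component and analyze where the cutting point $p$ can fall relative to the boundaries forced by that type. For the three diagonal components $C(\aA), C(\bB), C(\cC)$ the word lies entirely in one category, so only the matching diagonal components of $\phi$ and $\psi$ can contribute, yielding $\phi_i\cup\psi_i$ for $i=1,2,3$. For the off-diagonal components the cut either lands strictly to the left of the bimodule entry (so $\phi$ sees a diagonal block and $\psi$ sees the crossing block) or strictly to its right (so $\phi$ sees the crossing block and $\psi$ sees a diagonal block); this produces the two-term formulas $\phi_1\cup\psi_4+\phi_4\cup\psi_2$ for $D(X)$, and analogously $\phi_2\cup\psi_5+\phi_5\cup\psi_3$ for $D(Y)$ and $\phi_1\cup\psi_6+\phi_6\cup\psi_3$ for $D(X\otimes_\bB Y)$. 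The only genuinely new case is the seventh component $D(X|Y)$: here the word contains two distinct bimodule entries $sX$ and $sY$, so the cut can fall in the $\aA$-block, between the two bimodule entries, or in the $\cC$-block, giving the three-term expression $\phi_1\cup\psi_7+\phi_4\cup\psi_5+\phi_7\cup\psi_3$.

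The last and most delicate step is to check that the prefactor $(-1)^{|sa_{1,p}||\psi|}$ coming from the definition reduces in each case to the sign implicit in the cup products appearing on the target $D$-components. This requires keeping track of the shift conventions: the summands $D(X), D(Y), D(X\otimes_\bB Y)$ sit inside $C(\tT_{X,Y})$ with a single degree shift, whereas $D(X|Y)$ carries the double shift coming from the shift morphism $\eta_{X|Y}$, so when the total degree $|\psi|$ is rewritten in terms of the degrees of the individual components $\psi_j$ one must account for the extra shift units correctly. I expect this sign verification to be the main obstacle, exactly as in the $2\times 2$ case, where the matching of $|\psi||sa_{1,p}|$ with the component signs was the crux; once the bookkeeping is organized so that the degree of the bimodule slot $sX$ or $sY$ is absorbed into the shift, each of the seven identities follows by the same direct comparison used in Lemma~\ref{Lem-CupProd-2x2}.
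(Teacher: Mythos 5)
Your proposal is correct and is essentially the paper's own argument: the paper disposes of the $3\times 3$ case by exactly the splitting-position case analysis of Lemma~\ref{Lem-CupProd-2x2} (it says only ``Similar to Lemma~\ref{Lem-CupProd-2x2}''), and your enumeration of the seven word types, including the three admissible cut positions for the $D(X|Y)$ component giving $\phi_1\cup\psi_7+\phi_4\cup\psi_5+\phi_7\cup\psi_3$, reproduces that computation. One reassurance: the sign verification you flag as the main obstacle is in fact immediate, because every component $\psi_j$ of a homogeneous $\psi$ has the same internal degree $|\psi|$; the shifts in $D(X)$, $D(Y)$, $D(X\otimes_\bB Y)$ and the double shift in $D(X|Y)$ only affect the weight (arity) bookkeeping, which never enters the sign $(-1)^{|sa_{1,p}||\psi|}$.
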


Next, we analyze the brace operation on $C(\tT_{X,Y})$. Similar to Lemma \ref{Lem-Brace-2x2}, we have the following lemma.

\begin{lemma}\label{Lem-Brace-3x3}
	Let $\aA,\bB$ and $\cC$ be small dg categories, $X$ a dg $\aA$-$\bB$-bimodule, and $Y$ a dg $\bB$-$\cC$-bimodule. Then for all $\phi=(\phi_1,\cdots,\phi_7)^\mT\in C^n(\tT_{X,Y}),\psi_i=(\psi_{i,1},\cdots,\psi_{i,7})^\mT\in C^{n_i}(\tT_{X,Y}), 1\le i\le k$, the brace operation
	\[
	\phi\{\psi_1,\cdots,\psi_k\}=\begin{pmatrix}
		\phi_1\{\psi_{1,1},\psi_{2,1},\cdots,\psi_{k,1}\} \\
		\phi_2\{\psi_{1,2},\psi_{2,2},\cdots,\psi_{k,2}\} \\
		\phi_3\{\psi_{1,3},\psi_{2,3},\cdots,\psi_{k,3}\} \\
		\sum\limits_{i=1}^k \phi_4\{\psi_{1,1},\cdots,\psi_{i-1,1},\psi_{i,4}, \psi_{i+1,2}, \cdots,\psi_{k,2}\} \\
		\sum\limits_{i=1}^k \phi_5\{\psi_{1,2},\cdots,\psi_{i-1,2},\psi_{i,5}, \psi_{i+1,3}, \cdots,\psi_{k,3}\} \\
		\sum\limits_{i=1}^k \phi_6\{\psi_{1,1},\cdots,\psi_{i-1,1},\psi_{i,6}, \psi_{i+1,3}, \cdots,\psi_{k,3}\} \\
		(\phi\{\psi_1,\cdots,\psi_k\})_7
	\end{pmatrix},
	\]
	where $(\phi\{\psi_1,\cdots,\psi_k\})_7=\sum\limits_{1\le i<j\le k} \phi_7\{\psi_{1,1},\cdots,\psi_{i-1,1},\psi_{i,4}, \psi_{i+1,2},\cdots,\psi_{j-1,2},\psi_{j,5}, \psi_{j+1,3},\linebreak \cdots, \psi_{k,3}\} + \sum\limits_{i=1}^k \phi_7\{\psi_{1,1},\cdots,\psi_{i-1,1},\psi_{i,7}, \psi_{i+1,3}, \cdots,\psi_{k,3}\}$.
\end{lemma}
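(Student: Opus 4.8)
The statement to prove is Lemma~\ref{Lem-Brace-3x3}, which gives the explicit block decomposition of the brace operation on $C(\tT_{X,Y})$ with respect to the direct sum splitting
\[
C(\tT_{X,Y}) = C(\aA)\oplus C(\bB)\oplus C(\cC)\oplus D(X)\oplus D(Y)\oplus D(X\otimes_\bB Y)\oplus D(X|Y).
\]

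\textbf{Overall approach.} The plan is to follow exactly the pattern already used in Appendix~\ref{Appendix-ThetaA} for the $2\times 2$ case (Lemma~\ref{Lem-Brace-2x2}), but now bookkeeping the seven summands instead of three. The brace operation $\phi\{\psi_1,\cdots,\psi_k\}$ is defined purely combinatorially as a sum over insertions of the shifted cochains $s\psi_i$ into the slots of $\phi$. Since the $B_\infty$-structure on $C(\tT_{X,Y})$ comes from restricting the brace operation of the big dg category $\tT_{X,Y}$, and since each direct summand corresponds to a specific pattern of object-types (how many indices lie in $\aA$, $\bB$, $\cC$ and where the ``bimodule entries'' $x\in X$, $y\in Y$ appear), the entire computation reduces to tracking which component of $\phi$ and of each $\psi_i$ can contribute to each target component. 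The key point is that a composite of morphisms in $\tT_{X,Y}$ is nonzero only when the object-types match up, so only certain combinations of summands survive.

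\textbf{Key steps, in order.} First I would fix notation for the seven ``types'' of tuples appearing in the arguments of a cochain in $C(\tT_{X,Y})$: type $1,2,3$ are the pure $\aA$-, $\bB$-, $\cC$-strings (contributing to $C(\aA),C(\bB),C(\cC)$); types $4,5,6$ carry a single bimodule entry ($sX$, $sY$, or $s(X\otimes_\bB Y)$); and type $7$ carries two bimodule entries $sx$ and $sy$ separated by a $\bB$-string (contributing to $D(X|Y)$). Second, I would evaluate $\phi\{\psi_1,\cdots,\psi_k\}$ on a fixed test argument of each of the seven types and observe that the outer cochain $\phi$ and the inserted $\psi_i$'s must be chosen so that, reading left to right, the object-types are compatible. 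For components $1,2,3$ this forces $\phi$ and every $\psi_i$ to be of the matching pure type, giving the first three rows directly. For components $4,5,6$, exactly one inserted $\psi_i$ must contain the bimodule entry (hence be of the corresponding type $4,5,6$), while the $\psi$'s inserted to its left lie in the earlier pure type and those to its right in the later pure type; this produces the single sum over $i$ in rows four through six, matching the $2\times 2$ precedent. Third, for component $7$ (the $D(X|Y)$ slot) there are two genuinely new cases: either the two bimodule entries $sx,sy$ are carried by two distinct inserted cochains $\psi_i$ (type $4$) and $\psi_j$ (type $5$) with $i<j$, or they are carried by a single inserted cochain $\psi_i$ of type $7$; the intervening and flanking insertions must again respect the type ordering ($\aA$ before the first entry, $\bB$ between them, $\cC$ after), which yields precisely the double sum plus single sum in $(\phi\{\psi_1,\cdots,\psi_k\})_7$.

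\textbf{Expected main obstacle.} The combinatorics itself is routine once the type-matching principle is stated; the real care is in verifying that no sign discrepancies or spurious cross-terms arise, i.e., that the ordering constraints $i<j$ in the double sum and the placement of pure-type arguments on each side are forced and exhaustive. The most delicate step will be the component-$7$ analysis, where I must check that when the two bimodule entries come from two separate cochains the outer $\phi$ must itself be of type $7$ (so that it has a single $X$-slot and a single $Y$-slot into which $\psi_{i,4}$ and $\psi_{j,5}$ are respectively threaded), and confirm that all other conceivable distributions of the two bimodule entries vanish because the composition in $\tT_{X,Y}$ would be zero. I expect the signs to work out automatically, exactly as in Lemma~\ref{Lem-Brace-2x2}, since the brace operation inserts \emph{shifted} cochains and the restriction to each summand does not introduce extra Koszul signs beyond those already present; thus, as in the $2\times 2$ case, the verification amounts to observing that ``both sides have the same action on each element,'' and I would conclude with that remark rather than grinding through the explicit sign computation.
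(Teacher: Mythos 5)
Your overall strategy (componentwise type-tracking of the brace insertions, ending with ``both sides have the same action on each element'') is the same as the paper's, which disposes of the $3\times 3$ case only with the remark that it is similar to Lemma~\ref{Lem-Brace-2x2}; the approach is not the issue. The issue is that two of your concrete combinatorial claims are false, and they occur exactly at the delicate points.

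First, it is not true that ``exactly one inserted $\psi_i$ must contain the bimodule entry.'' In the brace $\phi(1^{\otimes i_0}\otimes s\psi_1\otimes\cdots\otimes s\psi_k\otimes 1^{\otimes i_k})$ the identity tensor factors pass arguments directly to the outer cochain, so the distinguished entry $sx$ (resp.\ $sy$, $s(x\otimes_\bB y)$) can land in the $X$-slot (resp.\ $Y$-, $X\otimes_\bB Y$-slot) of $\phi_4,\dots,\phi_7$ without being absorbed by any inserted cochain. These direct-pass terms are nonzero in general: take $k=1$ and $\psi_1$ with only $\psi_{1,1}\neq 0$; then $(\phi\{\psi_1\})_4$ contains the terms $\phi_4(\cdots\otimes s\psi_{1,1}(\cdots)\otimes\cdots\otimes sx\otimes\cdots)$, which your case analysis never produces, and which depend on $\psi_{1,1}$ while the displayed right-hand side depends only on $\psi_{1,4}$. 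So your exhaustiveness claim for rows $4$ through $7$ fails; you must either show how such terms are encoded in the right-hand-side notation --- which requires saying precisely what the mixed brace symbols $\phi_4\{\psi_{1,1},\dots,\psi_{i,4},\dots,\psi_{k,2}\}$ mean, something your write-up leaves as implicit as the paper does --- or accept that the decomposition you would derive is missing summands.

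Second, in the component-$7$ case where a single $\psi_i$ absorbs both $sx$ and $sy$, the output $\psi_{i,7}(\cdots)$ lies in $X\otimes_\bB Y$, i.e.\ it is a morphism of ``type $6$'' in $\tT_{X,Y}$; the outer cochain then receives a type-$6$ string and acts through $\phi_6\in D(X\otimes_\bB Y)$, not through $\phi_7$. Indeed $\phi_7\in D(X|Y)$ has no slot accepting an $X\otimes_\bB Y$-element, and feeding a representative $x'\otimes y'$ into its separate $X$- and $Y$-slots is not well defined over $\otimes_\bB$. This coupling of component $6$ into component $7$ is forced by the paper's own differential matrix, where $(\delta_{C(\tT_{X,Y})}\phi)_7$ contains the term $\tilde{\mu}^1_{X|Y}\phi_6$. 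So honest type-tracking yields $\sum_i\phi_6\{\psi_{1,1},\dots,\psi_{i,7},\dots,\psi_{k,3}\}$ for this case, not the $\phi_7\{\dots,\psi_{i,7},\dots\}$ you claim to recover ``precisely.'' As written, your proof would either silently misidentify which component of $\phi$ acts, or, if carried out honestly, produce a formula that disagrees with the one you set out to prove; either way the argument does not go through until you pin down the meaning of the mixed braces and redo this case.
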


Now we can give a proof of Lemma~\ref{Lem-ThetaA^2}.

\begin{lemma} \label{Lem-ThetaA^2-BInfQis} {\rm(=Lemma~\ref{Lem-ThetaA^2})}
	Let $\aA$ be a small dg category. Then the graded linear map
	\[
	\theta_\aA^2=(1_{C(\aA)}, 1_{C(\aA)}, 1_{C(\aA)}, \kappa_\aA, \kappa_\aA, \kappa_\aA, \kappa_\aA^2)^{\mT}:C(\aA) \rightarrow C(\tT_{ I_\aA,I_\aA})
	\]
	is a $B_\infty$-quasi-isomorphism. Moreover, the $B_\infty$-morphisms $\iota_1^*,\iota_2^*,\iota_3^*: C(\tT_{I_\aA,I_\aA})\to C(\aA)$ induced by the fully faithful dg functors $\iota_1,\iota_2,\iota_3: \aA\hookrightarrow\tT_{I_\aA,I_\aA}$ are $B_\infty$-quasi-isomorphisms in $B_\infty$ and equal in ${\rm Ho} B_\infty$.
\end{lemma}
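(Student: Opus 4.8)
The plan is to follow the template of the proof of Lemma~\ref{Lem-ThetaA} in Appendix~\ref{Appendix-ThetaA}, now feeding in the cup-product and brace formulae recorded in Lemma~\ref{Lem-CupProd-3x3} and Lemma~\ref{Lem-Brace-3x3}. First I would fix the identification of the underlying graded vector space $C(\tT_{I_\aA,I_\aA})=C(\aA)^{\oplus 3}\oplus D(I_\aA)^{\oplus 3}\oplus D(I_\aA|I_\aA)$ and recall that $\kappa_\aA:C(\aA)\to D(I_\aA)$ is an isomorphism of complexes while $\kappa_\aA^2:C(\aA)\to D(I_\aA|I_\aA)$ is a quasi-isomorphism. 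The verification that $\theta_\aA^2$ is a $B_\infty$-morphism then splits into three checks of exactly the kind carried out in Appendix~\ref{Appendix-ThetaA}: that $\theta_\aA^2$ is a morphism of complexes, that it is compatible with cup products, and that it is compatible with brace operations.

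For the complex structure I would multiply the $7\times 7$ differential matrix $\delta_{C(\tT_{I_\aA,I_\aA})}$ (the $X=Y=I_\aA$ specialization of $\delta_{C(\tT_{X,Y})}$) by the column $\theta_\aA^2$ and compare with $\theta_\aA^2\delta_{C(\aA)}$. The first three components are trivial, and the three $D(I_\aA)$-components (rows $4,5,6$, the last using $I_\aA\otimes_\aA I_\aA\cong I_\aA$) each collapse to the single identity $\tilde{\alpha}_{I_\aA}-\tilde{\beta}_{I_\aA}+\delta_{D(I_\aA)}\kappa_\aA=\kappa_\aA\delta_{C(\aA)}$ already established in Appendix~\ref{Appendix-ThetaA}. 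The only genuinely new relation lies in the seventh ($D(I_\aA|I_\aA)$) component, namely
\[
-\tilde{\alpha}_{I_\aA|I_\aA}\kappa_\aA-\tilde{\beta}_{I_\aA|I_\aA}\kappa_\aA+\tilde{\mu}^1_{I_\aA|I_\aA}\kappa_\aA+\delta_{D(I_\aA|I_\aA)}\kappa_\aA^2=\kappa_\aA^2\delta_{C(\aA)},
\]
which I would check by separating $\delta_{D(I_\aA|I_\aA)}$ into internal and external parts: the internal part matches $\kappa_\aA^2(\delta_{C(\aA)})_\nin$ at once, while the external part, together with the three operator terms, reproduces $\kappa_\aA^2(\delta_{C(\aA)})_\ex$ after a sign bookkeeping.

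For cup products, Lemma~\ref{Lem-CupProd-3x3} shows that $\theta_\aA^2(\phi)\cup\theta_\aA^2(\psi)$ and $\theta_\aA^2(\phi\cup\psi)$ agree in components $1$ through $6$ by the Appendix~\ref{Appendix-ThetaA} computation, so the whole statement reduces to the seventh-component identity
\[
\phi\cup\kappa_\aA^2\psi+\kappa_\aA\phi\cup\kappa_\aA\psi+\kappa_\aA^2\phi\cup\psi=\kappa_\aA^2(\phi\cup\psi),
\]
which I would verify by evaluating both sides on a general element of the relevant summand and splitting the cut point according to whether it falls before, between, or after the two inserted identity slots --- the three-term analogue of the two-term identity of Appendix~\ref{Appendix-ThetaA}. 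The brace compatibility is handled in the same spirit via Lemma~\ref{Lem-Brace-3x3}: all but the seventh component reduce to the earlier case, and the seventh amounts to
\[
\kappa_\aA^2\big(\phi\{\psi_1,\dots,\psi_k\}\big)=\sum_{1\le i<j\le k}(\kappa_\aA^2\phi)\{\dots,\kappa_\aA\psi_i,\dots,\kappa_\aA\psi_j,\dots\}+\sum_{i=1}^k(\kappa_\aA^2\phi)\{\dots,\kappa_\aA^2\psi_i,\dots\},
\]
which holds because, viewed as maps into $D(I_\aA|I_\aA)$, both sides have identical action on each argument (the two inserted $\kappa_\aA$'s mark precisely the two bar-positions recorded by $\kappa_\aA^2$).

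Finally, for the quasi-isomorphism and homotopy statements I would use the evident commuting triangles $\iota_i^*\theta_\aA^2=1_{C(\aA)}$ for $i=1,2,3$. The projections $\iota_i^*$ are quasi-isomorphisms by the same triangle/pullback reasoning as in the paragraph following Lemma~\ref{Lem-Homotopy-Bicart} together with Lemma~\ref{Lem-CTXY-bar}, applied iteratively to $\tT_{I_\aA,I_\aA}$ and using that $\alpha_{I_\aA}$ and $\beta_{I_\aA}$ are quasi-isomorphisms. By the two-out-of-three property, $\theta_\aA^2$ is then a quasi-isomorphism, and since each $\iota_i^*$ is a one-sided inverse of the quasi-isomorphism $\theta_\aA^2$, all three agree with $(\theta_\aA^2)^{-1}$ --- hence with one another --- in $\Ho B_\infty$. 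I expect the main obstacle to be purely the sign bookkeeping in the three seventh-component identities above: the external differential of $D(I_\aA|I_\aA)$ and the operator $\tilde{\mu}^1_{I_\aA|I_\aA}$ carry the intricate signs of the $3\times 3$ construction, so the delicate point is confirming that these signs conspire exactly as in the $2\times 2$ case, rather than introducing any conceptually new ingredient.
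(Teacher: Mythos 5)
Your verification that $\theta_\aA^2$ is a $B_\infty$-morphism coincides with the paper's proof: the same seven-component decomposition, the same reduction of components $1$--$6$ to the $2\times 2$ computations of Appendix~\ref{Appendix-ThetaA}, and the same three seventh-component identities for the differential, the cup product, and the braces (the paper likewise disposes of the brace identity by observing that both sides act identically on elements of $D(I_\aA|I_\aA)$).

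Where you genuinely diverge is in proving that the $\iota_i^*$ are quasi-isomorphisms. The paper does not use Lemma~\ref{Lem-CTXY-bar} here: it regards $\tT_{I_\aA,I_\aA}$ as the $2\times 2$ matrix category $\begin{pmatrix}\aA & X\\ & \tT_{I_\aA}\end{pmatrix}$ with $X=(I_\aA\ I_\aA\otimes_\aA I_\aA)\cong{}_\aA(\tT_{I_\aA})_{\tT_{I_\aA}}$, checks that $X$ is cofibrant over $\aA$ and that $X\otimes^{\bf L}_{\tT_{I_\aA}}-$ is fully faithful, so that $\beta_X$ is a quasi-isomorphism by Keller's criterion, and then reads off from Lemma~\ref{Lem-Homotopy-Bicart} that $\iota_1^*$ alone is a quasi-isomorphism; the same property for $\iota_2^*,\iota_3^*$ is only deduced afterwards from $\iota_i^*\theta_\aA^2=1_{C(\aA)}$ and two-out-of-three, once $\theta_\aA^2$ is known to be a quasi-isomorphism. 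Your route --- $C(\tT_{I_\aA,I_\aA})\xrightarrow{\sim}\bar{C}(\tT_{I_\aA,I_\aA})=C(\tT_{I_\aA})\times_{C(\aA)}C(\tT_{I_\aA})$ via Lemma~\ref{Lem-CTXY-bar}, then pullback-stability of surjective quasi-isomorphisms together with the $2\times 2$ result from the paragraph following Lemma~\ref{Lem-Homotopy-Bicart} --- is equally valid, since the cofibrancy hypothesis of Lemma~\ref{Lem-CTXY-bar} is satisfied by $I_\aA$, and it has the mild advantage of treating all three $\iota_i^*$ symmetrically from the start; the paper's argument buys independence from Lemma~\ref{Lem-CTXY-bar} at the cost of introducing the auxiliary bimodule $(I_\aA\ I_\aA\otimes_\aA I_\aA)$ and verifying Keller's fully-faithfulness hypothesis for it. Both arguments conclude identically: $\theta_\aA^2$ is a quasi-isomorphism by two-out-of-three, and all three $\iota_i^*$ equal $(\theta_\aA^2)^{-1}$, hence one another, in ${\rm Ho}B_\infty$.
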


\begin{proof}
	(1) $\theta_\aA^2$ is a morphism of dg algebras:
	
	(1.1) $\theta_\aA^2$ is a morphism of complexes: We need to prove $\delta_{C(\tT_{ I_\aA,I_\aA})}\theta_\aA^2=\theta^2_\aA\delta_{C(\aA)}$.
	Clearly, 	
	{\small \[
	\delta_{C(\tT_{ I_\aA,I_\aA})}\theta_\aA^2=\begin{pmatrix}
		\delta_{C(\aA)} \\
		\delta_{C(\aA)} \\
		\delta_{C(\aA)} \\
		\tilde{\alpha}_{I_\aA}-\tilde{\beta}_{I_\aA}+\delta_{D(I_\aA)}\kappa_\aA \\
		\tilde{\alpha}_{I_\aA}-\tilde{\beta}_{I_\aA}+\delta_{D(I_\aA)}\kappa_\aA \\
		\tilde{\alpha}_{I_\aA}-\tilde{\beta}_{I_\aA}+\delta_{D(I_\aA)}\kappa_\aA \\
		-\tilde{\alpha}_{I_\aA|I_\aA}\kappa_\aA-\tilde{\beta}_{I_\aA|I_\aA}\kappa_\aA+\tilde{\mu}_{I_\aA|I_\aA}^1\kappa_\aA+\delta_{D(I_\aA|I_\aA)}\kappa_\aA^2
	\end{pmatrix},\
	\theta^2_\aA\delta_{C(\aA)}=\begin{pmatrix}
		\delta_{C(\aA)} \\
		\delta_{C(\aA)} \\
		\delta_{C(\aA)} \\
		\kappa_\aA\delta_{C(\aA)} \\
		\kappa_\aA\delta_{C(\aA)} \\
		\kappa_\aA\delta_{C(\aA)} \\
		\kappa_\aA^2\delta_{C(\aA)}
	\end{pmatrix}.
	\]}
	 By the proof (1.1) of Lemma \ref{Lem-ThetaA-BInfQis}, we have  $\kappa_\aA\delta_{C(\aA)}=\tilde{\alpha}_{I_\aA}-\tilde{\beta}_{I_\aA}+\delta_{D(I_\aA)}\kappa_\aA$. So it suffices to show
	\[
	-\tilde{\alpha}_{I_\aA|I_\aA}\kappa_\aA-\tilde{\beta}_{I_\aA|I_\aA}\kappa_\aA+\tilde{\mu}_{I_\aA|I_\aA}^1\kappa_\aA+\delta_{D(I_\aA|I_\aA)}\kappa_\aA^2=\kappa_\aA^2\delta_{C(\aA)}.
	\]
    Indeed, it is clear that $(\delta_{D(I_\aA|I_\aA)})_{\nin}\kappa_\aA^2=\kappa_\aA^2(\delta_{C(\aA)})_{\nin}$. Moreover, it is easy to check
	\[
	-\tilde{\alpha}_{I_\aA|I_\aA}\kappa_\aA-\tilde{\beta}_{I_\aA|I_\aA}\kappa_\aA+\tilde{\mu}_{I_\aA|I_\aA}^1\kappa_\aA+(\delta_{D(I_\aA|I_\aA)})_{\ex}\kappa_\aA^2=\kappa_\aA^2(\delta_{C(\aA)})_{\ex}.
	\]
	
	(1.2) $\theta^2_\aA$ is compatible with cup products: For all $\phi\in C^p(\aA),\psi\in C^q(\aA)$, by Lemma \ref{Lem-CupProd-3x3}, we have
	\[
	\theta^2_\aA(\phi)\cup\theta^2_\aA(\psi)=\begin{pmatrix}
		\phi \\
		\phi \\
		\phi \\
		\kappa_\aA\phi \\
		\kappa_\aA\phi \\
		\kappa_\aA\phi \\
		\kappa_\aA^2\phi
	\end{pmatrix}\cup\begin{pmatrix}
		\psi \\
		\psi \\
		\psi \\
		\kappa_\aA\psi \\
		\kappa_\aA\psi \\
		\kappa_\aA\psi \\
		\kappa_\aA^2\psi
	\end{pmatrix}=\begin{pmatrix}
		\phi\cup\psi \\
		\phi\cup\psi \\
		\phi\cup\psi \\
		\kappa_\aA\phi\cup\psi+\phi\cup\kappa_\aA\psi \\
		\kappa_\aA\phi\cup\psi+\phi\cup\kappa_\aA\psi \\
		\kappa_\aA\phi\cup\psi+\phi\cup\kappa_\aA\psi \\
		\phi\cup\kappa_\aA^2\psi+\kappa_\aA\phi\cup\kappa_\aA\psi+\kappa_\aA^2\phi\cup\psi
	\end{pmatrix}
	\]
	and 
	\[
	\theta^2_\aA(\phi\cup\psi)=\begin{pmatrix}
		\phi\cup\psi \\
		\phi\cup\psi \\
		\phi\cup\psi \\
		\kappa_\aA(\phi\cup\psi) \\
		\kappa_\aA(\phi\cup\psi) \\
		\kappa_\aA(\phi\cup\psi) \\
		\kappa^2_\aA(\phi\cup\psi) \\
	\end{pmatrix}.
	\]
	By the proof (1.2) of Lemma \ref{Lem-ThetaA-BInfQis}, we have
	\[
	\kappa_\aA\phi\cup\psi+\phi\cup\kappa_\aA\psi =\kappa_\aA(\phi\cup\psi).
	\]
	Similarly, we have
	\[
	\phi\cup\kappa_\aA^2\psi+\kappa_\aA\phi\cup\kappa_\aA\psi+\kappa_\aA^2\phi\cup\psi=\kappa^2_\aA(\phi\cup\psi).
	\]
	
	(2) $\theta^2_\aA$ is compatible with brace operations: For all $\phi\in C^n(\aA), \psi_i\in C^{n_i}(\aA), 1\le i\le k$, by Lemma \ref{Lem-Brace-3x3}, we have 
	\[
	\theta^2_\aA\phi\{\theta^2_\aA\psi_1,\cdots,\theta^2_\aA\psi_k\} =
	\begin{pmatrix}
		\phi\{\psi_1,\cdots,\psi_k\} \\
		\phi\{\psi_1,\cdots,\psi_k\} \\
		\phi\{\psi_1,\cdots,\psi_k\} \\
		\sum\limits_{i=1}^k \kappa_\aA\phi\{\psi_1,\cdots,\psi_{i-1},\kappa_\aA\psi_i, \psi_{i+1},\cdots,\psi_k\} \\
		\sum\limits_{i=1}^k \kappa_\aA\phi\{\psi_1,\cdots,\psi_{i-1},\kappa_\aA\psi_i, \psi_{i+1},\cdots,\psi_k\} \\
		\sum\limits_{i=1}^k \kappa_\aA\phi\{\psi_1,\cdots,\psi_{i-1},\kappa_\aA\psi_i, \psi_{i+1},\cdots,\psi_k\} \\
		(\theta^2_\aA\phi\{\theta^2_\aA\psi_1,\cdots,\theta^2_\aA\psi_k\})_7
	\end{pmatrix},
	\]
	where $(\theta^2_\aA\phi\{\theta^2_\aA\psi_1,\cdots,\theta^2_\aA\psi_k\})_7 = \sum\limits_{1\le i<j\le k} \kappa^2_\aA\phi\{\psi_1,\cdots,\psi_{i-1}, \kappa_\aA\psi_i, \psi_{i+1},\cdots,\psi_{j-1},\kappa_\aA\psi_j, \linebreak \psi_{j+1},\cdots,\psi_{k,3}\} + \sum\limits_{i=1}^k \kappa^2_\aA\phi\{\psi_1,\cdots,\psi_{i-1},\kappa^2_\aA\psi_i, \psi_{i+1}, \cdots,\psi_k\}$,
	and
	\[
	\theta^2_\aA(\phi\{\psi_1,\cdots,\psi_k\}) =\begin{pmatrix}
		\phi\{\psi_1,\cdots,\psi_k\} \\
		\phi\{\psi_1,\cdots,\psi_k\} \\
		\phi\{\psi_1,\cdots,\psi_k\} \\
		\kappa_\aA(\phi\{\psi_1,\cdots,\psi_k\}) \\
		\kappa_\aA(\phi\{\psi_1,\cdots,\psi_k\}) \\
		\kappa_\aA(\phi\{\psi_1,\cdots,\psi_k\}) \\
		\kappa^2_\aA(\phi\{\psi_1,\cdots,\psi_k\})
	\end{pmatrix}.
	\]
By the proof (2) of Lemma \ref{Lem-ThetaA-BInfQis}, we have
	\[
	\sum\limits_{i=1}^k \kappa_\aA\phi\{\psi_1,\cdots,\psi_{i-1},\kappa_\aA\psi_i, \psi_{i+1},\cdots,\psi_k\} = \kappa_\aA\left(\phi\{\psi_1,\cdots,\psi_k\}\right).
	\]	
	So it is enough to prove
	\[
	\begin{aligned}
		&\ \kappa^2_\aA(\phi\{\psi_1,\cdots,\psi_k\}) \\
		= & \sum\limits_{1\le i<j\le k} \kappa^2_\aA\phi\{\psi_1,\cdots,\psi_{i-1}, \kappa_\aA\psi_i, \psi_{i+1},\cdots,\psi_{j-1},\kappa_\aA\psi_j,  \psi_{j+1},\cdots,\psi_{k,3}\} \\ 
		&+ \sum\limits_{i=1}^k \kappa^2_\aA\phi\{\psi_1,\cdots,\psi_{i-1},\kappa^2_\aA\psi_i, \psi_{i+1}, \cdots,\psi_k\}.
	\end{aligned}	
	\]
	This is clear, since as maps in $D(I_\aA|I_\aA)$, both sides have the same action on each element.	
	
	(3) $\theta^2_\aA$ is a $B_\infty$-quasi-isomorphism: Three different fully faithful dg functors 
	$$\iota_1,\iota_2,\iota_3: \aA\hookrightarrow\tT_{I_\aA,I_\aA}=\begin{pmatrix}
		\aA&I_\aA&I_\aA\otimes_\aA I_\aA\\&\aA&I_\aA\\ &&\aA
	\end{pmatrix},$$
	induce three $B_\infty$-morphisms 
	$$\iota^*_1,\iota^*_2,\iota^*_3: C(\tT_{I_\aA,I_\aA})\to C(\aA).$$ 
	The dg $\aA$-$\tT_{I_\aA}$-bimodule $X:=(I_\aA\ I_\aA\otimes_\aA I_\aA) \cong\ _\aA(\tT_{I_\aA})_{\tT_{I_\aA}}$, which is cofibrant over $\aA$, induces a fully faithful functor 
	$X\otimes^{\bf L}_{\tT_{I_\aA}} -: \per\tT_{I_\aA}^\op\to\per\aA^\op$, 
	so the right action $\beta_X: C(\tT_{I_\aA})\to C(X)$ is a quasi-isomorphism.
	By Lemma~\ref{Lem-Homotopy-Bicart}, we have the following homotopy bicartesian diagram
	\[
	\begin{tikzcd}[column sep=60]
		C(\tT_{I_\aA,I_\aA}) \arrow[r, "\iota_1^*"] \arrow[d, "\iota_{23}^*"'] & C(\aA) \arrow[d, "\alpha_X"] \\
		C(\tT_{I_\aA}) \arrow[r, "\beta_X"] & C(X).
	\end{tikzcd}
	\]
	Thus $\iota_1^*$ is a $B_\infty$-quasi-isomorphism. 
	It is clear that the following diagram 
	\[
	\begin{tikzcd}[column sep=large]
		& C(\aA) \arrow[dl,equal] \arrow[d,"\theta^2_\aA"] \\
		C(\aA) & C(\tT_{I_\aA}) \arrow[l,two heads,"\iota_i^*"'] \\
	\end{tikzcd}	
	\]	
	is commutative, i.e., $\iota_i^*\theta^2_\aA=1_{C(\aA)}$, for $i=1,2,3$. 
	Hence $\theta^2_\aA$ is a $B_\infty$-quasi-isomorphism. Furthermore, $\iota_2^*$ and $\iota_3^*$ are $B_\infty$-quasi-isomorphisms, and $\iota_1^*=\iota_2^*=\iota_3^*$ in ${\rm Ho} B_\infty$.
\end{proof}

\section{$\theta_X$ is a $B_\infty$-quasi-isomorphism} \label{App-ThetaX}

In this appendix, we will give a proof of a fact used in the proof (4) of Theorem~\ref{Thm-LaxFuntor-HomotopyCat-B-Inf}.

Let $\aA$ and $\bB$ be small dg categories and $X$ a dg $\aA$-$\bB$-bimodule. Define a quasi-isomorphism of complexes  
$$\kappa_X^l:D(X) \rightarrow D(X|I_\bB)$$
by
$(\kappa_X^l\phi)(sa_{1,l}\otimes sx\otimes sb_{1,m}\otimes sb'\otimes sb''_{1,n}) := \phi(sa_{1,l}\otimes sx\otimes sb_{1,m}\otimes sb'\otimes sb''_{1,n})$,
and a quasi-isomorphism of complexes 
$$\kappa_X^r: D(X) \rightarrow D(I_\aA|X)$$
by
$(\kappa_X^r\phi)(sa_{1,l}\otimes sa'\otimes sa''_{1,m}\otimes sx\otimes sb_{1,n}):= \phi(sa_{1,l}\otimes sa'\otimes sa''_{1,m}\otimes sx\otimes sb_{1,n}).$

\begin{lemma} \label{Lem-ThetaX-BInfQis}
	Let $\aA$ and $\bB$ be small dg categories, and $X$ a dg $\aA$-$\bB$-bimodule. Then the graded linear map
	\[
	\theta_X:=\begin{pmatrix}
		1_{C(\aA)} & 0 & 0 \\
		0 & 1_{C(\bB)} & 0 \\
		0 & 1_{C(\bB)} & 0 \\
		0 & 0 & 1_{D(X)} \\
		0 & \kappa_\bB & 0 \\
		0 & 0 & 1_{D(X)} \\
		0 & 0 & \kappa_X^l
	\end{pmatrix}:C(\tT_X) \rightarrow C(\tT_{X,I_\bB})
	\]
	is a $B_\infty$-quasi-isomorphism. Moreover, the $B_\infty$-morphisms $\iota_{12}^*$ and $\iota_{13}^*$ induced by the fully faithful dg functors $\iota_{12},\iota_{13}: \tT_{X}=\begin{pmatrix}
		\aA&X\\&\bB
	\end{pmatrix}\hookrightarrow\tT_{X,I_\bB}=\begin{pmatrix}
		\aA&X&X\otimes_\bB I_\bB\\&\bB&I_\bB\\ &&\bB
	\end{pmatrix}$ are $B_\infty$-quasi-isomorphisms in $B_\infty$ and equal in ${\rm Ho}B_\infty$.
\end{lemma}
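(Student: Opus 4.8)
The plan is to follow the same three-stage strategy used in the proofs of Lemma~\ref{Lem-ThetaA-BInfQis} and Lemma~\ref{Lem-ThetaA^2-BInfQis}, since $\theta_X$ is structurally the analogue of $\theta_\aA$ but with the ``diagonal'' entry $\aA$ replaced by the $2\times 2$ block $\tT_X$ and the second copy being the trivial self-extension by $I_\bB$. First I would record the explicit cup-product and brace formulas for $C(\tT_{X,I_\bB})$, which are the specializations of Lemma~\ref{Lem-CupProd-3x3} and Lemma~\ref{Lem-Brace-3x3} to the case where the second and third dg categories are both $\bB$, the bimodule $X$ is unchanged, and the second bimodule is $Y=I_\bB$. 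Under this specialization $D(Y)=D(I_\bB)$, $D(X\otimes_\bB Y)\cong D(X)$, and $D(X|Y)=D(X|I_\bB)$, which explains the shape of the $7\times 3$ matrix defining $\theta_X$: the three source summands $C(\aA),C(\bB),D(X)$ are sent diagonally, with $C(\bB)$ duplicated into the two $C(\bB)$-slots (via $1$ and $\kappa_\bB$ into $D(I_\bB)$) and $D(X)$ duplicated into the $D(X)$ and $D(X\otimes_\bB Y)$ slots, and finally pushed by $\kappa_X^l$ into the $D(X|I_\bB)$-slot.

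The verification then proceeds in three steps exactly as before. Step~(1.1): $\theta_X$ is a morphism of complexes, i.e.\ $\delta_{C(\tT_{X,I_\bB})}\theta_X=\theta_X\delta_{C(\tT_X)}$. Using the explicit lower-triangular differential $\delta_{C(\tT_{X,Y})}$ given earlier (with the entries $\tilde\alpha,\tilde\beta,\tilde\mu$), this reduces componentwise to a handful of identities; the genuinely new ones are those involving the bottom row, namely the compatibility $-\tilde\alpha_{X|I_\bB}\,(\cdot)-\tilde\beta_{X|I_\bB}\,\kappa_\bB+\tilde\mu^1_{X|I_\bB}\,(\cdot)+\delta_{D(X|I_\bB)}\kappa_X^l = \kappa_X^l\,\delta_{D(X)}$, while the remaining rows recover the already-proved identity $\tilde\alpha_{I_\bB}-\tilde\beta_{I_\bB}+\delta_{D(I_\bB)}\kappa_\bB=\kappa_\bB\delta_{C(\bB)}$ from Lemma~\ref{Lem-ThetaA-BInfQis} together with trivial diagonal matches. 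Step~(1.2) and Step~(2): compatibility with cup product and with brace operations. By Lemma~\ref{Lem-CupProd-3x3} and Lemma~\ref{Lem-Brace-3x3} these again reduce, slot by slot, to identities of the form ``$\kappa$ is a derivation/coderivation for $\cup$ and for the braces'', all of which hold at the level of maps in $D(X|I_\bB)$ because both sides, evaluated on a tensor word, literally insert the same $1$'s and read off the same component of $\phi$; I would dispatch these with the phrase ``both sides have the same action on each element,'' mirroring the closing lines of the two appendix lemmas.

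Step~(3): quasi-isomorphy and the Ho$B_\infty$ statement. Here I would argue as in Lemma~\ref{Lem-ThetaA^2-BInfQis}(3): identify $X\otimes_\bB I_\bB\cong X$ so that the dg $\aA$-$\tT_{I_\bB}$-bimodule $(X\ X\otimes_\bB I_\bB)\cong X\otimes_\bB(\bB\ I_\bB)\cong {}_\aA X_{\tT_{I_\bB}}$ is cofibrant over $\aA$ when $X$ is (or invoke Lemma~\ref{lem:3.6} / Lemma~\ref{Lem-CTXY-bar} with $Y=I_\bB$, noting $I_\bB$ is a suitable module), apply Lemma~\ref{Lem-Homotopy-Bicart} to get the homotopy bicartesian square relating $C(\tT_{X,I_\bB})$, $C(\tT_X)$, $C(\tT_{I_\bB})$ and an action target, and conclude that $\iota_{12}^*$ and $\iota_{13}^*$ are $B_\infty$-quasi-isomorphisms. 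Then the evident commutativity $\iota_{12}^*\theta_X=1_{C(\tT_X)}=\iota_{13}^*\theta_X$ forces $\theta_X$ to be a quasi-isomorphism by 2-out-of-3 and gives $\iota_{12}^*=\iota_{13}^*$ in ${\rm Ho}B_\infty$. The main obstacle I anticipate is purely bookkeeping in Step~(1.1): tracking the Koszul signs in the bottom-row differential identity involving $\tilde\mu^1_{X|I_\bB}$ and $\kappa_X^l$, since this entry has no analogue in Lemma~\ref{Lem-ThetaA-BInfQis} and mixes the $D(X)$, $D(I_\bB)$ and $D(X|I_\bB)$ contributions; I would isolate it and check it against the internal and external parts of $\delta_{D(X|I_\bB)}$ separately, as was done for $\kappa_\aA^2$ in the $\theta_\aA^2$ computation.
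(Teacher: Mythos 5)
Your steps (1.1), (1.2) and (2) coincide with the paper's proof: the paper likewise specializes Lemma~\ref{Lem-CupProd-3x3} and Lemma~\ref{Lem-Brace-3x3} to $\tT_{X,I_\bB}$ and reduces everything to componentwise identities, the only new ones being in the bottom row, which the paper checks as three separate column identities, namely $\kappa_X^l\tilde{\alpha}_X=0$, $\kappa_X^l\tilde{\beta}_X=\tilde{\beta}_{X|I_\bB}\kappa_\bB$, and $-\tilde{\alpha}_{X|I_\bB}+\tilde{\mu}^1_{X|I_\bB}+\delta_{D(X|I_\bB)}\kappa_X^l=\kappa_X^l\delta_{D(X)}$ (your single displayed equation conflates these three, but the content is the same).

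The genuine gap is in your step (3). No application of Lemma~\ref{Lem-Homotopy-Bicart} produces a square ``relating $C(\tT_{X,I_\bB})$, $C(\tT_X)$, $C(\tT_{I_\bB})$'': the row decomposition you describe, $\tT_{X,I_\bB}=\begin{pmatrix}\aA & (X\ \,X\otimes_\bB I_\bB)\\ & \tT_{I_\bB}\end{pmatrix}$, yields a square whose other corners are $C(\aA)$, $C(\tT_{I_\bB})$ and $C((X\ \,X\otimes_\bB I_\bB))$, in which neither $\iota_{12}^*$ nor $\iota_{13}^*$ appears (and which could at best control $\iota_1^*:C(\tT_{X,I_\bB})\to C(\aA)$, which is \emph{not} a quasi-isomorphism in general); moreover your caveat ``cofibrant over $\aA$ when $X$ is'' imports a hypothesis the lemma does not have, since $X$ is arbitrary. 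The paper instead uses the \emph{column} decomposition $\tT_{X,I_\bB}=\begin{pmatrix}\tT_X & Y\\ & \bB\end{pmatrix}$ with $Y=\begin{pmatrix}X\otimes_\bB I_\bB\\ I_\bB\end{pmatrix}\cong {}_{\tT_X}(\tT_X)_\bB$, which is cofibrant over $\tT_X$ for \emph{every} $X$; Keller's criterion makes $\beta_Y$ a quasi-isomorphism, and the bicartesian square of Lemma~\ref{Lem-Homotopy-Bicart} then gives that $\iota_{12}^*$ is a quasi-isomorphism. Your fallback via Lemma~\ref{Lem-CTXY-bar} with $Y=I_\bB$ is a valid alternative (it applies for arbitrary $X$ because $I_\bB$ is cofibrant over $\bB$), but it needs the missing final step: the projection $\bar{C}(\tT_{X,I_\bB})=C(\tT_X)\times_{C(\bB)}C(\tT_{I_\bB})\to C(\tT_X)$ is an acyclic fibration, being a base change of the acyclic fibration $\iota_1^*:C(\tT_{I_\bB})\twoheadrightarrow C(\bB)$ (Lemmas~\ref{Lem-ThetaA} and~\ref{Lem-Pullback-Fib-AcycFib}), whence $\iota_{12}^*$ is a quasi-isomorphism. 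Finally, note that neither square can give $\iota_{13}^*$ directly: $\iota_{13}^*$ involves the summand $D(X\otimes_\bB I_\bB)$, which is killed by the projection to $\bar{C}(\tT_{X,I_\bB})$. So the logical order must be the paper's: first $\iota_{12}^*$ is a quasi-isomorphism, then $\theta_X$ is one because $\iota_{12}^*\theta_X=1_{C(\tT_X)}$, and only then $\iota_{13}^*$ because $\iota_{13}^*\theta_X=1_{C(\tT_X)}$, which also yields $\iota_{12}^*=\iota_{13}^*$ in ${\rm Ho}B_\infty$.
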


\begin{proof}
	(1) $\theta_X$ is a morphism of dg algebras:
	
	(1.1) $\theta_X$ is a morphism of complexes: We need to prove $\delta_{C(\tT_{X,I_\bB})}\theta_X=\theta_X\delta_{C(\tT_X)}$. Clearly,
	\[
	\delta_{C(\tT_{X,I_\bB})}\theta_X=\begin{pmatrix}
		\delta_{C(\aA)} & 0 & 0 \\
		0 &\delta_{C(\bB)} & 0 \\
		0 &\delta_{C(\bB)} & 0 \\
		\tilde{\alpha}_X & -\tilde{\beta}_X & \delta_{D(X)} \\
		0 & \tilde{\alpha}_{I_\bB}-\tilde{\beta}_{I_\bB}+\delta_{D(I_\bB)}\kappa_\bB & 0 \\
		\tilde{\alpha}_{X} & -\tilde{\beta}_{X} & \delta_{D(X)} \\
		0 & -\tilde{\beta}_{X|I_\bB}\kappa_\bB & -\tilde{\alpha}_{X|I_\bB}+\tilde{\mu}_{X|I_\bB}^1+\delta_{D(X|I_\bB)}\kappa_X^l
	\end{pmatrix}
	\]
	and
	\[
	\theta_X\delta_{C(\tT_X)}=\begin{pmatrix}
		\delta_{C(\aA)} & 0 & 0 \\
		0 &\delta_{C(\bB)} & 0 \\
		0 &\delta_{C(\bB)} & 0 \\
		\tilde{\alpha}_X & -\tilde{\beta}_X & \delta_{D(X)} \\
		0 & \kappa_\bB\delta_{C(\bB)} & 0 \\
		\tilde{\alpha}_{X} & -\tilde{\beta}_{X} & \delta_{D(X)} \\
		\kappa_X^l\tilde{\alpha}_X & -\kappa_X^l\tilde{\beta}_X & \kappa_X^l\delta_{D(X)}
	\end{pmatrix}.
	\]
	By the proof (1.1) of Lemma \ref{Lem-ThetaA-BInfQis}, we have $\kappa_\bB\delta_{C(\bB)}=\tilde{\alpha}_{I_\bB}-\tilde{\beta}_{I_\bB}+\delta_{D(I_\bB)}\kappa_\bB$. From the definitions of $D(X|I_\bB)$ and $\tilde{\alpha}_X$, we obtain $\kappa_X^l\tilde{\alpha}_X=0$. Moreover, the following diagram is commutative.
	\[
	\begin{tikzcd}
		C(\bB) \arrow[r, "\tilde{\beta}_X"] \arrow[d, "\kappa_\bB"'] & D(X) \arrow[d, "\kappa_X^l"] \\
		D(I_\bB) \arrow[r, "\tilde{\beta}_{X|I_\bB}"] & D(X|I_\bB)
	\end{tikzcd}
	\]
	Thus it suffices to show 
	\[
	-\tilde{\alpha}_{X|I_\bB}+\tilde{\mu}_{X|I_\bB}^1+\delta_{D(X|I_\bB)}\kappa_X^l=\kappa_X^l\delta_{D(X)}.
	\]
	Indeed, it is clear that $(\delta_{D(X|I_\bB)})_{\nin}\kappa_X^l=\kappa_X^l(\delta_{D(X)})_{\nin}$, and it is easy to check 
	\[
	-\tilde{\alpha}_{X|I_\bB}+\tilde{\mu}_{X|I_\bB}^1+(\delta_{D(X|I_\bB)})_{\ex}\kappa_X^l=\kappa_X^l(\delta_{D(X)})_{\ex}.
	\]
	
	(1.2) $\theta_X$ is compatible with cup products: For all $\phi=(\phi_1,\phi_2,\phi_3)\in C^p(\tT_X)$ and $\psi=(\psi_1,\psi_2,\psi_3)\in C^q(\tT_X)$, by Lemma \ref{Lem-CupProd-3x3}, we have
	\[
	\theta_X(\phi)\cup\theta_X(\psi)=\begin{pmatrix}
		\phi_1 \\
		\phi_2 \\
		\phi_2 \\
		\phi_3\\
		\kappa_\bB\phi_2 \\
		\phi_3 \\
		\kappa_X^l\phi_3
	\end{pmatrix}\cup\begin{pmatrix}
		\psi_1 \\
		\psi_2 \\
		\psi_2 \\
		\psi_3\\
		\kappa_\bB\psi_2 \\
		\psi_3 \\
		\kappa_X^l\psi_3
	\end{pmatrix}=\begin{pmatrix}
		\phi_1\cup\psi_1 \\
		\phi_2\cup\psi_2 \\
		\phi_2\cup\psi_2 \\
		\phi_1\cup\psi_3+\phi_3\cup\psi_2 \\
		\phi_2\cup\kappa_\bB\psi_2+\kappa_\bB\phi_2\cup\psi_2 \\
		\phi_1\cup\psi_3+\phi_3\cup\psi_2 \\
		\phi_1\cup\kappa_X^l\psi_3+\phi_3\cup\kappa_\bB\psi_2+\kappa_X^l\phi_3\cup\psi_2
	\end{pmatrix}
	\]
	and 
	\[
	\theta_\aA(\phi\cup\psi)=\begin{pmatrix}
		\phi_1\cup\psi_1 \\
		\phi_2\cup\psi_2 \\
		\phi_2\cup\psi_2 \\
		\phi_1\cup\psi_3+\phi_3\cup\psi_2 \\
		\kappa_\bB(\phi_2\cup\psi_2) \\
		\phi_1\cup\psi_3+\phi_3\cup\psi_2 \\
		\kappa_X^l(\phi_1\cup\psi_3+\phi_3\cup\psi_2)
	\end{pmatrix}.
	\]
	By the proof (1.2) of Lemma \ref{Lem-ThetaA-BInfQis}, we have
	\[
	\phi_2\cup\kappa_\bB\psi_2+\kappa_\bB\phi_2\cup\psi_2 = \kappa_\bB(\phi_2\cup\psi_2).
	\]
	Similarly, we can obtain
	\[
	\phi_1\cup\kappa_X^l\psi_3+\phi_3\cup\kappa_\bB\psi_2+\kappa_X^l\phi_3\cup\psi_2=\kappa_X^l(\phi_1\cup\psi_3+\phi_3\cup\psi_2).
	\]
	
	(2) $\theta_X$ is compatible with brace operations: Similar to the proof (2) of Lemma \ref{Lem-ThetaA-BInfQis}.	
	
	(3) $\theta^2_\aA$ is a $B_\infty$-quasi-isomorphism: Two different fully faithful dg functors 
	$$\iota_{12},\iota_{13}: \tT_{X}=\begin{pmatrix}
		\aA&X\\&\bB
	\end{pmatrix}\hookrightarrow\tT_{X,I_\bB}=\begin{pmatrix}
		\aA&X&X\otimes_\bB I_\bB\\&\bB&I_\bB\\ &&\bB
	\end{pmatrix}$$
	induce two $B_\infty$-morphisms 
	$$\iota^*_{12},\iota^*_{13}: C(\tT_{X,I_\bB})\to C(\tT_{X}).$$ 
	Note that $\tT_{X} \cong I_\bB$ as dg $\bB$-bimodules. The dg $\tT_{X}$-$\bB$-bimodule $Y:=\begin{pmatrix}
		X\otimes_\bB I_\bB\\ I_\bB
	\end{pmatrix} \cong\ _{\tT_{X}}(\tT_{X})_\bB$, which is cofibrant over $\tT_{X}$, induces a fully faithful dg functor 
	$Y\otimes^{\bf L}_\bB-: \per\bB^\op\to\per\tT_{X}^\op$, 
	so the right action $\beta_Y: C(\bB)\to C(Y)$ is a quasi-isomorphism.
	By Lemma~\ref{Lem-Homotopy-Bicart}, we have the following homotopy bicartesian diagram
	\[
	\begin{tikzcd}
		C(\tT_{X,I_\bB}) \arrow[r, "\iota_{12}^*"] \arrow[d, "\iota_3^*"'] & C(\tT_{X}) \arrow[d, "\alpha_Y"] \\
		C(\bB) \arrow[r, "\beta_Y"] & C(Y).
	\end{tikzcd}
	\]
	Thus $\iota_{12}^*$ is a $B_\infty$-quasi-isomorphism. 
	It is clear that the following diagram 
	\[
	\begin{tikzcd}[column sep=large]
		& C(\tT_X) \arrow[dl,equal] \arrow[d,"\theta_X"] \\
		C(\tT_X) & C(\tT_{X,I_\bB}) \arrow[l,two heads,"\iota_{1i}^*"'] \\
	\end{tikzcd}	
	\]	
	is commutative, i.e., $\iota_{1i}^*\theta_X=1_{C(\tT_{X})}$, for $i=2,3$. 
	Hence $\theta_X$ is a $B_\infty$-quasi-isomorphism. 
	Furthermore, $\iota_{13}^*$ is also  a $B_\infty$-quasi-isomorphism in $B_\infty$, and $\iota_{12}^*=\iota_{13}^*$ in ${\rm Ho} B_\infty$.
\end{proof}

Analogous to Lemma~\ref{Lem-ThetaX-BInfQis}, we have the following lemma.

\begin{lemma} \label{Lem-Theta'X-BInfQis}
	Let $\aA$ and $\bB$ be small dg categories, and $X$ a dg $\aA$-$\bB$-bimodule. Then the graded linear map
	\[
	\theta'_X:=\begin{pmatrix}
		1_{C(\aA)} & 0 & 0 \\
		1_{C(\aA)} & 0 & 0 \\
		0 & 1_{C(\bB)} & 0 \\
		\kappa_\aA & 0 & 0 \\
		0 & 0 & 1_{D(X)} \\
		0 & 0 & 1_{D(X)} \\
		0 & 0 & \kappa_X^r
	\end{pmatrix}:C(\tT_X) \rightarrow C(\tT_{I_\aA,X})
	\]
	is a $B_\infty$-quasi-isomorphism. Moreover, the $B_\infty$-morphism $\iota_{13}^*$ and $\iota_{23}^*$ induced by the fully faithful dg functors $\iota_{13},\iota_{23}:\tT_X\hookrightarrow\tT_{I_\aA,X}$ are $B_\infty$-quasi-isomorphisms in $B_\infty$ and equal in ${\rm Ho}B_\infty$. 
\end{lemma}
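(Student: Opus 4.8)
The plan is to follow the proof of Lemma~\ref{Lem-ThetaX-BInfQis} verbatim, replacing the gluing of $I_\bB$ on the right of $X$ by the gluing of $I_\aA$ on the left; indeed $\theta'_X$ is precisely the left--right dual of $\theta_X$, with $\kappa_X^l$ replaced by $\kappa_X^r$ and the roles of $\tilde\alpha$ and $\tilde\beta$ interchanged. Accordingly I would organize the argument as: (1.1) $\theta'_X$ is a morphism of complexes; (1.2) $\theta'_X$ is compatible with the cup product; (2) $\theta'_X$ is compatible with the brace operation; and (3) $\theta'_X$ is a quasi-isomorphism, from which the statements about $\iota_{13}^*$ and $\iota_{23}^*$ follow.

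For (1.1) I would compute $\delta_{C(\tT_{I_\aA,X})}\theta'_X$ and $\theta'_X\delta_{C(\tT_X)}$ as $7\times 3$ matrices, using the $3\times 3$ differential $\delta_{C(\tT_{X,Y})}$ under the substitution $Y\rightsquigarrow X$, $X\rightsquigarrow I_\aA$, so that $X\otimes_\bB Y\rightsquigarrow I_\aA\otimes_\aA X\cong X$ and $X|Y\rightsquigarrow I_\aA|X$. All rows then match automatically except the $D(I_\aA)$-row, which reduces to the identity $\kappa_\aA\delta_{C(\aA)}=\tilde\alpha_{I_\aA}-\tilde\beta_{I_\aA}+\delta_{D(I_\aA)}\kappa_\aA$ already proved in Lemma~\ref{Lem-ThetaA-BInfQis}(1.1), and the $D(I_\aA|X)$-row, which reduces to the three new identities $\kappa_X^r\tilde\beta_X=0$, $\kappa_X^r\tilde\alpha_X=-\tilde\alpha_{I_\aA|X}\kappa_\aA$, and $-\tilde\beta_{I_\aA|X}+\tilde\mu^1_{I_\aA|X}+\delta_{D(I_\aA|X)}\kappa_X^r=\kappa_X^r\delta_{D(X)}$. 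The first holds because $\tilde\beta_X$ is supported on inputs of the form $sx\otimes sb_{1,n}$ with no $\aA$-slots to the left of $X$, whereas $\kappa_X^r$ inserts a fresh $I_\aA$-slot there; the other two are verified on generators exactly as in Lemma~\ref{Lem-ThetaX-BInfQis}(1.1), the internal parts being immediate and only the external parts requiring a short sign computation. Step (1.2) uses Lemma~\ref{Lem-CupProd-3x3} and reduces, besides the $\kappa_\aA$-identity of Lemma~\ref{Lem-ThetaA-BInfQis}(1.2), to the single new identity $\phi_1\cup\kappa_X^r\psi_3+\kappa_\aA\phi_1\cup\psi_3+\kappa_X^r\phi_3\cup\psi_2=\kappa_X^r(\phi_1\cup\psi_3+\phi_3\cup\psi_2)$; step (2) uses Lemma~\ref{Lem-Brace-3x3} and reduces to a parallel brace identity, both checked by evaluating on an arbitrary element of $D(I_\aA|X)$.

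For (3) I would regroup the $3\times 3$ matrix dg category as a $2\times 2$ one, $\tT_{I_\aA,X}=\begin{pmatrix}\aA & W\\ & \tT_X\end{pmatrix}$, where $W:=(I_\aA\ I_\aA\otimes_\aA X)$ is the dg $\aA$-$\tT_X$-bimodule satisfying $W\cong {}_\aA(\tT_X)_{\tT_X}=X_{\iota_1}$, the restriction of $I_{\tT_X}$ along the canonical fully faithful embedding $\iota_1:\aA\hookrightarrow\tT_X$. Then $W$ is cofibrant over $\tT_X$, and by \cite[4.2 Lemma]{Keller94} the functor $-\otimes^{\bf L}_\aA W:\per\aA\to\per\tT_X$ is fully faithful, so the left action $\alpha_W:C(\aA)\to C(W)$ is a quasi-isomorphism. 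Applying Lemma~\ref{Lem-Homotopy-Bicart} to $\tT_W$ yields a homotopy bicartesian square in which $\iota_{23}^*=\iota_2^*:C(\tT_{I_\aA,X})\to C(\tT_X)$ is parallel to $\alpha_W$, whence $\iota_{23}^*$ is a $B_\infty$-quasi-isomorphism. Since $\iota_{13}^*\theta'_X=\iota_{23}^*\theta'_X=1_{C(\tT_X)}$ by a direct componentwise check, the $2$-out-of-$3$ property forces $\theta'_X$ and then $\iota_{13}^*$ to be quasi-isomorphisms as well, and $\iota_{13}^*=\iota_{23}^*=(\theta'_X)^{-1}$ in $\Ho B_\infty$. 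The main obstacle is purely bookkeeping: pinning down the correct signs in the three external $\kappa_X^r$-identities of (1.1) (the left--right dualization introduces the sign in $\kappa_X^r\tilde\alpha_X=-\tilde\alpha_{I_\aA|X}\kappa_\aA$ that was absent in Lemma~\ref{Lem-ThetaX-BInfQis}, since $\tilde\alpha_X$ enters $\delta_{C(\tT_X)}$ with a plus while $\tilde\beta_X$ enters with a minus), and making sure the $2\times 2$ regrouping identifies $\iota_{23}$ -- rather than $\iota_{13}$ -- with the block embedding controlled by the homotopy bicartesian square.
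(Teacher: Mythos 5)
Your proposal is correct and is essentially the paper's own proof: the paper proves this lemma only by the remark that it is ``analogous to Lemma~\ref{Lem-ThetaX-BInfQis}'', and your argument is the faithful left--right dualization of that proof --- the reduction of row $4$ to Lemma~\ref{Lem-ThetaA-BInfQis}, the three row-$7$ identities (including the correct sign flip $\kappa_X^r\tilde\alpha_X=-\tilde\alpha_{I_\aA|X}\kappa_\aA$, forced by $\tilde\alpha_X$ entering $\delta_{C(\tT_X)}$ with $+$ while both $\tilde\alpha_{I_\aA|X}$ and $\tilde\beta_{I_\aA|X}$ enter row $7$ with $-$), and the cup/brace identities read off from Lemmas~\ref{Lem-CupProd-3x3} and~\ref{Lem-Brace-3x3}. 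Your step (3), regrouping $\tT_{I_\aA,X}=\begin{pmatrix}\aA & W\\ & \tT_X\end{pmatrix}$ with $W\cong{}_\aA(\tT_X)_{\tT_X}$ cofibrant over $\tT_X$ so that $\alpha_W$ is a quasi-isomorphism and the homotopy bicartesian square of Lemma~\ref{Lem-Homotopy-Bicart} controls $\iota_{23}^*$, is exactly the dual of the paper's use of $Y\cong{}_{\tT_X}(\tT_X)_\bB$ and $\beta_Y$ there (and mirrors the proof of Lemma~\ref{Lem-ThetaA^2}), so nothing further is needed.
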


\begin{remark}
	The $B_\infty$-morphism $\theta_\aA^2=\theta_{I_\aA}\theta_\aA=\theta'_{I_\aA}\theta_\aA$.
	So we can apply Lemma~\ref{Lem-ThetaA-BInfQis} and Lemma~ \ref{Lem-ThetaX-BInfQis} or Lemma~\ref{Lem-Theta'X-BInfQis} to prove Lemma~\ref{Lem-ThetaA^2-BInfQis}. 
\end{remark}

\medspace

\noindent Y. Han and X.K. Wang \\
\\
KLMM, Academy of Mathematics and Systems Science \\
Chinese Academy of Sciences \\ 
Beijing 100190, China \\ 
\\
School of Mathematical Sciences \\
University of Chinese Academy of Sciences\\ 
Beijing 100049, China \\ 
\\
E-mail: hany@iss.ac.cn (Y. Han); wangxukun@amss.ac.cn (X.K. Wang)

\end{document}